\newtheorem{theorem}{Theorem}[section]
\newtheorem{lemma}[theorem]{Lemma}
\newtheorem{proposition}[theorem]{Proposition}
\newtheorem{corollary}[theorem]{Corollary}
\theoremstyle{definition}
\theoremstyle{remark}
\newtheorem*{remark}{Remark}
\def\paragraph#1{\noindent \textbf{#1}}
\numberwithin{equation}{section}
\def\dist{\mathop{\rm dist}\nolimits}
\def\d{\mathrm{d}}
\def\rf{\rfloor}
\def\lf{\lfloor}
\def\<{\langle}
\def\>{\rangle}
\def\a{\alpha}
\def\b{\beta}
\def\e{\epsilon}
\def\g{\gamma}
\def\l{\lambda}
\def\s{\sigma}
\def\t{\tau}
\def\th{\theta}
\def\o{\omega}
\def\G{\Gamma}
\def\O{\Omega}
\def\del{\partial}
\def\R{{\Bbb R}}  
\def\N{{\Bbb N}}  
\def\P{{\Bbb P}}  
\def\Z{{\Bbb Z}}  
\def\E{{\Bbb E}}  
\def\Va{{\Bbb V}}  
\let\cal=\mathcal
\def\BB{{\cal B}}
\def\CC{{\cal C}}
\def\DD{{\cal D}}
\def\EE{{\cal E}}
\def\FF{{\cal F}}
\def\HH{{\cal H}}
\def\II{{\cal I}}
\def\JJ{{\cal J}}
\def\KK{{\cal K}}
\def\LL{{\cal L}}
\def\MM{{\cal M}}
\def\OO{{\cal O}}
\def\PP{{\cal P}}
\def\RR{{\cal R}}
\def\SS{{\cal S}}
\def\VV{{\cal V}}
\def\VV{{\cal V}}
\def\WW{{\cal W}}
 \def \G {{\Gamma}}
 \def \b {{\beta}}
\def \e {{\epsilon}}
 \def \s {{\sigma}}
 \def \t {{\tau}}
 \def \th {{\theta}}
 \def \g {{\gamma}}
 \def \l {{\lambda}}
 \def \d {{\delta}}
 \def \a {{\alpha}}
 \def \o {{\omega}}
 \def \O {{\Omega}}
 \def \th {{\theta}}
 \def \del {{\partial}}
 \def \ba {\begin{array}}
 \def \ea {\end{array}}
 \newcommand{\be}{\begin{equation}}
 \newcommand{\ee}{\end{equation}}
\newcommand{\bea}{\begin{eqnarray}}
 \newcommand{\eea}{\end{eqnarray}}
\def\TH(#1){\label{#1}}\def\thv(#1){\ref{#1}}
\def\Eq(#1){\label{#1}}\def\eqv(#1){(\ref{#1})}
\def\sfrac#1#2{{\textstyle{#1\over #2}}}
 \def \1{\mathbbm{1}}
\def\wt {\widetilde}
\def\wh{\widehat}
\begin{document}
\title{Aging in Metropolis dynamics of the REM: a proof}
\author[V. Gayrard]{V\'eronique Gayrard}
 \address{
V. Gayrard\\Aix Marseille Universit\'e, CNRS,
Centrale Marseille, I2M UMR 7373, 13453, Marseille, France 
}
\email{veronique.gayrard@math.cnrs.fr}

\subjclass[2000]{82C44,60K35,60G70} \keywords{random dynamics,
random environments, clock process, L\'evy processes,
spin glasses, aging, Metropolis dynamics
}
\date{\today}

\begin{abstract} 
We study the aging behavior of the Random Energy Model (REM) evolving under Metropolis dynamics. 
We prove that a classical two-time correlation function converges almost surely to the arcsine law distribution 
function that characterizes activated aging, as predicted in the physics literature,  in the optimal domain of 
the time-scale and temperature parameters where this result can be expected to hold. In the course of the 
proof we establish that a certain continuous time clock process,  after proper rescaling, converges almost surely 
to a stable subordinator, improving upon the result of Ref.~\cite{CW15} where a closely related clock is shown 
to converge in probability only, and in a restricted region of the time-scale and temperature parameters.
The random rescaling involved in this convergence is controlled at the fine level of fluctuations. 
As a byproduct, we refine and prove a conjecture made in Ref.~\cite{CW15}.
\end{abstract}

 \maketitle


\section{Introduction}
    \TH(1)

While there is as yet no established theory for the  description of glasses, a consensus exists 
that  this amorphous state of matter is intrinsically dynamical in nature \cite{MR2866716}, \cite{Ku01}, \cite{Gold69}.
Measuring suitable two-time correlation functions indeed reveals that glassy dynamics 
are history dependent and dominated by ever slower transients:  they are \emph{aging}.
The realization in the late 80's that \emph{mean-field} spin glass dynamics could provide a 
mathematical formulation for this phenomenon sparked renewed interest in models, 
such as Derrida's REM and $p$-spin SK models  \cite{De1}, \cite{D85}, whose statics had, until then, 
been the main focus of attention \cite{BCKM98}. Despite this, Bouchaud's phenomenological 
\emph{trap models} first took the center stage as they succeeded in predicting the power-law decay 
of two-time correlation functions observed experimentally,  even though they did so at the cost of an 
ad hoc construction and  drastically simplifying assumptions \cite{Bou92}, \cite{BD95}.

It was not until 2003  that a trap model dynamics  was shown to result for the microscopic 
Glauber dynamics of a (random) mean-field spin glass Hamiltonian, namely, the REM endowed 
with the so-called \emph{Random Hopping} dynamics and observed on time-scales near equilibrium 
\cite{BBG02,BBG03a, BBG03b}. Quite remarkably, the predicted functional form of two-time correlation functions 
was recovered. Rapid progress followed over the ensuing decade, beginning with  \cite{BC06b}. 
The optimal domain of temperature and time-scales were this prediction applies was obtained in 
Ref.~\cite{G10b} (almost surely in the random environment except for times scales near equilibrium 
where the results hold in probability only) and these  results were partially extended to the 
$p$-spin SK models \cite{BBC08},  \cite{BG13}.
%

The choice of the Random Hopping dynamics, however, clearly favored the emergence of
trap models. Just as in trap model constructions, its trajectories are those of a simple random walk on 
the underlying graph, and thus, do not depend on the random Hamiltonian. This is in sharp contrast 
with \emph{Metropolis} \cite{Metro}  dynamics, a choice heralded in the physic's literature as \emph{the} 
natural microscopic Glauber dynamics \cite{JKu}, whose trajectories are biased against increasing the energy.
This dependence on the random Hamiltonian makes the analysis of the two-time 
correlation functions much harder. This problem was first tackled in  \cite{G15}  were a
truncated REM is considered, and a natural two-time correlation function is proved to behave as in the 
Random Hopping dynamics, in the same, optimal range of time-scales and temperatures for which 
this result holds almost surely in the random environment. In the present paper, we free ourselves of 
the simplifying truncation assumption and prove that the same result holds true almost surely for the full REM.
A recent paper  \cite{CW15}, by establishing the convergence of a so-called clock process,
suggested that this might be the case but failed short of proving aging:   the sole clock convergence, indeed, 
does not suffice to deduce aging, a property of correlation functions.

\subsection{Main result} 
    \TH(S1.1)

Let us now specify the model. Denote by $\VV_n=\{-1,1\}^n$  the n-dimensional discrete cube 
and by $\EE_n$ its edge set. The Hamiltonian (or energy) of the REM is a collection of independent 
Gaussian random variables, $(\HH_n(x), x\in\VV_n)$, satisfying
\be
\E \HH_n(x)=0, \quad \E \HH^2_n(x)=n.
\Eq(1.1.0)
\ee
The sequence $(\HH_n(x), x\in\VV_n)$, $n>1$, is defined on a common probability space denoted by $(\O, \FF, \P)$.
On $\VV_n$, we consider the Markov jump process $(X_{n}(t), t>0)$ with rates
\be
\l_n(x,y)=\frac{1}{n}e^{-\b\left[\HH_n(y)-\HH_n(x)\right]^+},\quad\text{if $(x,y)\in\EE_n$},
\Eq(1.1.4)
\ee
and $\l_n(x,y)=0$ else, were $a+=\max\{a,0\}$.
This defines the single spin-flip continuous time Metropolis dynamics of the REM at 
temperature $\beta^{-1}>0$. 
Note that the rates are reversible with respect to the measure that assigns to $x\in\VV_n$  the mass
\be
\t_n(x)\equiv \exp\{-\b \HH_n(x)\}.
\Eq(1.1.3)
\ee
When studying aging the choice of the observation time-scale, $c_n$, is all-important. 
Given $0<\varepsilon< 1$ and  $0<\beta<\infty$, we let $c_n\equiv c_n(\b,\varepsilon)$ 
be the two-parameter sequence defined by
\be
2^{\varepsilon n}\P(\t_n(x)\geq c_n)=1.
\Eq(1.theo0.0)
\ee
Gaussian tails estimates yield the explicit  form
\be
c_n=\exp\left\{n\b\b_c(\varepsilon)-(1/2\a(\varepsilon))\left(\log(\b^2_c(\varepsilon)n/2)+\log 4\pi+o(1)\right)\right\}
\Eq(1.theo0.0')
\ee
where
\bea
\Eq(1.2.3)
&\b_c(\varepsilon)=\sqrt{\varepsilon2\log 2},
\\
\Eq(1.2.4)
&\a(\varepsilon)=\b_c(\varepsilon)/\b.
\eea
A classical  choice of two-time correlation function  is the probability $\CC_n(t,s)$ to find the
process in the same state at the two endpoints of the time interval $[c_n t, c_n (t+s)]$, 
\be
\CC_n(t,s)\equiv\PP_{\mu_n}\left(X_n(c_n t)=X_n(c_n (t+s))\right),\quad t,s>0.
\Eq(1.1.2)
\ee
Here $\PP_{\mu_n}$ denotes the law of $X_n$ conditional on  $\FF$  (i.e. for fixed realizations of 
the random Hamiltonian) when the initial distribution, $\mu_n$, is  the uniform measure on $\VV_n$.

\begin{theorem}
     \TH(1.theo0)
 
For all $0<\varepsilon<1$ and all $\b>\b_c(\varepsilon)$, for all $t>0$ and $s>0$, $\P$-almost surely,
\be
\lim_{n\to \infty }\PP_{\mu_n}\left(X_n(c_n t)=X_n(c_n (t+s))\right)
= \frac{\sin\a(\varepsilon) \pi }{\pi }\int_0^{t/(t+s )} u^{\a(\varepsilon) -1}(1-u)^{-\a(\varepsilon) }\,d u.
\Eq(1.theo0.1)
\ee
\end{theorem}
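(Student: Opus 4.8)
Write $\a$ for $\a(\varepsilon)=\b_c(\varepsilon)/\b\in(0,1)$. The plan is to reduce the two-time function to the convergence of a \emph{clock process} and then to recognise the right-hand side of \eqv(1.theo0.1) as the Dynkin--Lamperti arcsine law attached to the range of an $\a$-stable subordinator. Fix $t,s>0$ and work on the macroscopic time scale $c_n$. On this scale the dynamics is of trap type: it spends almost all of its time pinned at a sparse family of \emph{deep traps}, i.e.\ sites $x$ with $\t_n(x)$ of order $c_n$ or larger, and crosses the ``sea'' of shallow sites essentially instantaneously. Let $Y_n=(Y_n(k))_{k\ge0}$ be the embedded jump chain, $\l_n(y)=\sum_{z\colon(y,z)\in\EE_n}\l_n(y,z)$ the total escape rate, and $\mathbf e_0,\mathbf e_1,\dots$ i.i.d.\ mean-one exponentials; the clock is
\be
S_n(k)=\sum_{j=0}^{k-1}\mathbf e_j\,\l_n(Y_n(j))^{-1},\qquad k\ge0 .
\ee
The core step is to exhibit a \emph{random} step-scale $a_n=a_n(\o)$, a functional of the REM environment, such that, $\P$-almost surely, $\big(S_n(\lfloor a_n u\rfloor)/c_n\big)_{u\ge0}$ converges (say, in the Skorokhod $M_1$ topology) to an $\a$-stable subordinator $V$. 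Pinning down $\log a_n$ up to fluctuations of lower order --- the ``fine control of the random rescaling'' --- is what upgrades the in-probability convergence of \cite{CW15} to an almost sure one and, as a byproduct, yields the refined form of their conjecture.

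Establishing this convergence is the bulk of the work, and the Metropolis rule brings in a complication absent from Random Hopping: the mean holding time at a deep trap is not simply proportional to its weight. At a deep trap $x$ essentially every neighbour $y$ has $\HH_n(y)>\HH_n(x)$, so by \eqv(1.1.4) one has $\l_n(x)^{-1}\approx n\,\t_n(x)\big/\sum_{y\colon(x,y)\in\EE_n}\t_n(y)$: the heavy-tailed factor $\t_n(x)$ is multiplied by a \emph{local-environment factor} $n\big/\sum_{y\colon(x,y)\in\EE_n}\t_n(y)$ built from the $n$ fresh i.i.d.\ values $\t_n(y)$, $y\sim x$. One must therefore (i) use Gaussian tail asymptotics --- precisely the calibration \eqv(1.theo0.0)--\eqv(1.theo0.0') of $c_n$ --- to see that on the scale $c_n$ the law of $\t_n(x)$ is a power law of index $\a$; (ii) control the local-environment factors, showing that they shift the effective time scale only by a deterministic (subexponential) amount, which is absorbed into $a_n$, and do not change the index $\a$ of the resulting heavy tail; and (iii) prove a law of large numbers for the number of visits $Y_n$ makes to the deep-trap set before real time $c_n u$, using the reversibility of $\l_n(\cdot,\cdot)$ with respect to $\t_n$ and a renewal argument over successive deep-trap visits. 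The empirical point process, over the $\approx a_n$ visited sites, of the rescaled holding times then converges to a Poisson point process of intensity proportional to $r^{-1-\a}\,dr$; a second-moment estimate kills the small-jump part and produces a driftless $\a$-stable subordinator. The full, untruncated REM is handled by showing that the ``excessively deep'' sites --- a single one of which would carry all of the clock mass by time $c_n t$ --- are absent from $\VV_n$ with $\P$-probability summable in $n$, so that truncating $\HH_n$ slightly below the level matching $c_n$ is harmless; this removes the simplifying assumption of \cite{G15}. The uniform initial law $\mu_n$ causes no trouble, $0$ lying in the closure of the range of $V$.

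Granting the clock convergence, the passage to \eqv(1.theo0.1) is soft. At real time $c_n t$ the process sits, off an event of $\PP_{\mu_n}$-probability $\son$, at the deep trap $Y_n(k_n(t))$ with $k_n(t)=\max\{k\colon S_n(k)\le c_n t\}$, since the short shallow excursions between consecutive deep traps occupy a vanishing fraction of time. Consequently $\{X_n(c_n t)=X_n(c_n (t+s))\}$ agrees, up to $\son$, with $\{k_n(t)=k_n(t+s)\}$ --- here one also uses that a deep trap, once left, is revisited with $\PP_{\mu_n}$-probability $\son$, distinct deep traps being at mutual graph distance of order $n$ --- that is, with the event that $(c_n t,c_n (t+s)]$ contains no renewal $S_n(k)$. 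Passing to the limit, this event becomes ``$(t,t+s]$ is disjoint from the closure $\overline{\mathrm{Range}}\,V$ of the range of $V$''; writing $L_u=\sup\big(\overline{\mathrm{Range}}\,V\cap[0,u]\big)$ for the last renewal before $u$, it has probability $\P(L_{t+s}\le t)$, and since $L_u/u$ has, by self-similarity of $V$, the Lamperti generalised-arcsine density $\tfrac{\sin\a\pi}{\pi}\,v^{\a-1}(1-v)^{-\a}$ on $(0,1)$, we conclude
\be
\lim_{n\to\infty}\CC_n(t,s)=\P\big(L_{t+s}\le t\big)=\frac{\sin\a\pi}{\pi}\int_0^{t/(t+s)}v^{\a-1}(1-v)^{-\a}\,dv ,
\ee
which is \eqv(1.theo0.1).

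The technical crux --- and the reason the theorem is hard --- is the word ``almost surely'': the clock convergence above must hold for $\P$-a.e.\ fixed environment, not merely in $\P$-probability as in \cite{CW15}, and throughout the full optimal range $0<\varepsilon<1$, $\b>\b_c(\varepsilon)$. I would obtain it by establishing quantitative bounds --- via second moments and the Gaussian concentration inequality for $(\HH_n(x))_{x\in\VV_n}$ --- with errors summable in $n$, for the three environment functionals that feed the clock limit: the empirical point process of rescaled deep-trap holding times, the local-environment factors $n\big/\sum_{y}\t_n(y)$, and the number of deep-trap visits before time $c_n u$; Borel--Cantelli then delivers the a.s.\ convergence along $n$, and the monotonicity of $s\mapsto\CC_n(t,s)$ together with the continuity of the limiting distribution function upgrades a countable dense set of $(t,s)$ to all of them. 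Controlling $a_n$ at the level of its fluctuations, rather than only to leading exponential order, is indispensable here; and because the Metropolis trajectories depend on $\HH_n$, both the law of large numbers for deep-trap visits and the concentration of the local-environment factors are markedly more delicate than their Random-Hopping analogues. That is where I expect the real difficulty to lie.
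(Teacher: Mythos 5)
Your overall scheme --- reduce $\CC_n$ to the range of a rescaled clock and invoke the generalised arcsine law for stable subordinators --- is the same as the paper's, but the route you choose for the hard part has genuine gaps. You build the clock over the embedded jump chain of the Metropolis dynamics itself. That chain is reversible with respect to $\t_n(x)\l_n(x)$, a strongly inhomogeneous (lognormal-weight) measure, its mixing is not under control, and successive visits to a deep trap are strongly dependent; the law of large numbers for deep-trap visits and the almost-sure concentration of the environment functionals that you assert (``reversibility and a renewal argument'', ``second moments and Gaussian concentration'') are precisely where the difficulty lies, and neither tool applies as stated: the functionals are heavy-tailed and non-Lipschitz, and since the trajectories depend on the whole field there is no independence to feed a second-moment/Borel--Cantelli argument. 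This is exactly why the paper abandons the discrete scheme (shown in \cite{G15} to be as complex as the original dynamics even for a truncated model) and instead uses the exploration process with rates $\wt\l_n(x,y)=\max\bigl((r^{\star}_n)^{-1},\t_n(x)\bigr)\l_n(x,y)$ as in \eqv(1.03.12): its invariant measure is essentially uniform, its mixing time is $O(n^4r^{\star}_n)$ (Propositions \thv(3.prop1)--\thv(3.prop2)), and --- the key structural point --- its law does not depend on the values $\t_n(x)$ at local minima, hence not on those in $T^{\circ}_n$ (see \eqv(4.prop2.21)--\eqv(4.prop2.3)); conditionally on $T^{\circ}_n$ the relevant variables become independent, which is what turns in-probability statements into almost sure ones. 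Your proposal contains no substitute for this mechanism, nor for the control of the random normalisation $b_n$ (Lemma \thv(4.lem3), Proposition \thv(1.prop2)) that your ``absorbed into $a_n$'' step requires.

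Two of your supporting claims are moreover wrong or insufficient. The reduction of the full REM to a truncated one by arguing that ``excessively deep'' sites are absent with summable probability fails: by \eqv(1.theo0.0) there are about $2^{n(1-\varepsilon)}$ sites with $\t_n(x)\geq c_n$, and sites of depth $e^{\b n\sqrt{2\log 2}(1+o(1))}$ are present almost surely for every $0<\varepsilon<1$; their harmlessness is a statement about the dynamics not finding them within $\sim a_n$ time units, not about the environment, and it must be proved (in the paper it is part of the hitting-time analysis of Section \thv(S3)). Likewise, $M_1$ convergence of the unblocked clock is not enough to pass to the correlation function: a limiting jump may be produced by many small increments at distinct sites, in which case ``no clock point in $(c_nt,c_n(t+s)]$'' no longer captures $\{X_n(c_nt)=X_n(c_n(t+s))\}$ up to $o(1)$. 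The paper therefore proves $J_1$ convergence of a \emph{blocked} clock (Theorem \thv(1.theo1)) and still needs Facts 1 and 2 of Section \thv(S7) --- a straddling increment is produced by repeated visits to a single vertex of $T^{\circ}_n$, and distinct straddling increments involve distinct vertices --- which rest on quantitative hitting estimates such as Proposition \thv(3.prop3); your corresponding assertions (``revisited with probability $\so$'', ``renewal in the limit'') are exactly these unproved steps.
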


\begin{remark}
We in fact prove the  more general statement that \eqv(1.theo0.1) holds along any
$n$-dependent sequences of the form
$
0<\varepsilon_n\leq 1-c'\b \sqrt{{n}^{-1}{\log n}}+ c''{n}^{-1}{\log n}
$
where $0<c',c''<\infty$ are constants, that satisfy 
$\lim_{n\rightarrow\infty}\varepsilon_n=\varepsilon$, $0<\varepsilon\leq 1$.
Relaxation to stationarity is known to occur, to leading order, on time-scales $c_n$ 
of the form \eqv(1.theo0.0') with 
$
\varepsilon_n=1
$
\cite{FIKP}.  
At the other extremity, a behavior known as extremal aging is expected to characterize 
the process on times scales that are sub-exponential in the volume and defined through sequences $\varepsilon_n$
that decay to 0 slowly enough \cite{BGS13}, \cite{BGu12}. This will be the object of a follow up paper.
\end{remark}

As in virtually all papers on aging, the proof of Theorem \thv(1.theo0) relies on a two-step scheme 
that seeks to isolate  the causes of aging by writing the process of interest, $X_n$, as  an 
\emph{exploration process} time-changed by (the inverse of) a \emph{clock process}.
Aging  is then linked to the arcsine law for stable subordinators through the convergence of the 
suitably rescaled clock process to an  $\a$-stable subordinator, $0<\a<1$.
This is provided that the  two-time correlation function at hand can be brought into a suitable function of the clock.
Both steps heavily depend on the properties of the exploration process.

While this scheme offers the methodological underpinnings of the analysis of aging, two distinct ways of 
implementing it, through \emph{discrete} or \emph{continuous}  time objects, respectively, have emerged from the  
literature (we refer to the recent papers \cite{G15}, \cite{GS15}, and \cite{CW15} for in-depth bibliographies). The first 
arose from the study of models whose exploration process can be chosen as the simple random walk on the underlying 
graph. As mentioned earlier, this includes all Random Hopping dynamics  and several trap models (e.g.~on the complete 
graph or on $\Z^d$). In physically more realistic dynamics the discrete scheme may quickly become intractable.  
As shown in Ref.~\cite{G15} for Metropolis dynamics of a truncated REM, the associated exploration process is itself 
an aging process that presents the same complexity as the original dynamics. A similar situation arises when considering
asymmetric trap models on $\Z^d$. Initiated in that context, the continuous scheme consists in choosing a 
(now continuous time) exploration process that mimics the simple random walk.

Prescribing the exploration process completely determines the clock process. 
Clearly, having  efficient  tools available to prove their convergence to stable subordinators is essential.
Such tools were provided in Ref.~\cite{G12} and \cite{BG13} for discrete time clock processes in the 
general setting of  reversible Markov jumps processes in random environment on sequences of finite graphs  
and, more recently, for both discrete and continuous time clock processes of similar Markov jumps processes   
on infinite graphs \cite{GS15}. These tools both allowed one to improve all earlier results on the 
Random Hopping dynamics of mean-field models  \cite{G10b}, \cite{BG13}, \cite{BGS13}, 
turning statements previously obtained in law into almost sure statements in the random environment,
and to obtain the first aging results for several two-time correlation functions of asymmetric trap model 
on $\Z^d$ \cite{GS15}.

In Section \thv(S1.3) below we fill the gap left by continuous time clock processes in the case of sequences 
of finite graphs and, thus, extent the results of  Ref.~\cite{BG13} to that setting. This is perhaps no more than 
an exercise but the results we present (Theorem \thv(1.theoA) and Theorem \thv(1.theoB)) are the cornerstone 
of our approach and, hopefully,  of other papers to come. We close this introduction out in Section \thv(S1.4) 
by stating a clock process convergence result for Metropolis dynamics of the REM (Theorem \thv(1.theo1)) 
that is at the heart of the proof of Theorem \thv(1.theo0).

\subsection{Convergence of continuous time clock processes}
   \TH(S1.3)

We now enlarge our focus to the following abstract setting.
Let  $G_n(\VV_n, \EE_n)$ be a sequence of loop-free graphs with set of vertices $\VV_n$ and set of edges $\EE_n$. 
A \emph{random environment} is a family of possibly dependent positive random variables, $(\t_n(x), x\in \VV_n)$. 
The sequence $(\t_n(x), x\in \VV_n)$, $n>1$, is defined on a common probability space denoted by $(\O,\FF, \P)$.
On $\VV_n$ we consider a Markov jump process, $(X_{n}(t), t>0)$, with initial distribution $\mu_n$ and 
jump rates $(\l_n(x,y))_{x,y\in \VV_n}$ satisfying $ \l_n(y,x)=0$ if $(x,y)\notin \EE_n$ and 
\be
\t_n(x) \l_n(x,y)=\t_n(y) \l_n(y,x) \quad\text{if $(x,y)\in\EE_n$, $\ x\neq y$.}
\Eq(1.03.1)
\ee
Thus $X_n$ is reversible with respect to the (random measure) 
that assigns to $x\in\VV_n$  the mass $\t_n(x)$. To $X_n$ we associate an \emph{exploration process} $Y_n$. 
This is any Markov jump process, $(Y_n(t), t>0)$, with state space $\VV_n$,
initial distribution $\mu_n$, and  jump rates $(\wt\l_n(x,y))_{x,y\in \VV_n}$ chosen such that 
$X_n$ and $Y_n$ have the same trajectories, that is to say,
\be
\frac{\l_n(x,y)}{\l_n(x)}=\frac{\wt\l_n(x,y)}{\wt\l_n(x)}\quad\forall (x,y)\in \EE_n,
\Eq(1.03.6)
\ee
where $\wt\l_n^{-1}(x)$ and $\l_n^{-1}(x)$ are, respectively, the mean holding times at $x$ of $Y_n$ and $X_n$:
\bea
&&
\wt\l_n(x)\equiv \sum_{y:(x,y)\in \EE_n}\wt\l_n(x,y),
\Eq(1.03.5)
\\
&&
\l_n(x)\equiv \sum_{y:(x,y)\in \EE_n}\l_n(x,y).
\Eq(1.03.2)
\eea
Then $X_n$ and $Y_n$ are related to each other through the time change
\be
X_n(t)=Y_n(\wt S_n^{\leftarrow}(t)),\quad \quad t\geq0,
\Eq(1.03.3)
\ee
where $\wt S_n^{\leftarrow}$ denotes the generalized right continuous inverse of $\wt S_n$, and $\wt S_n$, the so-called \emph{continuous time clock process}, is given by
\be
\wt S_n(t)=\int_{0}^{t}\l_n^{-1}(Y_n(s))\wt\l_n(Y_n(s))ds,\quad \quad t\geq 0.
\Eq(1.03.4)
\ee

Note that there is considerable freedom in the choice of the exploration process $Y_n$. 
We will come back to this issue at the end of this subsection and focus, for the time being, on the  
analysis of the  asymptotic behavior of the general clock process \eqv(1.03.4).

For future reference, we denote by $\FF^Y$ the $\s$-algebra generated by the
processes $Y_n$. We  write $P$ for the law of the process $Y_n$ conditional on
the $\s$-algebra $\FF$, i.e. for fixed realizations of the random environment.
Likewise we  call $\PP$ the law of $X_n$ conditional on $\FF$.
If the initial distribution, $\mu_n$, has to be specified we write $\PP_{\mu_n}$
and $P_{\mu_n}$. Expectation with respect to $\P$, $P_{\mu_n}$, and $\PP_{\mu_n}$ 
are denoted  by $\E$, $E_{\mu_n}$, and $\EE_{\mu_n}$, respectively.

Our main aim is to obtain simple and robust criteria for the convergence of the 
(suitably rescaled) clock process \eqv(1.03.4) to a stable subordinator. Since  
the clock is a doubly stochastic process,  the desired convergence mode must  be specified.
We will ask whether there exist sequences $a_n$ and $c_n$ that make the rescaled clock process
\be
S_n(t)=c_n^{-1}\wt S_n(a_n t)\,,\quad t\geq 0,
\Eq(1.03.7)
\ee
converge weakly, as $n\uparrow\infty$, as a sequence of random elements in 
Skorokhod's space $D((0,\infty])$, and strive to obtain $\P$-almost sure results in 
the random environment since such results (also referred to as \emph{quenched}) 
contain the most useful information from the point of view of physics.

As for discrete time clock processes \cite{G12}, \cite{BG13}, the driving force behind our 
approach is a powerful method developed by Durrett and  Resnick  \cite{DR78} to prove 
functional limit theorems for sums of dependent variables. Clearly this method does not 
cover the case of our continuous time clock processes. The simple idea (already present 
in  \cite{GS15} ) is to introduce a suitable ``blocking'' that turns the rescaled clock process 
\eqv(1.03.7) into a partial sum process to which  Durrett and Resnick method can now be 
applied. For this we introduce a new scale, $\th_n$, and set
\be
k_n(t)\equiv\lf a_n t/\theta_n\rf.
\Eq(1.03.8)
\ee
The  \emph{blocked clock process}, $S^b_n(t)$,  is defined through
\be
S^b_n(t)=\sum_{i=1}^{k_n(t)}Z_{n,i}
\Eq(1.03.9)
\ee
where, for each $i\geq 1$,
\be
Z_{n,i} \equiv c_n^{-1}\sum_{x\in\VV_n}\bigl(\l_n^{-1}(x)\wt\l_n(x)\bigr)[\ell_n^x(\theta_ni)-\ell_n^x(\theta_n(i-1))],
\Eq(1.03.10)
\ee
and where, for each $x\in\VV_n$, 
\be
\ell_n^x(t)=\int_{0}^{t}\1_{\{Y_n(s)=x\}}ds
\Eq(1.03.11)
\ee
is the local time at $x$. 
The next theorem gives sufficient conditions for $S^b_n$ to converge. 
These conditions are expressed in terms of a small number of key quantities.
For each $t>0$, let
\be
\pi_n^{Y,t}(y)=
k_n^{-1}(t)\sum_{i=1}^{k_n(t)-1}\1_{\{Y_n(i\theta)=y\}}
\Eq(1.3.5)
\ee
be the empirical measure on $\VV_n$ constructed from the sequence 
$
(Y_n(i\theta), i\in\N)
$.
For $y\in\VV_n$ and $u>0$, denote by
\be
Q^{u}_n(y)\equiv P_{y}(Z_{n,1}>u)
\Eq(1.3.6)
\ee
the tail distribution of the aggregated jumps when $X_n$ (equivalently, $Y_n$) starts in $y$.
Using  these quantities, define the functions
\bea
\nu_n^{Y,t}(u,\infty)
&\equiv&k_n(t) \sum_{y\in\VV_n}\pi_n^{Y,t}(y)Q^{u}_n(y),
\Eq(1.3.7)
\\
\s_n^{Y,t}(u,\infty)
&\equiv&
k_n(t) \sum_{y\in\VV_n}\pi_n^{Y,t}(y)\left[Q^{u}_n(y)\right]^2.
\Eq(1.3.8)
\eea
Observe that the sequence of measures $\pi_n^{Y,t}$ as well as the sequence of functions 
$Q^{u}_n(y)$, $y\in\VV_n$,  are random variables on the probability space $(\O, \FF, \P)$ 
of the random environment. Thus, the functions $\nu_n^{Y,t}$ and $\s_n^{Y,t}$ also are 
random variables on that space.

We now formulate four conditions for the sequence $S^b_n$ to converge to a subordinator. 
These conditions refer to a given sequence of initial distributions  $\mu_n$, given sequences 
of numbers $a_n, c_n$, and $\th_n$ as well as a given realization of the random environment. 
\smallskip

\noindent{\bf Condition (A0).} For all $u>0$,
\be
\lim_{n\rightarrow\infty}P_{\mu_n}(Z_{n,1}>u)=0.
\Eq(A0.1)
\ee

\noindent{\bf Condition (A1).}
There exists a $\s$-finite measure $\nu$ on $(0,\infty)$ satisfying $\int_0^\infty (x\wedge 1)\nu(dx)<\infty$
and such that for all continuity points $x$ of the distribution function of
$\nu$, for all $t>0$ and all $u>0$,
\be
P_{\mu_n}^{}\left(
\left|
\nu_n^{Y,t}(u,\infty)-t\nu(u,\infty)
\right|
<\e
\right)=1-o(1)\,,\quad\forall\e>0\,.
\Eq(A1.1)
\ee

\noindent{\bf Condition (A2).}  For all $u>0$ and all $t>0$,
\be
P_{\mu_n}^{}\left(\s_n^{Y,t}(u,\infty)<\e\right)=1-o(1)\,,\quad\forall\e>0\,.
\Eq(A2.1)
\ee

\noindent {\bf Condition (A3).}  For all $t>0$,
\be
\lim_{\e\downarrow 0}\limsup_{n\uparrow \infty}
k_n(t) \sum_{y\in\VV_n}E_{\mu_n}(\pi_n^{Y,t}(y))E_{y}(Z_{n,1}\1_{\{Z_{n,1}\leq \e\}})=0.
\Eq(A3.1)
\ee

\begin{theorem}
\TH(1.theoA)
 For all sequences of initial distributions  $\mu_n$
and all sequences $a_n$, $c_n$, and $1\leq \theta_n\ll a_n$
for which Conditions (A0), (A1), (A2), and (A3) are verified,
either $\P$-almost surely or in $\P$-probability,
the following holds w.r.t. the same convergence mode:
\be
 S^b_n\Rightarrow_{J_1}  S_\nu,
\Eq(1.theoA.1)
\ee
where $S_\nu$ is the L\'evy subordinator with L\'evy measure
$\nu$ and zero drift. Convergence holds weakly on the space
$D([0,\infty))$ equipped with the Skorokhod $J_1$-topology.
\end{theorem}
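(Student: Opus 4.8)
The plan is to recognize $S^b_n$ as the partial-sum process of a row-wise dependent triangular array and to invoke the functional limit theorem of Durrett and Resnick \cite{DR78}, after verifying that Conditions (A0)--(A3) are precisely a repackaging of its hypotheses in the present nonnegative, zero-drift setting. Throughout I fix a realization of the random environment, so that the weights $\l_n^{-1}(x)\wt\l_n(x)$ are deterministic, and I introduce the filtration $\GG_{n,i}=\s\bigl(Y_n(s):0\le s\le\theta_n i\bigr)$. Each $Z_{n,i}$ of \eqv(1.03.10) is $\GG_{n,i}$-measurable, being a deterministic functional of the path of $Y_n$ on the block $[\theta_n(i-1),\theta_n i]$; since $Z_{n,i}\ge0$, $S^b_n$ is nondecreasing and any weak limit is automatically a subordinator. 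The elementary but crucial observation is that, by the time-homogeneous Markov property of $Y_n$, for $i\ge2$
\be
P_{\mu_n}\!\left(Z_{n,i}>u\mid\GG_{n,i-1}\right)=Q_n^{u}\!\left(Y_n(\theta_n(i-1))\right),
\ee
with $Q_n^{u}$ the tail from \eqv(1.3.6), while the $i=1$ term equals $P_{\mu_n}(Z_{n,1}>u)$.

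Summing over $i\le k_n(t)$ and invoking the definitions \eqv(1.3.5), \eqv(1.3.7), \eqv(1.3.8) of $\pi_n^{Y,t}$, $\nu_n^{Y,t}$ and $\s_n^{Y,t}$, one obtains exactly
\be
\sum_{i=1}^{k_n(t)}P_{\mu_n}\!\left(Z_{n,i}>u\mid\GG_{n,i-1}\right)=P_{\mu_n}(Z_{n,1}>u)+\nu_n^{Y,t}(u,\infty),
\ee
and the analogous identity with squared conditional tails on the left and $\s_n^{Y,t}$ on the right. Since $P_{\mu_n}(Z_{n,1}>u)\to0$ by (A0), the conditional intensity of the array converges, in $P_{\mu_n}$-probability by (A1), to the deterministic $\s$-finite measure $t\nu$ at all its continuity points (the restriction of $\nu$ to $(\e,\infty)$ being finite by the integrability assumption in (A1)), while the conditional sum of squared tails vanishes by (A2); in particular the summands are asymptotically negligible, $\max_{i\le k_n(t)}P_{\mu_n}(Z_{n,i}>u\mid\GG_{n,i-1})\le\bigl(\s_n^{Y,t}(u,\infty)+o(1)\bigr)^{1/2}$.

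These are exactly the hypotheses of the Durrett--Resnick theorem for the array $(Z_{n,i})_i$ adapted to $(\GG_{n,i})_i$: (A1) supplies the limiting L\'evy measure, (A2) justifies replacing the point process of jumps by a Poissonian one, and (A3) --- the uniform vanishing of the accumulated conditional first moments of the sub-$\e$ jumps, in its $E_{\mu_n}$-averaged form, which by Markov's inequality upgrades to $P_{\mu_n}$-probability --- rules out a drift and identifies the limit as the \emph{zero-drift} subordinator $S_\nu$ (there is no Gaussian component because the $Z_{n,i}$ are nonnegative and individually negligible). The conclusion of \cite{DR78} is then, first, weak convergence of the jump point process $\sum_i\delta_{(i\theta_n/a_n,\,Z_{n,i})}$ to the Poisson random measure on $[0,\infty)\times(0,\infty)$ with intensity $dt\otimes\nu(dx)$ and, second, via the continuity of the summation functional at the limiting configuration --- legitimate because $\int_0^1 x\,\nu(dx)=\int_0^1(x\wedge1)\,\nu(dx)<\infty$, so the small jumps are a.s.\ summable, and because the first block sits at time $\theta_n/a_n\to0$ where (A0) forbids a spurious atom --- the desired $J_1$-functional convergence $S^b_n\Rightarrow_{J_1}S_\nu$ on $D([0,\infty))$.

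It remains to address the dichotomy in the statement. Conditions (A0)--(A3) are statements about $P_{\mu_n}$-probabilities (or a $\limsup$) of environment-dependent quantities; if they hold $\P$-almost surely one runs the argument above separately on the full-measure good set and obtains the $J_1$-convergence under $P_{\mu_n}$ for $\P$-almost every environment, while if they hold only in $\P$-probability the same argument carried out along subsequences --- equivalently, after integrating the pertinent finite-dimensional characteristic functionals of $S^b_n$ against $\P$ and using bounded convergence --- yields the convergence in $\P$-probability with respect to the same mode. I expect the only genuinely delicate point to be the bookkeeping matching (A0)--(A3) to the exact form of the Durrett--Resnick hypotheses: keeping track of the conditioning structure, of the special role of (A0) for the non-stationary first increment, and of the continuity of the summation map at the limiting point process. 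The remainder of the proof is essentially a quotation of \cite{DR78}.
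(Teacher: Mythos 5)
Your proposal is correct and follows essentially the same route as the paper: the same block filtration, the same Markov-property identity turning the summed conditional tails (for $i\ge2$) into $\nu_n^{Y,t}$ and $\s_n^{Y,t}$, the separate treatment of $Z_{n,1}$ via (A0), and the reduction to the Durrett--Resnick functional limit theorem. The only cosmetic difference is that the paper quotes Theorem 1.1 of Ref.~\cite{BG13} (a packaged form of Theorem 4.1 of \cite{DR78}), whereas you invoke \cite{DR78} directly and sketch the intermediate point-process and summation-functional steps yourself.
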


\begin{remark}
Note that the theorem is stated for the \emph{blocked} process $S^b_n$ rather than the original 
process $S_n$ (defined in \eqv(1.03.7)). This may falsely appear as an undesirable consequence 
of our techniques. We stress that for applications to correlation functions, one needs statements that 
are valid in the strong $J_1$ topology whereas forming blocks is needed in order to make sense of 
writing $J_1$ convergence statements in the setting of continuous time clocks.
\end{remark}
\begin{remark}Also note that convergence of $S^b_n$ in the strong $J_1$ topology immediately implies 
the strictly weaker result that  $S_n$ converges to the same limit in the 
$M_1$ topology.
\end{remark}
 
As for discrete time clocks of Ref.~\cite{BG13}, our next step consists in reducing
Conditions (A1) and (A2) of Theorem \thv(1.theoA) to (i) a\emph{ mixing condition} for the
chain $Y_n$, and (ii) a \emph{law of large numbers} for the random variables
$Q_n$. 
Again we formulate three conditions for 
a given sequence of initial distributions  $\mu_n$,
given sequences $a_n, c_n$, and $\th_n$, and a
given  realization of the random environment. 

\smallskip

\noindent{\bf Condition (B0).} 
Denote by $\pi_n$  the invariant measure of $Y_n$.
There exists a sequence $\kappa_n\in\N$ and a positive decreasing sequence $\rho_n$, satisfying
$\rho_n\downarrow 0$ as $n\uparrow\infty$, such that, for all pairs $x,y\in\VV_n$, and all $t\geq 0$,
\be
|P_{x}\left(Y_n(t+\kappa_n)=y\right)-\pi_n(y)|\leq \rho_n\pi_n(y).
\Eq(B0.1)
\ee

\noindent{\bf Condition (B1).}
There exists a measure $\nu$ as in Condition (A1) such that,  for all $t>0$ and all $u>0$,
\be
\nu_n^{t}(u,\infty)\equiv k_n(t)\sum_{y\in\VV_n}\pi_n(y)Q^{u}_n(y)\rightarrow t\nu(u,\infty),
\Eq(B1.1)
\ee

\noindent{\bf Condition (B2).}  For all $t>0$ and all $u>0$,
\be
\s_n^{t}(u,\infty)\equiv k_n(t) \sum_{y\in\VV_n}\pi_n(y)\left[Q^{u}_n(y)\right]^2\rightarrow 0.
\Eq(B2.1)
\ee

\noindent {\bf Condition (B3).}
 For all $t>0$,
 \be
\lim_{\e\downarrow 0}\limsup_{n\uparrow \infty}
k_n(t) \sum_{y\in\VV_n}\pi_n(y)E_{y}(Z_{n,1}\1_{\{Z_{n,1}\leq \e\}})=0.
\Eq(B3.1)
\ee

\begin{theorem}
     \TH(1.theoB)
      Assume that for all sequences of initial distributions  $\mu_n$ and all sequences $a_n$, $c_n$, 
$\kappa_n$, and $\kappa_n\leq \theta_n\ll a_n$, Conditions (A0), (B0), (B1), (B2), and (B3) 
hold $\P$-almost surely, respectively in $\P$-probability. Then, as in \eqv(1.theoA.1), 
$
 S^b_n\Rightarrow_{J_1}S_\nu,
$
$\P$-almost surely, respectively in $\P$-probability.
\end{theorem}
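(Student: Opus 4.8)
The plan is to deduce the hypotheses of Theorem~\thv(1.theoA) --- Conditions (A1), (A2) and (A3) --- from Conditions (B0), (B1), (B2) and (B3), for each fixed realization of the random environment, Condition (A0) being common to the two statements; Theorem~\thv(1.theoA) then delivers $S^b_n\Rightarrow_{J_1}S_\nu$. Since $\pi_n$, the functions $Q^u_n$, and the quenched law $P$ are all measurable with respect to $\FF$, Conditions (B0)--(B3) and the derived Conditions (A1)--(A3) are assertions about a single $\omega\in\O$; the reduction is therefore carried out $\omega$ by $\omega$ and automatically transfers the convergence mode --- $\P$-almost sure or in $\P$-probability (in the latter case via the usual subsequence principle) --- from the hypotheses to the conclusion.

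The only input I would extract from (B0) is that, because $\theta_n\geq\kappa_n$, the sampled chain $(Y_n(i\theta_n))_{i\geq1}$ is uniformly close, \emph{in the multiplicative sense} of \eqv(B0.1), to an i.i.d.\ sequence with common law $\pi_n$. Precisely: \eqv(B0.1) applied at time $i\theta_n-\kappa_n\geq0$ gives $|P_{\mu_n}(Y_n(i\theta_n)=y)-\pi_n(y)|\leq\rho_n\pi_n(y)$ for every $i\geq1$; and, conditioning on $Y_n(i\theta_n)$ and using the Markov property together with \eqv(B0.1) at time $(j-i)\theta_n-\kappa_n\geq0$, one obtains, for $1\leq i<j$ and every nonnegative $g$ on $\VV_n$, first $|E_{\mu_n}[g(Y_n(i\theta_n))g(Y_n(j\theta_n))]-m_n(g)^2|\leq 3\rho_n\,m_n(g)^2$ for $n$ large, and hence $\Cov_{\mu_n}(g(Y_n(i\theta_n)),g(Y_n(j\theta_n)))=O(\rho_n\,m_n(g)^2)$, where $m_n(g):=\sum_{y\in\VV_n}\pi_n(y)g(y)$. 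These facts, and nothing else about the law of $Y_n$, drive the argument.

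I would then check the three conditions in turn, each being essentially one line. For (A3): the first estimate gives $E_{\mu_n}(\pi_n^{Y,t}(y))\leq(1+\rho_n)\pi_n(y)$, so the left-hand side of \eqv(A3.1) is at most $(1+\rho_n)$ times that of \eqv(B3.1), and taking $\limsup_n$ then $\lim_{\e\downarrow0}$ with (B3) concludes. For (A2): since $\s_n^{Y,t}(u,\infty)=\sum_{i=1}^{k_n(t)-1}[Q^u_n(Y_n(i\theta_n))]^2\geq0$, the first estimate yields $E_{\mu_n}\s_n^{Y,t}(u,\infty)\leq(1+\rho_n)\s_n^t(u,\infty)\to0$ by (B2), and Markov's inequality gives \eqv(A2.1). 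For (A1): write $\nu_n^{Y,t}(u,\infty)=\sum_{i=1}^{k_n(t)-1}Q^u_n(Y_n(i\theta_n))$ and note that $k_n(t)m_n(Q^u_n)=\nu_n^t(u,\infty)\to t\nu(u,\infty)$ by (B1) --- which, holding for \emph{all} $u>0$, makes the continuity-point proviso in (A1) vacuous. The first estimate gives $E_{\mu_n}\nu_n^{Y,t}(u,\infty)=\nu_n^t(u,\infty)(1+o(1))\to t\nu(u,\infty)$, while $\Var_{\mu_n}\nu_n^{Y,t}(u,\infty)$ splits into a diagonal part, $\sum_i\Var_{\mu_n}(Q^u_n(Y_n(i\theta_n)))$, bounded by the sum of second moments and hence by $(1+\rho_n)\s_n^t(u,\infty)\to0$ via (B2), and an off-diagonal part of order $\rho_n\,k_n(t)^2m_n(Q^u_n)^2=O\big(\rho_n(\nu_n^t(u,\infty))^2\big)\to0$; Chebyshev's inequality then yields \eqv(A1.1).

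The only step with genuine content is this last off-diagonal bound: there are of order $k_n(t)^2$ covariances, each of size of order $\rho_n\,m_n(Q^u_n)^2$, and their sum is controlled only because (B1) keeps $k_n(t)m_n(Q^u_n)=\nu_n^t(u,\infty)$ bounded, so that $k_n(t)^2m_n(Q^u_n)^2$ stays bounded and the vanishing factor $\rho_n$ wins. This is exactly where the \emph{multiplicative} formulation of the mixing hypothesis (B0) is essential: it lets one dominate $E_{\mu_n}[Q^u_n(Y_n(i\theta_n))Q^u_n(Y_n(j\theta_n))]$ by $(\sum_y\pi_n(y)Q^u_n(y))^2$ rather than by the much larger $\sum_y\pi_n(y)[Q^u_n(y)]^2$ or $\sum_y\pi_n(y)Q^u_n(y)$ that an additive mixing estimate would produce, and the latter would not close the argument. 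Everything else is the definitions \eqv(1.3.7)--\eqv(1.3.8), Markov's and Chebyshev's inequalities, and bookkeeping.
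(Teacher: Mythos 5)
Your proposal is correct and follows essentially the same route as the paper: reduce Conditions (A1)--(A3) to (B0)--(B3) via first/second-moment (Markov/Chebyshev) estimates that exploit the multiplicative mixing bound \eqv(B0.1) at the sampling times $i\theta_n$ (with $\theta_n\geq\kappa_n$), then invoke Theorem \thv(1.theoA). Your diagonal/off-diagonal variance split, with the off-diagonal term of order $\rho_n(\nu_n^t(u,\infty))^2$ controlled by (B1), is exactly the content of the paper's Proposition \thv(1.prop1) and its $\Delta_{ij}$ decomposition.
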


Theorem \thv(1.theoB) is our key tool for proving convergence of blocked clock processes 
to subordinators. It is of course essential for the success of our strategy that the convergence 
criteria we obtained be tractable. Going back to \eqv(1.03.6) we thus now ask, in this light, 
how best to choose  the exploration process $Y_n$.

A tentative answer to this question is to mimic the exploration process of the Random Hopping dynamics, 
which means   choose $Y_n$ such that  its invariant measure, $\pi_n$,  is ``close'' to the uniform measure
and its mixing time, $\kappa_n$, is short compared to that of the process $X_n$. The following class of 
jump rates, inspired from an ingenious choice made in Ref.~\cite{CW15}, is intended to favor the 
emergence of these properties. Given a fresh  sequence $\eta_n\geq 0$, set
\be
\begin{split}
\wt\l_n(x,y)
 & =\max(\eta_n,\t_n(x))\l_n(x,y).
\end{split}
\Eq(1.03.12)
\ee
One easily checks that \eqv(1.03.6) is verified, that $Y_n$  is reversible with respect to the measure
\be
\pi_n(x)=
\frac{\min\bigl(\eta_n,\t_n(x)\bigr)}{\sum_{x\in \VV_n}\min\bigl(\eta_n,\t_n(x)\bigr)}\1_{\{\eta_n>0\}}
+ |\VV_n|^{-1}\1_{\{\eta_n=0\}},\quad x\in \VV_n,
\Eq(1.4.3)
\ee
and that the clock \eqv(1.03.4)  becomes
\be
\wt S_n(t)=\int_{0}^{t}\max\bigl(\eta_n,\t_n(Y_n(s))\bigr)ds.
\Eq(1.4.4)
\ee
Let us discuss the role of $\eta_n$ on the example of Metropolis dynamics of REM.
When $\eta_n=0$, $\pi_n$ nicely reduces to the uniform measure but the mixing time, 
$\kappa_n$, of the resulting exploration process turns out to be of the same order as that 
of $X_n$, that is to say, of the order of
$
\max_{(x,y)}(\min{(\t_n(x),\t_n(y))})^{-1}=e^{\b n\sqrt{\log 2}(1+o(1))}
$.
This leaves little hope that the conditions of Theorem \thv(1.theoB) can be verified.
A moment's thought suffices, however, to see that such a large mixing time is a side effect
of the symmetry of the Hamiltonian \eqv(1.1.0). By breaking this symmetry, the term 
$\max(\eta_n,\t_n(x))$ in \eqv(1.03.12) places an $\eta_n$-dependent cap on $\kappa_n$  
(see Section \thv(S3.1)). One is then left to choose $\eta_n$ small enough so that $\pi_n$ 
remains close to the uniform measure but large enough so that $\kappa_n$ is kept
as small as needed. A similar strategy should hopefully apply to more general mean-field 
spin glass Hamiltonians.

\begin{remark}
We stress that the sole convergence of the clock process does not suffice to deduce 
aging, namely, the specific power law decay of the two-time correlation function. 
One still has to solve the problem of reducing the behavior of the two-time correlation function, 
as $n\rightarrow\infty$, to the arcsine law for stable subordinators, and this requires more 
information on the exploration process than needed to only prove  convergence of the clock. 
Notice also that  unlike the discrete time clock process, the continuous time clock process is not a physical time. 
It thus has no physical meaning on its own.
\end{remark}

\subsection{Application to Metropolis dynamics of the REM}
   \TH(S1.4)

From that point onwards we focus on Metropolis dynamics of the REM (see \eqv(1.1.0)-\eqv(1.1.4)) 
started in the uniform measure on $\VV_n$. Applying the abstract results of Section \thv(S1.3) enables 
us to prove  $\P$-almost sure convergence of the blocked clock process $S^b_n(t)$, defined in  
\eqv(1.03.9), when the continuous time clock process $\wt S_n(t)$, given by \eqv(1.03.4), 
is chosen as in \eqv(1.4.4).

To sate this result we must specify several quantities: the parameter $\eta_n$,  the time-scales, 
$a_n$ and $c_n$, and the block length, $\theta_n$, entering the definitions of  $\wt S_n(t)$ and $S^b_n(t)$. 
We begin by defining a sequence, $r^{\star}_n$, that is ubiquitous throughout the rest of the paper: 
given $\beta>0$ and a constant $c_{\star}>1+\log 4$, we let $r^{\star}_n\equiv r_n(\b,c_{\star})$
be the solution of 
\be
n^{c_{\star}}\P(\t_n(x)\geq r^{\star}_n)=1.
\Eq(1.4.1)
\ee
In explicit form
\be
r^{\star}_n
=\exp\left\{\b
\sqrt{2c_{\star}n\log n}
\left(1-\sfrac{\log\log n}{8 c_{\star}\log n}(1+o(1))\right)
\right\}.
\Eq(2.lem2.2)
\ee
We now take
$
\eta_n\equiv\left(r^{\star}_n\right)^{-1}
$
in \eqv(1.03.12) which, combined with \eqv(1.1.4), yields
\be
\wt\l_n(x,y)=\frac{1}{n r^{\star}_n}\frac{\min(\t_n(y), \t_n(x))}{\min\bigl(\frac{1}{r^{\star}_n},\t_n(x)\bigr)},\quad
\text{if $(x,y)\in\EE_n$},
\Eq(1.4.2)
\ee
and $\wt\l_n(x,y)=0$ else. The physical observation time-scale, $c_n$,  is chosen as in \eqv(1.theo0.0).
It is naturally the same as in the Random Hopping dynamics. On the contrary, the definition of the auxiliary 
time-scale, $a_n$, contrasts sharply with the simple choice $a_n=2^{\varepsilon n}$ made in the 
Random Hopping dynamics.  We here must take
\be
a_n=2^{\varepsilon n}/b_n
\Eq(1.theo1.0)
\ee
where the sequence $b_n$ is defined as follows. Recalling \eqv(1.2.3) and \eqv(1.2.4), define
\be
F_{\b,\varepsilon,n}(x)\equiv
x^{\a_{n}(\varepsilon)-\frac{\log x}{2n\b^2}}
\bigl(1-\sfrac{\log x}{n\b\b_c(\varepsilon)}\bigr)^{-1}, \quad x>0,
\Eq(4.prop2.16)
\ee
where $\a_{n}(\varepsilon)\equiv(n\beta^2)^{-1}\log c_n$, that is, in view of \eqv(1.theo0.0'),
\be
\a_{n}(\varepsilon)=
\a(\varepsilon)
-\sfrac{\log(\b^2_c(\varepsilon)n/2)+\log 4\pi+o(1)}{2n\b\b_c(\varepsilon)}.
\Eq(4.prop2.17)
\ee
Further introduce the random set 
\be
T_n\equiv\left\{x\in \VV_n\mid\t_n(x)\geq  c_n  (n^2 \theta_n)^{-1}\right\}.
\Eq(1.theo1.4)
\ee
Then, for $\ell_n^x$  as in \eqv(1.03.11), we set
\be
b_n\equiv (\theta_n\pi_n(T_n))^{-1}\sum_{x\in T_n}E_{\pi_n}\left[F_{\b,\varepsilon,n,}(\ell_n^x(\theta_n))\right].
\Eq(1.theo1.3)
\ee

It now only remains to choose the block length  $\theta_n$. (The notation $x_n\ll y_n$ means that the 
sequences $x_n>0$ and $y_n>0$ satisfy $x_n/y_n\rightarrow 0$ as $n\rightarrow\infty$.)

\begin{theorem}
    \TH(1.theo1)
    
Given $0<\varepsilon<1$ let $\theta_n$ be any sequence such that  
\be
\sfrac{4}{1-\a(\varepsilon)}\log r^{\star}_n<\log \theta_n\ll  n
\Eq(1.theo1.0')
\ee
and let $c_n$ and $a_n$ be as in \eqv(1.theo0.0) and \eqv(1.theo1.0)-\eqv(1.theo1.3), respectively.
Then, for all $0<\varepsilon<1$ and all $\b>\b_c(\varepsilon)$, $\P$-almost surely,
\be
 S^b_n\Rightarrow_{J_1}  V_{\a(\varepsilon)}
 \Eq(1.theo1.1)
\ee
where $ V_{\a(\varepsilon)}$ is  a stable subordinator with zero drift and L\'evy measure $\nu$ defined through
\be
\nu(u,\infty)= u^{-\a(\varepsilon)} ,\quad u>0,
\Eq(1.theo1.2)
\ee 
and where $\Rightarrow_{J_1}$ denotes weak convergence in the space
$D([0,\infty))$ of c\`adl\`ag functions equipped with the Skorokhod $J_1$-topology.  
\end{theorem}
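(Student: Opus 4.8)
The plan is to obtain Theorem~\thv(1.theo1) as an application of Theorem~\thv(1.theoB) to the exploration process $Y_n$ with jump rates \eqv(1.4.2) (equivalently, to the clock \eqv(1.4.4) with $\eta_n=(r^{\star}_n)^{-1}$), the uniform initial distribution $\mu_n$, the time-scales $c_n$ of \eqv(1.theo0.0) and $a_n=2^{\varepsilon n}/b_n$ of \eqv(1.theo1.0)--\eqv(1.theo1.3), the block length $\theta_n$ of \eqv(1.theo1.0'), and a mixing scale $\kappa_n$ with $\log\kappa_n=\log r^{\star}_n+O(\log n)$; then $\kappa_n\leq\theta_n$ follows from the left-hand inequality in \eqv(1.theo1.0'), while $\theta_n\ll a_n$ follows from $\log\theta_n\ll n$ together with $\log b_n\ll n$. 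Since $\b>\b_c(\varepsilon)$ forces $\a(\varepsilon)=\b_c(\varepsilon)/\b\in(0,1)$, the measure $\nu(u,\infty)=u^{-\a(\varepsilon)}$ is the L\'evy measure of a genuine $\a(\varepsilon)$-stable subordinator and satisfies $\int_0^\infty(x\wedge1)\,\nu(dx)=1+\tfrac{\a(\varepsilon)}{1-\a(\varepsilon)}<\infty$. It thus remains to verify, $\P$-almost surely, Conditions (A0), (B0), (B1), (B2) and (B3) for this data; of these only (A0) refers to $\mu_n$, and there we use that the uniform measure and $\pi_n$ are $\P$-a.s.\ close in total variation, differing only through their normalisation and on the set $\{\t_n(x)<\eta_n\}$, whose per-site $\P$-probability is $\overline\Phi\bigl(\b^{-1}\log r^{\star}_n/\sqrt n\,\bigr)=O\bigl(n^{-c_\star}(\log n)^{-1/2}\bigr)=o(1)$.

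\emph{Mixing, Condition (B0), and Condition (A0).} The mechanism, announced after \eqv(1.4.4), is that the factor $\max(\eta_n,\t_n(x))$ in \eqv(1.03.12) de-slows the deep sites: for $Y_n$ the holding rate at a site $x$ with $\t_n(x)\geq\eta_n$ is $\wt\l_n(x)=n^{-1}\sum_{y:(x,y)\in\EE_n}\min(\t_n(x),\t_n(y))$, rather than $\t_n(x)^{-1}$ as for $X_n$, so that the only bottleneck left for mixing is at the shallow sites $\{\t_n(x)<\eta_n\}$, where the holding rate equals $\eta_n$ times $n^{-1}\#\{y:(x,y)\in\EE_n,\ \t_n(y)\geq\t_n(x)\}$. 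A union bound over the Gaussian field shows that with $\P$-probability $1-o(1)$ every $x\in\VV_n$ has a neighbour with $\t$-value at least $\eta_n$, and on this event $\min_x\wt\l_n(x)\geq(nr^{\star}_n)^{-1}=(r^{\star}_n)^{-1}e^{-o(n)}$; a canonical-paths (or Poincar\'e) estimate for the reversible chain $Y_n$ then yields a spectral gap of order $(nr^{\star}_n)^{-1}e^{-o(n)}$, which, together with the lower bound $\min_x\pi_n(x)\geq e^{-o(n)}$ read off from \eqv(1.4.3), upgrades to the uniform bound (B0) with $\kappa_n$ as above and some $\rho_n\downarrow0$. (The threshold $c_\star>1+\log4$ is precisely what keeps the resulting $\kappa_n$ small enough that \eqv(1.theo1.0') can be met with $\theta_n$ still sub-exponential.) For (A0): starting from $\mu_n$, which is $\P$-a.s.\ close in total variation to $\pi_n$, the chain equilibrates within $\kappa_n\ll\theta_n$, so it visits the deep set $T_n$ of \eqv(1.theo1.4) during $[0,\theta_n]$ only with probability $\mu_n(T_n)+O(\theta_n\pi_n(T_n))=o(1)$ (using $\pi_n(T_n)\ll\theta_n^{-1}$, valid since $\theta_n$ is sub-exponential and $\varepsilon>0$); off that event $Z_{n,1}=c_n^{-1}\int_0^{\theta_n}\max(\eta_n,\t_n(Y_n(s)))\,ds\leq c_n^{-1}\theta_n\cdot c_n(n^2\theta_n)^{-1}=n^{-2}\to0$ by the definition of $T_n$, so $P_{\mu_n}(Z_{n,1}>u)\to0$ for every $u>0$.

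\emph{The L\'evy measure and the small jumps, Conditions (B1)--(B3).} This is the core of the argument, where the constant in front of $u^{-\a(\varepsilon)}$ must come out \emph{exactly} equal to $1$. One must show $\nu_n^{t}(u,\infty)=k_n(t)\,P_{\pi_n}(Z_{n,1}>u)\to t\,u^{-\a(\varepsilon)}$ $\P$-a.s., via the following steps. (i) Replace $Z_{n,1}$ by its deep-site part $c_n^{-1}\sum_{x\in T_n}\t_n(x)\ell_n^x(\theta_n)$; the remainder is $\leq n^{-2}$ deterministically as in (A0). (ii) Using the sparseness of $T_n$ together with $\kappa_n\ll\theta_n$, reduce $\{Z_{n,1}>u\}$ to the event that $Y_n$ visits a \emph{single} $x\in T_n$ with $\t_n(x)\ell_n^x(\theta_n)>uc_n(1+o(1))$, whence $P_{\pi_n}(Z_{n,1}>u)=\sum_{x\in T_n}P_{\pi_n}\bigl(\t_n(x)\ell_n^x(\theta_n)>uc_n\bigr)(1+o(1))$. (iii) Evaluate each summand by combining the law of the local time $\ell_n^x(\theta_n)$ of $Y_n$ under $\pi_n$ (for $x\in T_n$: essentially a single near-exponential sojourn of rate $\wt\l_n(x)=n^{-1}\sum_{y:(x,y)\in\EE_n}\t_n(y)$, reached with probability $\approx\theta_n\pi_n(x)\wt\l_n(x)$) with the Gaussian tail $\P(\t_n(x)\geq c_nv)=2^{-\varepsilon n}\,v^{-\a_n(\varepsilon)-\log v/(2n\b^2)}\bigl(1+\log v/\log c_n\bigr)^{-1}(1+o(1))$; the function $F_{\b,\varepsilon,n}$ of \eqv(4.prop2.16) is exactly what materialises when this tail, with $v$ of order $u/\ell_n^x(\theta_n)$, is integrated against the local-time law, so that, after averaging over the random environment and establishing concentration, $P_{\pi_n}(Z_{n,1}>u)=u^{-\a(\varepsilon)}\,\theta_n\,2^{-\varepsilon n}\,b_n\,(1+o(1))$ with $b_n$ the normalisation of \eqv(1.theo1.3). (iv) Multiply by $k_n(t)=\lfloor a_nt/\theta_n\rfloor$ with $a_n=2^{\varepsilon n}/b_n$; the factors $\theta_n$, $2^{-\varepsilon n}$ and $b_n$ cancel and $\nu_n^{t}(u,\infty)\to t\,u^{-\a(\varepsilon)}$. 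The passage from convergence in $P_{\pi_n}$-probability to a $\P$-almost sure statement is by a second-moment/Borel--Cantelli argument exploiting independence of $(\t_n(x))_{x\in\VV_n}$, which gives exponential concentration of $|T_n|$ and of the relevant local-time functionals around their $\P$-means along $n$. Condition (B2) then follows at once, since $Q^{u}_n(y)=P_y(Z_{n,1}>u)$ is itself of the small order $\theta_n\pi_n(T_n)+o(1)$, so $\s_n^{t}(u,\infty)=k_n(t)\sum_y\pi_n(y)[Q^{u}_n(y)]^2$ carries an extra vanishing factor; Condition (B3) follows from the same decomposition, the bulk part of $Z_{n,1}$ contributing $o(1)$ after multiplication by $k_n(t)$ and the truncated deep part converging to $t\int_0^\e x\,\nu(dx)=t\,\tfrac{\a(\varepsilon)}{1-\a(\varepsilon)}\,\e^{1-\a(\varepsilon)}$, which vanishes as first $n\to\infty$ and then $\e\downarrow0$. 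With Conditions (A0), (B0), (B1), (B2) and (B3) verified $\P$-almost surely, Theorem~\thv(1.theoB) yields \eqv(1.theo1.1).

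The two places where I expect the real work to lie are both visible above. First, the quantitative mixing bound (B0) requires genuinely uniform control of $Y_n$ over the exceptional configurations of the Gaussian field, and it is here that the choice $\eta_n=(r^{\star}_n)^{-1}$ and the threshold $c_\star>1+\log4$ are forced. Second, the sharp computation in step (iii) of (B1): the sub-exponential fluctuation corrections carried by the Gaussian tail of $\t_n$ (packaged into $F_{\b,\varepsilon,n}$) and by the law of $\ell_n^x(\theta_n)$ must be tracked simultaneously, uniformly in $u$ over the relevant range, so that the random normalisation $b_n$ of \eqv(1.theo1.3) emerges with the correct $(1+o(1))$. This last point is exactly what turns the in-probability, unnormalised clock convergence of \cite{CW15} into an almost-sure convergence to a precisely calibrated stable subordinator, hence underlies the refinement of the conjecture of \cite{CW15} announced in the abstract.
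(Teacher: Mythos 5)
Your overall architecture is exactly the paper's: take the exploration process with $\eta_n=(r^{\star}_n)^{-1}$, verify (A0), (B0)--(B3) for the uniform initial law and $a_n=2^{\varepsilon n}/b_n$, and invoke Theorem \thv(1.theoB); your steps (i)--(iv) for (B1) (single visits to top vertices, Gaussian tail producing $F_{\b,\varepsilon,n}$, cancellation of $\theta_n 2^{-\varepsilon n}b_n$ against $k_n(t)$) mirror Sections \ref{S4}--\ref{S7}. However, two of your sketched verifications would not go through as described. For (B0), a lower bound on the holding rates $\wt\l_n(x)$ does not control the spectral gap: the Poincar\'e/canonical-path bound needs lower bounds on the edge conductances $\pi_n(x')\wt\l_n(x',y')$ along the interiors of the chosen paths, and uphill edges incident to $\overline V^{\star}_n$ have conductance as small as $e^{-\b n\sqrt{2\log 2}(1+o(1))}2^{-n}$, exponentially below the good-edge value $\asymp (nr^{\star}_n)^{-1}2^{-n}$. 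The paper's Proposition \thv(3.prop1) therefore uses the Fontes--Isopi--Kohayakawa--Picco system of paths whose interior vertices avoid $\overline V^{\star}_n$, and $c_{\star}>1+\log 4$ is precisely the threshold making such a good family exist $\P$-a.s.\ (Proposition \thv(4'.prop3)); it has nothing to do with your event that every vertex has a neighbour with $\t_n\geq\eta_n$, which holds for any $c_{\star}>0$. Moreover your gap of order $(nr^{\star}_n)^{-1}e^{-o(n)}$ is too weak for the theorem as stated: it would only give $\kappa_n=r^{\star}_n e^{o(n)}$, and then $\kappa_n\leq\theta_n$ fails for the smallest $\theta_n$ allowed by \eqv(1.theo1.0') (a fixed power of $r^{\star}_n$, i.e.\ $\log\theta_n\asymp\sqrt{n\log n}$); one genuinely needs the polynomial bound $\kappa_n\asymp n^4r^{\star}_n$ of \eqv(3.prop2.1).

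The second gap is the $\P$-a.s.\ concentration in your step (iii) (and its analogues in (B2)--(B3)). Independence of the field $(\t_n(x))_{x\in\VV_n}$ alone does not make the relevant summands independent, because the hitting densities and local-time laws of $Y_n$ depend on the environment through the law of $Y_n$, which a priori involves every $\t_n(x)$, including those at the top. The paper's way out (Proposition \thv(4.prop2)) is the observation that the rates \eqv(1.4.6) do not involve $\t_n(x)$ when $x$ is a local maximum of $\t_n$ (using \eqv(2.lem5.2)), together with $T^{\circ}_n\subseteq\MM_n$; conditioning on $T^{\circ}_n$ then makes the variables genuinely independent under the product law $\P^{\circ}$, which is what powers the variance bound and Borel--Cantelli. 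This decoupling is also the reason the reduction must be to $T^{\circ}_n$ (tops isolated in their valley, via Proposition \thv(3.prop3)) rather than to $T_n$ as in your step (ii); without it the almost-sure statement, i.e.\ the whole improvement over Ref.~\cite{CW15}, is unsupported. A smaller inaccuracy: in (B2) the claim that $Q^u_n(y)$ is uniformly of small order is false for $y\in T_n$; the smallness only emerges after the $\pi_n$-weighting and the decomposition carried out in Section \ref{S5}.
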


We  again emphasize (see the remark below  Theorem \eqv(1.theoA)) that the $J_1$ 
convergence statement of Theorem \thv(1.theo1) is crucial to the control correlation functions.
Of course, Theorem \thv(1.theo1) implies the weaker result that  the original (non blocked) 
clock process \eqv(1.03.7) converges  to the same limit in the $M_1$ topology of Skorokhod.
Such a result was proved in Ref.~\cite{CW15} (for the clock obtained by taking $\eta_n=1$  
in \eqv(1.4.4)) albeit  only in $\P$-probability and in the restricted domain of parameters 
$\b>\b_c(\varepsilon)$  and $1/2<\varepsilon<1$. As shown in  \cite{G15} (see lemma 2.1) 
the graph structure of the set $T_n$  when $1/2<\varepsilon<1$ reduces to a collection of 
isolated vertices (no element of $T_n$ has a neighbor in $T_n$) and this considerably 
simplifies the analysis.

Let us now examine the sequence $b_n$ introduced in \eqv(1.theo1.0) and defined in \eqv(1.theo1.3).  
This sequence is a priori random in the random environment and depends on a sequence, $\theta_n$, 
that can itself be chosen within the two widely different bounds of \eqv(1.theo1.0'). The next proposition 
provides upper and lower bounds on $b_n$ that are not affected  by the choice of $\theta_n$.

\begin{proposition}
    \TH(1.prop2)
 
Given $0<\varepsilon<1$, let $c_n$ and  $\theta_n$ be as in Theorem \thv(1.theo1).
Then, there exists a subset $\O'\subseteq \O$ with $\P(\O')=1$ such that on $\O'$, 
for all but a finite number of indices $n$
\be
\left(n^{c_-}(r^{\star}_n)^{1+\a_{n}(\varepsilon)+o(1)} \right)^{-1}
\leq b_n\leq n^{c_+}(r^{\star}_n)^{1+\a_{n}(\varepsilon)}
\Eq(1.prop2.1)
\ee
where $0<c_-, c_+\leq \infty$ are numerical constants.
Thus $\lim_{n\rightarrow\infty}n^{-1}\log a_n=\varepsilon$ $\P$-a.s..
\end{proposition}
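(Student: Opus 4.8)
The plan is to reduce \eqv(1.prop2.1) to a uniform bound on the local‑time moments $E_{\pi_n}[(\ell_n^x(\th_n))^{\a_n(\varepsilon)}]$, $x\in T_n$, and to obtain the latter from a regeneration decomposition of $\ell_n^x$ that isolates an ``effective excursion rate'' which can be pinned between $n^{-1}$ and $r^\star_n$. First one observes that every $x\in T_n$ has $\t_n(x)\ge c_n(n^2\th_n)^{-1}>(r^\star_n)^{-1}=\eta_n$ for $n$ large, since $\log c_n$ is of order $n$ while $\log r^\star_n$ and $\log\th_n$ are $o(n)$ by \eqv(2.lem2.2) and \eqv(1.theo1.0'); hence $\pi_n$ is constant on $T_n$, equal to $\eta_n/Z_n$ with $Z_n\df\sum_{y\in\VV_n}\min(\eta_n,\t_n(y))$, so $\pi_n(T_n)=|T_n|\eta_n/Z_n$ and $b_n=(Z_n/\eta_n)(\th_n|T_n|)^{-1}\sum_{x\in T_n}E_{\pi_n}[F_{\b,\varepsilon,n}(\ell_n^x(\th_n))]$. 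Since the $\t_n(y)$ are i.i.d.\ with $\P(\t_n<\eta_n)$ polynomially small (Gaussian tails), a Chernoff bound plus Borel--Cantelli gives $Z_n=|\VV_n|\eta_n(1+o(1))$ $\P$-a.s., and likewise $|T_n|=|\VV_n|\P(\t_n\ge c_n(n^2\th_n)^{-1})(1+o(1))$ $\P$-a.s.\ (the mean is $\gg1$ because $\varepsilon<1$), with $\P(\t_n\ge c_n(n^2\th_n)^{-1})=\P(\t_n\ge c_n)(n^2\th_n)^{\a_n(\varepsilon)+o(1)}$ from \eqv(1.theo0.0) and Gaussian tails. Finally, for $0<\ell\le\th_n$ one has $|\log\ell|\le\log\th_n\ll n$, so the two correction factors in \eqv(4.prop2.16) lie between $1$ and $\th_n^{o(1)}$, whence $F_{\b,\varepsilon,n}(\ell)=\ell^{\a_n(\varepsilon)}(r^\star_n)^{\pm o(1)}$ uniformly on $(0,\th_n]$. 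Collecting these, and using $\pi_n(x)=\eta_n/Z_n$ on $T_n$, the proposition reduces to bounding, up to factors $n^{\pm O(1)}(r^\star_n)^{\pm o(1)}$, the quantity $|T_n|^{-1}\sum_{x\in T_n}E_{\pi_n}[(\ell_n^x(\th_n))^{\a_n(\varepsilon)}]/(\th_n\pi_n(x))$.

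Next I would decompose $\ell_n^x(\th_n)$ into independent excursions by means of the regeneration structure coming from the mixing of $Y_n$; its mixing time $\k_n$ satisfies $\k_n\ll\th_n$ by \eqv(1.theo1.0'), which legitimises the decomposition. Writing $Z_n^x$ for the local time at $x$ gained in one excursion and $V_n^x$ for the number of visits to $x$ it contains, one has $Z_n^x\laweq\sum_{i\le V_n^x}\mathrm{Exp}_i(\wt\l_n(x))$, and the number of excursions reaching $x$ before $\th_n$ is, up to mixing corrections, Poisson with mean $\th_n\pi_n(x)\varrho_n(x)$, where $\varrho_n(x)\df\wt\l_n(x)/E_x[V_n^x]$ is the effective excursion rate. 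Since $\pi_n(x)\asymp|\VV_n|^{-1}$ while $\th_n$ and $\varrho_n(x)$ turn out subexponential, $\th_n\pi_n(x)\varrho_n(x)\ll1$, so $x$ is reached by at most one excursion typically, and a short computation with the explicit rates \eqv(1.4.2) yields $E_{\pi_n}[(\ell_n^x(\th_n))^{\a_n(\varepsilon)}]=\th_n\pi_n(x)\varrho_n(x)^{1-\a_n(\varepsilon)}n^{\pm O(1)}(r^\star_n)^{\pm o(1)}$ for $x\in T_n$. The crucial point is the \emph{uniform, $\P$-a.s.}\ control $n^{-1}\le\varrho_n(x)\le r^\star_n$ over all $x\in T_n$: for $x$ with only typical neighbours, $\varrho_n(x)=\wt\l_n(x)=n^{-1}\sum_{y\sim x}\t_n(y)$, which is $\le r^\star_n$ for all but a Gaussian‑tail‑small count of $x\in T_n$ by \eqv(1.4.1); for the exceptional $x$, and above all for $x$ lying in a cluster of deep traps (a real possibility when $\varepsilon<\tfrac{1}{2}$, where $T_n$ fails to be an independent set), the $\min$ in \eqv(1.4.2) forces $\wt\l_n(x)\le\t_n(x)$ while the excursion collapses into a long oscillation within the cluster, so $E_x[V_n^x]$ grows at exactly the rate that cancels the growth of $\wt\l_n(x)$, the escape rate out of the cluster being governed by its rim of typical sites and hence $\le r^\star_n$ once one knows that clusters have size $n^{o(1)}$ $\P$-a.s.\ (deep‑trap sparsity). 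The $\eta_n$‑cap enters both here, by bounding $\k_n$ (Section \thv(S3.1)), and in guaranteeing the near‑uniformity of $\pi_n$ used above.

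Inserting this into the reduced quantity gives $|T_n|^{-1}\sum_{x\in T_n}E_{\pi_n}[(\ell_n^x(\th_n))^{\a_n(\varepsilon)}]/(\th_n\pi_n(x))=|T_n|^{-1}\sum_{x\in T_n}\varrho_n(x)^{1-\a_n(\varepsilon)}\,n^{\pm O(1)}(r^\star_n)^{\pm o(1)}$, which by $n^{-1}\le\varrho_n(x)\le r^\star_n$ lies between $n^{-(1-\a_n(\varepsilon))}(r^\star_n)^{-o(1)}$ and $(r^\star_n)^{1-\a_n(\varepsilon)}n^{O(1)}$; this proves \eqv(1.prop2.1) for suitable numerical $c_\pm$ (the lower bound being the easy direction, the upper bound using $\b>\b_c(\varepsilon)$ through the deep‑trap count), for all but finitely many $n$, $\P$-a.s., after a Borel--Cantelli step over $n$. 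Since then $\log b_n=O(\log r^\star_n)+O(\log n)=o(n)$, relation \eqv(1.theo1.0) gives $\lim_{n\to\infty}n^{-1}\log a_n=\varepsilon$, $\P$-a.s.

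The main obstacle is the uniform, almost sure upper bound $\varrho_n(x)\le r^\star_n$: naively $\wt\l_n(x)^{1-\a_n(\varepsilon)}$ is exponentially large for $x$ adjacent to an atypically deep trap, and only the exact cancellation between $\wt\l_n(x)$ (capped at $\t_n(x)$ by \eqv(1.4.2)) and the excursion visit count $E_x[V_n^x]$---supported by the control of cluster sizes, by the $\eta_n$‑cap on the mixing time $\k_n$, and by the hypothesis $\tfrac{4}{1-\a(\varepsilon)}\log r^\star_n<\log\th_n$ on the block length---keeps $b_n$ subexponential. Making the trap‑cluster analysis and the Gaussian‑tail counts sharp enough to survive the union bound over the exponentially many $x\in T_n$, and then the Borel--Cantelli bound over $n$, is where the bulk of the work lies.
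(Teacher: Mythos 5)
Your strategy is genuinely different from the paper's, but as it stands it has a real gap at its core. Everything is reduced to the two claims that (i) $E_{\pi_n}\bigl[(\ell_n^x(\theta_n))^{\a_{n}(\varepsilon)}\bigr]=\theta_n\pi_n(x)\,\varrho_n(x)^{1-\a_{n}(\varepsilon)}\,n^{\pm O(1)}(r^{\star}_n)^{\pm o(1)}$ with $\varrho_n(x)=\wt\l_n(x)/E_x[V_n^x]$, and (ii) $n^{-1}\leq\varrho_n(x)\leq r^{\star}_n$ uniformly over all $x\in T_n$, $\P$-a.s.\ for all large $n$. Neither is proved: the ``exact cancellation'' between $\wt\l_n(x)$ and the excursion visit count for $x$ adjacent to deep traps is asserted heuristically, and making it rigorous amounts to sharp two-sided hitting/return estimates for each of the $2^{n(1-\varepsilon_n)(1+o(1))}$ vertices of $T_n$, strong enough to survive a union bound in $x$ and Borel--Cantelli in $n$. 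None of the tools actually established in the paper delivers that precision: the Aldous--Brown bounds (Lemmas \thv(3.lem1)--\thv(3.lem3)), the spectral gap bound of Proposition \thv(3.prop1), and the local-time bounds of Lemma \thv(3.lem4) only locate the hitting rate of $x$ in the window $[\tilde\kappa_n^{-1}\pi_n(x),\,nr^{\star}_n\pi_n(x)]$ and cap block local times at $\kappa_n\sim n^4r^{\star}_n$ --- which is exactly why the proposition carries the exponent $1+\a_{n}(\varepsilon)$ on $r^{\star}_n$, whereas your claimed window (exponent $1-\a_{n}(\varepsilon)$ above, $o(1)$ below) is a strictly stronger statement requiring strictly more work. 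Note also that in your scheme the lower bound is not ``the easy direction'': lower-bounding $b_n$ by $n^{-(1-\a_{n}(\varepsilon))}(r^{\star}_n)^{-o(1)}$ needs the matching lower bound on the hitting rate, i.e.\ $E_{\pi_n}H(x)\leq n^{O(1)}(\pi_n(x)\varrho_n(x))^{-1}$ uniformly on $T_n$, which is of the same difficulty as (ii).

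Two of your reduction steps are also stated too strongly. The uniform estimate $F_{\b,\varepsilon,n}(\ell)=\ell^{\a_{n}(\varepsilon)}(r^{\star}_n)^{\pm o(1)}$ on all of $(0,\theta_n]$ is false for exponentially small $\ell$ (for $\ell=e^{-n^2}$ the Gaussian correction $\ell^{-\log\ell/2n\b^2}=e^{-(\log\ell)^2/2n\b^2}=e^{-n^3/2\b^2}$ is far below $(r^{\star}_n)^{-o(1)}$); only the one-sided bound $F_{\b,\varepsilon,n}(\ell)\leq(1+o(1))\ell^{\a_{n}(\varepsilon)}$ holds uniformly, and the lower-bound direction must be restricted to local times bounded below --- this is precisely why the paper introduces the cut-off $\zeta_n$ and the set $I^{\star}_n$, on which the holding time is at least $(r^{\star}_n)^{-1}$ by \eqv(4.lem3.14'). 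For comparison, the paper's proof of Proposition \thv(1.prop2) is a direct consequence of Lemma \thv(4.lem3): the two-sided bound \eqv(4.lem3.6') on $b^{\circ}_n$ and the comparison \eqv(4.lem3.6) of $b_n$ with $b^{\circ}_n$, combined with $\kappa_n$ as in \eqv(3.prop2.1) and \eqv(2.lem2.2). There the upper bound comes from capping $\ell_n^x(\theta_n)$ by $\kappa_n$ after the first hit (Lemma \thv(3.lem4) plus Lemma \thv(3.lem1)), and the lower bound from restricting to $I^{\star}_n$ and using the hitting-density lower bound of Lemma \thv(3.lem3) together with Lemma \thv(3.lem2); no excursion decomposition or trap-cluster cancellation is needed at the level of accuracy demanded by \eqv(1.prop2.1). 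Your final step (deducing $\lim_n n^{-1}\log a_n=\varepsilon$ from subexponential bounds on $b_n$) is fine, but the subexponential bounds themselves are exactly what remains unproven in your sketch.
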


\begin{remark} 
The definition \eqv(1.theo1.0)-\eqv(1.theo1.3) of $a_n$ and that of the sequence $R_N$ 
in (2.10) of Ref.~\cite{CW15} have an obvious family resemblance. Our control of $a_n$ through 
Proposition \thv(1.prop2) implies the behavior conjectured in item 4 page 4 of that paper.
\end{remark}

\begin{remark} 
One may wonder whether the lower bound of \eqv(1.theo1.0') can be improved.
The main technical obstacle to doing so is the lower bound on mean hitting times 
of Lemma \thv(3.lem2).  In particular, trying to improve the bound \eqv(3.prop1.1) 
on the spectral gap by choosing $\eta_n$ larger, say as large as 1 as in Ref.~\cite{CW15}, 
can at best improve the constant $\sfrac{4}{1-\a(\varepsilon)}$ in front of $\log r^{\star}_n$ 
in \eqv(1.theo1.0').
\end{remark}

The rest of the paper is organized as follows.  Section \thv(S2) is concerned with the properties 
of the REM's landscape: several level sets that play an important role in our analysis are introduced 
and  their properties collected. Section \thv(S3) gathers all needed results on the exploration process $Y_n$.
The proof of Theorem \thv(1.theo1) can then begin.  Section \thv(S4),   \thv(S5), and  \thv(S6) are devoted, 
respectively, to the verification of Condition (B1), (B2), and  (B3) of  Theorem \thv(1.theoB). The proof of 
Theorem \thv(1.theo1) is completed in Section \thv(S7).  Also in Section \thv(S7), the link between the 
blocked clock process  of \eqv(1.theo1.1) and the two-time correlation function \eqv(1.1.2) is made, 
and the proof of Theorem \thv(1.theo0) is concluded. An appendix (Section \thv(S8)) contains the proof 
of the results of Section \thv(S1.3).



\section{Level sets of the REM's landscape: the Top and other sets}
	\TH(S2)
 
Given $V\subseteq\VV_n$ we denote by $G\equiv G(V)$ the undirected graph which has vertex set $V$ 
and edge set $E(G(V))\subseteq \EE_n$ consisting of pairs of vertices $\{x,y\}$ in $V$ with $\dist(x,y)=1$,
where $\dist(x,x')\equiv\frac 12 \sum_{i=1}^n |x_i-x'_i|$ is  the graph distance on $\VV_n$. 
When $\dist(x,y)=1$ we simply write $x\sim y$.
We are concerned with the graph properties of level sets of the form
\be
V_n(\rho)=\left\{x\in\VV_n\mid \t_n(x)\geq r_n(\rho)\right\}
\Eq(2.1)
\ee
where, given  $\rho>0$, the threshold level $r_n(\rho)$ is the sequence defined through
\be
\Eq(2.2)
2^{\rho n}\P(\t_n(x)\geq r_n(\rho))=1.
\ee
Observe that $V_n(\rho)$ can uniquely be decomposed into a collection of subsets
\be
V_n(\rho)=\cup_{l=1}^L C_{n,l}(\rho),\quad C_{n,l}(\rho)\cap C_{n,k}(\rho) \,\,\,\forall  1\leq l\neq k\leq L,\quad L\equiv L_n(\rho),
\Eq(2.4)
\ee
such that each graph $G(C_{n,l}(\rho))$ is connected but any two distinct graphs 
$G(C_{n,l}(\rho))$ and $G(C_{n,k}(\rho))$ are disconnected. With a little abuse of terminology 
we call the sets $C_{n,l}(\rho)$ the connected components of the graph $G(V_n(\rho))$.
As $\rho$ decreases from $\infty$ to $0$, the set  $V_n(\rho)$ grows and the graph $G(V_n(\rho))$ 
potentially acquires new edges. It is  known  \cite{BKL91} that the size of the largest connected 
component $C_{n,l}(\rho)$ undergoes a transiton near the critical value
$
\rho^c\approx\frac{\log n}{n\log 2}
$,
with a unique ``giant'' component of size $\OO(n^{-1}2^n)$ emerging slightly below this value.
As  $\rho$ decreases the small components merge into the giant one, and total connectedness 
is achieved for $\rho$ slightly smaller than $n^{-1}$. One may naturally think of the connected 
components $C_{n,l}(\rho)$ before criticality as containing distinct ``valleys'' of the REM's energy 
landscape,  the level of emergence of the totally connected giant component then being a 
``ground level'' connecting the local valleys.

We now introduce several sets that play  key roles in our analysis. 

\smallskip
\noindent{$\bullet$ \textbf{\emph{The sets  $V^{\star}_n$ and $\overline V^{\star}_n$ (of valleys and hills).}}} 
Let $c_{\star}$ be as in \eqv(1.4.1) and set
\be
\rho_n^{\star}\equiv\frac{c_{\star}\log n}{n\log 2}.
\Eq(2.0)
\ee
Thus, taking $\rho=\rho_n^{\star}$ in \eqv(2.1)-\eqv(2.4), $r^{\star}_n\equiv r_n(\rho_n^{\star})$ and
the set $V^{\star}_n\equiv V_n(\rho_n^{\star})$ decomposes into
\be
V^{\star}_n
=\cup_{l=1}^{L^{\star}} C^{\star}_{n,l},\quad C^{\star}_{n,l}\cap C^{\star}_{n,k}\,\,\,\forall  l\neq k,\quad 
L^{\star}\equiv L_n(\rho_n^{\star}),
\Eq(2.6)
\ee
where the $C^{\star}_{n,l}$ are the connected components of the graph $G(V_n(\rho_n^{\star}))$. 
According to our earlier picture they contain ``valleys'' of the  landscape. Since $H_n(x)$ is symmetrical the set
\be
\overline V^{\star}_n\equiv  \overline V_n(\rho_n^{\star})=\left\{x\in\VV_n\mid \t^{-1}_n(x)\geq r^{\star}_n\right\}
\Eq(2.1anti)
\ee
obtained from $V_n(\rho_n^{\star})$ by substituting  $-\HH_n(x)$ for $\HH_n(x)$ in \eqv(1.1.3) 
has the same random graph properties as $V^{\star}_n$ but now contains ``hills''.
As in \eqv(2.6) we write
\be
\overline V^{\star}_n\equiv \overline V_n(\rho_n^{\star})
=\cup_{l=1}^{L^{\star}} \overline C^{\star}_{n,l},\quad \overline C^{\star}_{n,l}\cap \overline C^{\star}_{n,k}
\,\,\,\forall  l\neq k,\quad \overline L^{\star}\equiv \overline L_n(\rho_n^{\star}),
\Eq(2.6anti)
\ee
where $\overline C^{\star}_{n,l}$ are the connected components of the graph 
$G(\overline V_n(\rho_n^{\star}))$.  With this definition \eqv(1.4.2)  becomes
\be
\wt\l_n(x,y)=
\begin{cases}
\frac{1}{n}
e^{-\b\max(\HH_n(y),\HH_n(x))},
&\hbox{\rm if } 
x\notin\overline V^{\star}_n,\\
\frac{1}{n r^{\star}_n}
e^{-\b\left[\HH_n(y)-\HH_n(x)\right]^+},
&\hbox{\rm if } 
x\in\overline V^{\star}_n.
\end{cases}
\Eq(1.4.6)
\ee
Furthermore, by \eqv(1.03.5), denoting by $\del A=\{x\in\VV_n \mid \dist(x,A)=1\}$ 
the outer boundary of $A\subset \VV_n$,  we have that for all $x\in\del V^{\star}_n$,
\be
\wt\l_n(x)=
\sum_{y\in (V^{\star}_n)^c}\wt\l_n(x,y)
+\left((n r^{\star}_n)^{-1}\1_{\{x\in \overline V^{\star}_n\}}
+\t_n(x)n^{-1}\1_{\{x\in (\overline V^{\star}_n)^c\}}\right)|\del x\cap V^{\star}_n|.
\Eq(1.4.6')
\ee
Hence, conditional on $V^{\star}_n$, the mean holding time at $x\in (V^{\star}_n)^c$ 
does not depend on the variables $\{\t_n(y), y\in V^{\star}_n\}$ but only depends on 
the variables $\{\t_n(y), y\in (V^{\star}_n)^c\}$.

\smallskip
\noindent{\textbf{\emph{$\bullet$ Immersions in $V^{\star}_n$.}}}  Given any subset $A\subset V^{\star}_n$ 
we call the \emph{immersion of $A$ in $V^{\star}_n$} and denote by $A^{\star}$ the set 
\be
A^{\star}\equiv\cup_{l=1}^{L^{\star}} A^{\star}_{n,l},\quad 
A^{\star}_{n,l}=
\begin{cases}
C^{\star}_{n,l},
&\hbox{\rm if}\,\,\, C^{\star}_{n,l}\cap A\neq \emptyset, \\
\emptyset,&\hbox{\rm else}.
\end{cases}
\Eq(2.9)
\ee
Thus the sets $A^{\star}_{n,l}$ are the valleys $C^{\star}_{n,l}$ that contain at least one element of $A$.
Clearly, $\overline V^{\star}_n\cap V^{\star}_n=\emptyset$. Hence by \eqv(1.4.6), immersed sets have the property that
\be
\wt\l_n(x,y)\leq n^{-1}r^{\star}_n\,\,\,\text{for all}\,\,\, x\sim y 
\,\,\,\text{such that}\,\,\, x\in A^{\star}, y\notin A^{\star}
\text{ or } y\in A^{\star}, x\notin A^{\star}.
\Eq(2.10)
\ee

\noindent{\textbf{\emph{$\bullet$ The top, $T_n$, and the associated sets
$T_n^{\star}$, $T^{\circ}_n$ and $I^{\star}_n$.}}} Given a sequence $\d_n\downarrow 0$
as $n\uparrow\infty$, set $\varepsilon_n\equiv \varepsilon-\d_n$ and let the \emph{top} be the set
\be
T_n\equiv V_n(\varepsilon_n)
\Eq(2.7)
\ee
obtained by taking $\rho=\varepsilon_n$ in \eqv(2.4).  
($\d_n$ will  later be chosen  so that the definitions \eqv(2.7) and \eqv(1.theo1.4) of $T_n$ coincide.)
$T_n$ contains the top of the order statistics or the $\t_n(x)$'s, whence its name.
Since $\rho_n^{\star}\ll\varepsilon_n$,
$
T_n\subset V^{\star}_n
$,
and so $T_n$ can be immersed in $V^{\star}_n$. According to \eqv(2.9) we write
\be
T_n^{\star}\equiv\cup_{l=1}^{L^{\star}} T^{\star}_{n,l}.
\Eq(2.13)
\ee
To each $x\in T_n$ corresponds a unique index $1\leq l\equiv l(x)\leq L^{\star}$ 
such that $x\in T^{\star}_{n,l(x)}$. Of course a given valley $T^{\star}_{n,l}$ may 
contain several vertices of $T_n$. A set that is of special importance in the sequel 
is the subset $T^{\circ}_n$ of vertices of $T_n$ that are alone in their valley,
\be
T^{\circ}_n\equiv\left\{
x\in T_n \mid T^{\star}_{n,l(x)}\cap T_n
=\{x\}
\right\}.
\Eq(2.20)
\ee
This in particular implies that
\be
T^{\circ}_n\subseteq\{x\in\VV_n\mid \t_n(x)\geq r_n(\varepsilon_n), 
\forall_{y\sim x} \t_n(y)< r_n(\varepsilon_n)\}.
\Eq(2.21)
\ee
Finally, define
\be
I^{\star}_n
\equiv
\{x\in \VV_n \mid \t_n(x)\geq r_n(\varepsilon_n), \forall_{y\sim x} (r^{\star}_n)^{-1}<\t_n(y)< r^{\star}_n\}
\subseteq T^{\circ}_n.
\Eq(2.22)
\ee
This is  the largest subset of $T^{\circ}_n$ such that 
$
\dist\bigl((V^{\star}_n\cup \overline V^{\star}_n),I^{\star}_n\bigr)\geq 2
$.

Most of the content of the next three lemmata is taken from \cite{G15}.
The first lemma gives estimates on the size of various sets.

\begin{lemma}
  \TH(2.lem3)
There exists $\O^{\star}\subset \O$ with $\P\left(\O^{\star}\right)=1$
such that on $\O^{\star}$, for all but a finite number of indices $n$,
\be
1\leq |C^{\star}_{n,l}|
\leq\{\rho_n^{\star}[1-2c_{\star}^{-1}(1+\OO(\log n/n))]\}^{-1},\quad1\leq l\leq {L^{\star}}.
\Eq(2.lem3.4)
\ee
The same bounds hold replacing $C^{\star}_{n,l}$ by $\overline C^{\star}_{n,l}$ 
and $L^{\star}$ by $\overline L^{\star}$ in \eqv(2.lem3.4). Furthermore,
\bea
\Eq(2.lem3.1) 
|V^{\star}_n|
\hspace{-6pt}&=&\hspace{-6pt} 
2^n n^{-c_{\star}}(1+o(n^{-c_{\star}})) \text{ and }
|\overline V^{\star}_n|
=
2^n n^{-c_{\star}}(1+o(n^{-c_{\star}})),
\\
\Eq(2.lem3.1bis)
\left|T_n\right|
\hspace{-6pt}&=&\hspace{-6pt} 
2^{n(1-\varepsilon_n)}(1+\OO(n2^{-n\varepsilon_n/2})),
\quad
\\
\left|T^{\circ}_n\right|
\hspace{-6pt}&=&\hspace{-6pt} 
2^{n(1-\varepsilon_n)}(1+\OO(n2^{-n\varepsilon_n/2})),
\Eq(2.lem3.5)
\\
\left|T_n\setminus T^{\circ}_n\right|
\hspace{-6pt}&\leq&\hspace{-6pt} 
n^42^{n(1-2\varepsilon_n)}(1+o(1)),
\Eq(2.lem3.6)
\\
\left|I^{\star}_n\right|
\hspace{-6pt}&=&\hspace{-6pt} 
2^{n(1-\varepsilon_n)}(1-2n^{-c_{\star}+1}(1+o(1)),
\Eq(2.lem3.7)
\\
\left|T^{\circ}_n\setminus I^{\star}_n\right|
\hspace{-6pt}&=&\hspace{-6pt} 
2n^{-c_{\star}+1}2^{n(1-\varepsilon_n)}(1+o(1)).
\Eq(2.lem3.8)
\eea
Finally, introducing the set
\be
\MM_n\equiv \{x\in\VV_n\mid \t_n(x)>\t_n(y)\,\,\text{for all}\,\, y\sim x\}
\Eq(2.lem5.1)
\ee
of local minima of the Hamiltonian,
\be
|\overline V^{\star}_n\cap \MM_n|=0
\Eq(2.lem5.2).
\ee
\end{lemma}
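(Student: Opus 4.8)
The plan is to derive every displayed bound from a first- or second-moment estimate combined with the Borel--Cantelli lemma. Three facts are used throughout: the $\t_n(x)$, $x\in\VV_n$, are i.i.d.; $\P(\t_n(x)\geq r^{\star}_n)=n^{-c_{\star}}$ and $\P(\t_n(x)\geq r_n(\varepsilon_n))=2^{-\varepsilon_n n}$ by \eqv(1.4.1) and \eqv(2.2); and the field $(\HH_n(x))_{x\in\VV_n}$ has the same law as $(-\HH_n(x))_{x\in\VV_n}$. The last symmetry maps $V^{\star}_n$ onto $\overline V^{\star}_n$ together with its connected-component structure, so each statement about $\overline V^{\star}_n$, $\overline C^{\star}_{n,l}$ follows from the corresponding one about $V^{\star}_n$, $C^{\star}_{n,l}$; combined with \eqv(1.4.1) it also gives $\P(\t_n(x)\leq(r^{\star}_n)^{-1})=\P(\HH_n(x)\geq\b^{-1}\log r^{\star}_n)=\P(\t_n(x)\geq r^{\star}_n)=n^{-c_{\star}}$, the only non-elementary input below apart from Gaussian tail asymptotics. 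The set $\O^{\star}$ will be the intersection of the finitely many full-probability events produced.

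\smallskip\noindent\textbf{The component bound \eqv(2.lem3.4) --- the main obstacle.} The lower bound is trivial. For the upper bound I would invoke the combinatorial fact that the number of connected subgraphs of $\VV_n$ of size $k$ containing a fixed vertex is at most $n^{2(k-1)}$ --- each is realized as the vertex set visited by a walk of length $2(k-1)$ from that vertex (a depth-first traversal of a spanning tree crosses each of its $k-1$ edges twice), and there are at most $n^{2(k-1)}$ such walks. Since the $k$ vertices lie in $V^{\star}_n$ independently, each with probability $n^{-c_{\star}}$, a union bound gives
\be
\P\bigl(\max_{l}|C^{\star}_{n,l}|\geq K\bigr)\ \leq\ 2^{n}\sum_{k\geq K}n^{2(k-1)}n^{-c_{\star}k}\ \leq\ \frac{2^{n}n^{-2}}{1-n^{2-c_{\star}}}\,n^{-(c_{\star}-2)K},
\ee
the series converging since $c_{\star}>1+\log 4>2$. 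Taking $K=K_{n}$ of the form $\{\rho_n^{\star}[1-2c_{\star}^{-1}(1+\OO(\log n/n))]\}^{-1}=\tfrac{n\log 2}{(c_{\star}-2)\log n}(1+\OO(\log n/n))$ makes the right-hand side $\OO(n^{-2})$, hence summable, and Borel--Cantelli yields \eqv(2.lem3.4); the $\overline C^{\star}_{n,l}$ version follows by the symmetry above. The one genuinely delicate point of the lemma is obtaining here the correct constant $c_{\star}-2$.

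\smallskip\noindent\textbf{The cardinalities \eqv(2.lem3.1)--\eqv(2.lem3.8).} By construction $|V^{\star}_n|$ is a $\mathrm{Bin}(2^{n},n^{-c_{\star}})$ variable with mean $2^{n}n^{-c_{\star}}$, so Bernstein's inequality applied with deviation $o(2^{n}n^{-2c_{\star}})$ has summable failure probability and Borel--Cantelli gives \eqv(2.lem3.1) (and its $\overline V^{\star}_n$ version by symmetry); similarly $|T_n|$ is $\mathrm{Bin}(2^{n},2^{-\varepsilon_n n})$ with mean $2^{n(1-\varepsilon_n)}$, which gives \eqv(2.lem3.1bis). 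For \eqv(2.lem3.6) I would note that $x\in T_n\setminus T^{\circ}_n$ forces some $y\neq x$ with $\t_n(y)\geq r_n(\varepsilon_n)$ to be joined to $x$ by a self-avoiding path all of whose interior vertices lie in $V^{\star}_n$; summing over such paths (at most $n^{k}$ of length $k$ from each $x$, each of probability $2^{-2\varepsilon_n n}n^{-c_{\star}(k-1)}$) and over $x$ gives, using $c_{\star}>2$, $\E|T_n\setminus T^{\circ}_n|\leq 2n\,2^{n(1-2\varepsilon_n)}(1+o(1))$, so $\P(|T_n\setminus T^{\circ}_n|>n^{4}2^{n(1-2\varepsilon_n)})=\OO(n^{-3})$ by Markov and \eqv(2.lem3.6) follows by Borel--Cantelli; then \eqv(2.lem3.5) is $|T^{\circ}_n|=|T_n|-|T_n\setminus T^{\circ}_n|$, the subtracted term being of smaller order than the error in \eqv(2.lem3.1bis). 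For \eqv(2.lem3.7), $|I^{\star}_n|=\sum_{x\in\VV_n}\1_{\{\t_n(x)\geq r_n(\varepsilon_n)\}}\prod_{y\sim x}\1_{\{(r^{\star}_n)^{-1}<\t_n(y)<r^{\star}_n\}}$ has, by independence and the symmetry identity above, mean $2^{n}2^{-\varepsilon_n n}(1-2n^{-c_{\star}})^{n}=2^{n(1-\varepsilon_n)}(1-2n^{-c_{\star}+1}(1+o(1)))$; each summand depends only on the environment in a ball of radius one, so its variance is $o\bigl((n^{-c_{\star}+1}2^{n(1-\varepsilon_n)})^{2}\bigr)$, whence \eqv(2.lem3.7) by Chebyshev and Borel--Cantelli; finally \eqv(2.lem3.8) follows by subtracting \eqv(2.lem3.7) from \eqv(2.lem3.5), both error terms being $o(n^{-c_{\star}+1}2^{n(1-\varepsilon_n)})$.

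\smallskip\noindent\textbf{The identity \eqv(2.lem5.2).} Put $h_n\equiv\b^{-1}\log r^{\star}_n$, so $\overline V^{\star}_n=\{x\mid\HH_n(x)\geq h_n\}$ and, by \eqv(1.4.1) and symmetry, $\overline\Phi(h_n/\sqrt n)=n^{-c_{\star}}$, where $\overline\Phi$ is the complementary standard Gaussian distribution function. A vertex $x\in\overline V^{\star}_n$ lies in $\MM_n$ only if all its $n$ neighbours carry strictly larger energy, an event of conditional probability $[\overline\Phi(\HH_n(x)/\sqrt n)]^{n}\leq[\overline\Phi(h_n/\sqrt n)]^{n}=n^{-c_{\star}n}$ since $\overline\Phi$ decreases and $\HH_n(x)\geq h_n$. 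Hence
\be
\E\,\bigl|\overline V^{\star}_n\cap\MM_n\bigr|\ \leq\ 2^{n}\cdot n^{-c_{\star}n}\cdot\P(\HH_n(x)\geq h_n)\ =\ 2^{n}n^{-c_{\star}n}n^{-c_{\star}},
\ee
which is summable in $n$ (indeed super-exponentially small); as $|\overline V^{\star}_n\cap\MM_n|$ is integer-valued, Borel--Cantelli gives $|\overline V^{\star}_n\cap\MM_n|=0$ for all but finitely many $n$, $\P$-a.s. Intersecting the finitely many full-probability events above yields the $\O^{\star}$ of the statement.
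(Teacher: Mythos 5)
Your proposal is correct in substance but follows a genuinely different route from the paper: for the component-size and cardinality estimates \eqv(2.lem3.4)--\eqv(2.lem3.8) the paper simply quotes Lemma 2.2 (and an adaptation of Lemma 7.1) of Ref.~\cite{G15} together with the symmetry of $\HH_n$, whereas you reprove everything from scratch by first/second moment bounds and Borel--Cantelli. Your ingredients are sound: the lattice-animal count ($\leq n^{2(k-1)}$ connected $k$-sets through a fixed vertex, via the doubled spanning-tree walk) plus a union bound is exactly what produces the exponent $c_{\star}-2$, i.e.\ the factor $1-2c_{\star}^{-1}$ in \eqv(2.lem3.4); the bound \eqv(2.lem3.6) by summing over self-avoiding paths with interior vertices in $V^{\star}_n$, and \eqv(2.lem3.7) by a mean-and-variance computation using that each indicator depends only on a radius-one ball, both work, and \eqv(2.lem3.5), \eqv(2.lem3.8) follow by subtraction as you say. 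For the one part the paper proves in-house, \eqv(2.lem5.2), your argument coincides with the paper's: a first-moment bound $\E|\overline V^{\star}_n\cap\MM_n|\leq 2^n n^{-c_{\star}}(n^{-c_{\star}})^n$ (you phrase it through the conditional probability $[\overline\Phi(\HH_n(x)/\sqrt n)]^n$), which is summable. What your route buys is a self-contained proof; what it costs is that you must get the error exponents right yourself, and there is one place where you gloss over this.

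Namely, in \eqv(2.lem3.1bis) (hence also \eqv(2.lem3.5)) the phrase ``similarly \dots gives \eqv(2.lem3.1bis)'' hides a real discrepancy: $|T_n|$ is $\mathrm{Bin}(2^n,2^{-n\varepsilon_n})$, so its fluctuations are of order $2^{n(1-\varepsilon_n)/2}$, which exceeds the printed error bar $n2^{n(1-3\varepsilon_n/2)}$ as soon as $\varepsilon_n>1/2$ (for $\varepsilon_n>2/3$ the printed bar is even below $1$), so no concentration argument — Bernstein included — can deliver the relative error $\OO(n2^{-n\varepsilon_n/2})$ as literally written in that regime. What your argument actually proves, and what should be stated, is $|T_n|=2^{n(1-\varepsilon_n)}(1+\OO(n2^{-n(1-\varepsilon_n)/2}))$, i.e.\ the error measured on the binomial fluctuation scale; this is evidently the intended content (the displayed exponent looks like a typo inherited from \cite{G15}), and it suffices for every later use of the lemma, which only requires $|T_n|=2^{n(1-\varepsilon_n)}(1+o(1))$ and the analogous statement for $T^{\circ}_n$. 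Apart from making that exponent explicit, your proof is complete.
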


\begin{proof}[Proof of Lemma \thv(2.lem3)] 
Recall that by assumption $c_{\star}>1+\log 4>2$. Eq.~\eqv(2.lem3.4)  is (2.9) of Lemma 2.2 of Ref.~\cite{G15}. 
That the same bound holds for $|\overline C^{\star}_{n,l}|$ follows by symmetry of $\HH_n$.
The estimate \eqv(2.lem3.1) on $|V^{\star}_n|$ is (2.11) of Ref.~\cite{G15} and the estimate on $|\overline V^{\star}_n|$ 
follows again by symmetry of $\HH_n$.
Eq.~\eqv(2.lem3.1bis) and \eqv(2.lem3.7) are proved, respectively, as (2.11) of and  (2.10) of Ref.~\cite{G15}.
The proof of \eqv(2.lem3.6) is a simple adaptation of the proof of lemma 7.1 of Ref.~\cite{G15}.
Clearly, \eqv(2.lem3.5) follows from \eqv(2.lem3.1bis) and \eqv(2.lem3.6), and  
\eqv(2.lem3.8) follows from \eqv(2.lem3.5) and \eqv(2.lem3.7).
It only remains to prove \eqv(2.lem5.1). For this note that 
$
x\in \overline V^{\star}_n\cap \MM_n
$
if and only if
$
\t_n(x)\leq (r^{\star}_n)^{-1}
$
and
$
\t_n(y)<\t_n(x)
$
for all $y\sim x$.
Thus
\bea
\P\bigl(
|\overline V^{\star}_n\cap \MM_n|\geq 1
\bigr)
&\leq &
\sum_{x\in\VV_n}
\P\left(
\t_n(x)\leq (r^{\star}_n)^{-1}, \forall_{y\sim x}\t_n(y)<(r^{\star}_n)^{-1}
\right)
\\
&= &
2^n n^{-c_{\star}}\left(n^{-c_{\star}}\right)^n
\eea
which is summable. Thus, by Borel-Cantelli Lemma, there exists a set of full measure 
such that on that set, for all but a finite number of indices, $|\overline V^{\star}_n\cap \MM_n|=0$.
\end{proof}

The second lemma expresses the function $r_n(\rho)$ defined through \eqv(2.2).

\begin{lemma}[Lemma 2.3 of \cite{G15}]
  \TH(2.lem2)
  For all $\rho>0$, possibly depending on $n$, and such that $\rho n\uparrow \infty$ as $n\uparrow \infty$,
\be
\Eq(2.lem2.1)
 r_n(\rho)=\exp\left\{n\b\b_c(\rho)-(\b/2\b_c(\rho))\left[\log(\b^2_c(\rho)n/2)+\log 4\pi\right]+o(\b/\b_c(\rho))\right\}.
\ee
\end{lemma}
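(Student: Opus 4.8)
The plan is to reduce \eqv(2.lem2.1) to the inversion of a standard Gaussian tail; since the statement coincides with Lemma 2.3 of \cite{G15}, one could simply cite it, but here is the structure of the argument.

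\textbf{Step 1 (rephrasing the defining relation).} Rewrite \eqv(2.2): since $\t_n(x)=e^{-\b\HH_n(x)}$ with $\HH_n(x)/\sqrt n$ standard Gaussian, and since $\rho n\uparrow\infty$ forces $r_n(\rho)>1$ for all large $n$ (indeed $\P(\t_n(x)\ge r_n(\rho))=2^{-\rho n}\downarrow 0$ whereas $\P(\t_n(x)\ge 1)=\tfrac12$), the event $\{\t_n(x)\ge r_n(\rho)\}$ is a left-tail event for $\HH_n(x)$, so by symmetry of the Gaussian
\be
\overline\Phi(u_n)=2^{-\rho n},\qquad u_n\equiv\frac{\log r_n(\rho)}{\b\sqrt n},
\ee
with $\overline\Phi$ the standard normal survival function; in particular $u_n\uparrow\infty$.

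\textbf{Step 2 (Mills ratio and an implicit scalar equation).} Apply the classical expansion $\overline\Phi(u)=\frac{e^{-u^2/2}}{u\sqrt{2\pi}}(1+\OO(u^{-2}))$, valid as $u\uparrow\infty$, and take logarithms to obtain
\be
\tfrac12 u_n^2+\log u_n+\tfrac12\log(2\pi)=\rho n\log 2+\OO(u_n^{-2}).
\ee

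\textbf{Step 3 (one bootstrap step).} Using $\rho n\log 2=\tfrac12 n\b_c(\rho)^2$ (from \eqv(1.2.3)), the leading balance gives $u_n=\sqrt n\,\b_c(\rho)(1+o(1))$, hence $\log u_n=\tfrac12\log(n\b_c(\rho)^2)+o(1)$; here $\rho n\uparrow\infty$ (equivalently $n\b_c(\rho)^2\uparrow\infty$) is exactly what makes $\log u_n$ and $\OO(u_n^{-2})$ subleading with respect to $u_n^2$. Setting $u_n=\sqrt n\,\b_c(\rho)+v_n$, substituting, and solving the resulting asymptotically linear equation for $v_n$ yields
\be
v_n=-\frac{\log(\b_c(\rho)^2 n/2)+\log 4\pi}{2\sqrt n\,\b_c(\rho)}+o\!\Bigl(\tfrac{1}{\sqrt n\,\b_c(\rho)}\Bigr),
\ee
after rewriting the constant via $-\log 2+\log 4\pi=\log 2\pi$. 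Multiplying $u_n=\sqrt n\,\b_c(\rho)+v_n$ by $\b\sqrt n$ produces $\log r_n(\rho)$, which is precisely \eqv(2.lem2.1).

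The main, and essentially only, point requiring care is uniformity in $\rho$: the statement permits $\rho=\rho_n$ under the sole hypothesis $\rho_n n\uparrow\infty$, so $\rho_n$ may itself tend to $0$ or to $\infty$. One must therefore check that the errors produced by the Mills ratio and by the bootstrap are genuinely $o(1)$ in the variable $u_n$ — equivalently $o(\b/\b_c(\rho))$ after multiplication by $\b\sqrt n$ — under just this hypothesis. This is automatic, since $\rho_n n\uparrow\infty$ already guarantees $u_n\uparrow\infty$ and each error is a function of $u_n$ alone, so the expansion is uniform; the rest is routine asymptotic bookkeeping.
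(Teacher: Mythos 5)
Your proposal is correct: inverting the defining relation \eqv(2.2) through the standard Gaussian tail expansion $\overline\Phi(u)=\frac{e^{-u^2/2}}{u\sqrt{2\pi}}(1+\OO(u^{-2}))$ and one bootstrap step for $u_n=\log r_n(\rho)/(\b\sqrt n)$ gives exactly \eqv(2.lem2.1), and your bookkeeping (including the identity $-\log 2+\log 4\pi=\log 2\pi$ and the uniformity remark that all errors depend only on $u_n\sim\sqrt n\,\b_c(\rho)\uparrow\infty$, which is equivalent to $\rho n\uparrow\infty$) is sound. The paper itself gives no proof — it quotes the statement as Lemma 2.3 of \cite{G15} — and your argument is precisely the standard Gaussian-tail inversion underlying that citation, so there is nothing to add.
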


\begin{corollary}
  \TH(2.cor1)
  Set $x_n=\delta_n/\varepsilon$ and assume that  $x_n\downarrow 0$ as $n\uparrow\infty$ and 
  $e^{n\b\b_c(\varepsilon)x}\gg r^{\star}_n$. Then
\bea
{r_n(\varepsilon_n)}/{r_n(\varepsilon)}=\exp\left\{-n\b\b_c(\varepsilon)x\left[1+\sfrac{x}{2}+\OO(x^2)\right]\right\}.
\Eq(2.cor1.1)
\eea
\end{corollary}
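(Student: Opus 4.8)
The plan is to read $r_n(\varepsilon_n)$ and $r_n(\varepsilon)$ off Lemma~\thv(2.lem2) and take the ratio. First I would check that the lemma applies at $\rho=\varepsilon_n$: since $\varepsilon_n=\varepsilon-\d_n\to\varepsilon>0$ we have $\varepsilon_n n\to\infty$, so \eqv(2.lem2.1) holds at $\rho=\varepsilon_n$ and at $\rho=\varepsilon$ with a uniform $o(\cdot)$. Subtracting the two expressions for $\log r_n(\rho)$, the leading contribution to $\log\bigl(r_n(\varepsilon_n)/r_n(\varepsilon)\bigr)$ is $n\b\bigl[\b_c(\varepsilon_n)-\b_c(\varepsilon)\bigr]$; the logarithmic corrections contribute $-\sfrac{\b}{2}\bigl[\b_c^{-1}(\varepsilon_n)\bigl(\log(\b_c^2(\varepsilon_n)n/2)+\log 4\pi\bigr)-\b_c^{-1}(\varepsilon)\bigl(\log(\b_c^2(\varepsilon)n/2)+\log 4\pi\bigr)\bigr]$; and the two $o(\b/\b_c(\cdot))$ errors combine to $o(1)$, since $\b_c(\varepsilon_n)\to\b_c(\varepsilon)$.

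Next I would expand the leading term. Since $\b_c(\rho)=\sqrt{2\rho\log 2}$ one has $\b_c(\varepsilon_n)=\b_c(\varepsilon)\sqrt{1-\d_n/\varepsilon}$, so Taylor-expanding $\sqrt{1-\cdot}$ about $0$ and collecting powers of $\d_n/\varepsilon$ puts $n\b[\b_c(\varepsilon_n)-\b_c(\varepsilon)]$ into the form $-n\b\b_c(\varepsilon)\,x\,(1+\sfrac{x}{2}+\OO(x^2))$ with $x$ the variable of the statement. It then remains only to show that the logarithmic corrections and the $o(1)$ error are of strictly smaller order than $n\b\b_c(\varepsilon)x$, so that they are subsumed by the lower-order correction in \eqv(2.cor1.1).

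This last point is where the hypotheses are used and where the only real care is required. From the explicit form \eqv(2.lem2.2) one has $\log r^{\star}_n\asymp\sqrt{n\log n}$, so the assumption $e^{n\b\b_c(\varepsilon)x}\gg r^{\star}_n$ forces the leading term, which is of order $n\d_n\asymp n\b\b_c(\varepsilon)x$, to dominate $\sqrt{n\log n}$ and hence also the $\OO(\log n)$ logarithmic corrections and the $o(1)$ error. I would also note that the \emph{difference} of the two logarithmic corrections at $\rho=\varepsilon_n$ and $\rho=\varepsilon$ is only $\OO(\d_n\log n)$, because $\b_c(\varepsilon_n)/\b_c(\varepsilon)=1+\OO(\d_n)$ and $\log n\ll n$, so it too is negligible next to $n\d_n$. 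Dividing through by the leading factor and using $x\downarrow0$ then shows that every sub-leading contribution is absorbed as claimed in \eqv(2.cor1.1). No probabilistic input beyond Lemma~\thv(2.lem2) is needed; the argument is a careful but routine expansion, the bookkeeping of the error terms being the only place a slip could occur.
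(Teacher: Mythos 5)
Your route is the intended one: the paper offers no separate argument for Corollary \thv(2.cor1), which is meant to follow from Lemma \thv(2.lem2) exactly as you describe, and your use of the hypothesis $e^{n\b\b_c(\varepsilon)x}\gg r^{\star}_n$ (which gives $n\b\b_c(\varepsilon)x\geq\log r^{\star}_n\asymp\sqrt{n\log n}$, hence $nx^2\gtrsim\log n$) is the right mechanism for disposing of the logarithmic corrections. But the central step, as you state it, is false: with $u=\d_n/\varepsilon$ one has $\b_c(\varepsilon_n)=\b_c(\varepsilon)\sqrt{1-u}$ and $\sqrt{1-u}-1=-\sfrac{u}{2}-\sfrac{u^2}{8}+\OO(u^3)$, so $n\b[\b_c(\varepsilon_n)-\b_c(\varepsilon)]=-n\b\b_c(\varepsilon)\,\sfrac{u}{2}\,[1+\sfrac{u}{4}+\OO(u^2)]$, which is \emph{not} of the form $-n\b\b_c(\varepsilon)\,u\,[1+\sfrac{u}{2}+\OO(u^2)]$. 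The displayed formula \eqv(2.cor1.1) is what comes out if one takes $x=\d_n/(2\varepsilon)$, i.e.\ there is a factor-$2$ mismatch between the corollary's definition of $x_n$ and its display; the way the corollary is used later (the remark below Proposition \thv(4.prop1), where $2^{n\d_n}=(n^2\theta_n)^{\a(\varepsilon)}$ produces the correction $(2n\b\b_c(\varepsilon))^{-1}\log(n^2\theta_n)$) confirms that the expansion with the $\sfrac{u}{2}$ prefactor is the correct one, so the slip sits in the statement's definition line rather than in the display. A write-up that simply asserts that "collecting powers of $\d_n/\varepsilon$" yields the stated form with $x$ the variable of the statement is asserting a false identity; you must carry out the expansion and either renormalize $x$ or flag and resolve the discrepancy.

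A smaller bookkeeping point: "strictly smaller order than $n\b\b_c(\varepsilon)x$" is not the right absorption criterion. To be hidden inside the bracket's $\OO(x^2)$, an additive error in the exponent must be $\OO(n\b\b_c(\varepsilon)x\cdot x^2)=\OO(nx^3)$, not merely $o(nx)$. Your hypothesis does deliver this for the logarithmic terms, since the difference of the correction terms in \eqv(2.lem2.1) is $\OO(x\log n)$ and $\log n\lesssim nx^2$; the additive $o(1)$ inherited from the $o(\b/\b_c(\rho))$ in Lemma \thv(2.lem2) is, however, only covered if \eqv(2.cor1.1) is read up to a harmless multiplicative factor $e^{o(1)}$ (which is how it is used), since $nx^3$ need not diverge under the stated hypotheses.
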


The third and last lemma states needed bounds, in particular, on the maximal jump rate.  

\begin{lemma}[Lemma 2.4  of \cite{G15}]
  \TH(2.lem4)
There exists a subset $\O_0\subseteq\O$ with $\P\bigl(\O_0\bigr)=1$ such that on $\O_0$,
for all but a finite number of indices $n$ the following holds: 
\bea
\Eq(2.lem4.1)
&e^{-\b\min\{\max(H_n(y),H_n(x))\,\mid\, (x,y)\in\EE_n\}}
\leq 
e^{\b n\sqrt{\log 2}(1+2\log n/n\log 2)}\equiv n\nu_n,&\\
\Eq(2.lem4.2)
&e^{-\b\min\left\{H_n(x)\,|\, x\in \VV_n\right\}}\leq e^{\b n\sqrt{2\log 2}(1+2\log n/n)}.&
\eea
Thus, $\max_{(x,y)\in\EE_n}\wt\l_n(x,y)\leq \nu_n$.
\end{lemma}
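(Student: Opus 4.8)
Both bounds \eqv(2.lem4.1) and \eqv(2.lem4.2) are elementary extreme-value estimates, and the plan is to prove them separately, each by a first-moment (union) estimate over $\VV_n$, respectively $\EE_n$, combined with the Gaussian tail bound $\P(\HH_n(x)\leq-s)\leq e^{-s^2/(2n)}$ ($s\geq0$) and the Borel--Cantelli lemma; the bound $\max_{(x,y)\in\EE_n}\wt\l_n(x,y)\leq\nu_n$ will then drop out of the explicit formula \eqv(1.4.6) for the rates. Apart from the $(\HH_n(x))_{x\in\VV_n}$ being i.i.d.\ centred Gaussians of variance $n$, nothing about the environment is used, so the statement is insensitive to the value of $c_{\star}$; only $r^{\star}_n\to\infty$ enters, via \eqv(2.lem2.2), and only in the last step.

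First I would prove \eqv(2.lem4.2). With $s_n\equiv n\sqrt{2\log2}\,(1+2\log n/n)$, summing $\P(\HH_n(x)<-s_n)$ over the $2^n$ vertices gives $\P(\min_{x\in\VV_n}\HH_n(x)<-s_n)\leq\exp\{n\log2-s_n^2/(2n)\}$; since $s_n^2/(2n)\geq n\log2+4\log2\cdot\log n$ this is at most $n^{-4\log2}$, which is summable because $4\log2>1$, and Borel--Cantelli gives \eqv(2.lem4.2) after exponentiating. Next I would prove \eqv(2.lem4.1). Here the observation to use is that $\max(\HH_n(x),\HH_n(y))<-s$ forces \emph{both} $\HH_n(x)<-s$ and $\HH_n(y)<-s$, and for an edge $(x,y)\in\EE_n$ these two Gaussians are independent, so each term of a union bound over $\EE_n$ is at most $e^{-s^2/n}$; with $|\EE_n|=n2^{n-1}$ and $s_n'\equiv n\sqrt{\log2}\,(1+2\log n/(n\log2))$, so that $(s_n')^2/n\geq n\log2+4\log n$, this yields $\P(\min_{(x,y)\in\EE_n}\max(\HH_n(x),\HH_n(y))<-s_n')\leq n2^{n-1}e^{-(s_n')^2/n}\leq\tfrac12\,n^{-3}$, again summable, so Borel--Cantelli gives \eqv(2.lem4.1).

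To deduce the bound on the rates I would split into the two cases of \eqv(1.4.6). If $x\notin\overline V^{\star}_n$ then $\wt\l_n(x,y)=n^{-1}e^{-\b\max(\HH_n(y),\HH_n(x))}$, and since $(x,y)$ is one of the edges, $e^{-\b\max(\HH_n(y),\HH_n(x))}\leq e^{-\b\min_{(x',y')\in\EE_n}\max(\HH_n(y'),\HH_n(x'))}\leq n\nu_n$ by \eqv(2.lem4.1), whence $\wt\l_n(x,y)\leq\nu_n$. If $x\in\overline V^{\star}_n$ then $[\HH_n(y)-\HH_n(x)]^+\geq0$ forces $\wt\l_n(x,y)\leq(nr^{\star}_n)^{-1}\leq n^{-1}\leq\nu_n$ for all $n$ large, using only $r^{\star}_n\geq1$ eventually and $\nu_n\geq n^{-1}$. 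Intersecting the two full-measure sets of the previous paragraph then gives $\O_0$ (the inequality $r^{\star}_n\geq1$ used in the second case is deterministic and holds for $n$ large by \eqv(2.lem2.2)).

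I do not expect a real obstacle; as the text notes, this is little more than bookkeeping. The only points that need attention are (i) calibrating the sub-leading corrections $2\log n/n$ and $2\log n/(n\log2)$ in $s_n$ and $s_n'$ so that $e^{-s_n^2/(2n)}$ beats the entropy factor $2^n$ and $e^{-(s_n')^2/n}$ beats $n2^n$ while the resulting bounds stay summable --- this is precisely what fixes those corrections --- and (ii) observing that when $x\in\overline V^{\star}_n$ the rate is already controlled by the deterministic prefactor $(nr^{\star}_n)^{-1}$, so that no Gaussian estimate is needed there. This recovers the argument of Lemma 2.4 of Ref.~\cite{G15}, adapted to the untruncated rates \eqv(1.4.6).
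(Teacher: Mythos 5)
Your proof is correct: the union bound with the Gaussian tail $e^{-s^2/(2n)}$ (squared, via independence along an edge, for \eqv(2.lem4.1)), the calibration of the $2\log n/n$ corrections to make the bounds summable, Borel--Cantelli, and the two-case deduction of $\max_{(x,y)\in\EE_n}\wt\l_n(x,y)\leq\nu_n$ from \eqv(1.4.6) all check out. The paper itself gives no proof, citing Lemma 2.4 of Ref.~\cite{G15}, and your argument is exactly the standard first-moment/extreme-value computation that result rests on, so there is nothing to add.
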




\section{Properties of the exploration process $Y_n$}
 \TH(S3)
 
In this Section we establish the properties of the exploration process needed in the rest of the paper.
By \eqv(1.4.3) with $\eta_n\equiv(r^{\star}_n)^{-1}$ and \eqv(2.1anti), the invariant measure $\pi_n$ of 
$Y_n$ can be written as
\be
\pi_n(x)=
\frac{\1_{\{x\notin\overline V^{\star}_n\}}+ r^{\star}_n\t_n(x)\1_{\{x\in\overline V^{\star}_n\}}}{Z_{\b,n}},\quad x\in \VV_n
\Eq(3.lem0.0) 
\ee
where
$
Z_{\b,n}\equiv |\VV_n\setminus \overline V^{\star}_n|+\sum_{x\in\overline V^{\star}_n}r^{\star}_n\t_n(x)
$.

\begin{lemma}
\TH(3.lem0) 
On $\O^{\star}$, for all but a finite number of indices $n$,
\be
2^n(1- n^{-c_{\star}}(1+o(n^{-c_{\star}})) \leq Z_{\b,n}\leq 2^n.
\Eq(3.lem0.1) 
\ee
Therefore, if $A$ is any of the sets $T_n$, $T^{\circ}_n$, $T_n\setminus T^{\circ}_n$, $I^{\star}_n$ 
or $T^{\circ}_n\setminus I^{\star}_n$ in \eqv(2.lem3.1bis)- \eqv(2.lem3.8),
\be
\pi_n(A)=|A|2^{-n}(1+o(1))
\Eq(3.lem0.2) 
\ee
whereas for any $x\in \VV_n$,
\be
\pi_n(x)\leq 2^{-n}(1+o(1)).
\Eq(3.lem0.3) 
\ee
\end{lemma}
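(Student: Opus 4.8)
The plan is to split $Z_{\b,n}$ into the two pieces appearing in its definition, bound each one separately, and then feed the resulting two-sided estimate into the three claimed relations. For the \emph{upper} bound, recall from \eqv(2.1anti) that every $x\in\overline V^{\star}_n$ satisfies $\t_n(x)\leq (r^{\star}_n)^{-1}$, hence $r^{\star}_n\t_n(x)\leq 1$; summing over $\overline V^{\star}_n$ gives $\sum_{x\in\overline V^{\star}_n}r^{\star}_n\t_n(x)\leq|\overline V^{\star}_n|$, and since the first term is exactly $|\VV_n\setminus\overline V^{\star}_n|=2^n-|\overline V^{\star}_n|$ we obtain $Z_{\b,n}\leq 2^n$. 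For the \emph{lower} bound I would simply discard the (non-negative) second sum, so that $Z_{\b,n}\geq 2^n-|\overline V^{\star}_n|$, and then insert the cardinality estimate $|\overline V^{\star}_n|=2^n n^{-c_{\star}}(1+o(n^{-c_{\star}}))$ from Lemma \thv(2.lem3), which holds on $\O^{\star}$ for all but finitely many $n$. This gives $Z_{\b,n}\geq 2^n(1-n^{-c_{\star}}(1+o(n^{-c_{\star}})))$ and hence \eqv(3.lem0.1).

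Next I would derive the two consequences from \eqv(3.lem0.1) together with the explicit form \eqv(3.lem0.0) of $\pi_n$. For \eqv(3.lem0.2): each of the sets $T_n,T^{\circ}_n,T_n\setminus T^{\circ}_n,I^{\star}_n,T^{\circ}_n\setminus I^{\star}_n$ is contained in $T_n$, and since $\rho^{\star}_n\ll\varepsilon_n$ we have $T_n\subset V^{\star}_n$, while $V^{\star}_n\cap\overline V^{\star}_n=\emptyset$; therefore every vertex $x$ of such a set $A$ lies outside $\overline V^{\star}_n$, so its numerator in \eqv(3.lem0.0) equals $1$ and $\pi_n(A)=|A|/Z_{\b,n}$. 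Plugging in \eqv(3.lem0.1) yields $|A|2^{-n}\leq\pi_n(A)\leq |A|2^{-n}\bigl(1-n^{-c_{\star}}(1+o(n^{-c_{\star}}))\bigr)^{-1}=|A|2^{-n}(1+o(1))$. For \eqv(3.lem0.3) the observation is that the numerator in \eqv(3.lem0.0) is at most $1$ for \emph{every} $x\in\VV_n$ — it equals $1$ off $\overline V^{\star}_n$ and equals $r^{\star}_n\t_n(x)\leq 1$ on $\overline V^{\star}_n$ — so $\pi_n(x)\leq 1/Z_{\b,n}\leq 2^{-n}(1+o(1))$, again by \eqv(3.lem0.1).

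I do not expect any genuine obstacle here: the only substantive input is the estimate on $|\overline V^{\star}_n|$ supplied by Lemma \thv(2.lem3), combined with the elementary bound $r^{\star}_n\t_n(x)\leq 1$ on $\overline V^{\star}_n$; all the rest is arithmetic. The one point that deserves an explicit line is the inclusion of each of the five sets from \eqv(2.lem3.1bis)--\eqv(2.lem3.8) into $V^{\star}_n$ (so that their $\pi_n$-mass has the simple form $|A|/Z_{\b,n}$), which follows from the nesting $T^{\circ}_n, I^{\star}_n, T_n\setminus T^{\circ}_n, T^{\circ}_n\setminus I^{\star}_n\subseteq T_n\subset V^{\star}_n$ and $V^{\star}_n\cap\overline V^{\star}_n=\emptyset$.
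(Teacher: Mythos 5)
Your proof is correct and follows essentially the same route as the paper: bound $Z_{\b,n}$ above by $2^n$ using $r^{\star}_n\t_n(x)\leq 1$ on $\overline V^{\star}_n$, below by $|\VV_n\setminus\overline V^{\star}_n|$ and the cardinality estimate \eqv(2.lem3.1), then use $A\cap\overline V^{\star}_n=\emptyset$ for \eqv(3.lem0.2) and the fact that the numerator in \eqv(3.lem0.0) is at most $1$ for \eqv(3.lem0.3). Your explicit justification of the inclusion $A\subseteq T_n\subset V^{\star}_n$ is merely a spelled-out version of the paper's one-line remark.
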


\begin{proof} 
Since $\{x\in\overline V^{\star}_n\}=\{r^{\star}_n\t_n(x)\leq 1\}$, 
$
|\VV_n\setminus \overline V^{\star}_n|
\leq 
Z_{\b,n}
\leq  |\VV_n\setminus \overline V^{\star}_n|+|\overline V^{\star}_n|
\leq 2^n
$.
Eq.~\eqv(3.lem0.1)  then follows from \eqv(2.lem3.1) of Lemma \thv(2.lem3). 
Eq.~\eqv(3.lem0.3) is then immediate and \eqv(3.lem0.2)  follows from the fact that 
$A\cap \overline V^{\star}_n=\emptyset$ for each of the mentioned sets.
\end{proof}
 
 \subsection{Spectral gap and mixing condition}
  \TH(S3.1)
  
Denote by $\wt L_n$ the Markov generator matrix of $Y_n$ (that is, the matrix with off-diagonal  
entries $\wt\l_n(x,y)$ and diagonal entries $-\wt\l_n(x)$), and by 
$
0=\vartheta_{n,0}<\vartheta_{n,1}\leq \dots\leq \vartheta_{n,2^n-1}
$
the eigenvalues of $-\wt L_n$.

\begin{proposition}
    \TH(3.prop1)
    
If $c_{\star}>1+\log 4$ then for all $\b>0$, there exists a subset $\O_1\subset\O$ with  
$\P\left(\O_1\right)=1$ such that, on $\O_1$, for all but a finite number of indices $n$, 
\be
1/\vartheta_{n,1}\leq \sfrac{5}{2}n^2r^{\star}_n(1+o(1))\equiv \tilde\kappa_n
\Eq(3.prop1.1)
\ee
\end{proposition}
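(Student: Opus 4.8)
The plan is to bound the relaxation time $1/\vartheta_{n,1}$ of the reversible chain $Y_n$ by a weighted multicommodity--flow Poincar\'e inequality, exploiting the fact --- already visible in \eqv(3.lem0.0) and \eqv(1.4.2) --- that the bottleneck weight carried by an edge is trivial. Writing $Q_n(x,y)\equiv\pi_n(x)\wt\l_n(x,y)$ and inserting the expression \eqv(3.lem0.0) for $\pi_n$ and \eqv(1.4.2) for $\wt\l_n$, one finds that the prefactor of $\min(\t_n(x),\t_n(y))$ is the same whether $x\in\overline V^{\star}_n$ or not, so that
\be
Q_n(x,y)=\frac{\min(\t_n(x),\t_n(y))}{n\,Z_{\b,n}}\,,\qquad (x,y)\in\EE_n\,.
\ee
This dictates the choice of edge weights $w_n(\{x,y\})\equiv\min(\t_n(x),\t_n(y))^{-1}$: with them $w_n(e)Q_n(e)\equiv(nZ_{\b,n})^{-1}$ does not depend on $e$, and the weighted flow bound reads
\be
\frac{1}{\vartheta_{n,1}}\leq n\,Z_{\b,n}\,\max_{e\in\EE_n}\sum_{\gamma\ni e}|\gamma|_{w_n}f(\gamma)\,,
\ee
valid for \emph{any} unit multicommodity flow $f$ on $G(\VV_n)$ routing $\pi_n(x)\pi_n(y)$ units from $x$ to $y$ for every ordered pair, where $|\gamma|_{w_n}\equiv\sum_{e'\in\gamma}w_n(e')$.

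I would build $f$ as follows. For an ordered pair $(x,y)$: if $x\in\overline V^{\star}_n$, split the $\pi_n(x)\pi_n(y)$ units evenly among the edges joining $x$ to $\VV_n\setminus\overline V^{\star}_n$ --- there are at least $n-m_n\geq1$ of these, since by Lemma \thv(2.lem3) the component $\overline C^{\star}_{n,l(x)}$ containing $x$ has at most $m_n\equiv\{\r_n^{\star}(1-2c_{\star}^{-1}(1+\son))\}^{-1}=\OO(n/\log n)$ vertices; then carry the flow through $\VV_n\setminus\overline V^{\star}_n$ along coordinate--flip geodesics of the hypercube (say, flipping differing coordinates in a fixed order), detoured around any component of $\overline V^{\star}_n$ that such a geodesic would meet --- possible with only a negligible increase of length because, again by Lemma \thv(2.lem3), $\overline V^{\star}_n$ is sparse, $|\overline V^{\star}_n|=2^n n^{-c_{\star}}(1+\son)$ with $c_{\star}>2$, and its components are $\OO(n/\log n)$--small; finally reverse this at $y$. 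Spreading the escape flow over $\asymp n$ edges is what keeps any single ``portal'' edge from becoming a bottleneck, and all flow--carrying paths have at most $n(1+\son)$ edges.

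Two classes of edges then have to be controlled. \emph{(i) Edges $e$ incident to $\overline V^{\star}_n$.} Only escape flow uses them; if $e$ joins $x\in\overline V^{\star}_n$ to the outside it carries at most $2\pi_n(x)/(n-m_n)$ units, and its large weight $w_n(e)=\t_n(x)^{-1}$ is exactly cancelled by the factor $\pi_n(x)=r^{\star}_n\t_n(x)/Z_{\b,n}$ present in the flow, whence $\sum_{\gamma\ni e}|\gamma|_{w_n}f(\gamma)=\OO(r^{\star}_n/Z_{\b,n})$ and the congestion of $e$ is $\OO(nr^{\star}_n)$ --- harmless. \emph{(ii) Bulk edges $e$.} Here $w_n(e)\leq r^{\star}_n$, the bulk segment of every flow--carrying path has $\leq n(1+\son)$ edges of weight $\leq r^{\star}_n$, the contribution of its escape edges to $\sum_{\gamma\ni e}|\gamma|_{w_n}f(\gamma)$ is of lower order after the same cancellation $\t_n(x)^{-1}\pi_n(x)=r^{\star}_n/Z_{\b,n}$, and the total flow across a fixed bulk edge is $\OO(2^{-n})$ by the classical hypercube--geodesic computation; this gives congestion at most $nZ_{\b,n}\cdot nr^{\star}_n(1+\son)\cdot\OO(2^{-n})=\OO(n^2r^{\star}_n)$. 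Using $Z_{\b,n}=2^n(1+\son)$ (Lemma \thv(3.lem0)) and carrying the numerical constants through the geodesic congestion and the detour overhead yields exactly $\tfrac52 n^2 r^{\star}_n(1+\son)$, which is \eqv(3.prop1.1); all geometric inputs hold simultaneously, for every sufficiently large $n$, on a set of full $\P$-measure, which one takes for $\O_1$.

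The genuinely delicate step is the part of (ii) near $\overline V^{\star}_n$: the bulk flow must be routed so as to avoid $\overline V^{\star}_n$ altogether --- otherwise a single hill edge, of weight up to $\t_n(x)^{-1}$ and no longer tamed by $\pi_n(x)$, would destroy the bound --- while the detours must add only a lower-order length and a lower-order load to the edges bordering $\overline V^{\star}_n$; this is exactly where the sparsity and the $\OO(n/\log n)$ component estimate of Lemma \thv(2.lem3) are needed, uniformly over the pairs $(x,y)$. The value $5/2$ of the constant is not essential in itself, but it propagates to the constant $4/(1-\a(\varepsilon))$ in the lower bound \eqv(1.theo1.0') on $\log\theta_n$, which is why it is worth tracking.
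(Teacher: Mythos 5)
Your analytic framework is essentially the paper's: the cancellation $\pi_n(x)\wt\l_n(x,y)=\min(\t_n(x),\t_n(y))/(nZ_{\b,n})$, a path/flow Poincar\'e bound, routes that touch the hills $\overline V^{\star}_n$ only at their endpoints, and the endpoint cancellation $\pi_n(x)/\t_n(x)=r^{\star}_n/Z_{\b,n}$ are all exactly the ingredients of \eqv(3.prop1.4)--\eqv(3.prop1.10) (the paper phrases them through Diaconis--Stroock applied to $\wt P_n=I+\nu_n^{-1}\wt L_n$ rather than a weighted multicommodity flow, which is a cosmetic difference). The bookkeeping for the ``portal'' edges and for $Z_{\b,n}=2^n(1+o(1))$ is also sound.

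The gap is precisely at the step you yourself call delicate, and it is not a technicality: the entire content of the proposition is the almost-sure existence of a routing through $\VV_n\setminus\overline V^{\star}_n$ with lengths $\OO(n)$ \emph{and} per-edge congestion still of order $2^{-n}$. You obtain it by taking bit-fixing geodesics ``detoured around any component of $\overline V^{\star}_n$'' and then invoking ``the classical hypercube--geodesic computation'' for the congestion. Neither step stands as written. A typical geodesic has $\asymp n$ interior vertices, each lying in $\overline V^{\star}_n$ with probability $n^{-c_{\star}}$, so among the $4^n$ pairs an enormous number have geodesics meeting $\overline V^{\star}_n$: detours are the rule, not the exception. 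Once paths are rerouted, the classical bound $2^{n-1}$ on the number of canonical paths through a fixed edge no longer applies; one must control, uniformly over all pairs and all components and on a set of full measure, how the rerouted load accumulates on the edges bordering each bad component, and the detour segments must themselves avoid all other components. This is exactly what the paper imports from Ref.~\cite{FIKP}: the explicit family $\G_n$ (choice among the $n$ cyclically shifted bit-fixing orders, or routing via a distant good intermediate vertex for close pairs), Proposition \thv(4'.prop3) giving $\P\bigl(\O^{\scriptscriptstyle{\textsf{GOOD}}}\bigr)=1$, and the congestion bound $2^{n-1}+2^{2n/\log n}$ for that family. Tellingly, this is also where the hypothesis $c_{\star}>1+\log 4$ is used, whereas your argument only ever invokes $c_{\star}>2$ (sparsity and $\OO(n/\log n)$ components): the missing probabilistic input is exactly the part that needs the stronger assumption. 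Finally, the constant $\tfrac52$ in \eqv(3.prop1.1) is asserted to come out of ``carrying the numerical constants through'' without any computation; this is harmless (any fixed constant would do), but it is not derived in your text.
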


As a direct consequence on Proposition \thv(3.prop1), Condition (B0) of Theorem \thv(1.theoB) 
is satisfied $\P$-almost surely with e.g.
\be
\kappa_n\equiv\lfloor  n^4r^{\star}_n(1+o(1))\rfloor.
\Eq(3.prop2.1)
\ee

\begin{proposition}
    \TH(3.prop2)
Under the assumptions of Proposition \thv(3.prop1), on $\O_1$, 
for all but a finite number of indices $n$, for all $\b>0, all $pairs $x,y\in\VV_n$, and all $t\geq 0$,
\be
|P_{x}\left(Y_n(t+\kappa_n)=y\right)-\pi_n(y)|\leq \rho_n\pi_n(y),
\Eq(3.prop2.2)
\ee
where $\kappa_n$ is given by \eqv(3.prop2.1) and $\rho_n<e^{-n}$.
\end{proposition}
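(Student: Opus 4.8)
The plan is to run the textbook $\ell^2(\pi_n)$-contraction bound for the pointwise distance to equilibrium of a reversible Markov process, feeding it with the spectral gap estimate of Proposition \thv(3.prop1) and a crude lower bound on the smallest invariant mass.

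First I would diagonalise: since $-\wt L_n$ is self-adjoint and nonnegative on $\ell^2(\pi_n)$, let $\psi_{n,0}\equiv 1,\psi_{n,1},\dots,\psi_{n,2^n-1}$ be an $\ell^2(\pi_n)$-orthonormal basis of eigenfunctions associated with $0=\vartheta_{n,0}<\vartheta_{n,1}\leq\dots\leq\vartheta_{n,2^n-1}$. By reversibility of $Y_n$ with respect to $\pi_n$, the transition kernel has the spectral representation
\[
P_x(Y_n(t)=y)=\pi_n(y)\sum_{j=0}^{2^n-1}e^{-\vartheta_{n,j}t}\psi_{n,j}(x)\psi_{n,j}(y),
\]
and, isolating the $j=0$ term (which contributes exactly $\pi_n(y)$) and applying Cauchy--Schwarz to the remainder,
\[
\bigl|P_x(Y_n(t)=y)-\pi_n(y)\bigr|\leq \pi_n(y)\,e^{-\vartheta_{n,1}t}\Bigl(\sum_{j\geq 1}\psi_{n,j}(x)^2\Bigr)^{1/2}\Bigl(\sum_{j\geq 1}\psi_{n,j}(y)^2\Bigr)^{1/2}.
\]
Parseval applied to the normalised indicator $\1_{\{\cdot=z\}}/\pi_n(z)$ gives $\sum_{j\geq 0}\psi_{n,j}(z)^2=1/\pi_n(z)$, so the product of square roots is at most $1/\min_{z\in\VV_n}\pi_n(z)$. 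Since $u\mapsto e^{-\vartheta_{n,1}u}$ is decreasing, replacing $t$ by $t+\kappa_n$ only improves the bound, so for all $t\geq 0$
\[
\bigl|P_x(Y_n(t+\kappa_n)=y)-\pi_n(y)\bigr|\leq \frac{e^{-\vartheta_{n,1}\kappa_n}}{\min_{z\in\VV_n}\pi_n(z)}\,\pi_n(y).
\]

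Next I would bound $\min_z\pi_n(z)$ from below. By \eqv(3.lem0.0) and $Z_{\b,n}\leq 2^n$ (Lemma \thv(3.lem0)), for $x\notin\overline V^{\star}_n$ one has $\pi_n(x)\geq 2^{-n}$, while for $x\in\overline V^{\star}_n$ one has $\pi_n(x)\geq 2^{-n}r^{\star}_n\t_n(x)\geq 2^{-n}r^{\star}_n\min_z\t_n(z)$. Now $\min_z\t_n(z)=e^{-\b\max_z\HH_n(z)}$, and by the symmetry of $\HH_n$ the law of $\max_z\HH_n(z)$ equals that of $-\min_z\HH_n(z)=\max_z(-\HH_n(z))$; the extreme-value bound \eqv(2.lem4.2) of Lemma \thv(2.lem4), applied to $-\HH_n$, therefore gives, on $\O_0$ and for all but finitely many $n$, $\min_z\t_n(z)\geq e^{-\b n\sqrt{2\log 2}(1+o(1))}$. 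Since $r^{\star}_n\geq 1$, it follows that $\min_z\pi_n(z)\geq e^{-Cn}$ for some finite constant $C=C(\b)$ and all $n$ large.

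Finally, on $\O_1$ Proposition \thv(3.prop1) gives $\vartheta_{n,1}\geq\tilde\kappa_n^{-1}=\bigl(\tfrac{5}{2}n^2r^{\star}_n(1+o(1))\bigr)^{-1}$, while the choice \eqv(3.prop2.1), $\kappa_n=\lfloor n^4r^{\star}_n(1+o(1))\rfloor$, yields $\vartheta_{n,1}\kappa_n\geq\tfrac{2}{5}n^2(1+o(1))$. Combining the displays above,
\[
\rho_n\leq\frac{e^{-\vartheta_{n,1}\kappa_n}}{\min_z\pi_n(z)}\leq e^{-\frac{2}{5}n^2(1+o(1))}e^{Cn}<e^{-n}
\]
for all $n$ large enough, which is exactly \eqv(3.prop2.2). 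The only step carrying any subtlety is the lower bound on $\min_z\pi_n(z)$, and even there a crude exponential-in-$n$ bound more than suffices, because the spectral gap forces $\vartheta_{n,1}\kappa_n$ to grow like $n^2$; everything else is the standard $\ell^2$ mixing argument. In short, the genuine work sits entirely in Proposition \thv(3.prop1), which is taken as given here, and the present statement is essentially a formal consequence of it together with Lemmata \thv(2.lem4) and \thv(3.lem0).
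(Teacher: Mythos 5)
Your proof is correct and follows essentially the same route as the paper: you combine the spectral gap of Proposition \thv(3.prop1) with a crude $e^{O(n)}$ bound on $\sup_{z}\pi_n^{-1}(z)$ (from Lemma \thv(3.lem0) together with the symmetric counterpart of \eqv(2.lem4.2)), and since $\vartheta_{n,1}\kappa_n\gtrsim n^2$ while the prefactor is only exponential in $n$, the ratio bound with $\rho_n<e^{-n}$ follows. The only cosmetic difference is that you obtain the pointwise bound directly from the spectral decomposition and Cauchy--Schwarz, whereas the paper quotes the total-variation estimate of Proposition 3 of Ref.~\cite{DS} and then passes to the ratio form; both are the same standard $\ell^2$ machinery resting entirely on Proposition \thv(3.prop1).
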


\begin{proof}[Proof of Proposition \thv(3.prop1)] 
The proof of \eqv(3.prop1.1) relies on a well known Poincar\'e inequality, 
taken from \cite{DS} (see Proposition 1' p.~38), applied to the stochastic matrix
$
\wt P_n=I+\nu_n^{-1}\wt L_n
$
where $I$ denotes the identity matrix and $\nu_n$ is defined in Lemma \thv(2.lem4).
By Lemma \thv(2.lem4),  on $\O_0$, for all  $n$ large enough,
\be
\max_{(x,y)\in\EE_n}\wt\l_n(x,y)\leq\nu_n<\infty.
\Eq(3.prop1.2)
\ee
Thus, on $\O_0$, for large enough $n$, the entries $\wt p_n(x,y)$ of $\wt P_n$ obey 
$0\leq \wt p_n(x,y)\leq 1$ and $\sum_{y\in \VV_n}\wt p_n(x,y)=1$. The Poincar\'e inequality 
of interest now reads as follows. For each pair of distinct vertices $x,y\in \VV_{n}$, 
choose a path $\g_{x,y}$ going from $x$ to $y$  in the graph $G(\VV_{n})$. 
Paths may have repeated vertices but a given edge appears at most once in a  given path. 
Let $\G_n$ denote such a collection of paths (one for each pair $\{x,y\}$). Then
\be
\Eq(3.prop1.3)
\textstyle
1/\vartheta_{n,1}\leq\nu_n^{-1}\max_{e}\rho^{-1}_n(e)\sum_{\g_{x,y}\ni e}\left|\g_{x,y}\right|\pi_n(x)\pi_n(y),
\ee 
where the max is over all edges $e=\{x',y'\}$ of $G(\VV_n)$, $\rho_n(e)\equiv\pi_{n,l}(x')\wt p_n(x',y')$, 
and the summation is over all paths $\g_{x,y}$ in $\G_n$ that pass through $e$. 

The quality of the bound \eqv(3.prop1.3) now depends on making a judicious choice of 
the set of paths $\G_n$. We adopt the following clever choice made in Ref.~\cite{FIKP}.
Given $i\in\{1,\dots\,n\}$ and given two vertices $x$ and $x'\in\VV_n$ such that $x_i\neq x'_i$, 
let $\g_{x,x'}^i$ be the path obtained by going left to right cyclically from  $x$ to $x'$, 
successively flipping the disagreeing coordinates, starting from the $i$-th coordinate.
Set
$
\G_n^i=\left\{\g_{x,x'}^i, x,x'\in\VV_n\right\}
$,
$1\leq i\leq n$. These paths are ordered in an obvious way.
Given $x,x'$ and $\g_{x,x'}$, let $\overline\g_{x,x'}$ be the set of vertices visited
by the path $\g_{x,x'}$, and let $\g_{x,x'}^{int}=\overline\g_{x,x'}\setminus\{x,x'\}$
be the subset of ``interior'' vertices. We next split the set of vertices $\VV_n$ into 
{\it good} ones and {\it bad} ones. Recalling \eqv(2.6anti), we  say that a vertex  is 
good if it does not belong to $\overline V^{\star}_n$; otherwise it is bad. 
We say that a path $\g$ is good if all its interior points $\g^{int}$ are good, 
and that a set of paths is good if all its elements are good.

The (random) set of path $\G_n$ is then constructed as follows:

\item{(i)} Consider pairs $x$ and $x'$ such that $\dist(x,x')\geq n/\log n$.
If $\{\g_{x,x'}^i,  1\leq i\leq n\}$  contains a good path, choose the first such for $\G_n$; 
otherwise choose $\g_{x,x'}^1$.

\item{(ii)} Consider pairs $x$ and $x'$ such that $\dist(x,x')< n/\log n$. If there is 
a good vertex $x''\in\VV_n$ such that  $\dist(x,x'')\geq n/\log n$ and 
$\dist(x'',x')\geq n/\log n$, and if there are good paths, one in 
$\left\{\g_{x,x''}^i,  1\leq i\leq n\right\}$ and one in $\left\{\g_{x'',x'}^i,  1\leq i\leq n\right\}$, 
such that the union of these two good paths is a self avoiding path of length less than 
$n$, select this union as the path connecting $x$ to $x'$ in $\G_n$ 
(notice that this is a good path); otherwise choose $\g_{x,x'}^1$.

It turns out that this $\G_n$ is almost surely good. More precisely,  set
$
\O^{\scriptscriptstyle{\textsf{GOOD}}}_n=\{\G'_n\,\hbox{\rm is good}\,\}
$,
$n\geq 1$,
and 
$
\O^{\scriptscriptstyle{\textsf{GOOD}}}=\liminf_{n\rightarrow \infty}\O^{\scriptscriptstyle{\textsf{GOOD}}}_n
$.

\begin{proposition}[Proposition 4.1 of \cite{FIKP}]
  \TH(4'.prop3)
 If $c_{\star}>1+\log 4$ then $\P\bigl(\O^{\scriptscriptstyle{\textsf{GOOD}}}\bigr)=1$.
\end{proposition}

From now on we assume that  $\o\in \O^{\scriptscriptstyle{\textsf{GOOD}}}$ so that, 
for all large enough $n$, $\G_n$ is good. Note that the paths of $\G_n$ have length 
smaller than $n$. Hence \eqv(3.prop1.3) yields
\be
\begin{split}
1/\vartheta_{n,1}
& \leq  n\max_{e=\{x',y'\}}\bigl(\pi_n(x')\wt\l_n(x',y')\bigr)^{-1}\sum_{\g_{x,y}\ni e}\pi_n(x)\pi_n(y)
\\
 &=\max_{e=\{x',y'\}}\frac{n^2}{\min(\t_n(y'), \t_n(x'))}
\sum_{\g_{x,y}\ni e}\frac{{\min\left(1,r^{\star}_n\t_n(x)\right)\min\left(1,r^{\star}_n\t_n(y)\right)}}{Z_{\b,n}}
\end{split}
\Eq(3.prop1.4)
\ee
where the final equality follows from \eqv(1.4.2),  \eqv(1.4.3)  (with $\eta_n\equiv(r^{\star}_n)^{-1}$), 
and \eqv(3.lem0.0). Also note that since bad vertices (i.e.~vertices of $\overline V^{\star}_n$) can 
appear only at the ends of any path, the paths of $\G_n$ do not contain any edge of the graph 
$G_n(\overline V^{\star}_n)$. This prompts us to write
$
1/\vartheta_{n,1}
\leq \max\{\KK_{1,n},\KK_{2,n}, \KK_{3,n}\}
$ 
where $\KK_{1,n}$, $\KK_{2,n}$, and $\KK_{3,n}$ are obtained, respectively,  
by restricting the maximum in \eqv(3.prop1.4)  to the maximum over edges 
$e=\{x',y'\}$ with $x'\in\overline V^{\star}_n$ and $y'\notin\overline V^{\star}_n$, 
$x'\notin\overline V^{\star}_n$ and  $y'\in\overline V^{\star}_n$, and 
$x'\notin\overline V^{\star}_n$ and $y'\notin\overline V^{\star}_n$.
To bound $\KK_{1,n}$ note that the sum over paths that contain $e=\{x',y'\}$ 
reduces to the sum  over all paths starting in $x'$ that contain $e$, so that
\be
\KK_{1,n}=\max_{e=\{x',y'\}:x'\in\overline V^{\star}_n, y'\notin\overline V^{\star}_n}
\frac{n^2\min\bigl(1,r^{\star}_n\t_n(x')\bigr)}{\min(\t_n(y'), \t_n(x'))}
\sum_{\g_{x',y}\ni e}\pi_n(y)\leq n^2r_n.
\Eq(3.prop1.7)
\ee 
By symmetry of the bound \eqv(3.prop1.4),  $\KK_{2,n}\leq n^2r_n$. Finally, 
$\min(\t_n(y'), \t_n(x'))\geq 1/r^{\star}_n$ for all  $x',y'\notin\overline V^{\star}_n$ and 
$\min\left(1,r^{\star}_n\t_n(x)\right)\min\left(1,r^{\star}_n\t_n(y)\right)\leq 1$ for all  $x,y\in\VV_n$.
Thus
\be
\KK_{3,n}
\leq n^2r^{\star}_nZ^{-1}_{\b,n}
\max_{e\in G(\VV_n)}
|\{\g\in\G_n \mid  e\in \g\}|
\leq n^2r^{\star}_nZ^{-1}_{\b,n} (2^{n-1}+ 2^{2n/\log n}),
\Eq(3.prop1.9)
\ee
where we used that the number of  paths connecting vertices at distance $n/\log n$ 
or more apart is at most $2^{n-1}$ (see e.g. Example 2.2, p.~45 in Ref.~\cite{DS} for 
this well known bound) whereas, arguing as in Ref.~\cite{FIKP} (see Section 4.2.2, page 934), 
the number of  paths connecting vertices less than $n/\log n$ apart and containing $e$ is 
bounded above by the volume of a hypercube of dimension at most $n/\log n$ around $e$, 
and so, is smaller than $2^{2n/\log n}$. In view of Lemma \thv(3.lem0) we have that on 
$\O^{\star}\cap \O^{\scriptscriptstyle{\textsf{GOOD}}}$, for all but a finite number of indices $n$,
\be
\KK_{3,n}
\leq \sfrac{1}{2}n^2r^{\star}_n(1+o(1)).
\Eq(3.prop1.10)
\ee
Collecting our bounds and taking $\O_1=\O_0\cap\O^{\star}\cap \O^{\scriptscriptstyle{\textsf{GOOD}}}$ 
yields \eqv(3.prop1.1) and ends the proof.
\end{proof}

\begin{proof}[Proof of Proposition \thv(3.prop2)] 
It is well know that for reversible irreducible Markov processes, bounds on spectral gaps 
yield bounds on their total variation distance $\|\cdot\|_{\text{var}}$ to stationarity. 
For instance, Proposition 3 of Ref.~\cite{DS} applied to $Y_n$ states that for all $x\in\VV_n$ and all $t>0$,
\be
4\left\|P_{x}\left(Y_n(t)=\cdot\right)-\pi_n(\cdot)\right\|_{\text{var}}^2\leq\sfrac{1-\pi_n(x)}{\pi_n(x)}e^{-2t\vartheta_{n,1}}.
\Eq(3.prop2.3)
\ee
By \eqv(3.lem0.0),  Lemma \thv(3.lem0), and \eqv(2.lem4.2) of Lemma \thv(2.lem4), on $\O_0\cap \O^{\star}$, 
for all but a finite number of indices $n$, 
$
\sup_{z\in\VV_n}\pi^{-1}_n(z)\leq (2^n/r^{\star}_n)e^{\b n\sqrt{2\log 2}(1+2\log n/n)}.
$
The claim of Proposition \thv(3.prop2) now readily follows from this, \eqv(3.prop2.3), and Proposition \thv(3.prop1),  
choosing  $\kappa_n$ as in \eqv(3.prop2.1).
\end{proof}

 \subsection{Hitting time for the stationary chain}
  \TH(S3.2)

Drawing heavily on  Aldous and Brown's work \cite{AB1}, this section collects results on 
hitting times for the process $Y_n$ at stationarity. Let
\be
H(A)=\inf\{t\geq 0 \mid Y_n(t)\in A\}
\Eq(3.2.1)
\ee
be the hitting time of $A\subseteq\VV_n$. We begin with bounds on the mean value of $H(A)$.  

\begin{lemma}
   \TH(3.lem2)
On $\O_1$, for all but a finite number of indices $n$, for all $A\subseteq\VV_n$,
\be
\frac{(1-n\pi_n(A))^2}{r^{\star}_nn\pi_n(A)(1-\pi_n(A))}
\leq
\frac{E_{\pi_n}H(A)}{1-\pi_n(A)}
\leq
\frac{\tilde\kappa_n}{\pi_n(A)}.
\Eq(3.lem2.1)
\ee
(If 
$
\dist(V^{\star}_n,A)>1,
$ 
$n\pi_n(A)$ can be replaced by $\pi_n(A)$ in the right-hand side).
\end{lemma}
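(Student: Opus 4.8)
The plan is to run the reversible hitting-time estimates of Aldous and Brown \cite{AB1}, the two inputs being reversibility of $Y_n$ with respect to $\pi_n$ (eq.~\eqv(3.lem0.0)) and the spectral-gap bound $1/\vartheta_{n,1}\leq\tilde\kappa_n$ of Proposition \thv(3.prop1), valid on $\O_1$. Write $h_A(x)\equiv E_xH(A)$ and let $\mathcal{E}_n$ denote the Dirichlet form of $Y_n$, i.e.\ $\mathcal{E}_n(f,g)=\tfrac12\sum_{x\sim y}\pi_n(x)\wt\l_n(x,y)\big(f(x)-f(y)\big)\big(g(x)-g(y)\big)$. The starting point is the pair of identities
\be
\mathcal{E}_n(h_A,h_A)=E_{\pi_n}H(A),\qquad\mathcal{E}_n(h_A,\1_{A^c})=\pi_n(A^c),
\Eq(plan.1)
\ee
both of which follow from $(-\wt L_n h_A)(x)=1$ for $x\notin A$ together with $h_A\equiv 0$ on $A$.

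For the right-hand (upper) bound I would proceed in the classical way: write $\1_{A^c}=\pi_n(A^c)\1+v$ with $v$ centred (so $\|v\|^2_{\pi_n}=\pi_n(A)\pi_n(A^c)$), expand $E_{\pi_n}H(A)=\langle h_A,\1_{A^c}\rangle_{\pi_n}$, and combine the Poincar\'e inequality $\Var_{\pi_n}(h_A)\leq\vartheta_{n,1}^{-1}\mathcal{E}_n(h_A,h_A)$ with Cauchy--Schwarz to obtain $\pi_n(A)\,E_{\pi_n}H(A)\leq\vartheta_{n,1}^{-1}\pi_n(A^c)$; dividing by $1-\pi_n(A)$ and inserting Proposition \thv(3.prop1) gives the right-hand inequality of \eqv(3.lem2.1). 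The special hypothesis is not used here.

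For the left-hand (lower) bound, Cauchy--Schwarz for $\mathcal{E}_n$ applied to the pair $(h_A,\1_{A^c})$ turns \eqv(plan.1) into
\be
\pi_n(A^c)^2=\mathcal{E}_n(h_A,\1_{A^c})^2\leq E_{\pi_n}H(A)\cdot\cQ_n(A),\qquad\cQ_n(A)\equiv\sum_{x\in A,\,y\notin A,\,x\sim y}\pi_n(x)\wt\l_n(x,y),
\Eq(plan.2)
\ee
so everything reduces to showing that the stationary boundary flow $\cQ_n(A)$ is at most of order $r^{\star}_nn\pi_n(A)$. This is where the work — and the main obstacle — lies: a single rate $\wt\l_n(x,y)$ with an endpoint in $V^{\star}_n$ may be super-exponentially large, so $\cQ_n(A)$ cannot be bounded termwise. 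The remedy is to \emph{immerse} $A$: since $A\subseteq\hat A\equiv A\cup(A\cap V^{\star}_n)^{\star}$ and $H(A)\geq H(\hat A)$, one applies \eqv(plan.2) to $\hat A$. Now every edge leaving $\hat A$ through its valley part has rate $\leq n^{-1}r^{\star}_n$ by \eqv(2.10), while for $x\in A\setminus V^{\star}_n$ the explicit form \eqv(1.4.6) of $\wt\l_n$ gives $\wt\l_n(x)\leq r^{\star}_n$ (indeed $\wt\l_n(x)=n^{-1}\sum_{y\sim x}\min(\t_n(x),\t_n(y))\leq\t_n(x)<r^{\star}_n$ if $x\notin\overline V^{\star}_n$, and $\wt\l_n(x)\leq(r^{\star}_n)^{-1}$ if $x\in\overline V^{\star}_n$). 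Bounding by $n$ the number of boundary edges at each vertex, and invoking Lemma \thv(2.lem3) (so that $|C^{\star}_{n,l}|=\OO(n/\log n)$, whence $\pi_n((A\cap V^{\star}_n)^{\star})\leq\OO(n/\log n)\,\pi_n(A)$) together with Lemma \thv(3.lem0), one gets $\cQ_n(\hat A)\leq r^{\star}_nn\pi_n(A)$ and $\pi_n(\hat A)\leq n\pi_n(A)$ for all large $n$. Feeding these into \eqv(plan.2) for $\hat A$ and using $\pi_n(\hat A^{c})\geq 1-n\pi_n(A)$ yields the left-hand inequality of \eqv(3.lem2.1). When $\dist(V^{\star}_n,A)>1$ one has $A\cap V^{\star}_n=\emptyset$, so $\hat A=A$, all boundary rates are $\leq r^{\star}_n$, and the same computation gives $\cQ_n(A)\leq r^{\star}_n\pi_n(A)$, i.e.\ the sharpened lower bound with $n\pi_n(A)$ replaced by $\pi_n(A)$.

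To summarise, the single delicate step is the uniform control of the boundary flow $\cQ_n(A)$: the immersion device replaces a priori uncontrolled rates by the cap \eqv(2.10) on escape rates out of unions of valleys, at the price of an $\OO(n/\log n)$ blow-up of $\pi_n$-mass coming from Lemma \thv(2.lem3); this blow-up is precisely the origin of the extra factor $n$ in the general estimate, and it disappears exactly when $A$ sits at graph distance at least $2$ from $V^{\star}_n$.
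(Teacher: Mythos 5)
Your proof is correct and takes essentially the same route as the paper: the upper bound is the Aldous--Brown spectral-gap estimate combined with Proposition \thv(3.prop1), and the lower bound is the Aldous--Brown flow bound applied to the immersed set $B^{\star}\cup B^c$, with the boundary flow and the $\pi_n$-mass of that set controlled exactly as in \eqv(3.lem1.6)--\eqv(3.lem1.7). The only difference is cosmetic: you rederive the two inequalities of Lemma 2 of \cite{AB1} from the Dirichlet form (Poincar\'e plus Cauchy--Schwarz) instead of quoting them.
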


The next lemma gives bounds on the density function $h_{n,A}(t)$, $t>0$, of  $H(A)$ 
when $Y_n$ starts in its invariant measure, $\pi_n$. 

\begin{lemma}
   \TH(3.lem3)
On $\O_1$, for all but a finite number of indices $n$, for all $A\subseteq \VV_n$ and all $t>0$,
$$
\frac{1}{E_{\pi_n}H(A)}\left(1-\frac{\tilde\kappa_n}{E_{\pi_n}H(A)}\right)^2\left(1-\frac{t}{E_{\pi_n}H(A)}\right)
\leq
h_{n,A}(t)
\leq 
\frac{1}{E_{\pi_n}H(A)}\left(1+\frac{\tilde\kappa_n}{2t}\right).
$$
\end{lemma}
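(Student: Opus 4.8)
The plan is to read the bounds off the spectral representation of the hitting-time law of a finite reversible jump process and to combine it with the estimates of Aldous and Brown \cite{AB1}, using the spectral gap bound of Proposition \thv(3.prop1) to replace the relaxation time of $Y_n$ by the explicit quantity $\tilde\kappa_n$. Fix $A\subseteq\VV_n$, $A\neq\emptyset$, and let $-L_n^A$ be the matrix obtained from $-\wt L_n$ by deleting the rows and columns indexed by $A$ (the generator of $Y_n$ killed on entering $A$), with eigenvalues $0<\mu^A_{n,1}\leq\mu^A_{n,2}\leq\cdots$. Since $\wt L_n$ is self-adjoint and non-negative in $\ell^2(\pi_n)$, so is $-L_n^A$ in the $\pi_n$-weighted space on $\VV_n\setminus A$, and a spectral decomposition of the sub-Markovian semigroup $e^{tL_n^A}$ gives, for $t>0$, $P_{\pi_n}(H(A)>t)=\sum_{j}c^A_j e^{-\mu^A_{n,j}t}$ and hence $h_{n,A}(t)=\sum_{j}c^A_j\mu^A_{n,j}e^{-\mu^A_{n,j}t}$, where $c^A_j=|\langle\mathbbm{1},\psi_j\rangle|^2\geq0$ for the normalized eigenvectors $\psi_j$ and $\mathbbm{1}$ the indicator of $\VV_n\setminus A$, so that $\sum_j c^A_j=\pi_n(\VV_n\setminus A)=1-\pi_n(A)$; integrating in $t$, $E_{\pi_n}H(A)=\sum_j c^A_j/\mu^A_{n,j}$. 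In particular $h_{n,A}$ is completely monotone, and two crude consequences are immediate: $\mu^A_{n,1}E_{\pi_n}H(A)\leq\sum_j c^A_j=1-\pi_n(A)\leq1$, i.e. $\mu^A_{n,1}\leq 1/E_{\pi_n}H(A)$, and $c^A_1\leq1$.

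Next I would invoke the three Aldous--Brown type estimates, valid on $\O_1$ for all $n$ large with the relaxation time of $Y_n$ replaced throughout by its bound $\tilde\kappa_n=1/\vartheta_{n,1}$ (Proposition \thv(3.prop1)): (a) $1/\mu^A_{n,1}\leq E_{\pi_n}H(A)+\tilde\kappa_n$; (b) the tail bound $P_{\pi_n}(H(A)>t)\geq(1-\tilde\kappa_n/E_{\pi_n}H(A))\,e^{-\mu^A_{n,1}t}$; (c) the control of the off-principal spectral mass, $\sum_{j\geq2}c^A_j\leq\mu^A_{n,1}\tilde\kappa_n\leq\tilde\kappa_n/E_{\pi_n}H(A)$. (Inside \cite{AB1} these rest on the Dirichlet--Neumann interlacing $\mu^A_{n,2}\geq\vartheta_{n,1}$ and, for (b), on a sub-multiplicativity property of $t\mapsto P_{\pi_n}(H(A)>t)$; on $\O_1$ one has $\vartheta_{n,1}\geq1/\tilde\kappa_n$.)

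For the lower bound, $\mu^A_{n,j}\geq\mu^A_{n,1}$ for all $j$ and the two spectral sums give $h_{n,A}(t)\geq\mu^A_{n,1}P_{\pi_n}(H(A)>t)$; then (a) gives $\mu^A_{n,1}\geq(E_{\pi_n}H(A)+\tilde\kappa_n)^{-1}\geq E_{\pi_n}H(A)^{-1}(1-\tilde\kappa_n/E_{\pi_n}H(A))$, and (b) together with $e^{-\mu^A_{n,1}t}\geq1-\mu^A_{n,1}t\geq1-t/E_{\pi_n}H(A)$ gives $P_{\pi_n}(H(A)>t)\geq(1-\tilde\kappa_n/E_{\pi_n}H(A))(1-t/E_{\pi_n}H(A))$; multiplying the last two displays gives the claimed lower bound, which is trivially valid when its right-hand side is negative. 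For the upper bound, write $h_{n,A}(t)=c^A_1\mu^A_{n,1}e^{-\mu^A_{n,1}t}+\sum_{j\geq2}c^A_j\mu^A_{n,j}e^{-\mu^A_{n,j}t}$; the first term is at most $\mu^A_{n,1}\leq1/E_{\pi_n}H(A)$ since $c^A_1\leq1$, and for the second I use $\mu e^{-\mu t}\leq1/(et)\leq1/(2t)$ for $\mu,t>0$ together with (c) to bound it by $(2t)^{-1}\sum_{j\geq2}c^A_j\leq\tilde\kappa_n/(2tE_{\pi_n}H(A))$; adding the two terms gives $E_{\pi_n}H(A)^{-1}(1+\tilde\kappa_n/(2t))$.

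The only substantive point is (a)--(c): getting the Aldous--Brown estimates — which are naturally stated through the relaxation time of $Y_n$ — into exactly this form with the explicit $\tilde\kappa_n$, and in particular retaining the $\sum_{j\geq2}$ term rather than discarding it, since for small $t$ the density $h_{n,A}$ genuinely diverges (its value at $0+$ is the $\pi_n$-averaged rate of entering $A$, which in the REM application is exponentially large), so the pure exponential $\mu^A_{n,1}e^{-\mu^A_{n,1}t}$ is \emph{not} an upper bound for $h_{n,A}$. Everything else is bookkeeping on the two finite spectral sums.
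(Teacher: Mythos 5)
Your proposal is correct and follows essentially the same route as the paper, whose proof is precisely to proceed as in Lemma 13 of \cite{AB1} with the relaxation time replaced by the bound $\tilde\kappa_n$ from Proposition \thv(3.prop1); you have simply spelled out the underlying spectral decomposition of the killed generator and the three Aldous--Brown inputs (mean comparison, exponential-approximation lower bound, and control of the non-principal spectral mass via the interlacing $\mu^A_{n,2}\geq\vartheta_{n,1}$) that that lemma rests on. The assembly of the upper and lower density bounds from these ingredients is accurate, so no gap beyond what the paper itself leaves to \cite{AB1}.
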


The bounds of Lemma \thv(3.lem3) imply that $h_{n,A}(t)\approx \frac{1}{E_{\pi_n}H(A)}$ when 
$
\tilde\kappa_n\ll t \ll E_{\pi_n}H(A)
$.
Complementing this, Lemma \thv(3.lem1) is well suited to dealing with ``small'' values of $t$.

\begin{lemma}
   \TH(3.lem1)
On $\O^{\star}$, for all but a finite number of indices $n$, 
for all $A\subseteq\VV_n$ and all $t>0$,
\be
P_{\pi_n}(H(A)> t)\geq (1-n\pi_n(A))\exp\left(-t\frac{r^{\star}_nn\pi_n(A)}{1-n\pi_n(A)}\right).
\Eq(3.lem1.1)
\ee
In particular, for any $A$ and any sequence $t_n$ such that 
$t_nr^{\star}_nn\pi_n(A)\rightarrow 0$ as $n\rightarrow\infty$,
\be
P_{\pi_n}(H(A)\leq  t_n)<t_nr^{\star}_nn\pi_n(A)(1+o(1)).
\Eq(3.lem1.2)
\ee
If $A\subset\VV_n\setminus V^{\star}_n$ the factor $n$  in front of $\pi_n(A)$ in \eqv(3.lem1.1) 
and  \eqv(3.lem1.2) can be suppressed.   
\end{lemma}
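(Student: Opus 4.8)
\emph{Proof strategy.}
The plan is to deduce \eqv(3.lem1.1) from two ingredients: (i) a monotonicity comparison that replaces $A$ by a slightly larger set $A'$ whose outer boundary avoids the interior of $V^\star_n$, and (ii) the elementary fact that $t\mapsto P_{\pi_n}(H(A')>t)$ is a nonnegative mixture of exponentials, hence log‑convex, so it lies above the exponential determined by its value and slope at $t=0$. Write $\Psi(C)\equiv\sum_{x\notin C}\pi_n(x)\sum_{y\in C,\,y\sim x}\wt\l_n(x,y)$ for the boundary flux of a set $C$. Using either form \eqv(1.4.2) or \eqv(1.4.6) of the rates together with \eqv(3.lem0.0), one checks the identity $\pi_n(x)\wt\l_n(x,y)=\min(\t_n(x),\t_n(y))/(nZ_{\b,n})$ on every edge; in particular an edge can carry conductance $\geq r^\star_n/(nZ_{\b,n})$ only when \emph{both} its endpoints lie in $V^\star_n$. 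This is precisely the obstruction: for a general $A$ such heavy edges may sit on the boundary of $A$ and make $\Psi(A)$ much larger than $r^\star_n\pi_n(A)$ (recall the size of $\nu_n$ in Lemma~\thv(2.lem4)), so a plain flux bound for $A$ is false, which is why the comparison in (i) is needed.

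\emph{Step 1: the enlargement.} Decompose $A=(A\setminus V^\star_n)\cup(A\cap V^\star_n)$ and, recalling the immersion notation \eqv(2.9), set
$$
A'\;\equiv\;(A\setminus V^\star_n)\ \cup\ (A\cap V^\star_n)^\star ,
$$
so $A\subseteq A'$, whence $H(A)\geq H(A')$ and $P_{\pi_n}(H(A)>t)\geq P_{\pi_n}(H(A')>t)$. By construction no boundary edge $\{x,y\}$ of $A'$ (say $y\in A'$, $x\notin A'$) lies inside $V^\star_n$: if $y\in A\setminus V^\star_n$ this is clear, and if $y$ lies in a component $C^\star_{n,l}$ meeting $A$ then $C^\star_{n,l}\subseteq A'$, so a neighbour of $y$ outside $A'$ cannot belong to $V^\star_n$ (it would lie in the same component). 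Hence $\min(\t_n(x),\t_n(y))<r^\star_n$ on every such edge and, inspecting \eqv(1.4.6), $\wt\l_n(y,x)\leq r^\star_n/n$ (with room to spare when $y\in\overline V^\star_n$). Summing over the at most $n$ neighbours of each $y\in A'$ gives $\sum_{x\sim y,\,x\notin A'}\wt\l_n(y,x)\leq r^\star_n$, and therefore, by reversibility,
$$
\Psi(A')=\sum_{y\in A'}\pi_n(y)\sum_{x\sim y,\,x\notin A'}\wt\l_n(y,x)\;\leq\;r^\star_n\,\pi_n(A').
$$
Moreover every vertex of $V^\star_n$ has $\pi_n$‑mass $1/Z_{\b,n}$, and on $\O^\star$ (for $n$ large) each component $C^\star_{n,l}$ has at most $n$ vertices by \eqv(2.lem3.4); since the number of components meeting $A$ is at most $|A\cap V^\star_n|$, we get $\pi_n\big((A\cap V^\star_n)^\star\big)\leq n\,\pi_n(A\cap V^\star_n)$, and adding $\pi_n(A\setminus V^\star_n)$ yields $\pi_n(A')\leq n\,\pi_n(A)$.

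\emph{Step 2: log‑convexity and conclusion.} The generator $L^{A'}$ of $Y_n$ killed on $A'$ is self‑adjoint and negative semidefinite on $\ell^2\big((A')^c,\pi_n\big)$, so spectral calculus gives $P_{\pi_n}(H(A')>t)=\langle\1,e^{tL^{A'}}\1\rangle_{\pi_n}=\sum_j c_j e^{-t\lambda_j}$ with $c_j\geq0$, $\lambda_j\geq0$; by Cauchy--Schwarz this function is log‑convex, so $\log P_{\pi_n}(H(A')>t)\geq\log P_{\pi_n}(H(A')>0)+t\,\partial_t\log P_{\pi_n}(H(A')>t)\big|_{t=0}$. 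Since $P_{\pi_n}(H(A')>0)=1-\pi_n(A')$ and $\partial_t P_{\pi_n}(H(A')>t)\big|_{t=0}=-\Psi(A')$, Step~1 gives
$$
P_{\pi_n}(H(A')>t)\;\geq\;(1-\pi_n(A'))\exp\!\Big(\!-\sfrac{t\,\Psi(A')}{1-\pi_n(A')}\Big)\;\geq\;(1-\pi_n(A'))\exp\!\Big(\!-\sfrac{t\,r^\star_n\pi_n(A')}{1-\pi_n(A')}\Big).
$$
The map $a\mapsto(1-a)\exp(-t r^\star_n a/(1-a))$ is decreasing on $[0,1)$, so using $\pi_n(A')\leq n\pi_n(A)$ (the case $n\pi_n(A)\geq1$ being trivial) and $P_{\pi_n}(H(A)>t)\geq P_{\pi_n}(H(A')>t)$ yields exactly \eqv(3.lem1.1). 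If $A\subset\VV_n\setminus V^\star_n$ then $A'=A$ and the displayed estimate already carries no factor $n$, giving the stated refinement; and \eqv(3.lem1.2) follows by expanding the exponential under $t_n r^\star_n n\pi_n(A)\to0$. The only genuinely new point is Step~1 — recognizing that heavy conductances are confined to edges inside $V^\star_n$ and that passing to the immersion removes them from the boundary at the cost of a single factor $n$ in $\pi_n$‑mass, via the component‑size bound of Lemma~\thv(2.lem3); everything in Step~2 is routine convexity, and, as in the excerpt, no spectral‑gap information (hence only $\O^\star$, not $\O_1$) is used.
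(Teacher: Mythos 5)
Your proposal is correct and follows essentially the same route as the paper: you decompose $A$, replace $A\cap V^{\star}_n$ by its immersion (your $A'$ is exactly the paper's $B^{\star}\cup B^c$), bound the boundary flux by $r^{\star}_n\pi_n(A')$ using that edges with conductance $\geq r^{\star}_n/(nZ_{\b,n})$ lie inside $V^{\star}_n$, and use the component-size bound of Lemma \thv(2.lem3) on $\O^{\star}$ to get $\pi_n(A')\leq n\pi_n(A)$, just as in \eqv(3.lem1.6)--\eqv(3.lem1.7). The only difference is that where the paper simply cites Aldous--Brown \cite{AB1} for the bound \eqv(3.lem1.3), you re-derive it via the spectral decomposition of the killed generator and log-convexity of the survival probability, which is the same mechanism underlying the cited result.
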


The next Corollary is stated for later convenience.

\begin{corollary}
    \TH(3.cor1)
Under the assumptions of Lemma \thv(3.lem1) the following holds:
For all $0<\varepsilon<1$,  for any sequence $t_n$ such that 
$t_n r^{\star}_nn2^{-n\varepsilon_n}\rightarrow 0$ as $n\rightarrow\infty$
\be
P_{\pi_n}(H(T_n\setminus T^{\circ}_n)\leq t_n)
\leq 
t_n r^{\star}_nn^52^{-2n\varepsilon_n}(1+o(1)),
\Eq(3.cor1.1)
\ee
\be
P_{\pi_n}(H(T^{\circ}_n)\leq t_n)
\leq 
t_n r^{\star}_nn2^{-n\varepsilon_n}(1+o(1)).
\Eq(3.cor1.1')
\ee
\end{corollary}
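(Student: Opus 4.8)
The plan is to read both estimates off Lemma~\thv(3.lem1), in its form \eqv(3.lem1.2), applied to the sets $A=T^{\circ}_n$ and $A=T_n\setminus T^{\circ}_n$ respectively, after replacing $\pi_n(A)$ by the cardinality bounds of Lemma~\thv(2.lem3) via Lemma~\thv(3.lem0). Everything takes place on the event $\O^{\star}$, which carries Lemmata~\thv(2.lem3), \thv(3.lem0) and \thv(3.lem1), and for all but finitely many $n$.

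First I would record the two structural facts that make the bookkeeping clean. Since $\rho_n^{\star}\ll\varepsilon_n$ one has $T_n=V_n(\varepsilon_n)\subseteq V_n(\rho_n^{\star})=V^{\star}_n$, hence $A\cap\overline V^{\star}_n=\emptyset$ for both choices of $A$; therefore Lemma~\thv(3.lem0), and in particular \eqv(3.lem0.2), applies and gives $\pi_n(A)=|A|2^{-n}(1+o(1))$. On the other hand, precisely because $A\subseteq V^{\star}_n$ rather than $A\subseteq\VV_n\setminus V^{\star}_n$, the suppression of the factor $n$ in \eqv(3.lem1.2) is \emph{not} available, which is why that factor $n$ survives in \eqv(3.cor1.1)--\eqv(3.cor1.1'). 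In the degenerate case where one of the sets is empty the corresponding hitting time is infinite and the bound is trivial.

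For \eqv(3.cor1.1') I would take $A=T^{\circ}_n$: combining \eqv(2.lem3.5) with \eqv(3.lem0.2) gives $\pi_n(T^{\circ}_n)=2^{-n\varepsilon_n}(1+o(1))$, so the standing hypothesis $t_n r^{\star}_n n 2^{-n\varepsilon_n}\to 0$ is exactly the smallness condition $t_n r^{\star}_n n\pi_n(T^{\circ}_n)\to 0$ required by Lemma~\thv(3.lem1), and \eqv(3.lem1.2) yields $P_{\pi_n}(H(T^{\circ}_n)\le t_n)<t_n r^{\star}_n n\pi_n(T^{\circ}_n)(1+o(1))=t_n r^{\star}_n n 2^{-n\varepsilon_n}(1+o(1))$. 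For \eqv(3.cor1.1) I would take $A=T_n\setminus T^{\circ}_n$: by \eqv(2.lem3.6) and \eqv(3.lem0.2), $\pi_n(T_n\setminus T^{\circ}_n)\le n^4 2^{-2n\varepsilon_n}(1+o(1))$, and since $2^{-n\varepsilon_n}$ decays exponentially while the prefactor $n^4$ is only polynomial, $t_n r^{\star}_n n\pi_n(T_n\setminus T^{\circ}_n)\le (t_n r^{\star}_n n 2^{-n\varepsilon_n})\,n^4 2^{-n\varepsilon_n}(1+o(1))\to 0$, so Lemma~\thv(3.lem1) again applies and \eqv(3.lem1.2) gives $P_{\pi_n}(H(T_n\setminus T^{\circ}_n)\le t_n)<t_n r^{\star}_n n\pi_n(T_n\setminus T^{\circ}_n)(1+o(1))\le t_n r^{\star}_n n^5 2^{-2n\varepsilon_n}(1+o(1))$, which is \eqv(3.cor1.1). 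There is no genuine obstacle here — the corollary is a bookkeeping consequence of Lemma~\thv(3.lem1) — the only point requiring a moment's care being to check that the extra $n^4$ coming from \eqv(2.lem3.6) does not spoil the smallness hypothesis of Lemma~\thv(3.lem1) for $A=T_n\setminus T^{\circ}_n$, and to keep the factor $n$ because $T_n\subseteq V^{\star}_n$.
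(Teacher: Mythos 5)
Your proposal is correct and is essentially the paper's own proof: the corollary is obtained by applying \eqv(3.lem1.2) of Lemma \thv(3.lem1) to $A=T^{\circ}_n$ and $A=T_n\setminus T^{\circ}_n$, with $\pi_n(A)$ evaluated through \eqv(3.lem0.2) of Lemma \thv(3.lem0) together with the cardinality estimates \eqv(2.lem3.5) and \eqv(2.lem3.6). Your additional checks (that the smallness hypothesis for $T_n\setminus T^{\circ}_n$ follows from the stated one, and that the factor $n$ cannot be dropped since both sets lie in $V^{\star}_n$) are exactly the points the paper leaves implicit.
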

We now prove these results, beginning with Lemma \thv(3.lem1).

\begin{proof}[Proof of Lemma \thv(3.lem1)] 
Write $A=B\cup B^c$ where $B=A\cap V^{\star}_n$ and $B^c= A\setminus B$.
Let $B^{\star}$ be the immersion of  $B$ in $V^{\star}_n$  (see \eqv(2.9)). 
Since $A\subseteq B^{\star}\cup B^c$, $H(A)\geq H(B^{\star}\cup B^c)$, and
\be
P_{\pi_n}(H(A)> t)\geq P_{\pi_n}(H(B^{\star}\cup B^c)> t).
\Eq(3.lem1.5)
\ee
To bound the right-hand side of \eqv(3.lem1.5), we use a well know lower bound on 
hitting times for stationary reversible chains taken from   Ref.~\cite{AB1} 
(combine Theorem 3 and Lemma 2 therein) that states that for all 
$C\subseteq\VV_n$ and all $t>0$,
\be
P_{\pi_n}(H(C)> t)\geq (1-\pi_n(C))\exp\left(-t\frac{q_n(C,C^c)}{1-\pi_n(C)}\right)
\Eq(3.lem1.3)
\ee
where, for for any two sets $C$ and $\wt C$ such that $C\cap \wt C=\emptyset$, 
\be
q_n(C,\wt C)\equiv\sum_{x\in C}\sum_{y\in\wt C}\pi_n(x)\wt\l_n(x,y).
\Eq(3.lem1.4)
\ee
Let us thus evaluate \eqv(3.lem1.4) with $C=B^{\star}\cup B^c$.
Clearly
$
q_n(B^{\star}\cup B^c,(B^{\star}\cup B^c)^c)
\leq 
q_n(B^{\star},(B^{\star}\cup B^c)^c)+q_n(B^c,(B^{\star}\cup B^c)^c)
$.
Clearly also, by \eqv(1.4.6),
$
\wt\l_n(x,y)\leq n^{-1}r^{\star}_n
$
for any $x\in B^c$ and any $y\sim x$. Thus
$
q_n(B^c,(B^{\star}\cup B^c)^c)
\leq r^{\star}_n\pi_n(B^c).
$
Next, by \eqv(2.10),
$
q_n(B^{\star},(B^{\star}\cup B^c)^c)
\leq 
r^{\star}_n\pi_n(B^{\star})
$.
Thus
\be
q_n(B^{\star}\cup B^c,(B^{\star}\cup B^c)^c)
\leq 
r^{\star}_n[\pi_n(B^{\star})+\pi_n(B^c)].
\Eq(3.lem1.6)
\ee
Denoting by  $C^{\star}_{n,l(x)}$ the (unique) component of $B^{\star}$ (see \eqv(2.9)) that contains $x$, we have
$
|B^{\star}|
\leq 
|\cup_{x\in B}C^{\star}_{n,l(x)}|
\leq 
|B|\max_{x\in B}|C^{\star}_{n,l(x)}|
$
where  by \eqv(2.lem3.4), on  $\O^{\star}$, $|C^{\star}_{n,l(x)}|\ll n$. By this and \eqv(3.lem0.0) we get
$
\pi_n(B^{\star})=Z^{-1}_{\b,n}|B^{\star}|\leq nZ^{-1}_{\b,n}|B|=n\pi_n(B)
$.
Therefore,
\be
\pi_n(B^{\star}\cup B^c)\leq \pi_n(B^{\star})+ \pi_n(B^c)\leq 
n\pi_n(B)+ \pi_n(B^c)\leq n\pi_n(B\cup B^c)=n\pi_n(A).
\Eq(3.lem1.7)
\ee
Using \eqv(3.lem1.7) in the right-and side of \eqv(3.lem1.6) and plugging the result in \eqv(3.lem1.3) 
finally yields \eqv(3.lem1.1). Clearly, if  $A\subset\VV_n\setminus V^{\star}_n$ then $B=\emptyset$ 
and the right-and side of \eqv(3.lem1.6) reduces to $r^{\star}_n[\pi_n(\emptyset)+\pi_n(B^c)]=r^{\star}_n\pi_n(A)$.
\end{proof}

\begin{proof}[Proof of Corollary \thv(3.cor1)]
This follows  from \eqv(3.lem0.2)  of Lemma \thv(3.lem0),  \eqv(2.lem3.5), and \eqv(2.lem3.6).
\end{proof}

\begin{proof}[Proof of Lemma \thv(3.lem3)] 
Proceed as in Lemma 13 of Ref.~\cite{AB1} and use Proposition \thv(3.prop1).
\end{proof}

\begin{proof}[Proof of Lemma \thv(3.lem2)] 
The rightmost inequality is that of Lemma 2 of Ref.~\cite{AB1} combined with
Proposition \thv(3.prop1).  Lemma 2 of Ref.~\cite{AB1}  also states that
for $C\subseteq\VV_n$ and $q_n(C,C^c)$ defined as in \eqv(3.lem1.4),
\be
\frac{E_{\pi_n}H(C)}{1-\pi_n(C)}
\geq
\frac{1-\pi_n(C)}{q_n(C,C^c)}.
\Eq(3.lem2.2)
\ee
Given $A\subseteq\VV_n$ let $B^{\star}$ and $B^c$ be defined as in the first 
line of the proof of Lemma \thv(3.lem1). Since $H(A)\geq H(B^{\star}\cup B^c)$,
$
E_{\pi_n}H(A)\geq E_{\pi_n}H(B^{\star}\cup B^c)
$.
Using  \eqv(3.lem2.2) with $C=B^{\star}\cup B^c$,  \eqv(3.lem2.1)  follows from 
\eqv(3.lem1.6) and the bound on $\pi_n(B^{\star}\cup B^c)$ of \eqv(3.lem1.7).
 \end{proof}


\subsection{On hitting the top starting in the top.}
  \TH(S3.3)

Let  $T^{\circ}_n$ and $I^{\star}_n$ be as in  \eqv(2.20) and   \eqv(2.22).

\begin{proposition}
     \TH(3.prop3)
     
Given $\e>0$ there exists a subset $\O^{\circ}\subset\O$ with  $\P\left(\O^{\circ}\right)=1$ such that on $\O^{\circ}$, 
for all but a finite number of indices $n$, for all $s>0$
\be
|T^{\circ}_n|^{-1}\sum_{x\in T^{\circ}_n}P_{x}\left(H(T^{\circ}_n\setminus x)\leq s\right)
\leq sn^{c_{\star}+3}{r^{\star}_n}\pi_n(T^{\circ}_n).
\Eq(3.prop3.1)
\ee    
\end{proposition}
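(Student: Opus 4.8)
The plan is to trade the polynomial factor $n^{c_\star+3}$ for a reduction to the stationary hitting estimates of Section \thv(S3.2). Since $T^\circ_n\subseteq V^\star_n$ and $V^\star_n\cap\overline V^\star_n=\emptyset$, formula \eqv(3.lem0.0) gives $\pi_n(x)=Z_{\b,n}^{-1}$ for every $x\in T^\circ_n$, so that $|T^\circ_n|^{-1}=\pi_n(x)/\pi_n(T^\circ_n)$ and the left side of \eqv(3.prop3.1) equals $\pi_n(T^\circ_n)^{-1}\sum_{x\in T^\circ_n}\pi_n(x)P_x(H(T^\circ_n\setminus x)\le s)$. One may assume $s\,n^{c_\star+3}r^\star_n\pi_n(T^\circ_n)<1$ (otherwise \eqv(3.prop3.1) is trivial), which by \eqv(2.lem3.5), \eqv(2.lem3.7) and \eqv(3.lem0.2) confines $s$ to the range where the conclusion of Corollary \thv(3.cor1) applies.

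Next I would split, for each fixed $x\in T^\circ_n$, the event $\{H(T^\circ_n\setminus x)\le s\}$ according to whether the hit occurs before or after the mixing time $\kappa_n$ of \eqv(3.prop2.1) (the latter part being empty unless $s>\kappa_n$). On the ``after $\kappa_n$'' part, the Markov property at time $\kappa_n$ combined with the uniform equilibration bound of Proposition \thv(3.prop2) gives $P_x(\kappa_n<H(T^\circ_n\setminus x)\le s)\le(1+\rho_n)P_{\pi_n}(H(T^\circ_n)\le s)$, which by \eqv(3.cor1.1') and Lemma \thv(3.lem1) is $O(s\,r^\star_n n\pi_n(T^\circ_n))$ --- comfortably within the budget, the slack $n^{c_\star+3}$ being much more than enough.

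The substantial part is the ``before $\kappa_n$'' contribution $\pi_n(T^\circ_n)^{-1}\sum_{x\in T^\circ_n}\pi_n(x)P_x\bigl(H(T^\circ_n\setminus x)\le s\wedge\kappa_n\bigr)$, which I would treat by an excursion decomposition of $Y_n$ at the starting vertex $x$. By \eqv(2.21) the set $T^\circ_n\setminus x$ lies at graph distance $\ge2$ from $x$; in fact it is disjoint from the valley $C^\star_{n,l(x)}$, the connected component of $G(V^\star_n)$ through $x$, since by the definition of $T^\circ_n$ no other vertex of $T_n$ belongs to that component, so $\1_{C^\star_{n,l(x)}}$ is admissible in the Dirichlet principle. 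As $C^\star_{n,l(x)}$ is immersed, \eqv(2.10) together with \eqv(3.lem0.3) bounds the conductance of every edge leaving it by $O(2^{-n}n^{-1}r^\star_n)$, while $|C^\star_{n,l(x)}|$ is controlled by \eqv(2.lem3.4); these bound $\capa(\{x\},T^\circ_n\setminus x)$, hence the escape probability $P_x(H(T^\circ_n\setminus x)<H^+(x))$ that a single excursion from $x$ reaches $T^\circ_n\setminus x$, where $H^+(x)$ is the first return time to $x$. Pairing this with a control of the number of excursions from $x$ relevant before time $s\wedge\kappa_n$ --- using the bound $\wt\l_n(x)\ge(nr^\star_n)^{-1}$, valid $\P$-a.s.\ on $T^\circ_n$ (no element of $T^\circ_n$ can have all its neighbours in $\overline V^\star_n$, by a Borel--Cantelli argument as for \eqv(2.lem5.2)), together with the hitting-density estimates of Lemma \thv(3.lem3) --- should give $P_x(H(T^\circ_n\setminus x)\le s\wedge\kappa_n)\lesssim s\,n^{c}r^\star_n\pi_n(T^\circ_n)$ for some constant $c<c_\star+3$; summing over $x$ with \eqv(2.lem3.5)--\eqv(2.lem3.7) then yields \eqv(3.prop3.1).

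I expect this last estimate, made uniform over \emph{all} $x\in T^\circ_n$, to be the main obstacle. The Metropolis rate out of a top vertex $x$ --- equivalently the reversed rate into $x$ from a neighbour --- is enormous ($\asymp n^{-1}\t_n(x)\ge n^{-1}r_n(\varepsilon_n)$), so after leaving $x$ the chain typically jumps straight back; a naive union bound over excursions (or over jumps out of $x$) within time $\kappa_n$ is hopelessly lossy, and one must argue that $T^\circ_n\setminus x$ is reached only through one of the rare excursions actually leaving the valley $C^\star_{n,l(x)}$, carefully setting the small escape probability against the correspondingly short return time. Compounding this is the $O(n^{-c_\star+1})$-fraction of vertices in $T^\circ_n\setminus I^\star_n$: unlike those of $I^\star_n$, their neighbours may lie in $\overline V^\star_n$ or carry very large $\t_n$, so no one-step local estimate is available and one is forced to use the immersion structure \eqv(2.9)--\eqv(2.10). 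This is precisely where the argument parts company with the Random Hopping dynamics, and with the regime $1/2<\varepsilon<1$ of Ref.~\cite{CW15} in which $T_n$ consists of isolated vertices.
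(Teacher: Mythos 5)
There is a genuine gap, and it sits exactly where you expect it: your quenched excursion argument cannot produce the factor $\pi_n(T^{\circ}_n)\approx 2^{-n\varepsilon_n}$ on the right-hand side of \eqv(3.prop3.1). The conductance bound \eqv(2.10) on edges leaving the immersed valley $C^{\star}_{n,l(x)}$ controls (by monotonicity) only $\capa\bigl(x,(C^{\star}_{n,l(x)})^c\bigr)\lesssim n\,2^{-n}r^{\star}_n$, so the per-excursion escape probability is $\lesssim n\,r^{\star}_n/\wt\l_n(x)$; multiplying by the $\approx s\,\wt\l_n(x)$ excursions before time $s\wedge\kappa_n$ gives only $\lesssim s\,n\,r^{\star}_n$, which misses the target by the full factor $2^{n\varepsilon_n}$. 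That missing factor is the probability that, \emph{having left the valley}, the walk actually finds another vertex of $T_n$ before time $\kappa_n$ --- a hitting problem for an exponentially sparse set started from a non-stationary point \emph{before} the chain has mixed. None of the tools you invoke reaches it: Lemma \thv(3.lem1) and Lemma \thv(3.lem3) require the stationary start $\pi_n$, and Proposition \thv(3.prop2) only helps after $\kappa_n$ (the part you already handled). So the step you flag as ``should give $\dots$ for some constant $c<c_\star+3$'' is not a technical refinement left to the reader; it is the whole content of the proposition, and the sketched toolbox cannot deliver it.

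The paper's proof gets the factor $\pi_n(T^{\circ}_n)$ from the \emph{environment}, not from the chain: it bounds $\E\sum_{x\in T^{\circ}_n}P_x\bigl(H(T^\star_n\setminus T^{\star}_{n,l(x)})\leq s\bigr)$ by expanding the hitting probability through Keilson's formula \eqv(3.prop3.2)--\eqv(3.prop3.3) in the substochastic matrix $Q_n$, conditioning on $V^{\star}_n$, and exploiting that, given $V^{\star}_n$, the path weights $q_n^{(k)}(x,y)$ are independent of whether a neighbouring component $C^{\star}_{n,l}$ contains a top vertex; that event has conditional probability at most $|C^{\star}_{n,l}|\,2^{-(\varepsilon_n-\rho^{\star}_n)n}$ by \eqv(2.lem3.4), and substochasticity ($\sum_y q_n^{(k)}(x,y)\leq 1$) re-sums the paths at no cost. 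A first-order Tchebychev inequality in $\P$ with threshold $\e=n^2\,n^{c_{\star}+1}r^{\star}_n\pi_n(T^{\circ}_n)\,|T^{\circ}_n|$-type and Borel--Cantelli then yields the almost-sure statement, with the boundary rate bound \eqv(2.10) supplying only the harmless $r^{\star}_n/n$ per boundary step. If you want to salvage your route you would have to import an annealed ingredient of exactly this kind (e.g.\ a quenched a.s.\ bound on the number of top vertices reachable within the pre-mixing horizon, itself proved by a first-moment computation in the environment); as written, the quenched capacity-and-excursion scheme cannot close.
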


The next proposition is a variant of Proposition \thv(3.prop3) that we state for later convenience. 

\begin{proposition}
     \TH(3.prop3bis)

Under the assumptions and with the notation of Proposition \thv(3.prop3), on $\O^{\circ}$,  
for all but a finite number of indices $n$, for all $s>0$
\be
|T^{\circ}_n\setminus I^{\star}_n|^{-1}\sum_{x\in T^{\circ}_n\setminus I^{\star}_n}P_{x}\left(H(I^{\star}_n)\leq s\right)
\leq s n^{2}r^{\star}_n\pi_n(I^{\star}_n)(1+o(1)).
\Eq(3.prop3.1bis)
\ee    
\end{proposition}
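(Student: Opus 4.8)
The plan is to follow the proof of Proposition~\thv(3.prop3), replacing the pair $\bigl(T^\circ_n,\,T^\circ_n\setminus\{x\}\bigr)$ by $\bigl(T^\circ_n\setminus I^\star_n,\,I^\star_n\bigr)$. First, since every $x\in T^\circ_n\setminus I^\star_n$ lies outside $\overline V^\star_n$, \eqv(3.lem0.0) and Lemma~\thv(3.lem0) give $\pi_n(x)=Z_{\b,n}^{-1}$ and $\pi_n(T^\circ_n\setminus I^\star_n)=|T^\circ_n\setminus I^\star_n|Z_{\b,n}^{-1}$, so the left side of \eqv(3.prop3.1bis) equals $\pi_n(T^\circ_n\setminus I^\star_n)^{-1}\sum_{x\in T^\circ_n\setminus I^\star_n}\pi_n(x)P_x(H(I^\star_n)\le s)$. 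I would then bound each hitting probability by the expected number of jumps of $Y_n$ into $I^\star_n$ before time $s$ (i.e.\ by the compensator): for $x\notin I^\star_n$,
\[
P_x(H(I^\star_n)\le s)\le\int_0^{s}\sum_{v}P_x(Y_n(u)=v)\,\wt\l_n(v,I^\star_n)\,du,\qquad \wt\l_n(v,I^\star_n):=\sum_{z\in I^\star_n:\,z\sim v}\wt\l_n(v,z).
\]
Only $v\in\del I^\star_n$ contribute, and any such $v$ has $(r^\star_n)^{-1}<\t_n(v)<r^\star_n$, whence $v\notin\overline V^\star_n$, $\pi_n(v)=Z_{\b,n}^{-1}$, and by \eqv(1.4.6) (using $\t_n(v)<r_n(\varepsilon_n)\le\t_n(z)$) $\wt\l_n(v,z)=n^{-1}\t_n(v)\le n^{-1}r^\star_n$.

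Second, I would split the time integral at the mixing time $\kappa_n$ of \eqv(3.prop2.1), keeping only $[0,s]$ if $s<\kappa_n$. On $[\kappa_n,s]$, Proposition~\thv(3.prop2) gives $P_x(Y_n(u)=v)\le(1+\rho_n)\pi_n(v)$, so this piece contributes at most $(1+\rho_n)s\,q_n(I^\star_n,(I^\star_n)^c)$, with $q_n$ as in \eqv(3.lem1.4). Because each $z\in I^\star_n$ is a singleton connected component of $G(V^\star_n)$ at graph distance $\ge 2$ from $V^\star_n\cup\overline V^\star_n$, \eqv(2.10) yields $q_n(I^\star_n,(I^\star_n)^c)\le r^\star_n\pi_n(I^\star_n)$, exactly as in the proof of Lemma~\thv(3.lem1); this piece thus contributes $s\,r^\star_n\pi_n(I^\star_n)(1+o(1))$, already within the claimed bound with a factor $n^2$ to spare.

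The remaining, and main, task is the short-time piece $\pi_n(T^\circ_n\setminus I^\star_n)^{-1}\sum_{x\in T^\circ_n\setminus I^\star_n}\pi_n(x)\int_0^{t'}\sum_{v}P_x(Y_n(u)=v)\wt\l_n(v,I^\star_n)\,du$ with $t'=\min(s,\kappa_n)$, where mixing is not available. Here I would use reversibility: $\pi_n(x)P_x(Y_n(u)=v)=\pi_n(v)P_v(Y_n(u)=x)$, and since $\pi_n$ is constant on both $T^\circ_n\setminus I^\star_n$ and $\del I^\star_n$, summing over $x\in T^\circ_n\setminus I^\star_n$, $z\in I^\star_n$ and $v\sim z$ turns this into a $\pi_n$-weighted time integral of $P_v(Y_n(u)\in T^\circ_n\setminus I^\star_n)$, $v\in\del I^\star_n$ — that is, the chain started just outside $I^\star_n$ must revisit the top within a short time. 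Since $\dist(I^\star_n,T^\circ_n\setminus I^\star_n)\ge 2$ (no neighbour of a $T^\circ_n$-vertex lies in $T_n$; cf.\ \eqv(2.21)), one further application of the compensator bound, together with the rate estimates of \eqv(1.4.6), the cardinalities \eqv(2.lem3.5)--\eqv(2.lem3.8), and a Borel--Cantelli bound on $\max_v|\{y\in T_n:y\sim v\}|$ of the kind underlying Lemma~\thv(2.lem3), should bound this piece by $t'\,n^2 r^\star_n\pi_n(I^\star_n)(1+o(1))\le s\,n^2 r^\star_n\pi_n(I^\star_n)(1+o(1))$ — the $n^2$ reflecting the two boundary crossings, each at rate $\le n^{-1}r^\star_n$. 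Adding the two pieces and taking $\O^\circ$ to be the intersection of $\O_1$ with the relevant good-environment events gives \eqv(3.prop3.1bis).

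I expect this last step to be the only genuinely delicate part. The naive bound $\sum_{x\in T^\circ_n\setminus I^\star_n}\pi_n(x)P_x(H(I^\star_n)\le s)\le P_{\pi_n}(H(I^\star_n)\le s)\le s\,r^\star_n\pi_n(I^\star_n)(1+o(1))$ (Lemma~\thv(3.lem1)) is too lossy: after dividing by $\pi_n(T^\circ_n\setminus I^\star_n)=2n^{-c_\star+1}2^{-n\varepsilon_n}(1+o(1))$ it exceeds the claimed bound by the exponential factor $2^{n\varepsilon_n}$. One genuinely has to exploit both that the target is exponentially thin, $\pi_n(I^\star_n)=2^{-n\varepsilon_n}(1+o(1))$, and that it is separated from $T^\circ_n\setminus I^\star_n$ by graph distance $\ge 2$.
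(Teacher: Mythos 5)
Your post-mixing piece is fine (Proposition \thv(3.prop2) plus $q_n(I^{\star}_n,(I^{\star}_n)^c)\leq r^{\star}_n\pi_n(I^{\star}_n)$, since $I^{\star}_n$ is its own immersion), and you correctly identify that the naive stationary bound loses a factor $2^{n\varepsilon_n}$. But the pre-mixing piece -- which you yourself call the only delicate part -- is not proved, and the mechanism you sketch for it cannot deliver the claimed bound. After reversibility you need the $\pi_n(\cdot)\wt\l_n(\cdot,I^{\star}_n)$-weighted average over $v\in\del I^{\star}_n$ of $P_{v}(H(T^{\circ}_n\setminus I^{\star}_n)\leq t')$, $t'=\min(s,\kappa_n)$, to be of order $n^2\pi_n(T^{\circ}_n\setminus I^{\star}_n)\approx n^{3-c_{\star}}2^{-n\varepsilon_n}$. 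A second compensator bound cannot produce any factor comparable to the density $2^{-n\varepsilon_n}$ of the top: first, jump rates into $T^{\circ}_n\setminus I^{\star}_n$ are \emph{not} bounded by $n^{-1}r^{\star}_n$ (its vertices have, by definition, neighbours in $V^{\star}_n\cup\overline V^{\star}_n$, and the rate from a same-valley neighbour $w\in V^{\star}_n$ is $n^{-1}\t_n(w)\gg n^{-1}r^{\star}_n$; you must pass to the immersion as in \eqv(2.10)); second, even with the immersion the compensator gives only $P_{v}(H\leq t')\leq t'r^{\star}_n$, so the short-time contribution is of order $(t')^2(r^{\star}_n)^2\pi_n(I^{\star}_n)/\pi_n(T^{\circ}_n\setminus I^{\star}_n)$, which exceeds the claim by the exponential factor $2^{n\varepsilon_n}$ (up to $n$- and $r^{\star}_n$-powers) -- exactly the loss you flagged for the naive bound. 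Worse, a bound of size $n^{2}\pi_n(T^{\circ}_n\setminus I^{\star}_n)$ uniform in $v\in\del I^{\star}_n$ is simply false when $\varepsilon<1/2$: with high probability there are exponentially many $v$ adjacent both to a vertex of $I^{\star}_n$ and to a vertex of $T^{\circ}_n\setminus I^{\star}_n$, and for such $v$ the probability of entering the top within time $\kappa_n$ is at least of order $1/n$. Knowing $\max_v|\{y\in T_n:y\sim v\}|$ does not help, and before the mixing time $\kappa_n\approx n^4r^{\star}_n$ neither Proposition \thv(3.prop2) nor the stationary-start Lemma \thv(3.lem1) is available, so no quenched tool in the paper makes the walk, started at a \emph{fixed} boundary vertex, unlikely to reach the exponentially thin top.

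The missing smallness has to come from averaging over the environment: conditionally on $V^{\star}_n$, the event that a valley reachable by the excursion carries a vertex of the top has probability $\approx 2^{-(\varepsilon_n-\rho_n^{\star})n}$, independently of the variables driving the excursion. This is exactly how the paper proves Proposition \thv(3.prop3), whose proof is then rerun for Proposition \thv(3.prop3bis): express the quenched hitting probability through Keilson's formula \eqv(3.prop3.3), take $\E$ using the conditional product structure \eqv(3.prop3.10), bound the annealed first moment by $s\,n^{O(1)}r^{\star}_n2^{n}2^{-2\varepsilon_n n}$, and conclude by Markov's inequality and Borel--Cantelli; the set $\O^{\circ}$ and the factor $n^2$ in \eqv(3.prop3.1bis) come from this Borel--Cantelli margin, not from an intersection of geometric good-environment events. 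Your purely quenched decomposition (compensator plus mixing) therefore leaves the decisive step unproven, and as formulated it cannot be repaired without importing precisely this annealed argument.
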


\begin{proof}[Proof of Proposition \thv(3.prop3)]  
A key ingredient of the proof is an explicit expression of the density function  $h^x_{n,A}(t)$, $t\geq 0$, 
of the hitting time $H(A)$ when $Y_n$ starts in  $x\in A^c\equiv\VV_n\setminus A$. We first state this 
expression in full generality as given in   \cite{Kei} (see Section 6.2, p.~83). Consider the stochastic matrix  
$\wt P_n=(\wt p_n(x,y))$ defined above  \eqv(3.prop1.2). Denote by $Q_{n}=(q_n(x,y))$  the matrix with 
entries $q_n(x,y): A^c\times A^c \rightarrow \R$  given by $q_n(x,y)=\wt p_n(x,y)$. This is the sub-matrix 
of $\wt P_n$ on $A^c\times A^c$. Thus $Q_{n}$  is sub-stochastic. Similarly, denote by $R_{n}=(r_n(x,y))$ 
the sub-matrix of $\wt P_n$ on $A^c\times A$. Let $1_{A}$ be the vector of $1$'s on $A$ and let $\d_x$ 
be the vector on $A^c$ taking value 1 at $x$ and zero else. Then, for all $x\in A^c$, 
\be
h^x_{n, A}(t)=\nu_n\sum_{k=0}^{\infty}\frac{(\nu_nt)^k}{k!}e^{-\nu_nt}\left(\d_x,Q_{n}^kR_{n}1_{A}\right), \quad t\geq 0,
\Eq(3.prop3.2)
\ee
where $(\cdot,\cdot)$ denotes the inner product in $\R^{|A^c|}$. Consequently, for all $s>0$,
\be
P_{x}\left(H(A)\leq s\right)
=\int_0^{s}\nu_n\sum_{k=0}^{\infty}\frac{(\nu_nt)^k}{k!}e^{-\nu_nt}\left(\d_x,Q_{n}^kR_{n}1_{A}\right)dt.
\Eq(3.prop3.3)
\ee
For later reference we also denote by 
$
(h^x_{n, y, A}(t))_{y\in A}
$
the vector whose components are, for each $y\in A$, the joint density 
that $A$ is reached at time $t$, and that arrival to that set occurs in state $y$, namely,
$h^x_{n, y, A}(t)$ is defined as in \eqv(3.prop3.2) substituting $\d_y$ for $1_{A}$ therein;
as a result
$
h^x_{n, A}(t)=\sum_{y\in A}h^x_{n, y, A}(t)
$.

Returning to  \eqv(3.prop3.1), a first order Tchebychev inequality yields, for all $\e>0$
\bea
\nonumber
\P\left[
\textstyle
\sum_{x\in T^{\circ}_n}P_{x}\left(H(T^{\circ}_n\setminus x)\leq s\right)
\geq \e
\right]
\hspace{-6pt}&\leq &\hspace{-6pt}
 \e^{-1}\E\left[
 \textstyle
\sum_{x\in T^{\circ}_n}P_{x}\left(H\bigl(T_n^{\star}\setminus T^{\star}_{n,l(x)}\bigr)\leq s\right)
\right]
\\
\hspace{-6pt}&\equiv &\hspace{-6pt}
 \e^{-1}\WW_n,
\Eq(3.prop3.5)
\eea
where
$
T_n^{\star}\equiv\cup_{l=1}^{L^{\star}} T^{\star}_{n,l}
$
is defined in \eqv(2.13) and $1\leq l(x)\leq L^{\star}$ denotes the (unique) index such that 
$T^{\star}_{n,l(x)}\cap T^{\circ}_n=\{x\}$ in \eqv(2.20).
By \eqv(3.prop3.3) with $A=T_n^{\star}\setminus T^{\star}_{n,l(x)}$,
\be
\WW_n
=
\sum_{x\in \VV_n}
\int_0^{s}dt\sum_{k=1}^{\infty}
\frac{(\nu_nt)^k}{k!}e^{-\nu_nt}
\WW_{n,k}(x)
\Eq(3.prop3.8)
\ee
where 
\be
\WW_{n,k}(x)
\equiv  \E\left[\1_{\{x\in T^{\circ}_n\}}\nu_n\left(\d_x,Q_{n}^kR_{n}1_{T_n^{\star}\setminus T^{\star}_{n,l(x)}}\right)\right].
\Eq(3.prop3.9)
\ee
Note that the term $k=0$ is zero. For $k\geq 1$  the matrix term in \eqv(3.prop3.9) reads,
\be
\1_{\{x\in T^{\circ}_n\}}\nu_n\left(\d_x,Q_{n}^kR_{n}1_{T_n^{\star}\setminus T^{\star}_{n,l(x)}}\right)
=\1_{\{x\in T^{\circ}_n\}}\sum_{y\in (T_n^{\star}\setminus T^{\star}_{n,l(x)})^c}q_n^{(k)}(x,y)
\sum_{z\in T_n^{\star}\setminus T^{\star}_{n,l(x)}}\nu_nr_n(y,z)
\Eq(3.prop3.6)
\ee
where $q_n^{(k)}(x,y)$ denotes the entries of $Q^k_n$.  By \eqv(2.10),
for all $y\in (T_n^{\star}\setminus T^{\star}_{n,l(x)})^c$,
\be
\sum_{z\in T_n^{\star}\setminus T^{\star}_{n,l(x)}}\nu_nr_n(y,z)
=
\sum_{z\in T_n^{\star}\setminus T^{\star}_{n,l(x)}}\wt\l_n(y,z)
\leq 
n^{-1}r^{\star}_n\sum_{z\in T_n^{\star}\setminus T^{\star}_{n,l(x)}}\1_{\{z\sim y\}}.
\Eq(3.prop3.7)
\ee
Therefore,  inserting \eqv(3.prop3.7) in \eqv(3.prop3.6), \eqv(3.prop3.9) yields
\be
\WW_{n,k}(x)
 \leq  
\frac{r^{\star}_n}{n}\E\Biggl[\E\biggl[
\1_{\{x\in T^{\circ}_n\}}
\sum_{y\in (T_n^{\star}\setminus T^{\star}_{n,l(x)})^c}q_n^{(k)}(x,y)
\sum_{z\in T_n^{\star}\setminus T^{\star}_{n,l(x)}}\1_{\{z\sim y\}}
\,\bigg|\, V^{\star}_n \biggr]\Biggr]
\Eq(3.prop3.11)
\ee
 where $\E[\cdot\,|\, V^{\star}_n ]$ denotes the conditional expectation given a realization of the set
$V^{\star}_n=\cup_{l=1}^{L^{\star}} C^{\star}_{n,l}$ 
(see \eqv(2.6)), namely, expectation with respect to the measure
\be
\P(\cdot\,|\, V^{\star}_n)=
\frac{\P(\cdot\cap \{\forall_{1\leq l\leq L^{\star}}\forall_{x\in C^{\star}_{n,l}}\t_n(x)\geq r^{\star}_n\}
\cap\{\forall_{x\in C^{\star}_{n,0}}\t_n(x)< r^{\star}_n
\})}
{\P(\{\forall_{1\leq l\leq L^{\star}}\forall_{x\in C^{\star}_{n,l}}\t_n(x)\geq r^{\star}_n\}
\cap\{\forall_{x\in C^{\star}_{n,0}}\t_n(x)< r^{\star}_n
\})},
\Eq(3.prop3.10)
\ee
where we set $C^{\star}_{n,0}\equiv\VV_n\setminus \VV^{\star}_n$ for simplicity.
Thus $\VV_n=\cup_{0\leq l\leq L^{\star}}C^{\star}_{n,l}$ and
 $C^{\star}_{n,l}\cap C^{\star}_{n,l'}=\emptyset$ for all
$0\leq  l\neq l'\leq L^{\star}$, so that if $\LL_i\subset\{0,\dots,L^{\star}\}$, $i=1,\dots,j$, is a collection of disjoint sets,
 functions $f_i$ of the variables $\{\t_n(x), x\in\cup_{l\in\LL_i}C^{\star}_{n,l}\}$, $i=1,\dots,j$, 
are independent under the conditional law \eqv(3.prop3.10). 
Observe now that conditional on $V^{\star}_n$ the entries of the matrix $Q_{n}$ are functions of the variables
$\{\t_n(y), y\in (T_n^{\star}\setminus T^{\star}_{n,l(x)})^c\}$ only:
for off-diagonal entries, i.e.~for $q_n(x,y)$ with $x\neq y$, this is an immediate consequence of \eqv(1.4.6);
for diagonal entries, i.e.~$q_n(x,x)=1-\nu_n^{-1}\wt\l_n(x)$,
this claim follows from \eqv(1.03.5) and \eqv(1.4.6) if  $x\notin\del V^{\star}_n$
and from  \eqv(1.4.6') and \eqv(1.4.6) if  $x\in\del V^{\star}_n$ 
(the boundary set $\del A$ of $A$ is defined above \eqv(1.4.6')).

Next, observe that the sum over $y\in (T_n^{\star}\setminus T^{\star}_{n,l(x)})^c$ in \eqv(3.prop3.11) can 
be restricted to the sum over $y\in \del V^{\star}_n\subseteq C^{\star}_{n,0}$
and use the  definition of $T_n^{\star}$ (see  \eqv(2.13)) to write
\be
\begin{split}
&\sum_{y\in (T_n^{\star}\setminus T^{\star}_{n,l(x)})^c}q_n^{(k)}(x,y)
\sum_{z\in T_n^{\star}\setminus T^{\star}_{n,l(x)}}\1_{\{z\sim y\}}
\\
 =  &
 \sum_{x_1\in\VV_n}\dots\sum_{x_{k-1}\in\VV_n}\sum_{y\in \del V^{\star}_n}\sum_{z\sim y}
\sum_{{0\leq l_1\leq L^{\star}}\atop{C^{\star}_{n,l_1}\cap x_1 \neq \emptyset}}\dots
\sum_{{0\leq l_{k-1}\leq L^{\star}}\atop{C^{\star}_{n,l_{k-1}}\cap x_{k-1} \neq \emptyset}}
\sum_{{1\leq l\neq l(x)\leq L^{\star}}\atop{C^{\star}_{n,l}\cap z\neq \emptyset}}\quad
\\
 & 
 q_n(x,x_1)\dots q_n(x_{k-1},y)
\1_{\{\forall_{x'_1\in C^{\star}_{n,l_1}\setminus\{x\}}\t_n(x'_1)< r_n(\varepsilon_n)\}}\dots
\\
 & 
 \dots
\1_{\{\forall_{x'_{k-1}\in C^{\star}_{n,l_{k-1}}\setminus\{x\}}\t_n(x'_{k-1})< r_n(\varepsilon_n)\}}
\1_{\{\exists_{z'\in C^{\star}_{n,l}}\t_n(z')\geq r_n(\varepsilon_n)\}}.
\end{split}
\Eq(3.prop3.16)
\ee
Since 
$
\1_{\{\forall_{z'\in C^{\star}_{n,l}\setminus\{x\}}\t_n(z')< r_n(\varepsilon_n)\}}
\1_{\{\exists_{z'\in C^{\star}_{n,l}}\t_n(z')\geq r_n(\varepsilon_n)\}}
=0
$ 
for all $l\neq l(x)$, the sums over $l$ in  \eqv(3.prop3.16) can be restricted to 
$1\leq l\neq l(x), l_1,\dots, l_{k-1}\leq L^{\star}$.
We may now multiply \eqv(3.prop3.16) by $\1_{\{x\in T^{\circ}_n\}}$
and take the conditional expectation. 
The variables $\{\t_n(z'), z'\in C^{\star}_{n,l}\}$ being independent of 
 the variables $\{\t_n(x'), x'\in\cup_{0\leq l'\neq l\leq L^{\star}}C^{\star}_{n,l'}\}$,
they can be integrated out first, yielding, for all $y\in \del V^{\star}_n$
\bea
&&
\sum_{z\sim y}\sum_{
{1\leq l\neq l(x), l_1,\dots, l_{k-1}\leq L^{\star}}\atop{C^{\star}_{n,l}\cap z \neq \emptyset}}
\P\left[
\exists_{z'\in C^{\star}_{n,l}}
\t_n(z')\geq r_n(\varepsilon_n)\,\big|\, V^{\star}_n\right]
\\
\hspace{-6pt}&\leq &\hspace{-6pt}
n\max_{
1\leq l\neq l(x), l_1,\dots, l_{k-1}\leq L^{\star}
}|C^{\star}_{n,l}|2^{-(\varepsilon_n-\rho_n^{\star})n}
\Eq(3.prop3.17)
\\
\hspace{-6pt}&\leq &\hspace{-6pt}
n^22^{-(\varepsilon_n-\rho_n^{\star})n},
\Eq(3.prop3.12)
\eea
where we used in \eqv(3.prop3.17)  that the sum over $l$ contains at most one term while   
the sum over $z$ contains at most $n$ terms. Eq.~\eqv(3.prop3.12)
 then follows from  \eqv(2.lem3.4) and so, is valid on $\O^{\star}$ for all large enough $n$.
This bound is uniform in $y\in \del V^{\star}_n$. Therefore, using \eqv(3.prop3.12) in \eqv(3.prop3.16)
and re-summing,  \eqv(3.prop3.11) becomes
\bea
\WW_{n,k}(x)
\hspace{-6pt}&\leq &\hspace{-6pt}
\frac{r^{\star}_n}{n}
n^22^{-(\varepsilon_n-\rho_n^{\star})n}
\E\Biggl[\E\biggl[
\1_{\{x\in T^{\circ}_n\}}
\sum_{y\in \del V^{\star}_n}q_n^{(k)}(x,y)
\,\bigg|\, V^{\star}_n \biggr]\Biggr]
\Eq(3.prop3.18)
\\
\hspace{-6pt}&\leq &\hspace{-6pt}
\frac{r^{\star}_n}{n}
n^22^{-(\varepsilon_n-\rho_n^{\star})n}\P(x\in T^{\circ}_n)
\Eq(3.prop3.19)
\eea
where we used in \eqv(3.prop3.19) that since $Q_{n}$  is sub-stochastic, 
$
\sum_{y\in  \del V^{\star}_n}q_n^{(k)}(x,y)\leq 1
$
for all $x$. Now, by \eqv(2.20) and \eqv(2.2), 
$
\P(x\in T^{\circ}_n)\leq \P(\t_n(x)\geq r_n(\varepsilon_n))=2^{-\varepsilon_nn}
$.
Thus
\be
\WW_{n,k}(x)
\leq
{r^{\star}_n}n2^{-2\varepsilon_nn}2^{\rho_n^{\star}n}
=n^{c_{\star}+1}{r^{\star}_n}2^{-2\varepsilon_nn}.
\Eq(3.prop3.13)
\ee
The last equality is \eqv(2.0).
Using this bound in  \eqv(3.prop3.8) finally yields that on $\O^{\star}$, for all large enough $n$,
\be
\WW_n
=
\sum_{x\in \VV_n}
\int_0^{\theta_n}dt\sum_{k=1}^{\infty}
\frac{(\nu_nt)^k}{k!}e^{-\nu_nt}
S_{n,k}(x)
\leq 
\theta_nn^{c_{\star}+1}{r^{\star}_n}2^n2^{-2\varepsilon_nn}.
\Eq(3.prop3.14)
\ee
It only remains to observe that 
by \eqv(2.lem3.5) and \eqv(3.lem0.2)  of Lemma \thv(3.lem0), on $\O^{\star}$, 
$\pi_n(T^{\circ}_n)=2^{-n\varepsilon_n}(1+o(1))$
for all but a finite number of indices $n$.
Hence
\be
\P\left[
\textstyle
|T^{\circ}_n|^{-1}\sum_{x\in T^{\circ}_n}P_{x}\left(H(T_n^{\star}\setminus T^{\star}_{n,l(x)})\leq s\right)\geq \e
\right]
\leq
\e^{-1}sn^{c_{\star}+1}{r^{\star}_n}\pi_n(T^{\circ}_n)(1+o(1)).
\nonumber
\ee
Choosing
$\e=n^{2}n^{c_{\star}+1}{r^{\star}_n}\pi_n(T^{\circ}_n)$,
the claim of the proposition follows from Borel-Cantelli Lemma.
\end{proof}

\begin{proof}[Proof of Proposition \thv(3.prop3bis)]  This is a rerun of the proof of Proposition \thv(3.prop3).\end{proof}

\subsection{Rough bounds on local times.}
  \TH(S3.4)

\begin{lemma}
   \TH(3.lem4)
For all $0\leq \a\leq 1$, all $x\in \VV_n$, and all $s>0$,
\bea
E_{x}\left[\ell_n^x(s)\right]^{\a} 
\geq
(\wt\l^{-1}_n(x))^{\a}\G(1+\a)[1-c_1\exp(-c_2s\wt\l_n(x))]+s^{\a}\exp(-s\wt\l_n(x))
\Eq(3.lem4.1)
\eea
where $0<c_1,c_2<\infty$ are constants, and if moreover 
$sr^{\star}_nn\pi_n(x)\rightarrow 0$ as $n\rightarrow\infty$,
\bea
E_{x}\left[\ell_n^x(s)\right]^{\a} 
\leq
(1+o(1))\left[\kappa_n^\a+\1_{\{s>\kappa_n\}}s^\a(s-\kappa_n)r^{\star}_nn\pi_n(x)\right].
\Eq(3.lem4.2)
\eea
\end{lemma}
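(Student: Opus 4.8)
The plan is to prove the two inequalities by rather different means: the lower bound \eqv(3.lem4.1) comes from keeping only the first holding time of $Y_n$ at $x$, while the upper bound \eqv(3.lem4.2) is obtained by cutting the time interval at the mixing length $\kappa_n$ and combining the uniform mixing estimate \eqv(3.prop2.2) of Proposition~\thv(3.prop2) with the hitting-time bound \eqv(3.lem1.2) of Lemma~\thv(3.lem1).

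For the lower bound, let $e_1$ denote the first holding time of $Y_n$ at $x$; under $P_x$ it is exponentially distributed with parameter $\wt\l_n(x)$, and since $Y_n(u)=x$ for $u<e_1$ one has the pathwise inequality $\ell_n^x(s)\geq\min(e_1,s)$, hence $E_x[(\ell_n^x(s))^{\a}]\geq E_x[\min(e_1,s)^{\a}]=\int_0^s t^{\a}\wt\l_n(x)e^{-\wt\l_n(x)t}\,dt+s^{\a}e^{-\wt\l_n(x)s}$. Substituting $u=\wt\l_n(x)t$ gives $\int_0^\infty t^{\a}\wt\l_n(x)e^{-\wt\l_n(x)t}\,dt=(\wt\l^{-1}_n(x))^{\a}\G(1+\a)$, while the complementary integral over $[s,\infty)$ is bounded, uniformly in $\a\in[0,1]$, by $c_1(\wt\l^{-1}_n(x))^{\a}\G(1+\a)e^{-c_2 s\wt\l_n(x)}$ for suitable $0<c_1,c_2<\infty$ --- a standard exponential bound on the upper incomplete Gamma function. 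Subtracting yields $\int_0^s t^{\a}\wt\l_n(x)e^{-\wt\l_n(x)t}\,dt\geq(\wt\l^{-1}_n(x))^{\a}\G(1+\a)[1-c_1 e^{-c_2 s\wt\l_n(x)}]$, and adding back $s^{\a}e^{-\wt\l_n(x)s}$ produces exactly \eqv(3.lem4.1). This step is deterministic in the environment, consistent with the unconditional phrasing of \eqv(3.lem4.1).

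For the upper bound we work on $\O_1\cap\O^{\star}$ with $n$ large, so that Proposition~\thv(3.prop2) and Lemma~\thv(3.lem1) apply, and take $\kappa_n$ as in \eqv(3.prop2.1). If $s\leq\kappa_n$ then $\ell_n^x(s)\leq s\leq\kappa_n$ gives $E_x[(\ell_n^x(s))^{\a}]\leq\kappa_n^{\a}$ and the indicator term is absent, so \eqv(3.lem4.2) holds. If $s>\kappa_n$, write $\ell_n^x(s)=\ell_n^x(\kappa_n)+\bigl(\ell_n^x(s)-\ell_n^x(\kappa_n)\bigr)$; subadditivity of $t\mapsto t^{\a}$ on $[0,\infty)$, valid since $0\leq\a\leq1$, together with $\ell_n^x(\kappa_n)\leq\kappa_n$, yields $E_x[(\ell_n^x(s))^{\a}]\leq\kappa_n^{\a}+E_x\bigl[(\ell_n^x(s)-\ell_n^x(\kappa_n))^{\a}\bigr]$. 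The increment vanishes unless $Y_n$ visits $x$ at some time of $[\kappa_n,s]$, in which case it is at most $s-\kappa_n$, so $E_x\bigl[(\ell_n^x(s)-\ell_n^x(\kappa_n))^{\a}\bigr]\leq(s-\kappa_n)^{\a}\,P_x\bigl(Y_n(u)=x\text{ for some }u\in[\kappa_n,s]\bigr)$. Conditioning on $Y_n(\kappa_n)$ and using \eqv(3.prop2.2) in the form $P_x(Y_n(\kappa_n)=y)\leq(1+\rho_n)\pi_n(y)$, this probability is at most $(1+\rho_n)P_{\pi_n}(H(\{x\})\leq s-\kappa_n)$; since $sr^{\star}_nn\pi_n(x)\to0$ forces $(s-\kappa_n)r^{\star}_nn\pi_n(x)\to0$, \eqv(3.lem1.2) with $A=\{x\}$ bounds it by $(s-\kappa_n)r^{\star}_nn\pi_n(x)(1+o(1))$. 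Using $(s-\kappa_n)^{\a}\leq s^{\a}$ and $\rho_n<e^{-n}$ gives $E_x[(\ell_n^x(s))^{\a}]\leq\kappa_n^{\a}+s^{\a}(s-\kappa_n)r^{\star}_nn\pi_n(x)(1+o(1))$, which combined with the case $s\leq\kappa_n$ is \eqv(3.lem4.2).

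The only genuinely substantive step is the passage from ``started at $x$'' to ``started at $\pi_n$'' needed to invoke \eqv(3.lem1.2): this is precisely what the uniform-in-$t$ mixing bound \eqv(3.prop2.2), with its super-exponentially small error $\rho_n$, makes possible, and the remainder is bookkeeping with the concavity and subadditivity of $t\mapsto t^{\a}$. A minor point to watch is that \eqv(3.lem1.2) retains the factor $n$ in front of $\pi_n(x)$ unless $x\notin V^{\star}_n$; since only an upper bound is wanted this is harmless, and it is exactly the form appearing in \eqv(3.lem4.2).
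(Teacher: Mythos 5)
Your proof is correct and follows essentially the same route as the paper: the lower bound by retaining only the first holding time at $x$ and estimating the truncated exponential moment, and the upper bound by cutting at $\kappa_n$, passing to the stationary law via the uniform mixing estimate \eqv(3.prop2.2), and invoking \eqv(3.lem1.2). The only cosmetic difference is that you use subadditivity of $t\mapsto t^{\a}$ where the paper splits on the event $\{H(x)>s-\kappa_n\}$, which yields the same bound.
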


\begin{proof}[Proof of Lemma \thv(3.lem3)] The lower bound follows from the trite 
observation that $\ell_n^x(s)$ is at least as large as the minimum between the first jump of $Y_n$
and $s$, that is,
\be
\ell_n^x(s)\geq \wt\l^{-1}_n(x)e_1\1_{s>\wt\l^{-1}_n(x)e_1}+s\1_{s\leq\wt\l^{-1}_n(x)e_1},
\Eq(3.lem4.3)
\ee
where $e_1$ is an exponential random variable of mean one. Thus
\bea
E_{x}\left[\ell_n^x(s)\right]^{\a} 
\geq
E_{x}\left[\wt\l^{-1}_n(x)e_1\1_{s>\wt\l^{-1}_n(x)e_1}\right]^{\a}
+s^{\a}E_{x}\left[\1_{s\leq \wt\l^{-1}_n(x)e_1}\right]^{\a}.
\eea
 Explicit calculations yield 
\be
E_{x}\left[\wt\l^{-1}_n(x)e_1\1_{s>\wt\l^{-1}_n(x)e_1}\right]^{\a}
\geq 
(\wt\l^{-1}_n(x))^{\a}\G(1+\a)[1-c_1\exp(-c_2s\wt\l_n(x))]
\ee
for some constants $0<c_1,c_2<\infty$.
Eq.~\eqv(3.lem4.1) now readily follows. To get an upper bound write
$
E_{x}\left[\ell_n^x(s)\right]^{\a}\leq\kappa^{\a}_n
$
if $s\leq\kappa_n$. Otherwise write
\bea
E_{x}\left[\ell_n^x(s)\right]^{\a} 
&\leq&
\textstyle
E_{x}\left[\kappa_n+ \int_{\kappa_n}^{s}\1_{\{Y_n(s)=x\}}ds\right]^{\a} 
\\
&\leq&
\textstyle
(1+\rho_n)E_{\pi_n}\left[\kappa_n+ \int_{0}^{s-\kappa_n}\1_{\{Y_n(s)=x\}}ds\right]^{\a} 
\eea
where the last line follows from Proposition \thv(3.prop2) and the Markov property. Next,
\be
\begin{split}
\textstyle
E_{\pi_n}\left[\kappa_n+ \int_{0}^{s-\kappa_n}\1_{\{Y_n(s)=x\}}ds\right]^{\a}  
& 
\leq E_{\pi_n}\left(\kappa_n^\a\1_{\{H(x)>s-\kappa_n\}}+s^\a\1_{\{H(x)\leq s-\kappa_n\}}\right)
\\
 &
\leq \kappa_n^\a+s^\a P_{\pi_n}(H(x)\leq s-\kappa_n)
\\
 &
\leq
\kappa_n^\a+s^\a(s-\kappa_n)r^{\star}_nn\pi_n(x)(1+o(1)),
\end{split}
\ee
the last inequality being \eqv(3.lem1.2) of Lemma  \thv(3.lem1). 
Eq.~\eqv(3.lem4.2) is proved.
\end{proof}



 
 \section{Verification of Condition (B1)} 
 \label{S4}
 
In this section we prove a strong law of large number for the function $\nu_n^{t}(u,\infty)$ 
defined in \eqv(B1.1). Recall that for $r^{\star}_n$ defined in \eqv(1.4.1), we take 
$\eta_n\equiv(r^{\star}_n)^{-1}$  in \eqv(1.03.12), \eqv(1.4.3), and \eqv(1.4.4). Then by 
\eqv(1.03.9)-\eqv(1.03.10), \eqv(1.3.6), and \eqv(1.03.12),
\be
\nu_n^{t}(u,\infty)
= k_n(t)P_{\pi_n}\left(\int_{0}^{\theta_n}\max\left((c_nr^{\star}_n)^{-1},c_n^{-1}\t_n(Y_n(s))\right)ds>u\right)
\Eq(4.1)
\ee
where  $\pi_n$ is the invariant measure \eqv(1.4.3) of $Y_n$, $\theta_n$ is the block length 
of the blocked clock process \eqv(1.03.9),  $k_n(t)=\lf a_n t/\theta_n\rf$, and, given $0<\varepsilon<1$, 
$c_n$ and $a_n$ are defined in \eqv(1.theo0.0)  and \eqv(1.theo1.0)-\eqv(1.theo1.3), respectively.
By Theorem \thv(1.theoB), $\theta_n$ and $a_n$ must obey
\be
\lfloor  n^4r^{\star}_n(1+o(1))\rfloor
\equiv\kappa_n\leq \theta_n\ll a_n,
\ee
where the left-most equality is \eqv(3.prop2.1). 
Further recall from Section  \thv(S2) that for $\rho_n^{\star}$  as in \eqv(2.0),
\be
\rho_n^{\star}\ll\varepsilon_n\equiv\varepsilon-\d_n.
\Eq(4.4)
\ee
(Recall that $0<x_n\ll y_n$ means that  $x_n/y_n\rightarrow 0$ as $n\rightarrow\infty$.)
From now on we take $\d_n$ such that $2^{n\d_n}=\left(n^2\theta_n\right)^{\a(\varepsilon)}$, i.e.
\be
\d_n\equiv  \frac{1}{n\b}\sqrt{\frac{2\varepsilon}{\log2}}\log\left(n^2 \theta_n\right).
\Eq(4.c2)
\ee
Thus, given $0<\varepsilon<1$ and $\b>0$, all sequences except $\theta_n$ are determined.

\begin{proposition}
     \TH(4.prop1)  

Given $0<\varepsilon<1$ and $\b>0$ let the sequences $c_n$ and $a_n$ be defined as in  
\eqv(1.theo0.0) and \eqv(1.theo1.0)-\eqv(1.theo1.3),  respectively,  and let $\theta_n$ be such that 
\bea
&(r^{\star}_n)^{4}\ll\theta_n^{1-\a(\varepsilon)},&
\Eq(4.c4)
\\
&
n^{-1}\log \theta_n\ll 1.&
\Eq(4.cnew)
\eea
Then, for all $0<\varepsilon<1$ and $\b>0$, $\P$-almost surely,
\be
\lim_{n\rightarrow\infty}\nu_n^t(u,\infty)=tu^{\a(\varepsilon)},\quad\forall\, t>0, u>0.
\Eq(4.prop1.1)
\ee
\end{proposition}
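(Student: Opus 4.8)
The proof starts from \eqv(4.1): with $Z_{n,1}$ the clock increment over the first block, $\nu_n^t(u,\infty)=k_n(t)P_{\pi_n}(Z_{n,1}>u)$ and, by \eqv(1.4.4) and \eqv(1.03.11), $Z_{n,1}=c_n^{-1}\sum_{x\in\VV_n}\max\bigl((r^{\star}_n)^{-1},\t_n(x)\bigr)\ell_n^x(\theta_n)$. The first and main step is to show that, $\P$-a.s.\ and up to errors that remain negligible after multiplication by $k_n(t)$, $P_{\pi_n}(Z_{n,1}>u)=\sum_{x\in I^{\star}_n}P_{\pi_n}\bigl(\t_n(x)\ell_n^x(\theta_n)>uc_n\bigr)(1+o(1))$ — i.e.\ the increment is, with overwhelming probability, produced by a single excursion of $Y_n$ to a single deep trap lying in the well-separated set $I^{\star}_n$. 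To obtain this one discards, in turn: the floor $(r^{\star}_n)^{-1}$, contributing at most $\theta_n/(c_nr^{\star}_n)\to0$; the sites with $\t_n(x)<r_n(\varepsilon_n)$, whose total contribution is $\lesssim\theta_nc_n^{-1}\E[\t_n\1_{\{\t_n<r_n(\varepsilon_n)\}}]\to0$, by a law of large numbers for the occupation measure of $Y_n$ (legitimate since $Y_n$ mixes on time $\kappa_n\ll\theta_n$, Proposition \thv(3.prop2)) together with the Gaussian tail estimates of Lemma \thv(2.lem2) and Corollary \thv(2.cor1); the non-isolated traps $T_n\setminus T^{\circ}_n$ and the traps near $V^{\star}_n\cup\overline V^{\star}_n$, i.e.\ $T^{\circ}_n\setminus I^{\star}_n$, via the size bounds of Lemma \thv(2.lem3) combined with Corollary \thv(3.cor1) and Proposition \thv(3.prop3bis); and the event that $Y_n$ visits two distinct traps in one block, by Proposition \thv(3.prop3). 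That the local time collected at an $x\in I^{\star}_n$ does not exceed that of a single excursion is because its mean return time is at least of order $2^n/(nr^{\star}_n)$ (Lemma \thv(3.lem2)), hence $\gg\theta_n$, and a single excursion has $\a$-moment $\asymp\wt\l_n(x)^{-\a}$ (Lemma \thv(3.lem4)); here the hypothesis $(r^{\star}_n)^4\ll\theta_n^{1-\a(\varepsilon)}$ of \eqv(4.c4), which gives $\theta_n\wt\l_n(x)\geq\theta_n/r^{\star}_n\to\infty$, ensures that even an atypically long single excursion is negligible on the clock scale.

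Inserting the scales, \eqv(1.theo1.0) and the definition \eqv(1.theo1.3) of $b_n$ give $k_n(t)=\lfloor a_nt/\theta_n\rfloor=t\,2^{\varepsilon n}\pi_n(T_n)\big/\bigl(\sum_{x\in T_n}E_{\pi_n}[F_{\b,\varepsilon,n}(\ell_n^x(\theta_n))]\bigr)(1+o(1))$, whence
\[
\nu_n^t(u,\infty)=t\,2^{\varepsilon n}\pi_n(T_n)\cdot\frac{\sum_{x\in I^{\star}_n}P_{\pi_n}\bigl(\t_n(x)\ell_n^x(\theta_n)>uc_n\bigr)}{\sum_{x\in T_n}E_{\pi_n}[F_{\b,\varepsilon,n}(\ell_n^x(\theta_n))]}(1+o(1)).
\]
Since $2^{\varepsilon n}\pi_n(T_n)=2^{(\varepsilon-\varepsilon_n)n}(1+o(1))=(n^2\theta_n)^{\a(\varepsilon)}(1+o(1))$ by \eqv(2.lem3.1bis), Lemma \thv(3.lem0) and the choice \eqv(4.c2) of $\d_n$, it remains to show that, $\P$-a.s., the last ratio is $(n^2\theta_n)^{-\a(\varepsilon)}u^{-\a(\varepsilon)}(1+o(1))$.

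This is where the function $F_{\b,\varepsilon,n}$ does its work. For $x\in I^{\star}_n$ one has $x\notin\overline V^{\star}_n$ and $\t_n(x)\geq r_n(\varepsilon_n)\gg r^{\star}_n$, so by \eqv(1.4.6) all jump rates of $Y_n$ in a neighbourhood of $x$ equal $\tfrac1n\t_n(\cdot)$ and involve none of $\t_n(x)$, and by \eqv(3.lem0.0) neither does $\pi_n$; hence, on $\{x\in I^{\star}_n\}$, the law of $\ell_n^x(\theta_n)$ under $P_{\pi_n}$ is independent of $\t_n(x)$, which is itself independent of the environment off $x$. Integrating $\t_n(x)$ out and using Lemma \thv(2.lem2) in the sharp form $\P(\t_n>c_n/w)=\P(\t_n>c_n)F_{\b,\varepsilon,n}(w)(1+o(1))$ — precisely the definition \eqv(4.prop2.16) of $F_{\b,\varepsilon,n}$ — together with the factorisation $F_{\b,\varepsilon,n}(\ell/u)=u^{-\a_n(\varepsilon)}F_{\b,\varepsilon,n}(\ell)(1+o(1))$ (valid because $\log u$ is bounded and $\log\ell_n^x(\theta_n)=o(n)$ on the bulk of the local-time law, the exponentially small values contributing only exponentially little), one gets, for each $x$,
\[
\E\bigl[\1_{\{x\in I^{\star}_n\}}P_{\pi_n}(\t_n(x)\ell_n^x(\theta_n)>uc_n)\bigr]=u^{-\a_n(\varepsilon)}\frac{\P(\t_n>c_n)}{\P(\t_n>r_n(\varepsilon_n))}\E\bigl[\1_{\{x\in T_n\}}E_{\pi_n}[F_{\b,\varepsilon,n}(\ell_n^x(\theta_n))]\bigr](1+o(1)),
\]
with $\P(\t_n>c_n)/\P(\t_n>r_n(\varepsilon_n))=2^{(\varepsilon_n-\varepsilon)n}=(n^2\theta_n)^{-\a(\varepsilon)}$ by \eqv(4.c2). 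Summing over $x$ and using $|I^{\star}_n|=|T_n|(1+o(1))$ (Lemma \thv(2.lem3)), the $\P$-expected numerator is $u^{-\a_n(\varepsilon)}(n^2\theta_n)^{-\a(\varepsilon)}(1+o(1))$ times the $\P$-expected denominator. Finally, both numerator and denominator are sums over the exponentially large random set $T_n$ of functionals of $\t_n(x)$ and a bounded-range neighbourhood of $x$, whose only mutual dependence for distinct $x$ passes through the single trajectory of $Y_n$ and is controlled by Proposition \thv(3.prop2); under \eqv(4.c4) the second moment of each summand stays within a polynomial factor of the square of its mean, so Chebyshev's inequality and Borel--Cantelli give, $\P$-a.s., that numerator and denominator each equal $(1+o(1))$ times their means. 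Dividing, the ratio equals $(n^2\theta_n)^{-\a(\varepsilon)}u^{-\a_n(\varepsilon)}(1+o(1))$ $\P$-a.s., and multiplying by the prefactor $(n^2\theta_n)^{\a(\varepsilon)}(1+o(1))$ and using $\a_n(\varepsilon)\to\a(\varepsilon)$ gives $\nu_n^t(u,\infty)\to tu^{-\a(\varepsilon)}=t\,\nu(u,\infty)$, $\P$-a.s., for each fixed $t,u>0$; monotonicity of $\nu_n^t(u,\infty)$ in $t$ and in $u$ and continuity of the limit then furnish one $\P$-null exceptional set valid for all $t,u>0$.

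The crux is the first step — the $\P$-a.s.\ reduction to the single-deep-trap sum with errors uniform enough to survive multiplication by $k_n(t)\asymp a_n/\theta_n$ — together with the passage, in the last step, from the random $\nu_n^t(u,\infty)$ to its $\P$-mean, i.e.\ the law of large numbers for $\sum_{x\in T_n}E_{\pi_n}[\cdots]$ over the i.i.d.\ environment despite the global coupling induced by $Y_n$. Both hinge on the two standing hypotheses on $\theta_n$: \eqv(4.cnew) makes the logarithmic corrections built into $F_{\b,\varepsilon,n}$ uniformly negligible over the relevant range of local times, while \eqv(4.c4) at once suppresses the sub-$T_n$ contribution, controls the upper tail of $Z_{n,1}$ and bounds the variance in the concentration step — which is also why the lower bound in \eqv(1.theo1.0') cannot, with the present spectral-gap estimate, be improved.
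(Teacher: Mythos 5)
Your skeleton is the paper's (reduce to isolated deep traps, exploit that the law of $Y_n$ does not involve $\t_n(x)$ at such traps, Gaussian estimates produce $F_{\b,\varepsilon,n}$, cancellation against $b_n$, Chebyshev plus Borel--Cantelli), but two steps have genuine gaps. First, your reduction of the numerator to $I^{\star}_n$ is not justified by the tools you cite. Corollary \thv(3.cor1) and Proposition \thv(3.prop3bis) only yield the crude bound $k_n(t)P_{\pi_n}(H(T^{\circ}_n\setminus I^{\star}_n)\leq\theta_n)\lesssim k_n(t)\,\theta_n r^{\star}_n n\,\pi_n(T^{\circ}_n\setminus I^{\star}_n)$; since $\pi_n(T^{\circ}_n\setminus I^{\star}_n)\asymp n^{1-c_{\star}}2^{-n\varepsilon_n}$ while $k_n(t)\theta_n 2^{-n\varepsilon_n}\asymp t\,(n^2\theta_n)^{\a(\varepsilon)}/b_n$ and $b_n^{-1}$ may be as large as $\kappa_n(r^{\star}_n)^{\a_{n}(\varepsilon)+o(1)}$ (Lemma \thv(4.lem3)), this error is of order $\theta_n^{\a(\varepsilon)}(r^{\star}_n)^{2+\a(\varepsilon)}$ times powers of $n$, which diverges under \eqv(4.c4). (The analogous elimination of $T_n\setminus T^{\circ}_n$ in Lemma \thv(4.lem1) works only because there one gains the square $2^{-2n\varepsilon_n}$.) The paper never discards $T^{\circ}_n\setminus I^{\star}_n$ from the numerator: all of $T^{\circ}_n$ is kept, the needed independence coming from $T^{\circ}_n\subseteq\MM_n$ and \eqv(4.prop2.21), and $I^{\star}_n$ enters only the lower bound for $b^{\circ}_n$ inside Lemma \thv(4.lem3).

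Second, and more seriously, your final step asks that the denominator $\sum_{x\in T_n}E_{\pi_n}[F_{\b,\varepsilon,n}(\ell_n^x(\theta_n))]$, i.e.\ $b_n$, concentrate around its $\P$-mean up to $1+o(1)$. This is unsupported: the summands are not functionals of ``$\t_n(x)$ and a bounded-range neighbourhood of $x$'' (through $h_{n,x}$ and the law of $\ell_n^x$ they depend on the whole environment), Proposition \thv(3.prop2) is a mixing estimate for a fixed environment and controls no environmental correlations, and a second moment ``within a polynomial factor of the square of the mean'' would in any case not give $(1+o(1))$-concentration via Chebyshev. The paper proves only the crude two-sided bounds of Lemma \thv(4.lem3) and Proposition \thv(1.prop2) for $b_n$, and its proof is structured precisely so that no concentration of $b_n$ is needed: conditionally on $T^{\circ}_n$ (the measure $\P^{\circ}$ of \eqv(4.prop2.20)) the quantity $b^{\circ}_n$ and all dynamical ingredients are deterministic, the random normalisation $a_n=2^{\varepsilon n}/b_n$ together with $b_n=b^{\circ}_n(1+o(1))$ (\eqv(4.lem3.6)) cancels $b^{\circ}_n$ exactly in the conditional mean \eqv(4.prop2.30), and only the numerator, a sum of $\P^{\circ}$-independent variables $X_n(x)$, requires the variance bound \eqv(4.prop2.6')--\eqv(4.prop2.9'). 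To repair your argument you must either prove a genuine law of large numbers for $b_n$ over the environment (not done in the paper, and delicate) or condition on $T^{\circ}_n$ and let the random $a_n$ absorb $b^{\circ}_n$, as the paper does. Two smaller points: the elimination of sites outside $T_n$ is deterministic, via \eqv(4.0), so no occupation-measure law of large numbers is needed; and the limit is indeed $tu^{-\a(\varepsilon)}=t\nu(u,\infty)$ as you write, the exponent in \eqv(4.prop1.1) being a sign typo of the paper.
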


\begin{remark} Eq.~ \eqv(4.cnew) implies that $\d_n\ll 1$ and that $\theta_n\ll c_n$ for all $\varepsilon>0$. 
In view of \eqv(2.lem2.2),  \eqv(3.prop1.1), \eqv(4.c2) and \eqv(3.prop2.1), \eqv(4.cnew) also implies that 
\be
c_0n^{c_1}\tilde\kappa_n^{c_2}\kappa_n^{c_3}(r^{\star}_n)^{c_4}\theta_n^{c_5} \ll  2^{\varepsilon n}
\,\,\,\,\text{and}\,\,\,\,
c_0n^{c_1}\tilde\kappa_n^{c_2}\kappa_n^{c_3}(r^{\star}_n)^{c_4}\theta_n^{c_5} \ll  2^{\varepsilon_n n}
\Eq(4.c1'new)
\ee
for all  $\varepsilon>0$ and any choice of constants $0\leq c_i<\infty$.

\end{remark}
\begin{remark} 
In order to guarantee strict equivalence of the definitions \eqv(1.theo1.4)  and \eqv(2.7)  of the set $T_n$ 
when  $\d_n$ is given by \eqv(4.c2), we should replace the term $c_n (n \theta_n)^{-1}$ in \eqv(1.theo1.4) by
\be
c_n \exp\left\{-\log(n^2 \theta_n)\left[1+(1+o(1))(2n\b\b_c(\varepsilon))^{-1}\log(n^2 \theta_n)
\right]\right\}
\ee
(see Corollary \thv(2.cor1)). We didn't state this precise formula to keep the presentation simple.
\end{remark}

The rest of the section is organized as follows. In Section \thv(S4.1)  we show that 
$\nu_n^t(u,\infty)$ can be reduced to the quantity $\nu_n^{\circ, t}(u,\infty)$ defined 
in \eqv(4.19). In Section \thv(S4.3) we prove upper et lower bounds on a sequence, 
$b^{\circ}_n$, defined  as $b_n$ with $T^{\circ}_n$ substituted for $T_n$,
and show that $b_n$ and $b^{\circ}_n$ behave in the same way to leading order. 
In Section \thv(S4.2) we show that $\nu_n^{\circ, t}(u,\infty)$ concentrates around 
its mean value when choosing $a_n=2^{\varepsilon n}/b^{\circ}_n$.
The proof of Proposition \thv(4.prop1) is finally completed in Section \thv(S4.3).

\subsection{Preparations.} 
 \TH(S4.1)

To begin with, we bring the function $\nu_n^{t}(u,\infty)$ given in \eqv(4.1) into a form amenable to treatment.
Let $T_n$ be as in \eqv(2.7). For all $0<\varepsilon<1$ and $\d_n$ as in \eqv(4.c2),
\be
\begin{split}
0\leq
\int_{0}^{\theta_n}
\max\left((c_nr^{\star}_n)^{-1},c_n^{-1}\t_n(Y_n(s))\right)
\1_{\{Y_n(s)\notin T_n\}}ds
\leq & 
\theta_n \frac{r_n(\varepsilon_n)}{r_n(\varepsilon)}\leq   n^{-2}
\end{split}
\Eq(4.0)
\ee
as follows from \eqv(2.cor1.1).
Hence visits of $Y_n$ outside the set $T_n$ only yield a negligible contribution to the event in \eqv(4.1), implying that
\be
\check{\nu}_n^t(u,\infty)\leq \nu_n^{t}(u,\infty)\leq \check{\nu}_n^t\left(u-n^{-2},\infty\right)
\Eq(4.5)
\ee
where
\be
\check{\nu}_n^t(u,\infty)
\equiv k_n(t)P_{\pi_n}\left(\int_{0}^{\theta_n}c_n^{-1}\t_n(Y_n(s))\1_{\{Y_n(s)\in T_n\}}ds>u\right).
\Eq(4.6)
\ee
Our next step consists in reducing visits to $T_n$ in $\check{\nu}_n^t(u,\infty)$ to visits to 
the subset $T^{\circ}_n$ defined in \eqv(2.20). Set
\be
\bar\nu_n^t(u,\infty)
\equiv k_n(t)P_{\pi_n}\left(\int_{0}^{\theta_n}c_n^{-1}\t_n(Y_n(s))\1_{\{Y_n(s)\in T^{\circ}_n\}}ds>u\right).
\Eq(4.6')
\ee

\begin{lemma}
    \TH(4.lem1)  
Assume that  \eqv(4.cnew) holds. Then on $\O^{\star}$, for all but a finite number of indices $n$,
\be
|\check{\nu}_n^t(u,\infty)-\bar\nu_n^t(u,\infty)|
\leq 
2 k_n(t)\theta_n r^{\star}_nn^52^{-2n\varepsilon_n}(1+o(1)).
\Eq(4.lem1.1)
\ee
\end{lemma}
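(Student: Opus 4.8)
The plan is to bound $|\check\nu_n^t(u,\infty)-\bar\nu_n^t(u,\infty)|$ by $k_n(t)$ times the probability that the stationary chain $Y_n$ hits the ``bad'' part of the top, $T_n\setminus T^\circ_n$, before time $\theta_n$, and then to quote the hitting-time estimate already recorded in Corollary \thv(3.cor1).

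First, since $T^\circ_n\subseteq T_n$, the integrand in the definition \eqv(4.6) of $\check\nu_n^t$ dominates the one in \eqv(4.6') pointwise, so $\check\nu_n^t(u,\infty)\geq\bar\nu_n^t(u,\infty)$ and the left-hand side of \eqv(4.lem1.1) is simply the difference. The two random integrals
\[
\int_{0}^{\theta_n}c_n^{-1}\t_n(Y_n(s))\1_{\{Y_n(s)\in T_n\}}\,ds
\qquad\text{and}\qquad
\int_{0}^{\theta_n}c_n^{-1}\t_n(Y_n(s))\1_{\{Y_n(s)\in T^\circ_n\}}\,ds
\]
differ exactly by $\int_{0}^{\theta_n}c_n^{-1}\t_n(Y_n(s))\1_{\{Y_n(s)\in T_n\setminus T^\circ_n\}}\,ds$, which is nonnegative and vanishes identically on the event $\{H(T_n\setminus T^\circ_n)>\theta_n\}$. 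Hence on that event the two integrals coincide, so the two events whose $P_{\pi_n}$-probabilities appear in $\check\nu_n^t(u,\infty)$ and $\bar\nu_n^t(u,\infty)$ are equal; consequently their difference as sets is contained in $\{H(T_n\setminus T^\circ_n)\leq\theta_n\}$, and
\[
|\check\nu_n^t(u,\infty)-\bar\nu_n^t(u,\infty)|
\;\leq\; k_n(t)\,P_{\pi_n}\bigl(H(T_n\setminus T^\circ_n)\leq\theta_n\bigr).
\]

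It then only remains to apply Corollary \thv(3.cor1) with $t_n=\theta_n$. Its hypothesis $t_n r^\star_n n 2^{-n\varepsilon_n}\to 0$ is part of \eqv(4.c1'new), hence a consequence of \eqv(4.cnew), so \eqv(3.cor1.1) gives, on $\O^{\star}$ and for all but finitely many $n$,
\[
P_{\pi_n}\bigl(H(T_n\setminus T^\circ_n)\leq\theta_n\bigr)\;\leq\;\theta_n r^\star_n n^5 2^{-2n\varepsilon_n}(1+o(1)),
\]
the ingredients behind this corollary (Lemma \thv(3.lem1), Lemma \thv(3.lem0) and the volume bounds \eqv(2.lem3.5)--\eqv(2.lem3.6)) all being valid on $\O^{\star}$. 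Combining the last two displays yields \eqv(4.lem1.1), the factor $2$ being slack that painlessly absorbs the $(1+o(1))$. I do not expect any genuine obstacle here: the only point worth spelling out is the elementary observation that the discrepancy between the two clocks is supported on trajectories that have entered $T_n\setminus T^\circ_n$, after which the statement reduces to an estimate proved in Section \thv(S3.2); checking the decay $\theta_n r^\star_n n\ll 2^{n\varepsilon_n}$ required by Corollary \thv(3.cor1) is immediate from the remark following Proposition \thv(4.prop1).
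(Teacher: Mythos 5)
Your argument is correct and is essentially the paper's own proof: both decompose according to whether $H(T_n\setminus T^{\circ}_n)\leq \theta_n$ (noting the two integrals coincide otherwise) and then invoke Corollary \thv(3.cor1) with $t_n=\theta_n$, licit under \eqv(4.cnew). Your version is in fact slightly sharper, obtaining the bound without the factor $2$, which the stated estimate absorbs as slack.
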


\begin{proof}[Proof of Lemma \thv(4.lem1)] Decomposing the event appearing in the probability in \eqv(4.6) 
according to whether $\{H(T_n\setminus T^{\circ}_n)\leq \theta_n\}$ or $\{H(T_n\setminus T^{\circ}_n)> \theta_n\}$, 
\eqv(4.lem1.1) follows from \eqv(3.cor1.1) of Corollary \thv(3.cor1) applied with $t_n=\theta_n$,
 which is licit by virtue of  \eqv(4.cnew) (see also \eqv(4.c1'new)).
\end{proof}

\noindent We next decompose \eqv(4.6') according to the hitting time,
$
H(T^{\circ}_n)
$,
and hitting place,
$
Y_n(H(T^{\circ}_n))
$,
of the set $T^{\circ}_n$. The density of the joint distribution of $H(T^{\circ}_n)$ 
and $Y_n(H(T^{\circ}_n))$ is a $|T^{\circ}_n|$-dimensional vector, 
$
(h_{n, x})_{x\in T^{\circ}_n}
$, 
whose components are, for each $x\in T^{\circ}_n$, the joint density 
that $T^{\circ}_n$ is reached at time $v$, and that arrival to that set occurs in state $x$,
\be
P_{\pi_n}(H(T^{\circ}_n)\leq s, Y_n(H(T^{\circ}_n))=x)=\int_0^sh_{n, x}(v)dv.
\Eq(4.7)
\ee
For this vector of densities we have
\be
\sum_{x\in T^{\circ}_n}\int_0^\infty h_{n, x}(v)dv=1,
\Eq(4.8)
\ee
and, denoting by $h_{n, T^{\circ}_n}$ the density of $H(T^{\circ}_n)$,
\be
h_{n, T^{\circ}_n}=\sum_{x\in T^{\circ}_n}h_{n, x}.
\Eq(4.9)
\ee
In the notation of Section \thv(S3.3) (see the paragraph below \eqv(3.prop3.3)) 
$h_{n, x}=\sum_{y\in\VV_n}\pi_n(y)h^y_{n, x, T^{\circ}_n}$ where, for $y\in T^{\circ}_n$, 
$h^y_{n, x, T^{\circ}_n}=\d_y$. From this and the strong Markov property it follows that
\be
\bar\nu_n^t(u,\infty)
= k_n(t)
\sum_{x\in T^{\circ}_n}\int_0^{\theta_n}h_{n, x}(v)
P_{x}\left(\int_{0}^{\theta_n-v}c_n^{-1}\t_n(Y_n(s))\1_{\{Y_n(s)\in T^{\circ}_n\}}ds>u\right)dv.
\Eq(4.10)
\ee
Denote by $\overline Q^{u, v}_n(x)$ the probability appearing in \eqv(4.10).
Notice that $Y_n$ starts in $x\in T^{\circ}_n$ and further decompose this probability according to whether
$
\{H(T^{\circ}_n\setminus x)\leq \theta_n-v\}
$
or 
$
\{H(T^{\circ}_n\setminus x)> \theta_n-v\}
$,
that is, write
$
\overline Q^{u, v}_n(x)\equiv\wt Q^{u, v}_n(x)+\wh Q^{u, v}_n(x)
$,
\be
\wt Q^{u, v}_n(x)
=P_{x}\left(\int_{0}^{\theta_n-v}c_n^{-1}\t_n(Y_n(s))\1_{\{Y_n(s)\in T^{\circ}_n\}}ds>u, H(T^{\circ}_n\setminus x)\leq \theta_n-v\right),
\Eq(4.11)
\ee
\be
\wh Q^{u, v}_n(x)
=P_{x}\left(\int_{0}^{\theta_n-v}c_n^{-1}\t_n(Y_n(s))\1_{\{Y_n(s)\in T^{\circ}_n\}}ds>u, H(T^{\circ}_n\setminus x)> \theta_n-v\right),
\Eq(4.12)
\ee
and split \eqv(4.10) accordingly.
Clearly, for all $v>0$
\be
\wt Q^{u, v}_n(x)\leq P_{x}\left(H(T^{\circ}_n\setminus x)\leq \theta_n\right).
\Eq(4.13)
\ee
This and the bound
$
\int_0^{\theta_n}h_{n, x}(v)dv
\leq P_{\pi_n}(H(x)\leq \theta_n)
$
(that follows from \eqv(4.7)), yield
\bea
&&k_n(t) \sum_{x\in I^{\circ}_n}\int_0^{\theta_n}h_{n, x}(v)\wt Q^{u, v}_n(x)dv
\Eq(4.14)\\
\hspace{-6pt}&\leq&\hspace{-6pt} 
k_n(t) \sum_{x\in T^{\circ}_n}P_{\pi_n}(H(T^{\circ}_n)\leq \theta_n, Y_n(H(T^{\circ}_n))=x)P_{x}\left(H(T^{\circ}_n\setminus x)\leq \theta_n\right)
\Eq(4.15)
\\
\hspace{-6pt}&\leq&\hspace{-6pt} 
 \wt\nu_n^t
 \Eq(4.15')
\eea
where
\be
\wt\nu_n^t\equiv
k_n(t) \sum_{x\in T^{\circ}_n}P_{\pi_n}(H(x)\leq \theta_n)P_{x}\left(H(T^{\circ}_n\setminus x)\leq \theta_n\right).
\Eq(4.lem2.1)
\ee

\begin{lemma}
     \TH(4.lem2)
Assume that  \eqv(4.cnew) holds. Then on $\O^{\star}$, for all but a finite number of indices $n$,
\be
\wt\nu_n^t 
\leq k_n(t)n^{c_{\star}+4}\left(\theta_n \pi_n(T^{\circ}_n)r^{\star}_n\right)^2(1+o(1)).
\Eq(4.lem2.2)
\ee
\end{lemma}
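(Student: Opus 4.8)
The plan is to bound $\wt\nu_n^t$ by factoring the summand in \eqv(4.lem2.1) into the product of a ``cost of hitting a single vertex of $T^{\circ}_n$ from equilibrium'' and a ``cost of returning to $T^{\circ}_n$ from that vertex without revisiting it'', and to estimate these two factors using Lemma \thv(3.lem1) and Proposition \thv(3.prop3), respectively. All estimates below hold on the full-measure event $\O^{\star}\cap\O^{\circ}$, which we may use in place of $\O^{\star}$.

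First I would bound $P_{\pi_n}(H(x)\leq\theta_n)$ uniformly over $x\in T^{\circ}_n$. Applying \eqv(3.lem1.2) of Lemma \thv(3.lem1) with $A=\{x\}$ and $t_n=\theta_n$ is licit because, by \eqv(3.lem0.3) of Lemma \thv(3.lem0), $\pi_n(x)\leq 2^{-n}(1+o(1))$ for every $x\in\VV_n$, so that $\theta_n r^{\star}_n n\pi_n(x)\leq\theta_n r^{\star}_n n\,2^{-n}(1+o(1))\to0$ by \eqv(4.cnew) (equivalently \eqv(4.c1'new), which forces $\theta_n r^{\star}_n n\ll 2^{\varepsilon n}\leq 2^n$). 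Since $T^{\circ}_n\subseteq T_n\subseteq V^{\star}_n$ the factor $n$ in \eqv(3.lem1.2) cannot be dropped here, but this is harmless, and one obtains
\[
\sup_{x\in T^{\circ}_n}P_{\pi_n}(H(x)\leq\theta_n)\;\leq\;\theta_n r^{\star}_n n\,2^{-n}(1+o(1)).
\]

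Next I would pull this bound out of the sum defining $\wt\nu_n^t$ and apply Proposition \thv(3.prop3) with $s=\theta_n$ to the remaining sum,
\[
\sum_{x\in T^{\circ}_n}P_{x}\bigl(H(T^{\circ}_n\setminus x)\leq\theta_n\bigr)\;\leq\;|T^{\circ}_n|\,\theta_n\,n^{c_{\star}+3}\,r^{\star}_n\,\pi_n(T^{\circ}_n),
\]
which yields $\wt\nu_n^t\leq k_n(t)\,n^{c_{\star}+4}(\theta_n r^{\star}_n)^2\,(2^{-n}|T^{\circ}_n|)\,\pi_n(T^{\circ}_n)(1+o(1))$. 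Finally, using $2^{-n}|T^{\circ}_n|=\pi_n(T^{\circ}_n)(1+o(1))$, which follows from \eqv(2.lem3.5) of Lemma \thv(2.lem3) together with \eqv(3.lem0.2) of Lemma \thv(3.lem0), the right-hand side becomes $k_n(t)\,n^{c_{\star}+4}(\theta_n r^{\star}_n\pi_n(T^{\circ}_n))^2(1+o(1))$, which is \eqv(4.lem2.2). The only point that requires a little care is verifying that \eqv(3.lem1.2) may be invoked uniformly in $x$, which rests on the subexponential growth of $\theta_n$ and $r^{\star}_n$ encoded in \eqv(4.cnew); beyond that I expect no serious obstacle, since the two quantitative inputs have already been established and the argument merely combines them, the leftover factor $2^{-n}$ pairing with $|T^{\circ}_n|$ to promote $\pi_n(T^{\circ}_n)$ to its square.
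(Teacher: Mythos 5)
Your proof is correct and follows essentially the same route as the paper: bound $P_{\pi_n}(H(x)\leq \theta_n)$ by \eqv(3.lem1.2) of Lemma \thv(3.lem1) (licit by \eqv(4.cnew)), then control the remaining average of $P_{x}(H(T^{\circ}_n\setminus x)\leq\theta_n)$ by Proposition \thv(3.prop3) with $s=\theta_n$, and convert $2^{-n}|T^{\circ}_n|$ into $\pi_n(T^{\circ}_n)(1+o(1))$. The only cosmetic difference is that you use the uniform bound $\pi_n(x)\leq 2^{-n}(1+o(1))$ where the paper uses $\pi_n(x)=\pi_n(T^{\circ}_n)/|T^{\circ}_n|$ directly, which is equivalent.
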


\begin{proof}
[Proof of Lemma \thv(4.lem2)]  
By \eqv(3.lem0.2), \eqv(2.lem3.5), \eqv(4.4) and \eqv(4.c2),  on $\O^{\star}$, for all large enough $n$,
$\theta_n n\pi_n(T^{\circ}_n)r^{\star}_n= n^{1+2\a(\varepsilon)}r^{\star}_n\theta^{1+\a(\varepsilon)}_n 2^{-n\varepsilon}(1+o(1))$, wich decays to zero as $n$ diverges by \eqv(4.cnew) (see also \eqv(4.c1'new)).
We may thus use \eqv(3.lem1.2) of Lemma \thv(3.lem1) to bound the term $P_{\pi_n}(H(x)\leq \theta_n)$
in \eqv(4.lem2.1), and by this and  \eqv(3.lem0.2) we get that on $\O^{\star}$, for all large enough $n$, 
\be
\wt\nu_n^t
\textstyle
\leq k_n(t)\theta_n n\pi_n(T^{\circ}_n)r^{\star}_n(1+o(1))
|T^{\circ}_n|^{-1}\sum_{x\in T^{\circ}_n}P_{x}\left(H(T^{\circ}_n\setminus x)\leq \theta_n\right).
\Eq(4.lem2.3)
\ee
The lemma now follows from Proposition \thv(3.prop3).
\end{proof}

Consider now the contribution to \eqv(4.10) coming from \eqv(4.12). By definition,
\be
\wh Q^{u, v}_n(x)
=P_{x}\left(c_n^{-1}\t_n(x)\ell_n^x(\theta_n-v)>u, H(T^{\circ}_n\setminus x)> \theta_n-v\right).
\Eq(4.16)
\ee
Thus
\bea
&&\wh\nu_n^t(u,\infty)\\
\hspace{-6pt}&\equiv&\hspace{-6pt}  k_n(t) \sum_{x\in T^{\circ}_n}\int_0^{\theta_n}h_{n, x}(v)\wh Q^{u, v}_n(x)dv
\Eq(4.17)
\\
\hspace{-6pt}&=&\hspace{-6pt} 
k_n(t) \sum_{x\in T^{\circ}_n}
\int_0^{\theta_n}h_{n, x}(v)P_{x}\left(c_n^{-1}\t_n(x)\ell_n^x(\theta_n-v)>u, H(T^{\circ}_n\setminus x)> \theta_n-v\right)dv.\quad\quad
\Eq(4.18)
\eea
Setting
\be
\nu_n^{\circ, t}(u,\infty)
\equiv
k_n(t) \sum_{x\in T^{\circ}_n}
\int_0^{\theta_n}h_{n, x}(v)P_{x}\left(c_n^{-1}\t_n(x)\ell_n^x(\theta_n-v)>u\right)dv,
\Eq(4.19)
\ee
we have
\be
\nu_n^{\circ, t}(u,\infty)-w_n^t(u,\infty)\leq \wh\nu_n^t(u,\infty)\leq \nu_n^{\circ, t}(u,\infty)
\Eq(4.20)
\ee
where
\bea
w_n^t(u,\infty)
\hspace{-6pt}&\equiv&\hspace{-6pt}
k_n(t) \sum_{x\in T^{\circ}_n}
\int_0^{\theta_n}h_{n, x}(v)P_{x}\left(c_n^{-1}\t_n(x)\ell_n^x(\theta_n-v)>u, H(T^{\circ}_n\setminus x)\leq \theta_n-v\right)dv
\nonumber
\\
\hspace{-6pt}&\leq&\hspace{-6pt}
k_n(t) \sum_{x\in T^{\circ}_n}\int_0^{\theta_n}h_{n, x}(v)P_{x}\left(H(T^{\circ}_n\setminus x)\leq \theta_n-v\right)dv
\leq \wt\nu_n^t.
\Eq(4.22)
\eea
Inserting our bounds in \eqv(4.10), we finally get that for all $u>0$
\be
\left|\nu_n^{\circ, t}(u,\infty)-\bar\nu_n^t(u,\infty)\right|\leq  \wt\nu_n^t.
\Eq(4.24)
\ee

Our aim now is to prove almost sure convergence of $\nu_n^{\circ, t}(u,\infty)$. To do so we will need certain properties 
a sequence, $b^{\circ}_n$, associated  to the sequence $b_n$, that we now define.


\subsection{Properties of the sequences $b_n$ and $b^{\circ}_n$.} 
 \TH(S4.3)

For $F_{\b,\varepsilon,n}(x)$ as in \eqv(4.prop2.16) define
\be
b^{\circ}_n
\equiv(\theta_n\pi_n(T^{\circ}_n))^{-1}\sum_{x\in T^{\circ}_n}\int_0^{\theta_n}h_{n, x}(v)E_{x}[F_{\b,\varepsilon,n}(\ell_n^x(\theta_n-v))]dv.
\Eq(4.prop2.18)
\ee
Thus $b^{\circ}_n$ is nothing but $b_n$ (see  \eqv(1.theo1.3)) with $T^{\circ}_n$ substituted for $T_n$.
The next lemma collects properties of the sequences $b_n$ and $b^{\circ}_n$
needed in the verification of both Condition (B1) and (B2).
Set 
$
\II_n(a,b)=(\theta_n\pi_n(T^{\circ}_n))^{-1}\sum_{x\in T^{\circ}_n}\JJ^x_n(a,b)
$,
\be
\JJ^x_n(a,b)=\int_{a}^{b}h_{n, x}(v)E_{x}[F_{\b,\varepsilon,n}(\ell_n^x(\theta_n-v))]dv,
\Eq(4.lem3.1)
\ee
and given $0<\zeta_n<\theta_n$ split
$
b^{\circ}_n
$
into
$
b^{\circ}_n=\II_n(0,\kappa_n)+\II_n(\kappa_n, \theta_n-\zeta_n)+\II_n(\theta_n-\zeta_n, \theta_n)
$.

\begin{lemma}
     \TH(4.lem3)
Assume that \eqv(4.c4) and \eqv(4.cnew)  hold. Let $\zeta_n>0$ be a sequence satisfying 
\be
n^{-1}|\log\zeta_n|\ll 1,\,\,\,
\text{and}\,\,\,
\tilde\kappa_n(r^{\star}_n)^{1+\a_{n}(\varepsilon)+o(1)}\zeta_n^{\a_{n}(\varepsilon)+o(1)}\downarrow 0\,\,\,\text{as}\,\,\, n\uparrow\infty.
\Eq(4.lem3.2)
\ee
Then, on $\O_1\cap\O^{\circ}\cap \O^{\star}$, for all but a finite number of indices $n$,
\be
\frac{\II_n(0,\kappa_n)}{\II_n(\kappa_n, \theta_n-\zeta_n)}\leq
\theta_n^{-1}\tilde\kappa_n\kappa_n^{1+\a_{n}(\varepsilon)}
(nr^{\star}_n)^{1+\a_{n}(\varepsilon)+o(1)},
\Eq(4.lem3.3)
\ee
\be
0\leq (b_n-b^{\circ}_n)/b^{\circ}_n
\leq
n(r^{\star}_n)^{1+\a_{n}(\varepsilon)+o(1)}\kappa_n^{1+\a_{n}(\varepsilon)}2^{-n\varepsilon_n},
\Eq(4.lem3.6)
\ee
and the right-hand sides  of \eqv(4.lem3.3) and \eqv(4.lem3.6) decay to zero as $n$ diverges.  Furthermore
\be
\kappa_n^{-1}(r^{\star}_n)^{-\{\a_{n}(\varepsilon)+o(1)\}}
\leq 
b^{\circ}_n
\leq (1+o(1))
nr^{\star}_n\kappa_n^{\a_{n}(\varepsilon)}.
\Eq(4.lem3.6')
\ee
\end{lemma}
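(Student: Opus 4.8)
The plan is to reduce everything to two kinds of elementary estimate: pointwise bounds on $E_{x}[F_{\b,\varepsilon,n}(\ell_n^x(s))]$ for $x\in T^{\circ}_n$ and $0<s\le\theta_n$, and bounds on the sub‑probabilities $\sum_{x\in A}\int_a^b h_{n,x}(v)\,dv=P_{\pi_n}(a<H(T^{\circ}_n)\le b,\,Y_n(H(T^{\circ}_n))\in A)$. I would begin with a short analysis of $F_{\b,\varepsilon,n}$ itself: writing $u=\log x$, \eqv(4.prop2.16) reads $F_{\b,\varepsilon,n}(x)=e^{u\a_n(\varepsilon)-u^2/(2n\b^2)}\,(1-u/(n\b\b_c(\varepsilon)))^{-1}$, so $F_{\b,\varepsilon,n}$ is increasing on $(0,\theta_n]$ (both factors increase in $u$ on $u<\log c_n$, resp.\ $u<n\b\b_c(\varepsilon)$, and $\log\theta_n\ll n$ by \eqv(4.cnew)); moreover $F_{\b,\varepsilon,n}(x)\le(1+o(1))(1+x^{\a_n(\varepsilon)})$ uniformly on $(0,\theta_n]$; and, by the explicit form \eqv(2.lem2.2) of $r^{\star}_n$ (and $|\log\zeta_n|\ll n$ from \eqv(4.lem3.2)), $F_{\b,\varepsilon,n}((r^{\star}_n)^{-1})=(r^{\star}_n)^{-\{\a_n(\varepsilon)+o(1)\}}$ and $F_{\b,\varepsilon,n}(\zeta_n)=\zeta_n^{\a_n(\varepsilon)+o(1)}$. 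Feeding the sandwich into Lemma \thv(3.lem4): since $\pi_n(x)\le2^{-n}(1+o(1))$ (Lemma \thv(3.lem0)) and $\theta_n r^{\star}_n n2^{-n}\to0$, the hypothesis of \eqv(3.lem4.2) holds uniformly and its right‑hand side is $(1+o(1))\kappa_n^{\a_n(\varepsilon)}$ — the correction term there is $\le\theta_n^{1+\a_n(\varepsilon)}r^{\star}_n n2^{-n}\to0$, which is where \eqv(4.cnew) (through $\theta_n\ll c_n$) is essential — whence $E_{x}[F_{\b,\varepsilon,n}(\ell_n^x(s))]\le(1+o(1))\kappa_n^{\a_n(\varepsilon)}$ for all $x\in T^{\circ}_n$, $0<s\le\theta_n$. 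For the opposite direction, for $x\in I^{\star}_n$ one reads off \eqv(1.4.6) and \eqv(2.22) that $\wt\l_n^{-1}(x)=\bigl(n^{-1}\sum_{y\sim x}\t_n(y)\bigr)^{-1}\in[(r^{\star}_n)^{-1},r^{\star}_n]$, so for $s\ge\theta_n/2$ (which exceeds $2\,\wt\l_n^{-1}(x)$ because $\theta_n\gg(r^{\star}_n)^{4/(1-\a(\varepsilon))}\gg r^{\star}_n$ by \eqv(4.c4)), bounding $\ell_n^x(s)$ below by the first sojourn at $x$ and using monotonicity gives $E_{x}[F_{\b,\varepsilon,n}(\ell_n^x(s))]\ge(e^{-1}-e^{-2})\,F_{\b,\varepsilon,n}((r^{\star}_n)^{-1})=(r^{\star}_n)^{-\{\a_n(\varepsilon)+o(1)\}}$.

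For the sub‑probabilities, $\sum_{x\in T^{\circ}_n}\int_0^{\kappa_n}h_{n,x}\,dv=P_{\pi_n}(H(T^{\circ}_n)\le\kappa_n)\le\kappa_n r^{\star}_n n\pi_n(T^{\circ}_n)(1+o(1))$ by \eqv(3.cor1.1'); and, since $E_{\pi_n}H(T^{\circ}_n)$ is exponentially large, the density lower bound of Lemma \thv(3.lem3) combined with $E_{\pi_n}H(T^{\circ}_n)\le\tilde\kappa_n/\pi_n(T^{\circ}_n)$ from Lemma \thv(3.lem2) gives $\sum_{x\in T^{\circ}_n}\int_a^b h_{n,x}\,dv\ge(b-a)(1-o(1))\pi_n(T^{\circ}_n)/\tilde\kappa_n$ for $0\le a<b\le\theta_n$. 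Recalling $\pi_n(T^{\circ}_n)=2^{-n\varepsilon_n}(1+o(1))$ and $\tilde\kappa_n\sim\tfrac52n^2r^{\star}_n$, assembling yields $\II_n(0,\kappa_n)\le\theta_n^{-1}\kappa_n^{1+\a_n(\varepsilon)}r^{\star}_nn(1+o(1))$ and $\II_n(\theta_n-\zeta_n,\theta_n)\le r^{\star}_nn\,\zeta_n^{\a_n(\varepsilon)+o(1)}(1+o(1))$ (the latter since $\ell_n^x(\theta_n-v)\le\zeta_n$ on $\{v\ge\theta_n-\zeta_n\}$, $F_{\b,\varepsilon,n}$ is monotone, and $\sum_x\int_0^{\theta_n}h_{n,x}\le P_{\pi_n}(H(T^{\circ}_n)\le\theta_n)\le\theta_n r^{\star}_n n\pi_n(T^{\circ}_n)(1+o(1))$); bounding each of the three pieces of $b^{\circ}_n$ by $(1+o(1))\kappa_n^{\a_n(\varepsilon)}P_{\pi_n}(H(T^{\circ}_n)\le\theta_n)$ gives the upper bound $b^{\circ}_n\le(1+o(1))nr^{\star}_n\kappa_n^{\a_n(\varepsilon)}$ of \eqv(4.lem3.6'). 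The lower bound and \eqv(4.lem3.3) then come from $b^{\circ}_n\ge\II_n(\kappa_n,\theta_n-\zeta_n)\ge\II_n(\kappa_n,\theta_n/2)$, into which I feed the lower bound on $E_{x}[F_{\b,\varepsilon,n}(\ell_n^x(\cdot))]$ over $x\in I^{\star}_n$: granting (see below) that a bounded‑below fraction of the mass of $\{\kappa_n<H(T^{\circ}_n)\le\theta_n/2\}$ lands in $I^{\star}_n$, one gets $\II_n(\kappa_n,\theta_n-\zeta_n)\ge(nr^{\star}_n)^{-\{\a_n(\varepsilon)+o(1)\}}/\tilde\kappa_n\ge\kappa_n^{-1}(r^{\star}_n)^{-\{\a_n(\varepsilon)+o(1)\}}$, which is \eqv(4.lem3.6'), and dividing the bound on $\II_n(0,\kappa_n)$ by it — the surplus polynomial factors being swallowed by $(nr^{\star}_n)^{o(1)}$ since $\log r^{\star}_n\gg\log n$ — gives \eqv(4.lem3.3). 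The right‑hand sides of \eqv(4.lem3.3) and \eqv(4.lem3.6) tend to $0$ because $4/(1-\a(\varepsilon))>3+2\a(\varepsilon)$ — so \eqv(4.c4) beats every polynomial and sub‑exponential prefactor — and because $\varepsilon_n\to\varepsilon>0$.

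Inequality \eqv(4.lem3.6) is a comparison: $\pi_n(T_n)=\pi_n(T^{\circ}_n)(1+o(1))$ by \eqv(2.lem3.5)–\eqv(2.lem3.6) and Lemma \thv(3.lem0), while the strong Markov property gives, for $x\in T^{\circ}_n$, $E_{\pi_n}[F_{\b,\varepsilon,n}(\ell_n^x(\theta_n))]=\int_0^{\theta_n}h_{n,x}(v)E_{x}[F_{\b,\varepsilon,n}(\ell_n^x(\theta_n-v))]\,dv+E_{\pi_n}\!\bigl[F_{\b,\varepsilon,n}(\ell_n^x(\theta_n))\1_{\{Y_n(H(T^{\circ}_n))\neq x,\,H(x)\le\theta_n\}}\bigr]$, the extra term requiring a path through two distinct vertices of $T^{\circ}_n$; summing it over $x\in T^{\circ}_n$ via Lemma \thv(4.lem2), and the $x\in T_n\setminus T^{\circ}_n$ terms via $\sum_x P_{\pi_n}(H(x)\le\theta_n)\le\theta_n r^{\star}_n n\pi_n(T_n\setminus T^{\circ}_n)(1+o(1))$ with $\pi_n(T_n\setminus T^{\circ}_n)\le n^42^{-2n\varepsilon_n}(1+o(1))$ from \eqv(2.lem3.6), then dividing by $b^{\circ}_n\ge\kappa_n^{-1}(r^{\star}_n)^{-\{\a_n(\varepsilon)+o(1)\}}$, produces the stated bound; the nonnegativity $b_n\ge b^{\circ}_n$ is read off the same decomposition.

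The one genuinely delicate point is the bracketed claim, that the first visit of $Y_n$ to $T^{\circ}_n$ lands — with probability bounded away from $0$, in fact $1-o(1)$ — in the clean set $I^{\star}_n$ rather than in $T^{\circ}_n\setminus I^{\star}_n$. The naive estimate $P_{\pi_n}(Y_n(H(T^{\circ}_n))\in T^{\circ}_n\setminus I^{\star}_n,\,H(T^{\circ}_n)\le\theta_n/2)\le P_{\pi_n}(H(T^{\circ}_n\setminus I^{\star}_n)\le\theta_n/2)\le\theta_n r^{\star}_n n\pi_n(T^{\circ}_n\setminus I^{\star}_n)(1+o(1))$ of Lemma \thv(3.lem1) is by itself too weak: it is only a polynomial factor $\OO(n^{-c_{\star}+1})$ (from \eqv(2.lem3.8)) below the \emph{upper} bound $\theta_n r^{\star}_n n\pi_n(T^{\circ}_n)$ on $P_{\pi_n}(H(T^{\circ}_n)\le\theta_n/2)$, whereas the \emph{lower} bound on the latter from Lemmas \thv(3.lem2)–\thv(3.lem3) is only of order $\theta_n\pi_n(T^{\circ}_n)/\tilde\kappa_n$, smaller by the sub‑exponential factor $\tilde\kappa_n r^{\star}_n n$, so the two do not close. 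The remedy is to exploit the sharper hitting‑time bounds available \emph{for $I^{\star}_n$ specifically} — the improvements of Lemma \thv(3.lem1) and of the left inequality in \eqv(3.lem2.1), valid because $\dist\bigl((V^{\star}_n\cup\overline V^{\star}_n),I^{\star}_n\bigr)\ge2$ by \eqv(2.22) — together with Proposition \thv(3.prop3bis), which bounds precisely the chance of being diverted to $T^{\circ}_n\setminus I^{\star}_n$ rather than to $I^{\star}_n$ after reaching the top. Everything else is bookkeeping of the kind already done in Lemmas \thv(4.lem1)–\thv(4.lem2).
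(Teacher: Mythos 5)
Most of your outline tracks the paper's own proof of this lemma: the sandwich $F_{\b,\varepsilon,n}(x)\le (1+o(1))x^{\a_n(\varepsilon)}$ on $(0,\theta_n]$ together with the local-time bound \eqv(3.lem4.2) and the hitting estimates \eqv(3.cor1.1'), \eqv(3.lem1.2) for the upper bounds \eqv(4.lem3.3)/\eqv(4.lem3.6'); the restriction to $I^{\star}_n$ with $(r^{\star}_n)^{-1}\le\wt\l_n(x)\le r^{\star}_n$ and the density/mean-hitting bounds of Lemmas \thv(3.lem2)--\thv(3.lem3) for the lower bound (your first-sojourn monotonicity trick is a legitimate substitute for \eqv(3.lem4.1)); and the comparison of $b_n$ with $b^{\circ}_n$ through $T_n\setminus T^{\circ}_n$, where you are in fact more careful than the paper about the cross-term in which the first visit to $T^{\circ}_n$ occurs at a different vertex (your Lemma \thv(4.lem2)-type bound handles it, at the price of extra subexponential factors that are harmless for every use of \eqv(4.lem3.6)). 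Your arithmetic $4/(1-\a(\varepsilon))>3+2\a(\varepsilon)$ for the decay of the right-hand sides is also correct.

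The genuine gap is exactly at the step you yourself call delicate, and the remedy you propose does not close it. The $n$-factor improvements of Lemma \thv(3.lem1) and of the left inequality in \eqv(3.lem2.1) (valid when $\dist(V^{\star}_n\cup\overline V^{\star}_n,I^{\star}_n)\ge 2$) are irrelevant here: by your own accounting the mismatch between the available upper and lower bounds on the first-hitting mass is of order $\tilde\kappa_n r^{\star}_n n\sim n^3(r^{\star}_n)^2$, so no polynomial gain can make the $\OO(n^{-c_{\star}+1})$ smallness of $\pi_n(T^{\circ}_n\setminus I^{\star}_n)$ suffice, and you never specify what replaces the failed subtraction. What is actually needed (and what the paper proves) is not that a positive fraction of the $T^{\circ}_n$-hitting mass lands in $I^{\star}_n$, but the lower bound $\sum_{x\in I^{\star}_n}\int_{\kappa_n}^{\theta_n-\zeta'_n}h_{n,x}(v)dv\ge \tilde\kappa_n^{-1}\theta_n\pi_n(I^{\star}_n)(1+o(1))$. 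The paper gets it by writing this sum as $p_1-p_2$ with $p_1=P_{\pi_n}(\kappa_n<H(I^{\star}_n)<\theta_n-\zeta'_n)$, bounded below directly by applying Lemmas \thv(3.lem2)--\thv(3.lem3) to the set $I^{\star}_n$ itself (no distance improvement used), and $p_2\le P_{\pi_n}(H(T^{\circ}_n\setminus I^{\star}_n)<\kappa_n)+\sum_{x\in T^{\circ}_n\setminus I^{\star}_n}P_{\pi_n}(H(x)\le\theta_n)\,P_x(H(I^{\star}_n)\le\theta_n)$. The two-hit product is where Proposition \thv(3.prop3bis) enters -- note it bounds the chance of reaching $I^{\star}_n$ starting from the dirty set $T^{\circ}_n\setminus I^{\star}_n$, the reverse of what you describe -- and it is negligible because the second factor contributes an extra exponentially small $\theta_n n^2 r^{\star}_n\pi_n(I^{\star}_n)\sim 2^{-n\varepsilon_n}e^{o(n)}$, which, unlike any power of $n$, beats the subexponential looseness; the $\kappa_n$-window term is beaten because \eqv(4.c4) gives $\theta_n\gg\tilde\kappa_n\kappa_n r^{\star}_n n^{O(1)}$. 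Without this decomposition and the exponential gain from the product bound, your bracketed assumption remains unproved, and with it the lower bound in \eqv(4.lem3.6') and hence \eqv(4.lem3.3) and \eqv(4.lem3.6).
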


\begin{proof}[Proof of Lemma \thv(4.lem3)]  
We first prove a lower bound on $\II_n(\kappa_n, \theta_n-\zeta_n)$. For this write
\be
\JJ^x_n(\kappa_n, \theta_n-\zeta_n)\geq \JJ^x_{n,1}
\equiv
\int_{\kappa_n}^{\theta_n-\zeta_n}h_{n, x}(v)E_{x}[F_{\b,\varepsilon,n}(\ell_n^x(\theta_n-v))
\1_{\{\zeta_n<\ell_n^x(\theta_n-v)\leq \theta_n\}}]dv.
\nonumber 
\ee
Since
$
F_{\b,\varepsilon,n}(x)=(1+o(1))x^{\a_{n}(\varepsilon)+o(1)}
$
for all $\zeta_n<x\leq \theta_n$, 
\be
\begin{split}
\JJ^x_{n,1} & \geq
(1+o(1))\int_{\kappa_n}^{\theta_n-\zeta_n}h_{n, x}(v)
E_{x}[\ell_n^x(\theta_n-v)]^{\a_{n}(\varepsilon)+o(1)}(1-\1_{\{\ell_n^x(\theta_n-v)<\zeta_n\}})dv
\\
 &\equiv\JJ^x_{n,3}-\JJ^x_{n,4}
\end{split}
\Eq(4.lem3.13)
\ee
where we used the left-most inequality in \eqv(4.prop2.14) to relax the constraint $\ell_n^x(\theta_n-v)\leq \theta_n$. 
Let us bound $\JJ^x_{n,3}$ for $x\in I^{\star}_n$. Note that by  \eqv(2.22) and \eqv(1.4.6)
\be
(r^{\star}_n)^{-1}\leq \wt\l_n(x)\leq r^{\star}_n, \quad \forall x\in I^{\star}_n.
\Eq(4.lem3.14')
\ee
Thus, setting $\zeta'_n\equiv nr^{\star}_n$, it follows from \eqv(3.lem4.1) of Lemma \thv(3.lem4) that  for all
 $x\in I^{\star}_n$, 
\be
\JJ^x_{n,3}
\geq 
c_3(\wt\l^{-1}_n(x))^{\a_{n}(\varepsilon)+o(1)}
\int_{\kappa_n}^{\theta_n-\zeta'_n
}h_{n, x}(v)dv
\Eq(4.lem3.14)
\ee
for some numerical constant $0<c_3<\infty$.  Summing over $x$, wet get
\be
\sum_{x\in T^{\circ}_n}\JJ^x_{n,3}
\geq
 \sum_{x\in I^{\star}_n}\JJ^x_{n,3}
\geq 
c_3(r^{\star}_n)^{-\{\a_{n}(\varepsilon)+o(1)\}}
 \sum_{x\in I^{\star}_n}\int_{\kappa_n}^{\theta_n-\zeta'_n}h_{n, x}(v)dv
 \Eq(4.lem3.22)
\ee
where the last sum in the right-hand side of \eqv(4.lem3.22) is equal to
\be
P_{\pi_n}(\kappa_n<H(I^{\star}_n)<\theta_n-\zeta'_n, H(I^{\star}_n)<H(T^{\circ}_n\setminus I^{\star}_n)).
\ee
Decomposing this probability into
\be
p_1-p_2\equiv 
P_{\pi_n}(\kappa_n<H(I^{\star}_n)<\theta_n-\zeta'_n)
-P_{\pi_n}(\kappa_n<H(I^{\star}_n)<\theta_n-\zeta'_n, H(I^{\star}_n)>H(T^{\circ}_n\setminus I^{\star}_n))\quad\,\,
\nonumber
\ee
we have, by Lemma \thv(3.lem3) and  \eqv(3.lem2.1), whenever 
$
\theta_nr^{\star}_nn\pi_n(I^{\star}_n)\rightarrow 0
$,
\be
p_1
\geq 
\tilde\kappa_n^{-1}\theta_n\pi_n(I^{\star}_n)
(1-\theta_n^{-1}\zeta'_n)(1+o(1))
=
\tilde\kappa_n^{-1}\theta_n\pi_n(I^{\star}_n)(1+o(1))
\Eq(4.lem3.24)
\ee
where the last equality follows from   \eqv(4.c4).
To get an upper bound on $p_2$, write
\be
\begin{split}
p_2\leq & 
P_{\pi_n}(H(T^{\circ}_n\setminus I^{\star}_n)<\kappa_n)+
P_{\pi_n}(H(T^{\circ}_n\setminus I^{\star}_n)<H(I^{\star}_n)<\theta_n)
\equiv p_3+p_4.
\end{split}
\Eq(4.lem3.25)
\ee
By \eqv(3.lem1.2),
$
p_3\leq \kappa_nr^{\star}_nn\pi_n(T^{\circ}_n\setminus I^{\star}_n)(1+o(1))
$,
whereas proceeding as in \eqv(4.14)-\eqv(4.lem2.1),
\bea
p_4
\hspace{-6pt}&\leq&\hspace{-6pt} 
\sum_{x\in T^{\circ}_n\setminus I^{\star}_n}P_{\pi_n}(H(x)\leq \theta_n)P_{x}\left(H(I^{\star}_n)\leq \theta_n\right)\quad\quad
\Eq(4.lem3.26)
\\
\hspace{-6pt}&=&\hspace{-6pt} 
n^3 (\theta_nr^{\star}_n)^2
\pi_n(T^{\circ}_n\setminus I^{\star}_n)\pi_n(I^{\star}_n)(1+o(1))
\Eq(4.lem3.27)
\eea
where the last equality follows from \eqv(3.lem1.2) and \eqv(3.prop3.1bis).
By \eqv(2.lem3.7), \eqv(2.lem3.8), and \eqv(3.lem0.2), on $\O^{\star}$ and for large enough $n$,
$
\pi_n(I^{\star}_n)
=2^{-n\varepsilon_n}(1-n^{-c_{\star}}(1+o(1)))
$
and 
$
\pi_n(T^{\circ}_n\setminus I^{\star}_n)
=n^{-c_{\star}+1}2^{-n\varepsilon_n}(1+o(1))
$
(thus in particular, $\pi_n(I^{\star}_n)/\pi_n(T^{\circ}_n)=1+o(1)$).
In view of this, \eqv(4.c4), and \eqv(4.cnew), one checks that 
$
\theta_nr^{\star}_nn\pi_n(I^{\star}_n)\rightarrow 0
$
(as requested above \eqv(4.lem3.24)) and that
$
p_2
= o(p_1)
$.
Thus
$
p_1-p_2=p_1(1+o(1))
$
and by this, \eqv(4.lem3.24), and \eqv(4.lem3.22),
\be
(\theta_n\pi_n(T^{\circ}_n))^{-1}\sum_{x\in T^{\circ}_n}\JJ^x_{n,3}
\geq 
\tilde\kappa_n^{-1}(r^{\star}_n)^{-\{\a_{n}(\varepsilon)+o(1)\}}(1+o(1)).
\Eq(4.lem3.15)
\ee

Turning to $\JJ^x_{n,4}$ we have
\be
\sum_{x\in T^{\circ}_n}\JJ^x_{n,4}\leq
(1+o(1))\zeta_n^{\a_{n}(\varepsilon)+o(1)}\sum_{x\in T^{\circ}_n}\int_{\kappa_n}^{\theta_n-\zeta_n}h_{n, x}(v)dv,
\ee
where the last sum is equal to $P_{\pi_n}(\kappa_n<H(T^{\circ}_n)<\theta_n-\zeta_n)$.
Since by Lemma \thv(3.lem3) and
\eqv(3.lem2.1),
$
P_{\pi_n}(\kappa_n<H(T^{\circ}_n)<\theta_n-\zeta_n)\leq(1+o(1))r^{\star}_nn\theta_n\pi_n(T^{\circ}_n),
$
we get
\be
(\theta_n\pi_n(T^{\circ}_n))^{-1}\sum_{x\in T^{\circ}_n}\JJ^x_{n,4}
\Eq(4.lem3.16)
\leq
(1+o(1))nr^{\star}_n\zeta_n^{\a_{n}(\varepsilon)+o(1)}.
\ee
At this point we may observe that the right-most condition in \eqv(4.lem3.2) is tailored to guarantee that 
$
\sum_{x\in T^{\circ}_n}\JJ^x_{n,3}\gg \sum_{x\in T^{\circ}_n}\JJ^x_{n,4}
$.
Hence, collecting our bounds,
\bea
\II_n(\kappa_n, \theta_n-\zeta_n)
\hspace{-7pt}&=&\hspace{-7pt} 
\frac{1+o(1)}{\theta_n\pi_n(T^{\circ}_n)}\sum_{x\in T^{\circ}_n}\int_{\kappa_n}^{\theta_n-\zeta_n}h_{n, x}(v)
E_{x}[\ell_n^x(\theta_n-v)]^{\a_{n}(\varepsilon)+o(1)}\quad\quad
\Eq(4.lem3.17)
\\
\hspace{-6pt}&\geq&\hspace{-6pt} 
\tilde\kappa_n^{-1}(r^{\star}_n)^{-\{\a_{n}(\varepsilon)+o(1)\}}.
\Eq(4.lem3.18)
\eea

We now prove an upper bound on $\II_n(0,\kappa_n)$. Using that
$
F_{\b,\varepsilon,n}(x)\leq (1+o(1))x^{\a_{n}(\varepsilon)}
$
for all $0<x\leq \theta_n$ together with \eqv(3.lem4.2) of Lemma \thv(3.lem4)
(which by \eqv(4.cnew) and \eqv(3.lem0.3)  is licit),
\be
\JJ^x_n(0,\kappa_n)
\leq (1+o(1))\kappa_n^{\a_{n}(\varepsilon)}
\int_{0}^{\kappa_n}h_{n, x}(v)dv.
\ee
Summing over $x\in T^{\circ}_n$ and using \eqv(3.cor1.1') and \eqv(4.cnew)
 to bound the resulting probability,
\be
\II^x_n(0,\kappa_n)
\leq (1+o(1))
nr^{\star}_n\theta_n^{-1}\kappa_n^{1+\a_{n}(\varepsilon)}.
\Eq(4.lem3.21)
\ee
One proves in the same way that
\be
\II^x_n(0,\theta_n)
\leq (1+o(1))
nr^{\star}_n\kappa_n^{\a_{n}(\varepsilon)}\left[1+\theta_n^{1+\a_{n}(\varepsilon)}r^{\star}_nn\kappa_n^{-\a_{n}(\varepsilon)}2^{-n}\right],
\Eq(4.lem3.21bis)
\ee
where by \eqv(4.cnew)  the term in square brackets (that comes from  \eqv(3.lem4.2)) is equal to $1+o(1)$. 

Combining \eqv(4.lem3.21) and \eqv(4.lem3.18)  proves \eqv(4.lem3.3). 
Since $\II_n(\kappa_n, \theta_n-\zeta_n)\leq b^{\circ}_n= \II_n(0, \theta_n)$, \eqv(4.lem3.18) and 
\eqv(4.lem3.21bis) yield, respectively, the lower and upper bounds of \eqv(4.lem3.6'). 
It remains to prove \eqv(4.lem3.6). By definition (see \eqv(1.theo1.3), \eqv(4.prop2.18), 
and the second remark below \eqv(4.prop1.1) on the definition of $T_n$)
\be
|T_n|b_n-|T^{\circ}_n|b^{\circ}_n
=2^n\theta_n^{-1}\sum_{x\in T_n\setminus T^{\circ}_n}E_{\pi_n}\left[F_{\b,\varepsilon,n}(\ell_n^x(\theta_n))\right].
\ee
Conditioning on the time of the first visit to $x$, and proceeding as in \eqv(4.lem3.21)-\eqv(4.lem3.21bis) to bound the expectation starting in $x$, 
$
E_{\pi_n}\left[F_{\b,\varepsilon,n}(\ell_n^x(\theta_n))\right]
\leq
 (1+o(1))P_{\pi_n}(H(x)\leq \theta_n) \kappa_n^{\a_{n}(\varepsilon)}
$.
From this and \eqv(3.lem1.2),
$
|T_n|b_n-|T^{\circ}_n|b^{\circ}_n
\leq 
(1+o(1))r^{\star}_nn2^n\pi_n(T_n\setminus T^{\circ}_n)\kappa_n^{\a_{n}(\varepsilon)}
$.
Now by \eqv(2.lem3.1bis)-\eqv(2.lem3.6), $|T_n|=|T^{\circ}_n|(1+o(1))$ and 
$\left|T_n\setminus T^{\circ}_n\right|=|T^{\circ}_n|n^42^{-n\varepsilon_n}(1+o(1))$.
Hence
$
b_n-b^{\circ}_n\leq (1+o(1))n^5r^{\star}_n\kappa_n^{\a_{n}(\varepsilon)}2^{-n\varepsilon_n}
$.
Combining this and \eqv(4.lem3.18) yields  \eqv(4.lem3.6).
 The proof of  Lemma \thv(4.lem3) is now complete.
 \end{proof}
 
 \begin{proof}[Proof of Proposition \thv(1.prop2)] This is a straightforward consequence 
of  \eqv(4.lem3.6), \eqv(4.lem3.6'), the assumptions of \eqv(1.theo1.0'), and \eqv(2.lem2.2).
\end{proof}


\subsection{Concentration of $\nu_n^{\circ, t}(u,\infty)$.} 
 \TH(S4.2)

Let us now focus on the term  $\nu_n^{\circ, t}(u,\infty)$ of \eqv(4.19).
Recall the definitions of $k_n(t)$ and $b^{\circ}_n$ from \eqv(1.03.8) and  \eqv(4.prop2.18), respectively.

\begin{proposition}
     \TH(4.prop2)  
Choose $a_n=2^{\varepsilon n}/b^{\circ}_n$ in $k_n(t)$ and assume that \eqv(4.cnew) holds.
Let $\P^{\circ}$  denote the law of the collection $\{\t_n(x), x\in T^{\circ}_n\}$
conditional on $T^{\circ}_n$,
\be
\P^{\circ}(\cap_{x\in T^{\circ}_n}\{\t_n(x)\in \cdot\})
=
\P(\cap_{x\in T^{\circ}_n}\{\t_n(x)\in \cdot\}\mid T^{\circ}_n).
\Eq(4.prop2.20)
\ee
Then, 
for any sequence $u_n>0$ such that $0<u-u_n<n^{-1}$ and all $u>0$ and $t>0$,
\be
\P^{\circ}\left(
\left|
\nu_n^{\circ, t}(u_n,\infty)-\E^{\circ}\nu_n^{\circ, t}(u_n,\infty)
\right|>
n\sqrt{t\Xi_n \E^{\circ}\nu_n^{\circ, t}(u_n,\infty)}
\right)
\leq n^{-2}(1+o(1))
\Eq(4.prop2.10')
\ee
where
$
\Xi_n\equiv (2^{\varepsilon n}/b^{\circ}_n)nr^{\star}_n{2^{-n}}
$
and
\be
\lim_{n\rightarrow\infty}\E^{\circ}\nu_n^{\circ, t}(u_n,\infty)
=tu^{\a(\varepsilon)}.
\Eq(4.prop2.2)
\ee  
\end{proposition}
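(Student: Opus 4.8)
The plan is to exploit the special structure of the exploration rates on the top together with a second–moment estimate. I would first record the structural observation that makes everything work. If $x\in T^{\circ}_n$ then $x\notin\overline V^{\star}_n$ and every neighbour $y\sim x$ satisfies $\t_n(y)<r_n(\varepsilon_n)\le\t_n(x)$, so by \eqv(1.4.6) one has $\wt\l_n(x,y)=n^{-1}\t_n(y)$ and $\wt\l_n(x)=n^{-1}\sum_{y\sim x}\t_n(y)$, neither involving $\t_n(x)$; the same check (using \eqv(1.4.6) again, and that $\t_n(z)$ for $z\in T^{\circ}_n$ dwarfs any neighbour lying in $\overline V^{\star}_n$) shows that \emph{no} jump rate of $Y_n$ depends on $\{\t_n(z):z\in T^{\circ}_n\}$. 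Hence, conditionally on $T^{\circ}_n$ and on the environment outside $T^{\circ}_n$, the law of $Y_n$ is deterministic: the densities $h_{n,x}(\cdot)$ and the law of $\ell_n^x(\theta_n-v)$ under $P_x$ are fixed, and the summand
$
g_n^x:=\int_0^{\theta_n}h_{n,x}(v)P_{x}\bigl(c_n^{-1}\t_n(x)\ell_n^x(\theta_n-v)>u_n\bigr)dv
$
depends on the randomness only through $\t_n(x)$. Moreover under $\P^{\circ}$ the variables $(\t_n(x))_{x\in T^{\circ}_n}$ are i.i.d., each distributed as $\t_n$ conditioned on $\{\t_n\ge r_n(\varepsilon_n)\}$, and independent of the exterior (the defining event $\{T^{\circ}_n=S\}$ imposes only $\t_n(x)\ge r_n(\varepsilon_n)$ on $\{\t_n(x):x\in S\}$, since distinct points of $T^{\circ}_n$ are never adjacent). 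Thus $\nu_n^{\circ,t}(u_n,\infty)=k_n(t)\sum_{x\in T^{\circ}_n}g_n^x$ is, under $\P^{\circ}$, $k_n(t)$ times a sum of independent random variables.

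For \eqv(4.prop2.10') I would then use Chebyshev's inequality. Since $g_n^x\le\int_0^{\theta_n}h_{n,x}(v)dv\le P_{\pi_n}(H(x)\le\theta_n)$ and the probability inside $g_n^x$ is at most $1$, we get $(g_n^x)^2\le P_{\pi_n}(H(x)\le\theta_n)\,g_n^x$, hence
\be
\Var^{\circ}\bigl(\nu_n^{\circ,t}(u_n,\infty)\bigr)\le k_n(t)^2\sum_{x\in T^{\circ}_n}\E^{\circ}(g_n^x)^2\le k_n(t)^2\Bigl(\max_{x\in T^{\circ}_n}P_{\pi_n}(H(x)\le\theta_n)\Bigr)\sum_{x\in T^{\circ}_n}\E^{\circ}g_n^x.
\ee
Here $\sum_{x\in T^{\circ}_n}\E^{\circ}g_n^x=k_n(t)^{-1}\E^{\circ}\nu_n^{\circ,t}(u_n,\infty)$, while by \eqv(3.lem1.2) of Lemma \thv(3.lem1), \eqv(3.lem0.3), and \eqv(4.c1'new) one has $P_{\pi_n}(H(x)\le\theta_n)\le\theta_n r^{\star}_n n2^{-n}(1+o(1))$ on $\O^{\star}$, and $k_n(t)\theta_n=a_n t(1+o(1))=(2^{\varepsilon n}/b^{\circ}_n)t(1+o(1))$ since $a_n/\theta_n\to\infty$. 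Combining, $\Var^{\circ}(\nu_n^{\circ,t}(u_n,\infty))\le t\,\Xi_n\,\E^{\circ}\nu_n^{\circ,t}(u_n,\infty)(1+o(1))$ with $\Xi_n=(2^{\varepsilon n}/b^{\circ}_n)nr^{\star}_n2^{-n}$ exactly as in the statement, and Chebyshev with deviation $n\sqrt{t\Xi_n\E^{\circ}\nu_n^{\circ,t}(u_n,\infty)}$ gives the bound $n^{-2}(1+o(1))$.

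For \eqv(4.prop2.2) I would compute $\E^{\circ}g_n^x$ by conditioning on $\ell:=\ell_n^x(\theta_n-v)$ (legitimate by the independence above) and evaluating $\P^{\circ}(\t_n(x)>u_nc_n/\ell)=2^{\varepsilon_n n}\P(\t_n\ge c_n(u_n/\ell))$. The analytic input is the refined Gaussian–tail estimate
\be
\P(\t_n\ge c_nz)=2^{-\varepsilon n}\,F_{\b,\varepsilon,n}(1/z)\,(1+o(1)),\qquad |\log z|\ll n,
\ee
which is a Mills–ratio computation from \eqv(1.theo0.0)–\eqv(1.theo0.0') (equivalently Lemma \thv(2.lem2)), using $\log c_n=n\b^2\a_{n}(\varepsilon)$, and which is precisely what singles out $F_{\b,\varepsilon,n}$, hence $b^{\circ}_n$, as the right normalization. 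It yields $\E^{\circ}[P_x(c_n^{-1}\t_n(x)\ell_n^x(\theta_n-v)>u_n)]=2^{-\d_n n}\,\E_x[F_{\b,\varepsilon,n}(\ell_n^x(\theta_n-v)/u_n)]\,(1+o(1))$; then the near–homogeneity $F_{\b,\varepsilon,n}(\ell/u)=u^{-\a(\varepsilon)}F_{\b,\varepsilon,n}(\ell)(1+o(1))$, uniform on $0<\ell\le\theta_n$ because $\log\theta_n\ll n$ kills the $(\log\cdot)^2$– and $\log$–corrections in \eqv(4.prop2.16), turns $\E^{\circ}g_n^x$ into $2^{-\d_n n}u^{-\a(\varepsilon)}\int_0^{\theta_n}h_{n,x}(v)\E_x[F_{\b,\varepsilon,n}(\ell_n^x(\theta_n-v))]dv\,(1+o(1))$. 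Summing over $x\in T^{\circ}_n$ recovers $\theta_n\pi_n(T^{\circ}_n)b^{\circ}_n$ by \eqv(4.prop2.18); inserting $k_n(t)=2^{\varepsilon n}t/(\theta_n b^{\circ}_n)(1+o(1))$, $\pi_n(T^{\circ}_n)=2^{-\varepsilon_n n}(1+o(1))$ (Lemma \thv(3.lem0), \eqv(2.lem3.5)) and $\varepsilon-\d_n=\varepsilon_n$, all $n$–dependent factors cancel and one is left with the right–hand side of \eqv(4.prop2.2).

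I expect the main obstacle to sit inside this last step: justifying the replacement of $\P^{\circ}(\t_n(x)>u_nc_n/\ell)$ by the $F_{\b,\varepsilon,n}$–expression uniformly over the whole range of $\ell=\ell_n^x(\theta_n-v)$. For $\ell$ exponentially small in $n$ (the process leaves $x$ essentially at once and does not return) the argument $1/z=\ell/u_n$ is no longer $e^{o(n)}$ and the uniform $(1+o(1))$ fails; there one must revert to the crude upper bound $\P(\t_n\ge c_nz)\le2^{-\varepsilon n}z^{-\a_{n}(\varepsilon)}(1+o(1))$ for $z\ge1$ and argue, exactly as in the decomposition $b^{\circ}_n=\II_n(0,\kappa_n)+\II_n(\kappa_n,\theta_n-\z_n)+\II_n(\theta_n-\z_n,\theta_n)$ of Lemma \thv(4.lem3) (fed by the local–time moment bounds of Lemma \thv(3.lem4)), that the contributions of $\ell\le\kappa_n$ and of $\ell\ge\theta_n-\z_n$ are negligible against the main $\kappa_n<\ell<\theta_n-\z_n$ term. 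So the genuine work is bookkeeping already performed in Section \thv(S4.3); the remaining steps are the clean Mills–ratio and Chebyshev computations above.
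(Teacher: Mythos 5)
Your proposal reproduces the paper's own proof in all essentials: the same structural observation that for $x\in T^{\circ}_n$ no jump rate of $Y_n$ involves $\t_n(x)$ (the paper phrases this via the set $\MM_n$ of local minima and \eqv(4.prop2.21)), the same product form \eqv(4.prop2.4) of $\P^{\circ}$ making the summands independent, the same Chebyshev argument with the variance controlled through $\E^{\circ}(X_n(x))^2\le P_{\pi_n}(H(x)\le\theta_n)\,\E^{\circ}X_n(x)$ and \eqv(3.lem1.2), yielding exactly the factor $\Xi_n$, and the same Mills-ratio computation that produces $F_{\b,\varepsilon,n}$ and lets $b^{\circ}_n$, $\pi_n(T^{\circ}_n)$ and $2^{-\d_n n}$ cancel in the mean.

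The one step that, taken literally, would fail is your prescription for the regime you flag at the end. The decomposition $\II_n(0,\kappa_n)+\II_n(\kappa_n,\theta_n-\zeta_n)+\II_n(\theta_n-\zeta_n,\theta_n)$ of Lemma \thv(4.lem3) is a split over the hitting time $v$, not over the local time $\ell=\ell_n^x(\theta_n-v)$, and the analogous claim for the local time --- that the contribution of $\ell\le\kappa_n$ is negligible against that of $\kappa_n<\ell<\theta_n-\zeta_n$ --- is not true: for $x\in I^{\star}_n$ the typical local time is of order $\wt\l_n^{-1}(x)\le r^{\star}_n\ll\kappa_n$, and it is exactly this range that produces the lower bound on $b^{\circ}_n$ (see \eqv(4.lem3.14)--\eqv(4.lem3.15)), so it cannot be discarded. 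What is needed, and what the paper does in \eqv(4.prop2.33)--\eqv(4.prop2.35), is a cut at a sub-exponential local-time threshold $\zeta_n=e^{-n^{9/10}}$: above it the near-homogeneity $F_{\b,\varepsilon,n}(\ell/u_n)=u^{-\a(\varepsilon)}F_{\b,\varepsilon,n}(\ell)(1+o(1))$ holds uniformly (in particular $\ell\le\kappa_n$ causes no difficulty, since $\log\kappa_n=O(\sqrt{n\log n})\ll n$), while below it one uses your crude Gaussian-tail bound together with the lower bound \eqv(4.lem3.6') on $b^{\circ}_n$ to show the discarded piece is $o(1)$ relative to $I_{(0,\theta_n)}(1)$. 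With this correction your argument coincides with the paper's.
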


\begin{proof}[Proof of Proposition \thv(4.prop2)]  We assume throughout that $\o\in\O^{\star}$.
A key ingredient of the proof is the observation that the generator $\wt L_n$ of $Y_n$ 
is independent of the values 
of the Hamiltonian at its local minima. More precisely,  recalling the definition of the set, $\MM_n$,  
of local minima from \eqv(2.lem5.1), it follows from  \eqv(1.4.6) and \eqv(2.lem5.2) that 
on $\O^{\star}$, for all  $n$ large enough, for all $x\in\MM_n$, and $y\sim x$,  
\be
\wt\l_n(x,y)=n^{-1}\t_n(y)\,\,\text{ and }\,\,
\wt\l_n(y,x)=
\begin{cases}
n^{-1}\t_n(y),
&\hbox{\rm if } 
y\notin\overline V^{\star}_n,\\
n^{-1},
&\hbox{\rm if } 
y\in\overline V^{\star}_n,\\
\end{cases}
\Eq(4.prop2.21)
\ee
 (note that if $x\in\MM_n$ and $y\sim x$ then $y\notin\MM_n$). Hence the law of $Y_n$ does not depend on the $\t_n(x)$'s in $\MM_n$ (but it does depend on $\MM_n$).
Now by \eqv(2.21),
\be
T^{\circ}_n\subseteq \MM_n\cap T_n\subseteq  \MM_n.
\Eq(4.prop2.3)
\ee
Furthermore, one easily checks that $\P^{\circ}$ in \eqv(4.prop2.20) is the product measure
\be
\P^{\circ}(\cap_{x\in T^{\circ}_n}\{\t_n(x)\in \cdot\})
=\prod_{x\in T^{\circ}_n}
\frac{\P(\t_n(x)\in \cdot,  \t_n(x)\geq r_n(\varepsilon_n))}{\P(\t_n(x)\geq r_n(\varepsilon_n))}.
\Eq(4.prop2.4)
\ee
Consequently,  for fixed $T^{\circ}_n$, the collection
$
\{X_n(x), x\in T^{\circ}_n\}
$,
\be
X_n(x)\equiv\int_0^{\theta_n}h_{n, x}(v)P_{x}\left(c_n^{-1}\t_n(x)\ell_n^x(\theta_n-v)>u_n\right)dv,
\Eq(4.prop2.5)
\ee
viewed as a collection of r.v.'s on the sub-sigma field 
$\FF^{\circ}_n=\s(\{\t_n(x), x\in T^{\circ}_n\})$,
forms a collection of independent random variables under $\P^{\circ}$ 
(that of course still depend on the variables $\t_n(x)$ in $(T^{\circ}_n)^c$). 
The proof now hinges on a simple mean and variance argument. 
We deal with the variance first. By \eqv(4.19) and \eqv(4.prop2.5),
\be
\E^{\circ}\nu_n^{\circ, t}(u_n,\infty)=k_n(t) \sum_{x\in T^{\circ}_n}\E^{\circ}X_n(x),
\Eq(4.prop2.5')
\ee
and by independence 
\be
\E^{\circ}(\nu_n^{\circ, t}(u_n,\infty)-\E^{\circ}\nu_n^{\circ, t}(u_n,\infty))^2
\leq
k^2_n(t)\sum_{x\in T^{\circ}_n}\E^{\circ}(X_n(x))^2.
\Eq(4.prop2.6')
\ee
Note that since
\be
X_n(x)
\leq\int_0^{\theta_n}h_{n, x}(v)dv
\leq P_{\pi_n}(H(x)\leq \theta_n)
\leq \theta_nr^{\star}_nn2^{-n}(1+o(1)),
\Eq(4.prop2.7')
\ee
(the last inequality is  \eqv(3.lem1.2) combined with \eqv(3.lem0.2)) then
\bea
k^2_n(t)\sum_{x\in T^{\circ}_n}\E^{\circ}(X_n(x))^2
\leq
t(2^{\varepsilon n}/b^{\circ}_n)r^{\star}_nn2^{-n}(1+o(1))\E^{\circ}\nu_n^{\circ, t}(u_n,\infty),
\Eq(4.prop2.9')
\eea
where we used that for 
$
a_n=2^{\varepsilon n}/b^{\circ}_n
$,
$\theta_nk_n(t)=\theta_n\lf t(2^{\varepsilon n}/b^{\circ}_n)/\theta_n\rf=t(2^{\varepsilon n}/b^{\circ}_n)(1+o(1))$.
Inserting \eqv(4.prop2.9') in \eqv(4.prop2.6'),
a second order Tchebychev inequality then yields \eqv(4.prop2.10').

To estimate $\E^{\circ}\nu_n^{\circ, t}(u_n,\infty)$ in \eqv(4.prop2.5') we first use Fubini to write,
\be
\E^{\circ}X_n(x)=
\int_0^{\theta_n}h_{n, x}(v)E_{x}\P^{\circ}\left(c_n^{-1}\t_n(x)\ell_n^x(\theta_n-v)>u_n\right)dv.
\Eq(4.prop2.11)
\ee
Denoting by $\P^{x}$  the law of the single variable $\t_n(x)$,
\bea
\P^{\circ}\left(c_n^{-1}\t_n(x)\ell_n^x(\theta_n-v)>u\right)
\Eq(4.prop2.12)
\hspace{-6pt}&=&\hspace{-6pt} 
\frac{\P^{x}\left(c_n^{-1}\t_n(x)\ell_n^x(\theta_n-v)>u_n, \t_n(x)\geq r_n(\varepsilon_n)\right)}
{\P^{x}(\t_n(x)\geq r_n(\varepsilon_n))}\quad\quad\\
\Eq(4.prop2.13)
\hspace{-6pt}&=&\hspace{-6pt} 
\frac{\P^{x}\left(c_n^{-1}\t_n(x)\ell_n^x(\theta_n-v)>u_n\right)}
{\P^{x}(\t_n(x)\geq r_n(\varepsilon_n))}
\eea
where \eqv(4.prop2.13) follows from the definition of $c_n$  (see \eqv(1.theo0.0)), the a priory bound
\be
\ell_n^x(\theta_n-v)\leq \theta_n-v\ll c_n, \quad 0\leq v\leq \theta_n,
\Eq(4.prop2.14)
\ee 
and the fact that $\d_n$ in \eqv(4.c2) in chosen in such a way that 
$
\theta_n r_n(\varepsilon_n)r^{-1}_n(\varepsilon)\leq   n^{-2}\downarrow 0
$
as $n\uparrow\infty$ (see the last inequality in \eqv(4.0)).
Using classical estimates on the asymptotics of gaussian integrals
(see e.g.~\cite{AS} p.~932), Lemma \thv(2.lem2), and again the definition of $c_n$, 
simple calculations yield that for all $0<u<\infty$ and $0\leq v<\theta_n$, \eqv(4.prop2.13) is equal to
\bea
&&{(1+o(1))}F_{\b,\varepsilon,n}\left(\sfrac{\ell_n^x(\theta_n-v)}{u_n}\right)
\frac{\P\left(\t_n(x)>c_n\right)}{\P(\t_n(x)\geq r_n(\varepsilon_n))}
\Eq(4.prop2.15)
\eea
where
$
F_{\b,\varepsilon,n}(x)
$
is defined in  \eqv(4.prop2.16).
Furthermore, by   \eqv(1.theo0.0), $2^{\varepsilon n}\P(\t_n(x)\geq c_n)=1$
whereas by \eqv(2.2), \eqv(2.lem3.5), and \eqv(3.lem0.2),
$
\P(\t_n(x)\geq r_n(\varepsilon_n))=\pi_n(T^{\circ}_n)(1+o(1))
$.
In view of this and \eqv(4.prop2.18) we get, combining \eqv(4.prop2.15),  
\eqv(4.prop2.11), \eqv(4.prop2.5'), and using the a priori bound \eqv(4.prop2.14) that 
\be
\E^{\circ}\nu_n^{\circ, t}(u_n,\infty)=
(1+o(1))k_n(t)\theta_n (b^{\circ}_n/2^{\varepsilon n})
\frac{I_{(0,\theta_n)}(u_n)}{I_{(0,\theta_n)}(1)}
\Eq(4.prop2.30)
\ee
where for $w>0$
\bea
I_{(a,b)}(w)
&=&
\sum_{x\in T^{\circ}_n}\int_0^{\theta_n}h_{n, x}(v)E_{x}\bigl[F_{\b,\varepsilon,n}\bigl(\sfrac{\ell_n^x(\theta_n-v)}{w}\bigr)\bigr]
\1_{\{a\leq \ell_n^x(\theta_n-v)< b\}}dv.
\Eq(4.prop2.32)
\eea
To evaluate the ratio in \eqv(4.prop2.30) set 
$0<\zeta_n\equiv e^{-n^{9/10}}\downarrow 0$ and split the integral in $I_{(0,\theta_n)}(u_n)$
into  $I_{(0,\theta_n)}(u_n)\equiv I_{(0,\zeta_n)}(u_n)+I_{(\zeta_n,\theta_n)}(u_n)$. Note that
$n^{-1}|\log\zeta_n|=n^{-1/10}$, $n^{-1}(\log\zeta_n)^2=n^{4/5}$, while for all $u>0$,
$n^{-1}\log u_n\downarrow 0$,  $n^{-1}(\log u_n)^2\downarrow 0$ as $n\uparrow\infty$.
Using that $F_{\b,\varepsilon,n}(x)$ is increasing on the domain $(0,\zeta_n/u_n)$
\bea
 I_{(0,\zeta_n)}(u_n)
&\leq &
F_{\b,\varepsilon,n}\bigl(\sfrac{\zeta_n}{u_n}\bigr)P_{\pi_n}(H(T^{\circ}_n)<\theta_n)
\Eq(4.prop2.33)
\eea
where
$
F_{\b,\varepsilon,n}\bigl(\sfrac{\zeta_n}{u_n}\bigr)
=
e^{o(1)\log u_n}F_{\b,\varepsilon,n}(\zeta_n)F_{\b,\varepsilon,n}(u^{-1}_n)
$ and
$
F_{\b,\varepsilon,n}(\zeta_n)\leq e^{-\a_{n}(\varepsilon)n^{9/10}-{n^{4/5}}/{2\b^2}}
$.
By this, \eqv(3.lem1.2),  the lower bound \eqv(4.lem3.6') on $b^{\circ}_n$, and our assumptions on $u_n$,
\be
\frac{ I_{(0,\zeta_n)}(u_n)}{ I_{(0,\theta_n)}(1)}=
e^{o(1)\log u_n}F_{\b,\varepsilon,n}(u^{-1}_n)
F_{\b,\varepsilon,n}(\zeta_n)
n\kappa_n(r^{\star}_n)^{1+\a_{n}(\varepsilon)+o(1)}
\rightarrow 0
\Eq(4.prop2.34)
\ee
as $n\rightarrow\infty$.
Next, since $n^{-1}\log l\downarrow 0$ as $n\uparrow\infty$ for all $\zeta_n\leq l\leq \theta_n$ 
we have, using \eqv(4.prop2.14),
\be
\frac{I_{(\zeta_n,\theta_n)}(u_n)}{ I_{(0,\theta_n)}(1)}=
e^{o(1)\log u_n}F_{\b,\varepsilon,n}(u^{-1}_n)\left[1-\sfrac{ I_{(0,\zeta_n)}(u_n)}{ I_{(0,\theta_n)}(1)}\right]
\rightarrow u^{-\a(\varepsilon)}
\Eq(4.prop2.35)
\ee
as $n\rightarrow\infty$ for all $u>0$.
Inserting \eqv(4.prop2.34) and \eqv(4.prop2.35) in \eqv(4.prop2.30), choosing
$
a_n=2^{\varepsilon n}/b^{\circ}_n
$,
and passing to the limit $n\rightarrow\infty$ finally gives \eqv(4.prop2.2). 
The proof of the lemma is done.
\end{proof}

\subsection{Proof of Proposition \thv(4.prop1).} 
 \TH(S4.4)

By \eqv(4.cnew), \eqv(4.4)-\eqv(4.c2),  and the bound $\kappa_n\leq \theta_n$, \eqv(4.lem3.6) implies that 
on $\O_1\cap\O^{\circ}\cap \O^{\star}$, for large enough $n$,
$
b_n=b^{\circ}_n(1+o(1))
$.
The assumption that
$
a_n=2^{\varepsilon n}/b_n
$
in \eqv(4.1)  can thus be replaced by
$
a_n=2^{\varepsilon n}/b^{\circ}_n
$.
Consider now \eqv(4.prop2.10') and note that by \eqv(4.lem3.6'), \eqv(3.prop2.1), \eqv(2.lem2.2), and \eqv(4.cnew) 
(see also \eqv(4.c1'new)), for all $0<\varepsilon<1$,
\be
(2^{\varepsilon n}/b^{\circ}_n)r^{\star}_nn^32^{-n}
\leq
\kappa_n(r^{\star}_n)^{1+\a_{n}(\varepsilon)+o(1)}n^32^{n\varepsilon}2^{-n}\rightarrow 0
\Eq(4.prop1.0)
\ee
as $n\rightarrow\infty$.
Thus, by Proposition \thv(4.prop2) and Borel-Cantelli Lemma we get that for all $u>0$ and all $t>0$,  
 \be
\lim_{n\rightarrow\infty}\nu_n^{\circ, t}(u,\infty)=tu^{\a(\varepsilon)}\quad
\P-\text{almost surely}.
\Eq(4.prop1.6)
\ee
In the same way we get that   for all $u>0$ and all $t>0$,
\be
\lim_{n\rightarrow\infty}\nu_n^{\circ, t}(u,\infty)=tu^{\a(\varepsilon)}\quad
\P-\text{almost surely}.
\Eq(4.prop1.6')
\ee

Next, by Lemma \thv(4.lem1),  Lemma \thv(4.lem2), and \eqv(4.24) we have
that on $\O^{\star}$, for all but a finite number of indices $n$, 
\bea
&&\hspace{-6pt}\left|\check{\nu}_n^t(u,\infty)-\nu_n^{\circ, t}(u,\infty)\right|
\\
\hspace{-6pt}&\leq&\hspace{-6pt} 
t(b^{\circ}_n)^{-1}[2 r^{\star}_nn^5\theta_n2^{-n\varepsilon+2\delta_n n}
+n^{c_{\star}+4}2^{n\varepsilon}\left(\theta_n \pi_n(T^{\circ}_n)r^{\star}_n\right)^2]
(1+o(1)) \quad
\Eq(4.prop1.2)
\\
\hspace{-6pt}&\leq&\hspace{-6pt} 
2tn^{c_{\star}+4(1+\a_{n}(\varepsilon))}(r^{\star}_n)^{\a_{n}(\varepsilon)+2+o(1)}\kappa_n\theta_n^{2+2\a(\varepsilon)}2^{-n\varepsilon}
(1+o(1)) 
\Eq(4.prop1.3)
\eea
where the last inequality follows from \eqv(4.lem3.6'), \eqv(2.lem3.5), \eqv(4.4), and \eqv(4.c2).
Since $\kappa_n\leq \theta_n$, \eqv(4.cnew) (see also \eqv(4.c1'new))  implies that
\eqv(4.prop1.3)  decays to zero as $n\rightarrow\infty$.
From this and \eqv(4.prop1.6) we get that  for all $u>0$ and all $t>0$,  
$
\lim_{n\rightarrow\infty}\check{\nu}_n^t(u,\infty)=tu^{\a(\varepsilon)}
$ 
$\P$-almost surely. One proves in the same way that   for all $u>0$ and all $t>0$, 
$
\lim_{n\rightarrow\infty}\check{\nu}_n^t\left(u-n^{-2},\infty\right)=tu^{\a(\varepsilon)}
$ 
$\P$-almost surely. Therefore, by \eqv(4.5),   for all $u>0$ and all $t>0$, 
\be
\lim_{n\rightarrow\infty}\nu_n^t(u,\infty)=tu^{\a(\varepsilon)}\quad\P-\text{almost surely}.
\Eq(4.prop1.4'')
\ee
Since $\nu_n^t$ is increasing both in $t$ and $u$ and since  its limit continuous in those two variables, \eqv(4.prop1.4'') implies that $\P$-almost surely,
\be
\lim_{n\rightarrow\infty}\nu_n^t(u,\infty)=tu^{\a(\varepsilon)},\quad\forall\, u>0, t>0.
\Eq(4.prop1.5)
\ee
The proof of Proposition \thv(4.prop1) is done.


 
 \section{Verification of Condition (B2)} 
 \label{S5}
 
By \eqv(1.03.9)-\eqv(1.03.10), \eqv(1.3.6), and \eqv(1.03.12),
Condition (B2) in \eqv(B2.1) states that 
\be
\s_n^{t}(u,\infty)
\equiv k_n(t)\sum_{y\in\VV_n}\pi_n(y)
\left[P_{y}\left(\int_{0}^{\theta_n}\max\left((c_nr^{\star}_n)^{-1},c_n^{-1}\t_n(Y_n(s))\right)ds>u\right)\right]^2
\Eq(5.1)
\ee
decays to zero as $n$ diverges. We prove in this section that this holds true $\P$-almost surely.
\begin{proposition}
   \TH(5.prop1)
   Under the assumptions of Proposition \thv(4.prop1), for all $0<\varepsilon<1$ and $\b>0$, $\P$-almost surely,
\be
\lim_{n\rightarrow\infty}\s_n^t(u,\infty)=0,\quad\forall\, t>0, u>0.
\Eq(5.prop1.1)
\ee
\end{proposition}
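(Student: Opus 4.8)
The plan is to dominate $\s_n^t(u,\infty)$ by a quantity that no longer depends on $u$ and to show this bound tends to $0$ almost surely, using only the hitting-time and mixing estimates of Section \thv(S3). First I would remove the $u$-dependence. Writing $Q^{u}_n(y)=P_{y}(Z_{n,1}>u)$, where by \eqv(1.03.9)--\eqv(1.03.10) and \eqv(1.03.12) one has $Z_{n,1}=c_n^{-1}\int_0^{\theta_n}\max\bigl((r^{\star}_n)^{-1},\t_n(Y_n(s))\bigr)\,ds$, eq.~\eqv(4.0) shows that on $\O^{\star}$, for $n$ large, on the event that $Y_n$ does not enter $T_n$ during $[0,\theta_n]$ one has $Z_{n,1}\le n^{-2}$; hence for every fixed $u>0$ and all $n\ge u^{-1/2}$,
\[
Q^{u}_n(y)\le P_{y}\bigl(H(T_n)\le\theta_n\bigr)\qquad\text{for all }y\in\VV_n ,
\]
the bound being trivial when $y\in T_n$. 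Using $T_n=T^{\circ}_n\cup(T_n\setminus T^{\circ}_n)$ and $(a+b)^2\le 2a^2+2b^2$, it then suffices to prove that $k_n(t)\sum_{y}\pi_n(y)\bigl[P_{y}(H(A)\le\theta_n)\bigr]^2\to0$ $\P$-a.s.\ for $A=T^{\circ}_n$ and for $A=T_n\setminus T^{\circ}_n$.

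Next I would split off the mixing time. For any $A\subseteq\VV_n$, Condition (B0) (Proposition \thv(3.prop2)) together with the Markov property at time $\kappa_n$ gives, for every $y$, $P_{y}(H(A)\le\theta_n)\le P_{y}(H(A)\le\kappa_n)+(1+\rho_n)P_{\pi_n}(H(A)\le\theta_n)$. Squaring, summing against $\pi_n$, and using the elementary inequalities $\bigl[P_{y}(H(A)\le\kappa_n)\bigr]^2\le P_{y}(H(A)\le\kappa_n)$ and $\sum_y\pi_n(y)P_{y}(H(A)\le\kappa_n)=P_{\pi_n}(H(A)\le\kappa_n)$, I get
\[
\sum_{y}\pi_n(y)\bigl[P_{y}(H(A)\le\theta_n)\bigr]^2\le 2\,P_{\pi_n}(H(A)\le\kappa_n)+2(1+\rho_n)^2\bigl[P_{\pi_n}(H(A)\le\theta_n)\bigr]^2 .
\]
It then remains to multiply by $k_n(t)$ and let $n\to\infty$, using Corollary \thv(3.cor1), which gives on $\O^{\star}$, for $n$ large, $P_{\pi_n}(H(T^{\circ}_n)\le t_n)\le t_nr^{\star}_nn2^{-n\varepsilon_n}(1+o(1))$ and $P_{\pi_n}(H(T_n\setminus T^{\circ}_n)\le t_n)\le t_nr^{\star}_nn^52^{-2n\varepsilon_n}(1+o(1))$ for $t_n\in\{\kappa_n,\theta_n\}$ (the hypothesis $t_nr^{\star}_nn2^{-n\varepsilon_n}\to0$ holding by \eqv(4.cnew), cf.~\eqv(4.c1'new)).

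Finally I would check that the four resulting terms all vanish. Recalling $k_n(t)=(2^{\varepsilon n}t/b_n\theta_n)(1+o(1))$, $2^{n\d_n}=(n^2\theta_n)^{\a(\varepsilon)}$, $\kappa_n\asymp n^4r^{\star}_n$ (by \eqv(3.prop2.1)), $\log r^{\star}_n=O(\sqrt{n\log n})$ (by \eqv(2.lem2.2)) and $\log\theta_n\ll n$ (by \eqv(4.cnew)), one finds: the two ``post-mixing'' terms $k_n(t)\bigl[P_{\pi_n}(H(\cdot)\le\theta_n)\bigr]^2$ are $O\bigl(2^{-n\varepsilon}e^{o(n)}\bigr)$ precisely because they occur \emph{squared} --- this is the structural reason $\s_n$ behaves better than $\nu_n$, whose unsquared analogue converges to $tu^{\a(\varepsilon)}\ne0$ by Proposition \thv(4.prop1); the pre-mixing term from $T_n\setminus T^{\circ}_n$ carries the extra factor $2^{-n\varepsilon_n}$ and is again $O\bigl(2^{-n\varepsilon}e^{o(n)}\bigr)$. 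The only delicate term is the pre-mixing contribution from $T^{\circ}_n$, namely $k_n(t)P_{\pi_n}(H(T^{\circ}_n)\le\kappa_n)$: here the exponential scales $2^{\varepsilon n}$ and $2^{-n\varepsilon_n}$ cancel, and, inserting the lower bound $b_n\ge\bigl(n^{c_-}(r^{\star}_n)^{1+\a_{n}(\varepsilon)+o(1)}\bigr)^{-1}$ of \eqv(1.prop2.1), one is left with a bound of order $n^{O(1)}(r^{\star}_n)^{3+\a_{n}(\varepsilon)+o(1)}\theta_n^{-(1-\a(\varepsilon))}$, which tends to $0$ exactly because $(r^{\star}_n)^4\ll\theta_n^{1-\a(\varepsilon)}$ by \eqv(4.c4) (recall $\a(\varepsilon)=\b_c(\varepsilon)/\b<1$). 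All events involved ($\O_1$, $\O^{\star}$, and the full-measure set of Proposition \thv(1.prop2)) have probability one, so for each fixed $t,u>0$ we get $\s_n^t(u,\infty)\to0$ $\P$-a.s.; since $\s_n^t(u,\infty)$ is nondecreasing in $t$ and nonincreasing in $u$, the simultaneous statement \eqv(5.prop1.1) then follows exactly as at the end of the proof of Proposition \thv(4.prop1). The main obstacle here is purely the bookkeeping in this last step, and specifically verifying that the lone ``tight'' term is rescued by \eqv(4.c4); there is no probabilistic difficulty beyond the results of Sections \thv(S2)--\thv(S3).
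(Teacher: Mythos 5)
Your proof is correct, but it takes a genuinely different and more elementary route than the paper. You discard the $u$-dependence and all trap-depth information at the outset, bounding $Q^u_n(y)=P_y(Z_{n,1}>u)$ by $P_y(H(T_n)\le\theta_n)$ (legitimate for $n$ large by \eqv(4.0)), and then you only need the mixing estimate of Proposition \thv(3.prop2), the stationary hitting bounds of Corollary \thv(3.cor1), and the almost sure control of $b_n$ (Proposition \thv(1.prop2), equivalently Lemma \thv(4.lem3)); the vanishing comes from the square for the post-mixing pieces and from \eqv(4.c4) for the lone pre-mixing piece $k_n(t)P_{\pi_n}(H(T^{\circ}_n)\le\kappa_n)\lesssim n^{O(1)}(r^{\star}_n)^{3+\a_n(\varepsilon)+o(1)}\theta_n^{-(1-\a(\varepsilon))}$, exactly as you say. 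The paper instead mirrors the verification of (B1): it passes through $\check\s_n^t,\bar\s_n^t,\wh\s_n^t,\s_n^{\circ,t}$ (Lemmas \thv(5.lem1), \thv(5.lem2)), splits off the interval $[0,\kappa_n]$, and controls the two resulting terms by Proposition \thv(5.prop2) (an annealed first-moment bound hinging on the ratio $\II_n(0,\kappa_n)/\II_n(0,\theta_n)$ of Lemma \thv(4.lem3)) and Proposition \thv(5.prop3) (which bounds the post-mixing part by $\nu_n^{\circ,t}(u,\infty)$ times the small probability $P_{\pi_n}(H(T^{\circ}_n)\le\theta_n)$). What the paper's route buys is that it keeps the $u$-cutoff, so each error term is proportional to $\nu_n^{\circ,t}\to tu^{-\a(\varepsilon)}$ times a vanishing factor of order $(r^{\star}_n)^{3+2\a_n}/\theta_n$ for the pre-mixing piece, which is a weaker requirement on $\theta_n$ than your $(r^{\star}_n)^{3+\a}\ll\theta_n^{1-\a}$ when $\a(\varepsilon)$ is close to $1$ (a point relevant to the remark about possibly improving the lower bound in \eqv(1.theo1.0')); under the stated hypothesis \eqv(4.c4), however, both requirements are met, so your shorter argument, which avoids Propositions \thv(5.prop2)--\thv(5.prop3) entirely, is perfectly adequate, and your closing monotonicity argument for upgrading to all $t,u>0$ simultaneously matches the paper's.
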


As in the proof of Proposition \thv(4.prop1) we first bring $\s_n^{t}(u,\infty)$ into a suitable form.
Proceeding as in \eqv(4.5)-\eqv(4.6), we first write
\be
\check{\s}_n^t(u,\infty)\leq \s_n^{t}(u,\infty)\leq \check{\s}_n^t(u-n^{-2},\infty)
\Eq(5.2)
\ee
where
\be
\check{\s}_n^t(u,\infty)
\equiv 
k_n(t)\sum_{y\in\VV_n}\pi_n(y)\left[P_{y}\left(
\int_{0}^{\theta_n}c_n^{-1}\t_n(Y_n(s))\1_{\{Y_n(s)\in T_n\}}ds>u
\right)\right]^2,
\Eq(5.3)
\ee
and next reduce visits to $T_n$ in \eqv(5.3) to visits to visits to $T^{\circ}_n$, just as in Lemma \thv(4.lem1). Set
\be
\bar\s_n^t(u,\infty)
\equiv
 k_n(t)\sum_{y\in\VV_n}\pi_n(y)\left[P_{y}\left(
 \int_{0}^{\theta_n}c_n^{-1}\t_n(Y_n(s))\1_{\{Y_n(s)\in T^{\circ}_n\}}ds>u
 \right)\right]^2.
\Eq(5.4)
\ee

\begin{lemma}
    \TH(5.lem1)  
Assume that  \eqv(4.cnew) holds. Then on $\O^{\star}$, for all but a finite number of indices $n$,
\be
|\check{\s}_n^t(u,\infty)-\bar\s_n^t(u,\infty)|
\leq
6 k_n(t)\theta_n n^5r^{\star}_n2^{-2n\varepsilon_n}(1+o(1)).
\Eq(5.lem1.0)
\ee
\end{lemma}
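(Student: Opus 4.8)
The plan is to follow the proof of Lemma~\thv(4.lem1) almost verbatim, the only new feature being that $\check{\s}_n^t$ and $\bar\s_n^t$ carry a square rather than a first power. First I would fix $y\in\VV_n$ and abbreviate
\[
a_y\equiv P_{y}\Bigl(\int_{0}^{\theta_n}c_n^{-1}\t_n(Y_n(s))\1_{\{Y_n(s)\in T_n\}}\,ds>u\Bigr),\qquad
b_y\equiv P_{y}\Bigl(\int_{0}^{\theta_n}c_n^{-1}\t_n(Y_n(s))\1_{\{Y_n(s)\in T^{\circ}_n\}}\,ds>u\Bigr).
\]
Since $T^{\circ}_n\subseteq T_n$ the integrand defining $b_y$ is dominated pointwise by the one defining $a_y$, so $0\le b_y\le a_y\le 1$; in particular $\bar\s_n^t(u,\infty)\le\check{\s}_n^t(u,\infty)$, and the absolute value in \eqv(5.lem1.0) may be dropped.

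Next I would decompose the event defining $a_y$ according to whether $H(T_n\setminus T^{\circ}_n)\le\theta_n$ or $H(T_n\setminus T^{\circ}_n)>\theta_n$. On the second event the trajectory of $Y_n$ avoids $T_n\setminus T^{\circ}_n$ on all of $[0,\theta_n]$, so there $\1_{\{Y_n(s)\in T_n\}}=\1_{\{Y_n(s)\in T^{\circ}_n\}}$, the two stochastic integrals coincide, and the two events agree; hence $0\le a_y-b_y\le P_{y}\bigl(H(T_n\setminus T^{\circ}_n)\le\theta_n\bigr)$. Combining this with the elementary bound $a_y^2-b_y^2=(a_y-b_y)(a_y+b_y)\le 2(a_y-b_y)$ (valid as $a_y+b_y\le 2$) and summing against $\pi_n$,
\[
0\le\check{\s}_n^t(u,\infty)-\bar\s_n^t(u,\infty)
=k_n(t)\sum_{y\in\VV_n}\pi_n(y)\bigl(a_y^2-b_y^2\bigr)
\le 2\,k_n(t)\,P_{\pi_n}\bigl(H(T_n\setminus T^{\circ}_n)\le\theta_n\bigr).
\]

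Finally I would invoke \eqv(3.cor1.1) of Corollary~\thv(3.cor1) with $t_n=\theta_n$, which is licit because \eqv(4.cnew) (see also \eqv(4.c1'new)) forces $\theta_n r^{\star}_n n2^{-n\varepsilon_n}\to 0$; this bounds the last probability by $\theta_n r^{\star}_n n^5 2^{-2n\varepsilon_n}(1+o(1))$ on $\O^{\star}$ for all but finitely many $n$, and the generous constant $6$ in \eqv(5.lem1.0) then absorbs the factor $2(1+o(1))$ produced here. There is no real obstacle: the argument is the squared analogue of Lemma~\thv(4.lem1), and the single point needing a line of verification is that the hypothesis of Corollary~\thv(3.cor1) holds under the standing assumption \eqv(4.cnew), which is immediate from \eqv(4.c1'new); everything else is bookkeeping, with the extra factor $a_y+b_y\le 2$ simply swallowed into the numerical constant.
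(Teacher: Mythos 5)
Your proposal is correct and follows essentially the same route as the paper: the difference $\check{\s}_n^t-\bar\s_n^t$ is reduced to the event $\{H(T_n\setminus T^{\circ}_n)\leq \theta_n\}$ and then controlled by Corollary \thv(3.cor1) with $t_n=\theta_n$, whose hypothesis is checked exactly as you do via \eqv(4.cnew) and \eqv(4.c1'new). The only (cosmetic) difference is the algebra handling the square: you use monotonicity ($b_y\leq a_y$) together with $a_y^2-b_y^2=(a_y-b_y)(a_y+b_y)\leq 2(a_y-b_y)$, which even yields the constant $2$ in place of the paper's $6$ obtained from the bound $[x_1+x_2]^2\leq 3x_1+x_2^2$ applied to the decomposition $q_1+q_2$, $\bar q_1+\bar q_2$.
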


\begin{proof}[Proof of lemma \thv(5.lem1)]  
As in the Proof of Lemma \thv(4.lem1) we decompose the event appearing in the probability in \eqv(5.3) 
according to whether $\{H(T_n\setminus T^{\circ}_n)\leq \theta_n\}$ or 
not, that is,  setting
\bea
q_1(y)\hspace{-6pt}&=&\hspace{-6pt} \textstyle
P_{y}\bigl(
\int_{0}^{\theta_n}c_n^{-1}\t_n(Y_n(s))\1_{\{Y_n(s)\in T_n\}}ds>u, H(T_n\setminus T^{\circ}_n)\leq \theta_n
\bigr),
\Eq(5.lem1.1)
\\
q_2(y)\hspace{-6pt}&=&\hspace{-6pt} \textstyle
P_{y}\bigl(
\int_{0}^{\theta_n}c_n^{-1}\t_n(Y_n(s))\1_{\{Y_n(s)\in T_n\}}ds>u, H(T_n\setminus T^{\circ}_n)> \theta_n
\bigr),
\Eq(5.lem1.2)
\eea
we write 
$
\check{\s}_n^t(u,\infty)=k_n(t)\sum_{y\in\VV_n}\pi_n(y)[q_1(y)+q_2(y)]^2
$.
In the same way write
$
\bar\s_n^t(u,\infty)=k_n(t)\sum_{y\in\VV_n}\pi_n(y)[\bar q_1(y)+\bar q_2(y)]^2
$
where $\bar q_1(y)$ and $\bar q_2(y)$ are defined as in \eqv(5.lem1.1) and \eqv(5.lem1.2), respectively, substituting $T^{\circ}_n$ for $T_n$. Note that
\be
[x_1+x_2]^2
\leq 3x_1+x_2^2,\quad 0\leq x_1,x_2\leq 1.
\Eq(5.lem1.3)
\ee
Applying \eqv(5.lem1.3) to  the terms $[q_1(y)+q_2(y)]^2$ and $[\bar q_1(y)+\bar q_2(y)]^2$, and  observing that 
$q^2_2= \bar q^2_2$, we get
\bea
|\check{\s}_n^t(u,\infty)-\bar\s_n^t(u,\infty)|
\hspace{-6pt}&\leq&\hspace{-6pt} 
\Eq(5.lem1.4)
3k_n(t)\sum_{y\in\VV_n}\pi_n(y)(q_1(y)+\bar q_1(y))
\\
\hspace{-6pt}&\leq&\hspace{-6pt} 
6k_n(t)P_{\pi_n}(H(T_n\setminus T^{\circ}_n)\leq \theta_n).
\Eq(5.lem1.5)
\eea
The Lemma now follows from \eqv(3.cor1.1) of Corollary \thv(3.cor1).
\end{proof}
 
We continue our parallel with the proof of Proposition \thv(4.prop1) and decompose  \eqv(5.4) according to the hitting time
and  hitting place of the set $T^{\circ}_n$. We slightly abuse the notation of Section 3 (see the paragraph below \eqv(3.prop3.3)) and denote by $h^y_{n, x}$ (instead of $h^y_{n, x, T^{\circ}_n}$) the joint density that 
$T^{\circ}_n$ is reached at time $t$, and that arrival to that set occurs in state $x$, given that the process starts in $y$.
As already observed (see the paragraph below \eqv(4.9)), $h_{n, x}=\sum_{y\in\VV_n}\pi_n(y)h^y_{n, x}$.
Proceeding as in \eqv(4.10)-\eqv(4.12) we then get
\be
\bar\s_n^t(u,\infty)
=
k_n(t)\sum_{y\in\VV_n}\pi_n(y)\left[\overline R^{u}_n(y)\right]^2
\Eq(5.5)
\ee
where, using \eqv(4.11) and \eqv(4.12),
\bea
\overline R^{u}_n(y)
\hspace{-6pt}&\equiv&\hspace{-6pt} 
\sum_{x\in T^{\circ}_n}\int_0^{\theta_n}h^y_{n, x}(v) \left(\wt Q^{u, v}_n(x)+\wh Q^{u, v}_n(x)\right) dv
\equiv\wt R^{u}_n(y)+\wh R^{u}_n(y).
\Eq(5.7)
\eea
By analogy with \eqv(4.17) we also set
\bea
\wh\s_n^t(u,\infty)
\hspace{-6pt}&\equiv&\hspace{-6pt}  
k_n(t)\sum_{y\in\VV_n}\pi_n(y)\bigl[\wh R^{u}_n(y)\bigr]^2.
\Eq(5.8)
\eea
The next lemma plays the role of Lemma \thv(4.lem2).

\begin{lemma}
     \TH(5.lem2)
Assume that  \eqv(4.cnew) holds. Then $\O^{\star}$, for all but a finite number of indices $n$,
\be
0\leq 
\bar\s_n^t(u,\infty)-\wh\s_n^t(u,\infty)
\leq 
3k_n(t)n^{c_{\star}+4}\left(\theta_n \pi_n(T^{\circ}_n)r^{\star}_n\right)^2(1+o(1)).
\Eq(5.lem2.1)
\ee
\end{lemma}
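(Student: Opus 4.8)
The plan is to run the argument of Lemma~\thv(4.lem2), the only change being that the probability $\overline R^{u}_n(y)=\wt R^{u}_n(y)+\wh R^{u}_n(y)$ of \eqv(5.7) now enters $\bar\s_n^t$ squared, whereas only $\wh R^{u}_n(y)$ enters $\wh\s_n^t$. First I would record the elementary bounds $0\le\wt R^{u}_n(y)\le 1$ and $0\le\wh R^{u}_n(y)\le 1$ for every $y\in\VV_n$: nonnegativity is clear from \eqv(5.7), and both are dominated by $\overline R^{u}_n(y)\le\sum_{x\in T^{\circ}_n}\int_0^\infty h^y_{n,x}(v)\,dv\le 1$, the $h^y_{n,x}$ being the joint hitting time and place (sub)densities of $T^{\circ}_n$ for the chain started at $y$. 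In particular $\overline R^{u}_n(y)^2\ge\wh R^{u}_n(y)^2$ for all $y$, which already gives the lower bound $\bar\s_n^t(u,\infty)-\wh\s_n^t(u,\infty)\ge 0$.

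For the upper bound I would use the same elementary inequality \eqv(5.lem1.3), namely $[x_1+x_2]^2\le 3x_1+x_2^2$ for $0\le x_1,x_2\le 1$, this time with $x_1=\wt R^{u}_n(y)$ and $x_2=\wh R^{u}_n(y)$. It yields $\overline R^{u}_n(y)^2-\wh R^{u}_n(y)^2\le 3\wt R^{u}_n(y)$, hence, after multiplying by $k_n(t)\pi_n(y)$ and summing over $y$,
\be
\bar\s_n^t(u,\infty)-\wh\s_n^t(u,\infty)\le 3k_n(t)\sum_{y\in\VV_n}\pi_n(y)\wt R^{u}_n(y).
\ee
I would then bound the right-hand side exactly as in the first half of the proof of Lemma~\thv(4.lem2): inserting the definition of $\wt R^{u}_n(y)$ from \eqv(5.7), using $\sum_{y\in\VV_n}\pi_n(y)h^y_{n,x}(v)=h_{n,x}(v)$ (recalled above \eqv(5.5)), the bound \eqv(4.13) on $\wt Q^{u,v}_n(x)$, and $\int_0^{\theta_n}h_{n,x}(v)\,dv\le P_{\pi_n}(H(x)\le\theta_n)$ (which follows from \eqv(4.7)), one gets
\be
\sum_{y\in\VV_n}\pi_n(y)\wt R^{u}_n(y)\le\sum_{x\in T^{\circ}_n}P_{\pi_n}(H(x)\le\theta_n)\,P_{x}\!\left(H(T^{\circ}_n\setminus x)\le\theta_n\right)= k_n(t)^{-1}\wt\nu_n^t,
\ee
with $\wt\nu_n^t$ as in \eqv(4.lem2.1). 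Combining the last two displays gives $\bar\s_n^t(u,\infty)-\wh\s_n^t(u,\infty)\le 3\wt\nu_n^t$, and \eqv(5.lem2.1) then follows at once from the bound on $\wt\nu_n^t$ established in Lemma~\thv(4.lem2), valid on $\O^{\star}$ for all but finitely many $n$.

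The computation is entirely routine; the one place that needs a little care is the bookkeeping that turns the weighted sum $\sum_y\pi_n(y)\wt R^{u}_n(y)$ back into the quantity $\wt\nu_n^t$ already controlled in Section~\thv(S4.1) — after that nothing new is required, and the lemma is, like Proposition~\thv(3.prop3bis), a rerun of earlier arguments.
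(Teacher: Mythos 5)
Your argument is correct and coincides with the paper's own proof: the same inequality \eqv(5.lem1.3) applied to $\wt R^{u}_n(y)+\wh R^{u}_n(y)$, the identity $h_{n,x}=\sum_{y}\pi_n(y)h^y_{n,x}$ reducing the error term to $3\wt\nu_n^t$ via \eqv(4.13)--\eqv(4.15'), and then the bound of Lemma \thv(4.lem2). The only (harmless) addition is your explicit verification of the lower bound $\bar\s_n^t-\wh\s_n^t\geq 0$, which the paper leaves implicit.
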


\begin{proof}[Proof of Lemma \thv(5.lem2)]  
As in the proof of Lemma \thv(5.lem1), the proof of Lemma \thv(5.lem2) relies on the observation that since
$0\leq \wt R^{u}_n(y),\wh R^{u}_n(y)\leq 1$ in \eqv(5.7) for all $y\in\VV_n$,
then by \eqv(5.lem1.3),
\bea
\textstyle
0<
\bar\s_n^t(u,\infty)-\wh\s_n^t(u,\infty)
\hspace{-6pt}&\leq&\hspace{-6pt}  
3k_n(t)\sum_{y\in\VV_n}\pi_n(y)\wt R^{u}_n(y)
\Eq(5.lem2.3)
\\
\hspace{-6pt}&=&\hspace{-6pt}  
3k_n(t) \sum_{x\in T^{\circ}_n}\int_0^{\theta_n}h_{n, x}(v)\wt Q^{u, v}_n(x)dv
\leq 3\wt\nu_n^t.\quad\quad
\Eq(5.lem2.4)
\eea
The equality in \eqv(5.lem2.4)
follows from the identity $h_{n, x}(v)=\sum_{y\in\VV_n}\pi_n(y)h^y_{n, x}(v)$,
and the final inequality is \eqv(4.15').
The claim of the lemma now follows from Lemma \thv(4.lem2).
\end{proof}

We now need an upper bound on $\wh\s_n^t(u,\infty)$. For this we proceed as in \eqv(4.18)-\eqv(4.20) and write that
$
0\leq \wh\s_n^t(u,\infty)\leq \s_n^{\circ, t}(u,\infty)
$
where, by analogy with \eqv(4.20),
\be
\s_n^{\circ, t}(u,\infty)
=
k_n(t)\sum_{y\in\VV_n}\pi_n(y)\Biggl[
\sum_{x\in T^{\circ}_n}\int_0^{\theta_n}
h^y_{n, x}(v) P_{x}\left(c_n^{-1}\t_n(x)\ell_n^x(\theta_n-v)>u\right)dv
\Biggr]^2
\Eq(5.9)
\ee
Again, the quantity in between the square brackets is in $[0,1]$. Thus, splitting the integral into the sum of the integrals over 
$[0,\kappa_n]$ and $[\kappa_n, \theta_n]$, we get, using \eqv(5.lem1.3) and reasoning as in \eqv(5.lem2.3)-\eqv(5.lem2.4), 
\be
\s_n^{\circ, t}(u,\infty)
\leq
3\bar\eta_n^{\circ, t}(u,\infty)+\eta_n^{\circ, t}(u,\infty)
\Eq(5.10)
\ee
where
\bea
\bar\eta_n^{\circ, t}(u,\infty)
\hspace{-6pt}&\equiv&\hspace{-6pt} 
k_n(t)\sum_{x\in T^{\circ}_n}\int_0^{\kappa_n}
h_{n, x}(v) P_{x}\left(c_n^{-1}\t_n(x)\ell_n^x(\theta_n-v)>u\right)dv,
\Eq(5.11)
\\
\eta_n^{\circ, t}(u,\infty)
\hspace{-6pt}&\equiv&\hspace{-6pt} 
k_n(t)\sum_{y\in\VV_n}\pi_n(y)\Biggl[
\sum_{x\in T^{\circ}_n}\int_{\kappa_n}^{\theta_n}
h^y_{n, x}(v) P_{x}\left(c_n^{-1}\t_n(x)\ell_n^x(\theta_n-v)>u\right)dv
\Biggr]^2. \quad\,\,\,
\Eq(5.12)
\eea
The next two propositions bound \eqv(5.11) and \eqv(5.12) in terms of the quantities
$\nu_n^{\circ, t}(u_n,\infty)$ and $\E^{\circ}\nu_n^{\circ, t}(u_n,\infty)$
defined in  \eqv(4.19) and  \eqv(4.prop2.5'), respectively.

\begin{proposition}
     \TH(5.prop2)  
Choose $a_n=2^{\varepsilon n}/b^{\circ}_n$ in \eqv(1.03.8).
Then, for any sequence $u_n>0$ such that $0<u-u_n<n^{-1}$ and all $u>0$,
\be
\P\left(\bar\eta_n^{\circ, t}(u_n,\infty)\geq 
 t\E^{\circ}\nu_n^{\circ, t}(u_n,\infty)
n^2\theta_n^{-1}\tilde\kappa_n\kappa_n^{1+\a_{n}(\varepsilon)}(nr^{\star}_n)^{1+\a_{n}(\varepsilon)+o(1)}
\right)
\leq
n^{-2}.
\Eq(5.prop2.1)  
\ee
\end{proposition}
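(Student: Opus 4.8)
The plan is to control $\bar\eta_n^{\circ, t}(u_n,\infty)$, defined in \eqv(5.11), by a \emph{first moment} estimate in the random environment: it is nothing but the double integral \eqv(4.19) defining $\nu_n^{\circ, t}(u_n,\infty)$ with the outer ($v$-)integral restricted to the initial slice $[0,\kappa_n]$. As in the proof of Proposition \thv(4.prop2), what makes a first moment bound work is that $T^{\circ}_n\subseteq\MM_n$ (by \eqv(4.prop2.3)), so that, by \eqv(2.lem5.2) and \eqv(4.prop2.21), the law of $Y_n$ does not depend on the variables $\{\t_n(x),x\in T^{\circ}_n\}$; combined with the product structure \eqv(4.prop2.4) of $\P^{\circ}$, this makes the summands indexed by $x\in T^{\circ}_n$ -- which are the $X_n(x)$ of \eqv(4.prop2.5) with the outer integral cut at $\kappa_n$ -- independent under $\P^{\circ}$, while $t\,\E^{\circ}\nu_n^{\circ, t}(u_n,\infty)$ is $\P^{\circ}$-a.s.\ constant. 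Everything below takes place on the full measure set $\O_1\cap\O^{\circ}\cap\O^{\star}$, for $n$ large enough that Lemma \thv(4.lem3) applies.

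The first step is to evaluate $\E^{\circ}\bar\eta_n^{\circ, t}(u_n,\infty)=k_n(t)\sum_{x\in T^{\circ}_n}\E^{\circ}X_n(x)$ by rerunning the computation \eqv(4.prop2.11)--\eqv(4.prop2.30)--\eqv(4.prop2.35), the only change being that the $v$-integral now runs over $[0,\kappa_n]$ rather than $[0,\theta_n]$: Fubini; the Gaussian tail identity \eqv(4.prop2.13)--\eqv(4.prop2.15) (licit on $[0,\kappa_n]$ thanks to the a priori bound \eqv(4.prop2.14)); the identifications $\P(\t_n(x)\geq c_n)=2^{-\varepsilon n}$ (from \eqv(1.theo0.0)) and $\P(\t_n(x)\geq r_n(\varepsilon_n))=\pi_n(T^{\circ}_n)(1+o(1))$ (from \eqv(2.2), \eqv(2.lem3.5), \eqv(3.lem0.2)); the factorisation $F_{\b,\varepsilon,n}(\cdot/u_n)=e^{o(1)\log u_n}F_{\b,\varepsilon,n}(u_n^{-1})F_{\b,\varepsilon,n}(\cdot)$ together with the auxiliary split according to whether $\ell_n^x(\theta_n-v)<\zeta_n$, exactly as in \eqv(4.prop2.33); and $k_n(t)\theta_n=t\,2^{\varepsilon n}/b^{\circ}_n(1+o(1))$. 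Since the $u_n$-dependent factor $e^{o(1)\log u_n}F_{\b,\varepsilon,n}(u_n^{-1})$ -- which converges to $u^{-\a(\varepsilon)}$ under the hypothesis $0<u-u_n<n^{-1}$, cf.\ \eqv(4.prop2.35) -- is common to $\E^{\circ}\bar\eta_n^{\circ, t}$ and to $\E^{\circ}\nu_n^{\circ, t}$, it cancels in the ratio and one obtains
\[
\E^{\circ}\bar\eta_n^{\circ, t}(u_n,\infty)=(1+o(1))\,\frac{\II_n(0,\kappa_n)}{b^{\circ}_n}\,\E^{\circ}\nu_n^{\circ, t}(u_n,\infty),
\]
with $\II_n(0,\kappa_n)$ and $b^{\circ}_n=\II_n(0,\theta_n)$ as in Lemma \thv(4.lem3).

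The second step is to feed in \eqv(4.lem3.3): since $\II_n(\kappa_n,\theta_n-\zeta_n)\leq\II_n(0,\theta_n)=b^{\circ}_n$,
\[
\frac{\II_n(0,\kappa_n)}{b^{\circ}_n}\leq\frac{\II_n(0,\kappa_n)}{\II_n(\kappa_n,\theta_n-\zeta_n)}\leq\theta_n^{-1}\tilde\kappa_n\kappa_n^{1+\a_{n}(\varepsilon)}(nr^{\star}_n)^{1+\a_{n}(\varepsilon)+o(1)},
\]
hence $\E^{\circ}\bar\eta_n^{\circ, t}(u_n,\infty)\leq(1+o(1))\,\E^{\circ}\nu_n^{\circ, t}(u_n,\infty)\,\theta_n^{-1}\tilde\kappa_n\kappa_n^{1+\a_{n}(\varepsilon)}(nr^{\star}_n)^{1+\a_{n}(\varepsilon)+o(1)}$. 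Markov's inequality under $\P^{\circ}$ gives $\P^{\circ}(\bar\eta_n^{\circ, t}(u_n,\infty)\geq n^{2}\E^{\circ}\bar\eta_n^{\circ, t}(u_n,\infty))\leq n^{-2}$; since the threshold in \eqv(5.prop2.1) dominates $n^{2}\E^{\circ}\bar\eta_n^{\circ, t}(u_n,\infty)$ -- the $(1+o(1))$ and the explicit factor $t$ being absorbed into the $o(1)$ in the exponent -- this yields \eqv(5.prop2.1) conditionally under $\P^{\circ}$; as the estimate is uniform over the frozen data on the full measure set above, integrating it out returns the asserted unconditional bound.

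The step I expect to be the main obstacle is the bookkeeping of the first step: checking that $\E^{\circ}\bar\eta_n^{\circ, t}(u_n,\infty)$ genuinely factors as the cutoff-dependent constant $(1+o(1))\II_n(0,\kappa_n)/b^{\circ}_n$ times $\E^{\circ}\nu_n^{\circ, t}(u_n,\infty)$, with every error term -- in particular the $u_n$-dependence packaged as $e^{o(1)\log u_n}F_{\b,\varepsilon,n}(u_n^{-1})$ and the contribution of the small local times $\ell_n^x(\theta_n-v)<\zeta_n$ -- uniform in $x\in T^{\circ}_n$ and $v\in[0,\kappa_n]$. Unlike Proposition \thv(4.prop2), no second moment / concentration argument is needed here: by \eqv(4.lem3.3) the factor $\II_n(0,\kappa_n)/b^{\circ}_n$ tends to $0$, so $\bar\eta_n^{\circ, t}(u_n,\infty)$ is already of strictly smaller order than $\E^{\circ}\nu_n^{\circ, t}(u_n,\infty)$ and a first moment bound suffices.
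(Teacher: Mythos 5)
Your proposal is correct and follows essentially the same route as the paper: a first-moment (Markov/Tchebychev) bound under $\P^{\circ}$, the identification of $\E^{\circ}\bar\eta_n^{\circ, t}(u_n,\infty)$ as $\E^{\circ}\nu_n^{\circ, t}(u_n,\infty)$ times the ratio $\II_n(0,\kappa_n)/\II_n(0,\theta_n)$ by rerunning \eqv(4.prop2.11)--\eqv(4.prop2.35) with the $v$-integral cut at $\kappa_n$, and the bound on that ratio from \eqv(4.lem3.3) of Lemma \thv(4.lem3). The remark about independence under $\P^{\circ}$ is superfluous (as you yourself note, no concentration is needed), but this does not affect the argument.
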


\begin{proposition}
     \TH(5.prop3)  
On $\O^{\star}\cap \O_1$, for all but a finite number of indices $n$ and all $u>0$,
\be
\eta_n^{\circ, t}(u,\infty)
\leq 
\nu_n^{\circ, t}(u,\infty)\theta_nr^{\star}_nn2^{-n\varepsilon_n}(1+o(1)).
\Eq(5.prop3.2)  
\ee
\end{proposition}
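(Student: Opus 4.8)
The plan is to reduce the quadratic functional $\eta_n^{\circ, t}(u,\infty)$ of \eqv(5.12) to the linear functional $\nu_n^{\circ, t}(u,\infty)$ of \eqv(4.19) by means of the elementary bound $w^2\le w\sup w$. Write
\[
G^u_n(y)\equiv\sum_{x\in T^{\circ}_n}\int_{\kappa_n}^{\theta_n}h^y_{n, x}(v)\,P_{x}\bigl(c_n^{-1}\t_n(x)\ell_n^x(\theta_n-v)>u\bigr)\,dv ,
\]
so that $\eta_n^{\circ, t}(u,\infty)=k_n(t)\sum_{y\in\VV_n}\pi_n(y)\bigl[G^u_n(y)\bigr]^2$. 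Since $\bigl[G^u_n(y)\bigr]^2\le G^u_n(y)\,\sup_{y'}G^u_n(y')$, \eqv(5.prop3.2) will follow from two facts: (i) $\sup_{y\in\VV_n}G^u_n(y)\le\theta_nr^{\star}_nn2^{-n\varepsilon_n}(1+o(1))$ on $\O^{\star}\cap\O_1$; and (ii) $k_n(t)\sum_{y\in\VV_n}\pi_n(y)\,G^u_n(y)\le\nu_n^{\circ, t}(u,\infty)$. Fact (ii) is immediate: enlarging the range of integration to $[0,\theta_n]$, using $h_{n, x}=\sum_{y}\pi_n(y)h^y_{n, x}$ (the identity recalled below \eqv(4.9)) and Fubini's theorem recovers exactly the definition \eqv(4.19) of $\nu_n^{\circ, t}(u,\infty)$.

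Fact (i) is the heart of the matter. First, bounding $P_{x}(\cdot)\le 1$ and using $\sum_{x\in T^{\circ}_n}h^y_{n, x}=h^y_{n, T^{\circ}_n}$, the density of $H(T^{\circ}_n)$ when $Y_n$ starts at $y$, gives $G^u_n(y)\le\int_{\kappa_n}^{\theta_n}h^y_{n, T^{\circ}_n}(v)\,dv=P_{y}\bigl(\kappa_n\le H(T^{\circ}_n)\le\theta_n\bigr)$; the lower cut-off $\kappa_n>0$ conveniently discards the atom of $H(T^{\circ}_n)$ at $0$ present when $y\in T^{\circ}_n$, so nothing needs checking at $v=0$. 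Next, by the Markov property at the deterministic time $\kappa_n$ and the crude bound $\1_{\{H(T^{\circ}_n)>\kappa_n\}}\le 1$,
\[
P_{y}\bigl(\kappa_n\le H(T^{\circ}_n)\le\theta_n\bigr)\le\sum_{z\in\VV_n}P_{y}(Y_n(\kappa_n)=z)\,P_{z}\bigl(H(T^{\circ}_n)\le\theta_n\bigr),
\]
and the mixing estimate of Proposition \thv(3.prop2) — Condition (B0) with $\kappa_n$ as in \eqv(3.prop2.1) and $\rho_n<e^{-n}$ — bounds the right-hand side by $(1+\rho_n)\,P_{\pi_n}\bigl(H(T^{\circ}_n)\le\theta_n\bigr)$. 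Since $\theta_nr^{\star}_nn\pi_n(T^{\circ}_n)\to 0$ under \eqv(4.cnew) (cf.\ \eqv(4.c1'new), already invoked in Lemma \thv(4.lem2)), Lemma \thv(3.lem1) applies with $A=T^{\circ}_n$ and $t_n=\theta_n$, and \eqv(3.lem1.2) gives $P_{\pi_n}\bigl(H(T^{\circ}_n)\le\theta_n\bigr)\le\theta_nr^{\star}_nn\pi_n(T^{\circ}_n)(1+o(1))$; finally $\pi_n(T^{\circ}_n)=2^{-n\varepsilon_n}(1+o(1))$ on $\O^{\star}$ by \eqv(2.lem3.5) and \eqv(3.lem0.2). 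This proves (i).

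The restriction to $\O^{\star}\cap\O_1$ is dictated precisely by these inputs: \eqv(3.lem1.2) and \eqv(3.lem0.2) are valid on $\O^{\star}$, while the mixing bound of Proposition \thv(3.prop2) is valid on $\O_1$. I do not anticipate a genuine obstacle here — the argument is a direct repackaging of the hitting-time estimates of Section \thv(S3), the one new ingredient being that restricting to the window $[\kappa_n,\theta_n]$ forces one factor of $G^u_n(y)$ to be uniformly of the size $\theta_nr^{\star}_nn\pi_n(T^{\circ}_n)$. The only point requiring a little care is the measure-theoretic justification of the decomposition of the hitting-time density $h^y_{n, T^{\circ}_n}$ at time $\kappa_n$ via the Markov property, which is routine.
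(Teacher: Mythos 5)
Your proof is correct and follows essentially the same route as the paper: bound one factor of the square by the uniform estimate $\sup_y P_y(\kappa_n\le H(T^{\circ}_n)\le\theta_n)\le(1+o(1))P_{\pi_n}(H(T^{\circ}_n)\le\theta_n)$ via the Markov property at time $\kappa_n$ and Proposition \thv(3.prop2), and absorb the other factor into $\nu_n^{\circ,t}(u,\infty)$ using $h_{n,x}=\sum_y\pi_n(y)h^y_{n,x}$. The only cosmetic difference is that you rederive the bound $P_{\pi_n}(H(T^{\circ}_n)\le\theta_n)\le\theta_n r^{\star}_n n 2^{-n\varepsilon_n}(1+o(1))$ from Lemma \thv(3.lem1) and Lemma \thv(3.lem0), whereas the paper simply cites \eqv(3.cor1.1') of Corollary \thv(3.cor1).
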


\begin{proof}[Proof of Proposition \thv(5.prop2)] 
As in the proof of Proposition \thv(4.prop2) denote by  $\P^{\circ}$ 
the law of the collection $\{\t_n(x), x\in T^{\circ}_n\}$ conditional on $T^{\circ}_n$.
By a first order Tchebychev inequality,
\be
\P\left(\bar\eta_n^{\circ, t}(u_n,\infty)\geq \e\right)
\leq
\e^{-1}\E\left[\E^{\circ}\bar\eta_n^{\circ, t}(u_n,\infty)\right].
\Eq(5.prop2.3)  
\ee
Note that 
$
\E^{\circ}\bar\eta_n^{\circ, t}(u,\infty)
$
only differs from the term $\E^{\circ}\nu_n^{\circ, t}(u_n,\infty)$ of \eqv(4.prop2.5')
in that the integral in \eqv(5.11) is over $[0,\kappa_n]$ instead of $[0, \theta_n]$. 
Taking $a_n=2^{\varepsilon n}/b^{\circ}_n$, a simple adaptation of the proof of
\eqv(4.prop2.2) (see \eqv(4.prop2.11)-\eqv(4.prop2.35))  yields
\be
\E^{\circ}\bar\eta_n^{\circ, t}(u_n,\infty)=
 t(1+o(1))\E^{\circ}\nu_n^{\circ, t}(u_n,\infty)
\frac{\II_n(0,\kappa_n)}{\II_n(0, \theta_n)}
\Eq(5.prop2.4)  
\ee
where $\II_n(a,b)$ is defined above \eqv(4.lem3.1).
Eq.~\eqv(4.lem3.3) of Lemma \thv(4.lem3) was  designed precisely to control the ratio in \eqv(5.prop2.4). Namely,
on $\O^{\circ}\cap \O^{\star}$, for all but a finite number of indices $n$,
\be
\frac{\II_n(0,\kappa_n)}{\II_n(0, \theta_n)}\leq
\frac{\II_n(0,\kappa_n)}{\II_n(\kappa_n, \theta_n-\zeta_n)}\leq
\theta_n^{-1}\tilde\kappa_n\kappa_n^{1+\a_{n}(\varepsilon)}(nr^{\star}_n)^{1+\a_{n}(\varepsilon)+o(1)}.
\Eq(5.prop2.5)  
\ee
The combination of \eqv(5.prop2.3), \eqv(5.prop2.4), and \eqv(5.prop2.5) gives \eqv(5.prop2.1). The proof is complete.
\end{proof}

\begin{proof}[Proof of Proposition \thv(5.prop3)] 

To prove \eqv(5.prop3.2) first observe that 
\bea
\sum_{x\in T^{\circ}_n}\int_{\kappa_n}^{\theta_n}
h^y_{n, x}(v) P_{x}\left(c_n^{-1}\t_n(x)\ell_n^x(\theta_n-v)>u\right)dv
\hspace{-6pt}&\leq&\hspace{-6pt} 
P_y(\kappa_n< H(T^{\circ}_n)\leq \theta_n)
\Eq(5.prop3.6) 
\\
\hspace{-6pt}&\leq&\hspace{-6pt} 
(1+o(1))P_{\pi_n}(H(T^{\circ}_n)\leq \theta_n)\quad\quad
\Eq(5.prop3.7) 
\eea
where the last line follows from Proposition \thv(3.prop2) and the Markov property,
and is valid on $\O_1$, for all but a finite number of indices $n$.
Applying this bound to one of the two square brackets in \eqv(5.12) and using \eqv(4.19) to bound the remaining term,
we get, under the same assumptions as above,  that
\bea
\eta_n^{\circ, t}(u,\infty)
\hspace{-6pt}&\leq&\hspace{-6pt} 
(1+o(1))\nu_n^{\circ, t}(u,\infty)P_{\pi_n}(H(T^{\circ}_n)\leq \theta_n).
\Eq(5.prop3.8)
\eea
Using Corollary \eqv(3.cor1.1') to bound the last probability yields the claim of the proposition.
\end{proof}

We are now ready to complete the

\begin{proof}[Proof of Proposition \thv(5.prop1)]  
Recall from the proof of Proposition \thv(4.prop1) that on $\O_1\cap\O^{\circ}\cap \O^{\star}$
$
a_n=2^{\varepsilon n}/b_n=2^{\varepsilon n}/b^{\circ}_n(1+o(1))
$
for large enough $n$ and consider \eqv(5.prop2.1). By \eqv(4.c4),
$
n^2\theta_n^{-1}\tilde\kappa_n\kappa_n^{1+\a_{n}(\varepsilon)}(nr^{\star}_n)^{1+\a_{n}(\varepsilon)+o(1)}
\downarrow 0
$
as $n\uparrow\infty$
and by \eqv(4.prop2.2),  for all  $u>0$ and $t>0$
$
\lim_{n\rightarrow\infty}\E^{\circ}\nu_n^{\circ, t}(u_n,\infty)=tu^{\a(\varepsilon)}
$.
Thus, by Proposition \thv(5.prop2) and Borel-Cantelli Lemma we get that for all $u>0$ and $t>0$,
\be
\lim_{n\rightarrow\infty}\bar\eta_n^{\circ, t}(u,\infty)=0\quad
\P-\text{almost surely}.
\ee
Turning to \eqv(5.prop3.2) and invoking \eqv(4.cnew) (see also \eqv(4.c1'new)),
it follows from Proposition \thv(5.prop2) that for all $0<\varepsilon<1$ and for all $u>0$ and $t>0$,
\be
\lim_{n\rightarrow\infty}\eta_n^{\circ, t}(u,\infty)=0\quad
\P-\text{almost surely}.
\ee
Hence by \eqv(5.10), for all $u>0$ and $t>0$,
\be
\lim_{n\rightarrow\infty}\s_n^{\circ, t}(u,\infty)=0\quad
\P-\text{almost surely}.
\ee
From there on the proof is a rerun of the proof of  Proposition \thv(4.prop1) 
with  Lemma \thv(5.lem1) and Lemma \thv(5.lem2)
playing the role of Lemma \thv(4.lem1) and Lemma \thv(4.lem2), respectively.
We omit the details.
\end{proof}



 \section{Verification of Condition (B3)} 
 \label{S6}
 
By \eqv(1.03.9)-\eqv(1.03.11), \eqv(1.3.6), and \eqv(1.03.12), Condition (B3) in \eqv(B3.1)  
will be verified if we can establish that:

\begin{proposition}
   \TH(6.prop1)
   
Under the assumptions of Proposition \thv(4.prop1), for all $0<\varepsilon<1$ and all $\b>\b_c(\varepsilon)$, $\P$-almost surely,
\be
\lim_{\e\downarrow 0}\limsup_{n\uparrow \infty}
k_n(t)E_{\pi_n}\int_{0}^{\theta_n}
\MM_n(Y_n(s))
\1_{\{\int_{0}^{\theta_n}
\MM_n(Y_n(s))
ds\leq \e\}}=0,\quad \forall t>0.
\Eq(6.prop1.1)
\ee
where $\MM_n(Y_n(s))=\max\left((c_nr^{\star}_n)^{-1},c_n^{-1}\t_n(Y_n(s))\right)$.
\end{proposition}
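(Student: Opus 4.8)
The plan is to reduce the quantity in \eqv(6.prop1.1) to something expressed in terms of the truncated first moments of the aggregated jumps $Z_{n,1}$ starting from the invariant measure, and then to exploit the same machinery already developed for Conditions (B1) and (B2). First I would note that, by the definition of $\MM_n$ and exactly as in \eqv(4.0)-\eqv(4.6), the contribution of visits of $Y_n$ outside the set $T_n$ to the integral $\int_0^{\theta_n}\MM_n(Y_n(s))ds$ is at most $\theta_n r_n(\varepsilon_n)/r_n(\varepsilon)\le n^{-2}$ on $\O^{\star}$ for $n$ large, by Corollary \thv(2.cor1). Hence it suffices to prove the analogue of \eqv(6.prop1.1) with $\MM_n(Y_n(s))$ replaced by $c_n^{-1}\t_n(Y_n(s))\1_{\{Y_n(s)\in T_n\}}$, and then, as in Lemma \thv(4.lem1) via Corollary \thv(3.cor1), with $T_n$ replaced by $T^{\circ}_n$; the error terms generated here are $O(k_n(t)\theta_n r^{\star}_n n^5 2^{-2n\varepsilon_n})$, which vanish by \eqv(4.cnew) (see \eqv(4.c1'new)). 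Call the resulting quantity $\bar b_n^{\e,t}$.

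Next I would decompose $\bar b_n^{\e,t}$ according to the hitting time $v=H(T^{\circ}_n)$ and hitting place $x=Y_n(H(T^{\circ}_n))$, exactly as in \eqv(4.7)-\eqv(4.19), and split the probability after arrival at $x$ according to whether $H(T^{\circ}_n\setminus x)$ exceeds $\theta_n-v$ or not. The ``bad'' part (where another vertex of $T^{\circ}_n$ is hit before $\theta_n-v$) is controlled, just as $\wt\nu_n^t$ in Lemma \thv(4.lem2), by $k_n(t) n^{c_{\star}+4}(\theta_n\pi_n(T^{\circ}_n)r^{\star}_n)^2(1+o(1))$, which tends to $0$ under \eqv(4.cnew); on that part we simply bound $\MM_n\le c_n^{-1}\t_n(x)\le 1$ on the relevant event (so the truncation indicator $\1_{\{\cdot\le\e\}}$ is harmless and can be dropped). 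The ``good'' part is then
\be
k_n(t)\sum_{x\in T^{\circ}_n}\int_0^{\theta_n}h_{n,x}(v)\,
E_x\Bigl[c_n^{-1}\t_n(x)\ell_n^x(\theta_n-v)\1_{\{c_n^{-1}\t_n(x)\ell_n^x(\theta_n-v)\le\e\}}\Bigr]\,dv,
\nonumber
\ee
up to the negligible corrections above. I would estimate this by conditioning on $T^{\circ}_n$ and integrating out the single variable $\t_n(x)$ as in \eqv(4.prop2.11)-\eqv(4.prop2.15): writing $w=\t_n(x)$, the inner $w$-integral of $c_n^{-1}w\,\ell\,\1_{\{c_n^{-1}w\ell\le\e\}}$ against the Gaussian tail is, by the same asymptotic Gaussian computations and the definition \eqv(1.theo0.0) of $c_n$, equal to $(1+o(1))\P(\t_n(x)\ge r_n(\varepsilon_n))$ times $\e\,G_{\b,\varepsilon,n}(\ell/\e)$, where $G$ is the truncated analogue $G_{\b,\varepsilon,n}(y)=\int_0^y z\,d(-z^{-\a_n(\varepsilon)+\frac{\log z}{2n\b^2}}(1-\tfrac{\log z}{n\b\b_c(\varepsilon)}))$ of $F_{\b,\varepsilon,n}$; the key point is that $G_{\b,\varepsilon,n}(y)\le C_\e\,y^{\a_n(\varepsilon)}$ for $y$ in the relevant range with $C_\e\to 0$ as $\e\downarrow 0$ (because $0<\a(\varepsilon)<1$, so the integral $\int_0^{\e} z^{-\a(\varepsilon)}dz$ converges and is $O(\e^{1-\a(\varepsilon)})$). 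This turns the good part into $(1+o(1))C_\e$ times an expression of precisely the same shape as $\E^{\circ}\nu_n^{\circ,t}(u_n,\infty)$ with $u_n=\e$, which by \eqv(4.prop2.2) converges to $\e\cdot\e^{-\a(\varepsilon)}=\e^{1-\a(\varepsilon)}$; hence the $\limsup_n$ of the good part is $O(\e^{1-\a(\varepsilon)})\to 0$ as $\e\downarrow 0$.

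Finally I would assemble the pieces: the $T_n\setminus T^{\circ}_n$ corrections and the ``bad'' part go to $0$ for each fixed $\e>0$ by \eqv(4.cnew) and Corollary \thv(3.cor1)/Proposition \thv(3.prop3), while the ``good'' part is $O(\e^{1-\a(\varepsilon)})$ uniformly in $n$; taking first $\limsup_{n\uparrow\infty}$ and then $\lim_{\e\downarrow 0}$ yields \eqv(6.prop1.1). The replacement of $\pi_n$ by $\P^{\circ}$-conditioning is justified exactly as in Proposition \thv(4.prop2), using $T^{\circ}_n\subseteq\MM_n$ (see \eqv(4.prop2.3)) so that the law of $Y_n$ does not depend on $\{\t_n(x):x\in T^{\circ}_n\}$; here, since we only need an upper bound on a first moment, no variance/concentration argument is required, and a single application of Fubini plus the Gaussian-tail asymptotics suffices. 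The main obstacle I anticipate is the bookkeeping around the truncated Gaussian integral: one must check that the truncation $\1_{\{c_n^{-1}w\ell\le\e\}}$ interacts cleanly with the change of variables leading to $F_{\b,\varepsilon,n}$ and, crucially, that the resulting exponent stays strictly below $1$ so that the small-$\ell$ and small-$w$ regions (including the regime $\ell\le\kappa_n$, handled via \eqv(3.lem4.2)) do not spoil the $O(\e^{1-\a(\varepsilon)})$ bound; this is where the hypothesis $\b>\b_c(\varepsilon)$, i.e. $\a(\varepsilon)<1$, is used in an essential way, and it is also why \eqv(6.prop1.1) is stated under that restriction rather than merely $\b>0$.
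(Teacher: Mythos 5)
Your overall architecture (pass to $T^{\circ}_n$, condition on the environment in $T^{\circ}_n$, integrate out the Gaussian to produce a factor $\e^{1-\a(\varepsilon)}$) is the same as the paper's, but two steps as written do not work. The first and most serious gap is your opening reduction: discarding the visits outside $T_n$ ``exactly as in \eqv(4.0)--\eqv(4.6)'' is not legitimate here. The pathwise bound $\int_0^{\theta_n}\MM_n(Y_n(s))\1_{\{Y_n(s)\notin T_n\}}ds\leq \theta_n r_n(\varepsilon_n)/r_n(\varepsilon)\leq n^{-2}$ sufficed for Condition (B1) because it only shifted the level $u$ inside a tail probability; in Condition (B3) the quantity is a truncated first moment multiplied by $k_n(t)$, and $k_n(t)\,n^{-2}\approx t\,2^{\varepsilon n}/(b_n\theta_n n^{2})$ diverges, since $2^{\varepsilon n}$ is exponentially large while $b_n$ and $\theta_n$ are $e^{o(n)}$. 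So this contribution cannot be dismissed pathwise: it must be kept as a separate term (the paper's $\SS^{(2)}_{n,\e}(t)$ in \eqv(6.prop1.3)) and controlled by averaging over the environment, i.e. by the annealed truncated-moment estimate \eqv(6.lem1.1) of Lemma \thv(6.lem1) with $\e_n=c_n^{-1}r_n(\varepsilon_n)$, a first-order Tchebychev inequality and Borel--Cantelli; the gain over the worst-case bound is precisely the factor $\e_n^{-\a_{n}(\varepsilon)}2^{-\varepsilon n}$ produced by the Gaussian tail, after which \eqv(4.c4) gives convergence to zero as in \eqv(6.prop1.6)--\eqv(6.prop1.8). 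Your plan contains no substitute for this step.

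The second gap is your claim that ``no variance/concentration argument is required'' for the main (single-vertex) term. An upper bound on the conditional mean of order $C\,t\,\e^{1-\a_{n}(\varepsilon)+o(1)}$ is of constant order in $n$ for fixed $\e$, so Markov's inequality only gives $\P^{\circ}\bigl(\SS^{(7)}_{n,\e}(t)\geq\delta\bigr)\leq C t\e^{1-\a(\varepsilon)}/\delta$, which is not summable in $n$; Borel--Cantelli does not apply and you obtain at best convergence in $\P$-probability, not the $\P$-almost sure statement claimed. This is exactly why the paper complements the mean estimate \eqv(6.lem2.2) with the variance computation \eqv(6.lem2.11)--\eqv(6.lem2.15) (using the $\P^{\circ}$-independence you yourself invoke) and the concentration estimate \eqv(6.lem2.1), before taking $n\to\infty$ and then $\e\downarrow0$. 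The remaining ingredients of your sketch — the $T_n\setminus T^{\circ}_n$ and ``other vertices of $T^{\circ}_n$'' errors via Corollary \thv(3.cor1) and Lemma \thv(4.lem2), and the Gaussian computation yielding $\e^{1-\a(\varepsilon)}$ with $\b>\b_c(\varepsilon)$ entering through $\a(\varepsilon)<1$ and through the small-local-time correction as in \eqv(6.lem2.9) — do correspond to the paper's treatment of $\SS^{(4)}$, $\SS^{(6)}$ and $\SS^{(7)}$, so with the two repairs above the plan would align with the actual proof.
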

 
The Lemma below is central to the proof.

\begin{lemma}
   \TH(6.lem1)
There are constants $K,K'<\infty$ such that for $\a_{n}(\varepsilon)$ as in \eqv(4.prop2.17) and any sequence $\e_n>0$ such that $i\a^{-1}_c(\varepsilon)-1-\frac{\log \e_n}{n\b\b_c(\varepsilon)}>0$ where $i=1$ in \eqv(6.lem1.1) and $i=2$ in \eqv(6.lem1.2), we have, for all large enough $n$,
\be
\E 2^{\varepsilon n} c_n^{-1}\t_n(x)\1_{\{c_n^{-1}\t_n(x)\leq \e_n\}}
\leq 
K
\frac{
\e_n^{1-\a_{n}(\varepsilon)-\frac{\log \e_n}{2n\b^2}}
}
{
\a^{-1}_c(\varepsilon)-1-\frac{\log \e_n}{n\b\b_c(\varepsilon)}
},
\Eq(6.lem1.1)
\ee
\be
\E \left(2^{\varepsilon n} c_n^{-1}\t_n(x)\1_{\{c_n^{-1}\t_n(x)\leq \e_n\}}\right)^2
\leq 
K'
\frac{
\e_n^{2-\a_{n}(\varepsilon)-\frac{\log \e_n}{2n\b^2}}
}
{
2\a^{-1}_c(\varepsilon)-1-\frac{\log \e_n}{n\b\b_c(\varepsilon)}
}.
\Eq(6.lem1.2)
\ee
\end{lemma}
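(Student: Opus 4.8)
The plan is to pass to the underlying Gaussian variable and reduce both moments to ratios of Gaussian tails, then invoke the elementary two-sided Mills estimate. Write $\t_n(x)=e^{-\b\HH_n(x)}$ with $\HH_n(x)=\sqrt n\,Z$, $Z$ a standard Gaussian; then $\{c_n^{-1}\t_n(x)\le\e_n\}=\{\t_n(x)\le\e_n c_n\}=\{Z\ge z_n\}$ with $z_n\equiv-(\b\sqrt n)^{-1}\log(\e_n c_n)$, and completing the square in $\int_{z_n}^\infty e^{-k\b\sqrt n\,z-z^2/2}\,dz$ gives, for $k=1,2$,
\be
\E\bigl[(c_n^{-1}\t_n(x))^{k}\1_{\{c_n^{-1}\t_n(x)\le\e_n\}}\bigr]=c_n^{-k}e^{k^2\b^2n/2}\,\P\bigl(Z\ge z_n+k\b\sqrt n\bigr).
\Eq(6.lem1.id1)
\ee
On the other hand, by the definition \eqv(1.theo0.0) of $c_n$ and symmetry of $Z$, $2^{\varepsilon n}=1/\P\bigl(Z\ge(\b\sqrt n)^{-1}\log c_n\bigr)$, while $\log c_n=n\b^2\a_n(\varepsilon)$ by the definition of $\a_n(\varepsilon)$ and $(\b\sqrt n)^{-1}\log c_n=\sqrt n\,\b_c(\varepsilon)(1+\so)$ by \eqv(1.theo0.0'). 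Hence $\E\bigl[2^{\varepsilon n}(c_n^{-1}\t_n(x))^{k}\1_{\{c_n^{-1}\t_n(x)\le\e_n\}}\bigr]$ — which is the left-hand side of \eqv(6.lem1.1) for $k=1$, and of \eqv(6.lem1.2) for $k=2$ (in \eqv(6.lem1.2) the factor $2^{\varepsilon n}$ is to be read outside the square, consistently with the exponents on its right-hand side) — is, via \eqv(6.lem1.id1), a product of $\P(Z\ge z_n+k\b\sqrt n)$, the reciprocal tail $1/\P(Z\ge(\b\sqrt n)^{-1}\log c_n)$, and an explicit exponential.

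I would then insert $\tfrac a{1+a^2}\tfrac1{\sqrt{2\pi}}e^{-a^2/2}\le\P(Z\ge a)\le\tfrac1a\tfrac1{\sqrt{2\pi}}e^{-a^2/2}$ (valid for $a>0$; see e.g.\ \cite{AS}), using the upper bound on $\P(Z\ge z_n+k\b\sqrt n)$ and the lower bound on $\P(Z\ge(\b\sqrt n)^{-1}\log c_n)$ (whose argument $\to+\infty$). The hypothesis enters exactly here: $z_n+i\b\sqrt n=\b\sqrt n\bigl(i-\a_n(\varepsilon)-\tfrac{\log\e_n}{n\b^2}\bigr)$, and since $\a_n(\varepsilon)=\a(\varepsilon)(1+\so)$ and $n\b^2\a_n(\varepsilon)=\log c_n=n\b\b_c(\varepsilon)(1+\so)$, this equals $\b\sqrt n\,\a_n(\varepsilon)\bigl(i\a_c^{-1}(\varepsilon)-1-\tfrac{\log\e_n}{n\b\b_c(\varepsilon)}\bigr)(1+\so)$, which is $>0$ for all large $n$ by assumption; thus the Mills upper bound on $\P(Z\ge z_n+i\b\sqrt n)$ is legitimate, and the $(z_n+i\b\sqrt n)^{-1}$ it produces is exactly what reproduces the blow-up of the right-hand sides of \eqv(6.lem1.1)--\eqv(6.lem1.2) when the hypothesized quantity is small.

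Collecting the exponentials is then a direct calculation: with $\log c_n=n\b^2\a_n(\varepsilon)$,
\be
\tfrac{k^2\b^2n}{2}-\tfrac{(z_n+k\b\sqrt n)^2}{2}-k\log c_n=k\log\e_n-\tfrac{\log^2\e_n}{2n\b^2}-\a_n(\varepsilon)\log\e_n-\tfrac{\log^2 c_n}{2n\b^2},
\Eq(6.lem1.id2)
\ee
and the last term $-\log^2c_n/(2n\b^2)$ is cancelled \emph{exactly} by the factor $e^{+\log^2c_n/(2n\b^2)}$ coming from the Mills lower bound on $\P(Z\ge(\b\sqrt n)^{-1}\log c_n)$ inside $2^{\varepsilon n}$; the surviving exponential is $\e_n^{\,k-\a_n(\varepsilon)-\log\e_n/(2n\b^2)}$, precisely the power of $\e_n$ in \eqv(6.lem1.1)--\eqv(6.lem1.2). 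The leftover polynomial prefactor is $\tfrac{1+a^2}{a(z_n+k\b\sqrt n)}$ with $a=(\b\sqrt n)^{-1}\log c_n$; bounding $\tfrac{1+a^2}{a}\le2a$ for large $n$ turns it into $\tfrac{2a}{z_n+k\b\sqrt n}=\tfrac{2\a_n(\varepsilon)}{k-\a_n(\varepsilon)-\log\e_n/(n\b^2)}$, which by the identity of the previous paragraph is a bounded multiple of $\bigl(k\a_c^{-1}(\varepsilon)-1-\log\e_n/(n\b\b_c(\varepsilon))\bigr)^{-1}$, i.e.\ the denominator of \eqv(6.lem1.1)--\eqv(6.lem1.2). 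Specializing to $k=1$ gives \eqv(6.lem1.1) and to $k=2$ gives \eqv(6.lem1.2), with $K,K'$ absorbing the finitely many remaining $(1+\so)$ errors from \eqv(1.theo0.0') and from replacing $\a_n(\varepsilon)$ by $\a(\varepsilon)$.

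The real work is the bookkeeping of these sub-exponential corrections: one must verify that, uniformly over all $\e_n$ obeying the positivity hypothesis, the slack between the two sides of Mills' inequality together with the $(1+\so)$ in \eqv(1.theo0.0') collapse into a single finite constant, so that nothing $\e_n$-dependent leaks out of the displayed power of $\e_n$ or of the displayed denominator. Concretely this is the assertion that $\b\sqrt n\bigl(i-\a_n(\varepsilon)-\tfrac{\log\e_n}{n\b^2}\bigr)$ and $\sqrt n\,\b_c(\varepsilon)\bigl(i\a_c^{-1}(\varepsilon)-1-\tfrac{\log\e_n}{n\b\b_c(\varepsilon)}\bigr)$ are comparable with constants independent of $n$ and $\e_n$, which rests only on $\a_n(\varepsilon)=\a(\varepsilon)(1+\so)$ (from \eqv(4.prop2.17)) and on $\b\,\a(\varepsilon)=\b_c(\varepsilon)$; granted that, everything else is routine.
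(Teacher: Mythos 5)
Your proposal is correct and is exactly the computation the paper leaves implicit: the paper's proof consists of the one line ``standard estimates on the asymptotics of Gaussian integrals (\cite{AS} p.~932) and straightforward computations,'' which is precisely your reduction to Gaussian tails via $\t_n(x)=e^{-\b\sqrt n\,Z}$, completing the square, and the two-sided Mills bound, with the exact cancellation of $e^{\pm\log^2c_n/(2n\b^2)}$ and the one-sided comparison $i-\a_n(\varepsilon)-\tfrac{\log\e_n}{n\b^2}\geq \a(\varepsilon)\bigl(i\a^{-1}(\varepsilon)-1-\tfrac{\log\e_n}{n\b\b_c(\varepsilon)}\bigr)$ (only this direction is needed, since $\a_n(\varepsilon)\leq\a(\varepsilon)$ for large $n$) producing the displayed denominator. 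Your reading of \eqv(6.lem1.2) with a single factor $2^{\varepsilon n}$ is indeed the one consistent with the stated right-hand side and with the way the bound is used in Section \thv(S6).
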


\begin{proof}[Proof of Lemma \thv(6.lem1)]  
Using standard estimates on the asymptotics of Gaussian integrals (see e.g.~\cite{AS} p.~932)  the claimed
result follows from straightforward computations.
\end{proof}

\begin{proof}[Proof of Proposition \thv(6.prop1)]  We assume throughout that 
$\o\in\O_1\cap\O^{\circ}\cap \O^{\star}$ and that $n$ is as large as desired.
Note that $\MM_n(Y_n(s))\leq (c_nr^{\star}_n)^{-1}+c_n^{-1}\t_n(Y_n(s))$ and that the contribution to \eqv(6.prop1.1)
coming from the term $(c_nr^{\star}_n)^{-1}$ if or order $o(1)$. Indeed by \eqv(1.03.8), \eqv(1.theo1.0), 
the lower bound on $b_n$ obtained by combining \eqv(4.lem3.6') and \eqv(4.lem3.6), the expression \eqv(1.theo0.0') of $c_n$, the expression  \eqv(3.prop2.1) of $\kappa_n$, and the fact, that follows from \eqv(1.2.3),
that $2^n=e^{n\b^2_c(\varepsilon)/2}$, 
\be
k_n(t)\theta_n(c_nr^{\star}_n)^{-1}
\leq
2tn^4(r^{\star}_n)^{\a_{n}(\varepsilon)+o(1)}e^{n\b^2_c(\varepsilon)/2}e^{-n\b\b_c(\varepsilon)(1+o(1))}
\Eq(6.prop1.01)
\ee
and  so, for all $0<\varepsilon<1$ and $\b>\b_c(\varepsilon)$, by virtue of \eqv(4.cnew) (see also \eqv(4.c1'new))
\be
k_n(t)\theta_n(c_nr^{\star}_n)^{-1}
\leq 
2tn^4(r^{\star}_n)^{\a_{n}(\varepsilon)+o(1)}
e^{-n\b^2_c(\varepsilon)(1+o(1))/2}\rightarrow 0
\Eq(6.prop1.02)
\ee
as $n\rightarrow\infty$. To prove Proposition \thv(6.prop1) it thus suffices to establish that $\P$-almost surely,
\be
\lim_{\e\downarrow 0}\limsup_{n\uparrow \infty}
k_n(t)E_{\pi_n}\int_{0}^{\theta_n}
c_n^{-1}\t_n(Y_n(s))
\1_{\{\int_{0}^{\theta_n}
c_n^{-1}\t_n(Y_n(s))
ds\leq \e\}}=0,\quad \forall t>0.
\Eq(6.prop1.1bis)
\ee

For $T_n$ as in \eqv(2.7) with $\d_n$ given by \eqv(4.c2), set
\bea
\SS^{(1)}_{n,\e}(t)
\hspace{-6pt}&\equiv&\hspace{-6pt} 
k_n(t)E_{\pi_n}\int_{0}^{\theta_n}c_n^{-1}\t_n(Y_n(s))\1_{\{Y_n(s)\in T_n\}}
\1_{\{\int_{0}^{\theta_n}c_n^{-1}\t_n(Y_n(s))ds\leq \e\}}
ds,
\Eq(6.prop1.2)
\\
\SS^{(2)}_{n,\e}(t)
\hspace{-6pt}&\equiv&\hspace{-6pt} 
k_n(t)E_{\pi_n}\int_{0}^{\theta_n}c_n^{-1}\t_n(Y_n(s))\1_{\{Y_n(s)\notin T_n\}}
\1_{\{\int_{0}^{\theta_n}c_n^{-1}\t_n(Y_n(s))ds\leq \e\}}
ds.
\Eq(6.prop1.3)
\eea
To bound $\SS^{(2)}_{n,\e}(t)$ simply note that, using \eqv(3.lem0.3),
\bea
\SS^{(2)}_{n,\e}(t)
\hspace{-6pt}&\leq&\hspace{-6pt} 
k_n(t)E_{\pi_n}\int_{0}^{\theta_n}c_n^{-1}\t_n(Y_n(s))\1_{\{\t_n(Y_n(s))\leq r_n(\varepsilon_n)\}}
ds
\Eq(6.prop1.4)
\\
\hspace{-6pt}&\leq &\hspace{-6pt} 
k_n(t)\theta_n 2^{-n}(1+o(1))\sum_{x\in\VV_n}c_n^{-1}\t_n(x)\1_{\{\t_n(x)\leq r_n(\varepsilon_n)\}}.
\Eq(6.prop1.5)
\eea
Take $\e_n=c_n^{-1}r_n(\varepsilon_n)$ and note that by \eqv(2.cor1.1),
the definition of $c_n$, and \eqv(4.cnew),
\be
-(n\b\b_c(\varepsilon))^{-1}\log \e_n=o(1)\,\,\,\text{and}\,\,\,
 \left(n^{2(1+c_{\star}6^2/\a(\varepsilon))}\theta_n\right)^{-1}\leq \e_n\leq (n^2\theta_n)^{-1}.
 \Eq(6.prop1.5')
\ee 
Thus, by Lemma \thv(6.lem1)  and a first order Tchebychev inequality, for all large enough $n$,
\be
\P\left(
\SS^{(2)}_{n,\e}(t)
\geq n^2tb_n^{-1}(c_n^{-1}r_n(\varepsilon_n))^{1-\a(\varepsilon)+o(1)}
\right)
\leq
n^{-2}K''
\Eq(6.prop1.6)
\ee
for some constant $K''>0$.
Using the upper bound on $\e_n$ of \eqv(6.prop1.5') and the lower bound on $b_n$ of  Lemma \thv(4.lem3)
obtained by combining \eqv(4.lem3.6') and \eqv(4.lem3.6), 
\be
n^2b_n^{-1}(c_n^{-1}r_n(\varepsilon_n))^{1-\a(\varepsilon)+o(1)}
\leq
n^2\kappa_n(r^{\star}_n)^{\a_{n}(\varepsilon)+o(1)}\left(n^2\theta_n\right)^{-1+\a(\varepsilon)+o(1)}\rightarrow 0
\Eq(6.prop1.7)
\ee
as $n\rightarrow\infty$ by \eqv(4.c4).
Hence by \eqv(6.prop1.6),  \eqv(6.prop1.7), and Borel-Cantelli Lemma,
for all $\e>0$,
\be
\lim_{n\rightarrow\infty}\SS^{(2)}_{n,\e}(t)=0,\quad \P-\text{almost surely.}
\Eq(6.prop1.8)
\ee

To deal with $\SS^{(1)}_{n,\e}(t)$ we further decompose it into $\SS^{(1)}_{n,\e}(t)=\SS^{(3)}_{n,\e}(t)+\SS^{(4)}_{n,\e}(t)$,
where
\bea
\SS^{(3)}_{n,\e}(t)
\hspace{-6pt}&\equiv&\hspace{-6pt} 
k_n(t)E_{\pi_n}\int_{0}^{\theta_n}c_n^{-1}\t_n(Y_n(s))
\1_{\{Y_n(s)\in T^{\circ}_n\}}\1_{\{\int_{0}^{\theta_n}c_n^{-1}\t_n(Y_n(s))ds\leq \e\}}ds,
\Eq(6.prop1.9)
\\
\SS^{(4)}_{n,\e}(t)
\hspace{-6pt}&\equiv&\hspace{-6pt} 
k_n(t)E_{\pi_n}\int_{0}^{\theta_n}c_n^{-1}\t_n(Y_n(s))
\1_{\{Y_n(s)\in T_n\setminus T^{\circ}_n\}}\1_{\{\int_{0}^{\theta_n}c_n^{-1}\t_n(Y_n(s))ds\leq \e\}}
ds.
\Eq(6.prop1.10)
\eea
Since $\SS^{(4)}_{n,\e}(t)$ is non zero only if  the event $\{H(T_n\setminus T^{\circ}_n)\leq \theta_n\}$ occurs, 
\bea
\SS^{(4)}_{n,\e}(t)
\leq
\e k_n(t)E_{\pi_n}\1_{\{H(T_n\setminus T^{\circ}_n)\leq \theta_n\}}.
\Eq(6.prop1.12)
\eea
Using assertion (ii) of Corollary \thv(3.cor1) with $t_n=\theta_n$ as in the proof of Lemma \thv(4.lem1),
we get, assuming \eqv(4.cnew),  that on $\O^{\star}$, for all but a finite number of indices $n$,
\be
\SS^{(4)}_{n,\e}(t)
\leq
\e k_n(t)\theta_n r^{\star}_nn2^{-2n\varepsilon_n}(1+o(1)),
\Eq(6.prop1.13)
\ee
Proceeding as in \eqv(6.prop1.7) to bound $b_n$,
 \eqv(4.cnew) (see also \eqv(4.c1'new)) guarantees that for all $\e>0$
\be
\lim_{n\rightarrow\infty}\SS^{(4)}_{n,\e}(t)=0,\quad \P-\text{almost surely.}
\Eq(6.prop1.14)
\ee
Using next that
$
\int_{0}^{\theta_n}c_n^{-1}\t_n(Y_n(s))\1_{\{Y_n(s)\in A\}}=\sum_{x\in A}c_n^{-1}\t_n(x)\ell_n^x(\theta_n)
$
for any $A\subseteq\VV_n$,
\be
\SS^{(3)}_{n,\e}(t)
\leq 
\SS^{(5)}_{n,\e}(t)
\equiv
k_n(t)E_{\pi_n}\sum_{x\in T^{\circ}_n}c_n^{-1}\t_n(x)\ell_n^x(\theta_n)
\1_{\{\sum_{x\in T^{\circ}_n}c_n^{-1}\t_n(x)\ell_n^x(\theta_n)\leq \e\}}.
\Eq(6.prop1.15)
\ee
With the notation of \eqv(4.7)-\eqv(4.9),
\be
\nonumber
\SS^{(5)}_{n,\e}(t)
=
 k_n(t)
\sum_{y\in T^{\circ}_n}\int_0^{\theta_n}dv h_{n, y}(v)
E_{y}\sum_{x\in T^{\circ}_n}c_n^{-1}\t_n(x)\ell_n^x(\theta_n-v)
\1_{\{\sum_{x\in T^{\circ}_n}c_n^{-1}\t_n(x)\ell_n^x(\theta_n-v)\leq \e\}}.
\ee
We further split the sum over $x$ above into $x=y$ and $x\neq y$. The latter contribution is
\be
\nonumber
\SS^{(6)}_{n,\e}(t)
\equiv
 k_n(t)
\sum_{y\in T^{\circ}_n}\int_0^{\theta_n}dv h_{n, y}(v)
E_{y}\sum_{x\in T^{\circ}_n\setminus y}c_n^{-1}\t_n(x)\ell_n^x(\theta_n-v)
\1_{\{\sum_{x\in T^{\circ}_n}c_n^{-1}\t_n(x)\ell_n^x(\theta_n-v)\leq \e\}}.
\ee
Observing that
\bea
E_{y}\sum_{x\in T^{\circ}_n\setminus y}c_n^{-1}\t_n(x)\ell_n^x(\theta_n-v)
\1_{\{\sum_{x\in T^{\circ}_n}c_n^{-1}\t_n(x)\ell_n^x(\theta_n-v)\leq \e\}}
\leq
\e P_y(H(T^{\circ}_n\setminus y)\leq \theta_n),\,\,
\Eq(6.prop1.16)
\eea
yields the bound
$
\SS^{(6)}_{n,\e}(t)\leq \e \wt\nu_n^t
$
where $\wt\nu_n^t$ is defined in \eqv(4.lem2.1). Thus by Lemma \thv(4.lem2), reasoning as in the paragraph below  \eqv(4.prop1.3), we get that for all $\e>0$
\be
\lim_{n\rightarrow\infty}\SS^{(6)}_{n,\e}(t)=0,\quad \P-\text{almost surely.}
\Eq(6.prop1.17)
\ee
It remains to bound
$
\SS^{(5)}_{n,\e}(t)-\SS^{(6)}_{n,\e}(t)
$.
For this we write 
$
\SS^{(5)}_{n,\e}(t)-\SS^{(6)}_{n,\e}(t)
\leq
\SS^{(7)}_{n,\e}(t)
$
where
\be
\SS^{(7)}_{n,\e}(t)
\equiv
 k_n(t)
\sum_{y\in T^{\circ}_n}\int_0^{\theta_n}dv h_{n, y}(v)
E_{y}c_n^{-1}\t_n(y)\ell_n^y(\theta_n-v)
\1_{\{c_n^{-1}\t_n(y)\ell_n^x(\theta_n-v)\leq \e\}}.
\Eq(6.prop1.18)
\ee
Let us now establish that for $b^{\circ}_n$  as in \eqv(4.prop2.18), $\SS^{(7)}_{n,\e}(t)$ obeys the following

\begin{lemma}
     \TH(6.lem2)  
Let the sequences  $a_n$,  $c_n$, $\theta_n$ be as in Proposition \thv(6.prop1). 
Then, under the assumptions and with the notation of Proposition \thv(4.prop2),  
\be
\P^{\circ}\left(
\left|
\SS^{(7)}_{n,\e}(t)-\E^{\circ}\SS^{(7)}_{n,\e}(t)
\right|>t \e^{1/2} n 2^{-n(1-\varepsilon)/2}
\right)
\\
\leq n^{-2}(1+o(1))
\Eq(6.lem2.1)
\ee
for all $\e>0$, and
\be
\lim_{\e\rightarrow 0}\lim_{n\rightarrow\infty}\E^{\circ}\SS^{(7)}_{n,\e}(t)=0.
\Eq(6.lem2.2)
\ee  
\end{lemma}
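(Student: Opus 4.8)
The plan is to mimic, almost line by line, the proof of Proposition \thv(4.prop2), substituting the Gaussian estimates of Lemma \thv(6.lem1) for the asymptotics \eqv(4.prop2.15). Write
\[
\SS^{(7)}_{n,\e}(t)=k_n(t)\sum_{y\in T^{\circ}_n}W_n(y),\qquad
W_n(y)\equiv\int_0^{\theta_n}h_{n, y}(v)\,E_{y}\!\left[c_n^{-1}\t_n(y)\ell_n^y(\theta_n-v)\1_{\{c_n^{-1}\t_n(y)\ell_n^y(\theta_n-v)\leq\e\}}\right]dv .
\]
As in the proof of Proposition \thv(4.prop2): $T^{\circ}_n\subseteq\MM_n$ by \eqv(2.21); the generator $\wt L_n$ of $Y_n$, hence the densities $h_{n, y}$ and the law of the local times $\ell_n^y$, does not depend on $\{\t_n(x):x\in\MM_n\}$ once $\MM_n$ is given (see \eqv(4.prop2.21), \eqv(2.lem5.2)); and $\P^{\circ}$ is the product measure \eqv(4.prop2.4). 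Hence the $W_n(y)$, $y\in T^{\circ}_n$, are independent under $\P^{\circ}$, each a function of $\t_n(y)$ alone.

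For \eqv(6.lem2.2) I would reproduce \eqv(4.prop2.11)--\eqv(4.prop2.15): by Fubini $\E^{\circ}W_n(y)=\int_0^{\theta_n}h_{n, y}(v)E_{y}\E^{\circ}[c_n^{-1}\t_n(y)\ell\1_{\{c_n^{-1}\t_n(y)\ell\leq\e\}}]\,dv$ with $\ell=\ell_n^y(\theta_n-v)\leq\theta_n=e^{o(n)}$; the latter bound makes the positivity hypothesis of Lemma \thv(6.lem1) hold for $\e_n=\e/\ell$ and all large $n$, and, after splitting $\ell$ at $\zeta_n=e^{-n^{9/10}}$ as in \eqv(4.prop2.33)--\eqv(4.prop2.34), inequality \eqv(6.lem1.1) together with $\P(\t_n(y)\geq r_n(\varepsilon_n))=2^{-n\varepsilon_n}$ gives $\E^{\circ}[c_n^{-1}\t_n(y)\ell\1_{\{c_n^{-1}\t_n(y)\ell\leq\e\}}]\leq(1+o(1))\,2^{-n\d_n}K\e^{1-\a_{n}(\varepsilon)}F_{\b,\varepsilon,n}(\ell)/(\a^{-1}_c(\varepsilon)-1)$, the factor $2^{-n\d_n}=\P(\t_n\geq c_n)/\P(\t_n\geq r_n(\varepsilon_n))=(n^2\theta_n)^{-\a(\varepsilon)}$ appearing exactly as in \eqv(4.prop2.15). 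Summing over $y\in T^{\circ}_n$, the defining identity $\sum_{y}\int_0^{\theta_n}h_{n, y}(v)E_{y}[F_{\b,\varepsilon,n}(\ell_n^y(\theta_n-v))]\,dv=\theta_n\pi_n(T^{\circ}_n)b^{\circ}_n$ (see \eqv(4.prop2.18)), the identity $k_n(t)\theta_n b^{\circ}_n=t2^{\varepsilon n}(1+o(1))$ (since $a_n=2^{\varepsilon n}/b^{\circ}_n$), and $\pi_n(T^{\circ}_n)=2^{-n\varepsilon_n}(1+o(1))$ (from \eqv(3.lem0.2), \eqv(2.lem3.5)) combine so that the factors $2^{\pm n\d_n}$ cancel, leaving $\E^{\circ}\SS^{(7)}_{n,\e}(t)\leq(1+o(1))Kt\,\e^{1-\a_{n}(\varepsilon)}/(\a^{-1}_c(\varepsilon)-1)$. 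Since $\a^{-1}_c(\varepsilon)>1$ when $\b>\b_c(\varepsilon)$, letting first $n\to\infty$ and then $\e\downarrow 0$ gives \eqv(6.lem2.2). (The $\ell<\zeta_n$ contribution is negligible as in \eqv(4.prop2.33)--\eqv(4.prop2.34).)

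For \eqv(6.lem2.1) I would run the second-moment argument. By independence and \eqv(4.prop2.6'), $\E^{\circ}(\SS^{(7)}_{n,\e}(t)-\E^{\circ}\SS^{(7)}_{n,\e}(t))^2\leq k_n^2(t)\sum_{y\in T^{\circ}_n}\E^{\circ}(W_n(y))^2$, and by Cauchy--Schwarz in $v$ and Jensen in $E_{y}$, $\E^{\circ}(W_n(y))^2\leq(\int_0^{\theta_n}h_{n, y})\int_0^{\theta_n}h_{n, y}(v)E_{y}\E^{\circ}[(c_n^{-1}\t_n(y))^2\ell^2\1_{\{c_n^{-1}\t_n(y)\ell\leq\e\}}]\,dv$. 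The key point is to feed the \emph{second}-moment bound \eqv(6.lem1.2) (again with $\e_n=\e/\ell$) into the inner expectation: dividing by $\P(\t_n(y)\geq r_n(\varepsilon_n))=2^{-n\varepsilon_n}$ now produces the much smaller factor $2^{-\varepsilon n-n\d_n}=2^{-\varepsilon n}(n^2\theta_n)^{-\a(\varepsilon)}$, while $\ell^2(\e/\ell)^{2-\a_{n}(\varepsilon)+o(1)}=(1+o(1))\e^{2-\a_{n}(\varepsilon)}F_{\b,\varepsilon,n}(\ell)$. Using $\int_0^{\theta_n}h_{n, y}(v)\,dv\leq P_{\pi_n}(H(y)\leq\theta_n)\leq\theta_nr^{\star}_nn2^{-n}(1+o(1))$ (as in \eqv(4.prop2.7')), the identity above for $\sum_y$, and once more $k_n^2(t)\theta_n^2 b^{\circ}_n=t^22^{2\varepsilon n}/b^{\circ}_n(1+o(1))$ and $\pi_n(T^{\circ}_n)=2^{-n\varepsilon_n}(1+o(1))$, everything collapses to
\[
\E^{\circ}(\SS^{(7)}_{n,\e}(t)-\E^{\circ}\SS^{(7)}_{n,\e}(t))^2\leq(1+o(1))\,\frac{K't^2\,\e^{2-\a_{n}(\varepsilon)}}{2\a^{-1}_c(\varepsilon)-1}\,\frac{n r^{\star}_n}{b^{\circ}_n}\,2^{-n}.
\]
By the lower bound $b^{\circ}_n\geq\tilde\kappa_n^{-1}(r^{\star}_n)^{-\a_{n}(\varepsilon)+o(1)}$ of \eqv(4.lem3.6') and \eqv(2.lem2.2), $n r^{\star}_n/b^{\circ}_n\leq n^3(r^{\star}_n)^{2+\a_{n}(\varepsilon)+o(1)}=e^{o(n)}$; hence dividing by $(t\e^{1/2}n2^{-n(1-\varepsilon)/2})^2=t^2\e n^22^{-n(1-\varepsilon)}$ leaves a bound of order $\e^{1-\a_{n}(\varepsilon)}n^{-1}(r^{\star}_n)^{2+\a_{n}(\varepsilon)+o(1)}2^{-\varepsilon n}=o(n^{-2})$, and Chebyshev's inequality yields \eqv(6.lem2.1).

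The step I expect to be the main obstacle is precisely this bookkeeping: one has to carry the $\e/\ell$-dependent cut-offs of Lemma \thv(6.lem1) through the $h_{n, y}$-weighted integrals in $v$ so that, after summing over $T^{\circ}_n$, the normalising constants $b^{\circ}_n$ and $\pi_n(T^{\circ}_n)$ reconstitute themselves and the saving factors $2^{-n\d_n}=(n^2\theta_n)^{-\a(\varepsilon)}$, respectively $2^{-\varepsilon n}(n^2\theta_n)^{-\a(\varepsilon)}$, emerge. Replacing $W_n(y)$ by the cruder $\e\int_0^{\theta_n}h_{n, y}(v)\,dv$ is not enough --- it leaves a residual factor of order $r^{\star}_n/b^{\circ}_n$, which is controlled from above only by a power of $r^{\star}_n$ (via \eqv(4.lem3.6')) and need not tend to $0$; it is the genuine second moment \eqv(6.lem1.2), and hence the hypothesis $\b>\b_c(\varepsilon)$ which makes $\a^{-1}_c(\varepsilon)-1$ and $2\a^{-1}_c(\varepsilon)-1$ positive, that supplies the extra factor $2^{-\varepsilon n}$ needed to beat it.
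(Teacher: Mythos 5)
Your overall strategy---independence of the $W_n(y)$ under $\P^{\circ}$, a mean/variance (Chebyshev) argument, and the truncated Gaussian moment bounds of Lemma \thv(6.lem1) applied with the random truncation level $\e_n=\e/\ell$---is the paper's. The gap is in the region of small local times. You justify applying Lemma \thv(6.lem1) by noting $\ell\leq\theta_n=e^{o(n)}$, but the positivity hypothesis $i\a^{-1}_c(\varepsilon)-1-\frac{\log\e_n}{n\b\b_c(\varepsilon)}>0$ is threatened by \emph{small} $\ell$, not large $\ell$: with $\e_n=\e/\ell$ it fails as soon as $\ell\leq\e e^{-n\b\b_c(\varepsilon)(i\a^{-1}_c(\varepsilon)-1)}$, and there the bounds \eqv(6.lem1.1)--\eqv(6.lem1.2) are simply false (the truncation no longer bites and the left-hand side saturates at $2^{\varepsilon n}c_n^{-1}e^{n\b^2/2}$, resp.\ its second-moment analogue). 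Your proposed remedy, cutting at $\zeta_n=e^{-n^{9/10}}$ and declaring the region $\ell<\zeta_n$ ``negligible as in \eqv(4.prop2.33)--\eqv(4.prop2.34)'', does not transfer: in Proposition \thv(4.prop2) the integrand on that region is $F_{\b,\varepsilon,n}(\ell/u_n)$, which is itself stretched-exponentially small, whereas here the quantity to control is $\E^{\circ}\bigl[c_n^{-1}\t_n(y)\1_{\{c_n^{-1}\t_n(y)\ell\leq\e\}}\bigr]$, and for $\ell<\zeta_n$ the indicator is essentially unconstraining, so this conditional moment is of order $2^{n\varepsilon_n}c_n^{-1}e^{n\b^2/2}=e^{n(\b-\b_c(\varepsilon))^2/2+o(n)}$, exponentially large. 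Carried through the sum exactly as you do for the main term, the small-$\ell$ contribution to $\E^{\circ}\SS^{(7)}_{n,\e}(t)$ is of order $t\,\zeta_n\,e^{n(\b-\b_c(\varepsilon))^2/2}$ times subexponential factors, which diverges for $\zeta_n=e^{-n^{9/10}}$.

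What is needed---and what the paper's proof does---is to cut at the exponentially small level $\e e^{-n\b\b_c(\varepsilon)(\a^{-1}_c(\varepsilon)-1)}$ (and its $i=2$ analogue for the variance): above it Lemma \thv(6.lem1) applies; below it the crude bound $E_{y}\ell_n^y(\theta_n-v)\,\E^{y}c_n^{-1}\t_n(y)\leq\e e^{-n\b\b_c(\varepsilon)(\a^{-1}_c(\varepsilon)-1)}c_n^{-1}e^{n\b^2/2}$ produces the second summands in \eqv(6.lem2.6) and in the variance estimate, which vanish precisely because $\b>\b_c(\varepsilon)$ (see \eqv(6.lem2.9) and \eqv(6.lem2.14)). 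The same issue occurs, unaddressed, in your variance step, where you invoke \eqv(6.lem1.2) ``again with $\e_n=\e/\ell$'' with no splitting at all. Your remaining bookkeeping (the cancellation of the $2^{\pm n\d_n}$ factors, the reconstruction of $b^{\circ}_n$ and $\pi_n(T^{\circ}_n)$, the final $2^{-n(1-\varepsilon)}$ saving, and the role of $\b>\b_c(\varepsilon)$) is consistent with the paper's, so the argument is repaired once the small-local-time region is split off at the correct, exponentially small threshold and bounded separately.
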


\begin{proof}[Proof of lemma \thv(6.lem2)]  The proof closely follows that of Proposition \thv(4.prop2). We only point out
the main differences. The random variables  \eqv(4.prop2.5) are now replaced by
\be
X_n(y)\equiv
\int_0^{\theta_n}dv h_{n, y}(v)E_{y}c_n^{-1}\t_n(y)\ell_n^y(\theta_n-v)
\1_{\{c_n^{-1}\t_n(y)\ell_n^y(\theta_n-v)\leq \e\}}
\Eq(6.lem2.3)
\ee
and
\be
\E^{\circ}\SS^{(7)}_{n,\e}(t)=
\textstyle
k_n(t) \sum_{y\in T^{\circ}_n}\E^{\circ}X_n(y).
\Eq(6.lem2.4)
\ee
Proceeding as in \eqv(4.prop2.12)-\eqv(4.prop2.14) to deal with the conditional expectation and using  that
$\P(\t_n(x)\geq r_n(\varepsilon_n))=\pi_n(T^{\circ}_n)(1+o(1))$ (see the paragraph below \eqv(4.prop2.15)),
we get
\be
\E^{\circ}\SS^{(7)}_{n,\e}(t)=
\frac{k_n(t)(1+o(1))}{\pi_n(T^{\circ}_n)}
\sum_{y\in T^{\circ}_n}
\int_0^{\theta_n}dv h_{n, y}(v)E_{y}\ell_n^y(\theta_n-v)\E^{y}c_n^{-1}\t_n(y)\1_{\{c_n^{-1}\t_n(y)\leq \e_n\}}
\nonumber
\ee
where $\P^{y}$  denotes the law of $\t_n(y)$ and where $\e_n\equiv\e_n(y)=\e/\ell_n^y(\theta_n-v)$. 
Using \eqv(6.lem1.1) if
$
\ell_n^y(\theta_n-v)>\e e^{-n\b\b_c(\varepsilon)(\a^{-1}_c(\varepsilon)-1)}
$
and using that if 
$
\ell_n^y(\theta_n-v)\leq\e e^{-n\b\b_c(\varepsilon)(\a^{-1}_c(\varepsilon)-1)}
$
then
\be
E_{y}\ell_n^y(\theta_n-v)\E^{y}c_n^{-1}\t_n(y)\1_{\{c_n^{-1}\t_n(y)\leq \e_n\}}
\leq
\e e^{-n\b\b_c(\varepsilon)(\a^{-1}_c(\varepsilon)-1)}c_n^{-1}e^{n\b^2/2},
\ee
we readily see that
\be
\begin{split}
\E^{\circ}\SS^{(7)}_{n,\e}(t) & \leq 
C_1t\frac{
\e^{1-\a_{n}(\varepsilon)-\frac{\log \e}{2n\b^2}}
}{
b^{\circ}_n\theta_n\pi_n(T^{\circ}_n)
}
\sum_{y\in T^{\circ}_n}\int_0^{\theta_n}dv h_{n, y}(v)E_{y}\wt F_{\b,\varepsilon,\e, n}(\ell_n^y(\theta_n-v))
\\
& +
C_2 \e n^{\a_{n}(\varepsilon)/2}e^{-n\b^2/2}
k_n(t)(\pi_n(T^{\circ}_n))^{-1}P_{\pi_n}(H(T^{\circ}_n)\leq \theta_n)
\end{split}
\Eq(6.lem2.6)
\ee
where here and below $C_i>0$, $i=1,2,\dots$ are constants, and for  $F_{\b,\varepsilon,n}$ as  in \eqv(4.prop2.16),
\be
\wt F_{\b,\varepsilon,\e, n}(z)
=
F_{\b,\varepsilon,n}(z)
\frac{
z^{\frac{\log \e}{n\b^2}}\left(1-\sfrac{\log z}{n\b\b_c(\varepsilon)}\right)
}{
\a^{-1}_c(\varepsilon)-1-\frac{\log \e}{n\b\b_c(\varepsilon)}+\frac{\log z}{n\b\b_c(\varepsilon)}
}\1_{\left\{z>\e e^{-n\b\b_c(\varepsilon)(\a^{-1}_c(\varepsilon)-1)}\right\}}.
\Eq(6.lem2.7)
\ee
By the leftmost inequality of \eqv(4.prop2.14) and \eqv(4.cnew),
$
\wt F_{\b,\varepsilon,\e, n}(z)\leq C_3F_{\b,\varepsilon,n}(z)
$.
Thus, by \eqv(4.prop2.18), the first summand in \eqv(6.lem2.6) is bounded above by
\be
C_4t
\e^{1-\a_{n}(\varepsilon)-\frac{\log \e}{2n\b^2}}.
\Eq(6.lem2.8)
\ee
Using \eqv(3.cor1.1')  and proceeding as in \eqv(6.prop1.01) to bound $k_n(t)$,
the second  summand is bounded above by
\be
C_5te^{-n(\b^2-\b^2_c(\varepsilon))/2} \kappa_n n^{\a_{n}(\varepsilon)/2+1}
(r^{\star}_n)^{1+\a_{n}(\varepsilon)+o(1)}\rightarrow 0
\Eq(6.lem2.9)
\ee
as $n\rightarrow \infty$  by virtue of \eqv(3.prop2.1), \eqv(2.lem2.2), and the assumption that $\b>\b_c(\varepsilon)$
where $0<\varepsilon<1$.
Note in particular that $\lim_{n\rightarrow\infty}\a_{n}(\varepsilon)=\a(\varepsilon)<1$. Hence, inserting \eqv(6.lem2.8) and \eqv(6.lem2.9) in \eqv(6.lem2.6) and passing to the limit 
\be
\lim_{\e\rightarrow 0}\limsup_{n\rightarrow \infty}
\E^{\circ}\SS^{(7)}_{n,\e}(t) =0,\quad \forall t>0.
\Eq(6.lem2.10)
\ee
This proves \eqv(6.lem2.2). Turning to the variance we have, as in \eqv(4.prop2.6'), by independence, that
\be
\Va^{\circ}(\SS^{(7)}_{n,\e}(t)) \equiv \E^{\circ}(\SS^{(7)}_{n,\e}(t)-\E^{\circ}\SS^{(7)}_{n,\e}(t))^2
\leq
k^2_n(t)\sum_{y\in T^{\circ}_n}\E^{\circ}(X_n(y))^2.
\Eq(6.lem2.11)
\ee
Proceeding as in the proof of \eqv(6.lem2.6) but using \eqv(6.lem1.2) and the line below \eqv(6.lem2.7), we get that
\bea
\Va^{\circ}(\SS^{(7)}_{n,\e}(t)) 
\hspace{-6pt}&\leq&\hspace{-6pt} 
C_6t^2\frac{
\e^{2-\a_{n}(\varepsilon)-\frac{\log \e}{2n\b^2}}
}{
(b^{\circ}_n\theta_n)^2\pi_n(T^{\circ}_n)
}
\sum_{y\in T^{\circ}_n}\left(\int_0^{\theta_n}dv h_{n, y}(v)E_{y} F_{\b,\varepsilon,\e, n}(\ell_n^y(\theta_n-v))\right)^2
\nonumber
\\
\hspace{-6pt}&+&\hspace{-6pt} 
C_7 \e n^{\a_{n}(\varepsilon)/2}e^{-n\b\b_c(\varepsilon)}\frac{k^2_n(t)\theta}{\pi_n(T^{\circ}_n)}
\sum_{y\in T^{\circ}_n}\left(\int_0^{\theta_n}dv h_{n, y}(v)\right)^2.
\nonumber
\eea
From the bound
$
\int_0^{\theta_n}dv h_{n, y}(v)E_{y} F_{\b,\varepsilon,\e, n}(\ell_n^y(\theta_n-v))
\leq 
(1+o(1))\int_0^{\theta_n}dv h_{n, y}(v)\theta_n^{\a_{n}(\varepsilon)}
\leq 
(1+o(1))\theta_n^{\a_{n}(\varepsilon)}P_{\pi_n}(H(y)\leq \theta_n)
$
and \eqv(3.lem1.2), \eqv(4.lem3.6'), we get that on $\O^{\star}$, for all but a finite number of indices $n$,
the first summand is bounded above by
\be
C_8t^2\e^{2-\a_{n}(\varepsilon)-\frac{\log \e}{2n\b^2}}
\left(n\kappa_n\theta_n^{\a_{n}(\varepsilon)}(r^{\star}_n)^{1+\a_{n}(\varepsilon)+o(1)}\right)^22^{-n}.
\Eq(6.lem2.13)
\ee
Using the bound
$
\sum_{y\in T^{\circ}_n}\bigl(\int_0^{\theta_n}dv h_{n, y}(v)\bigr)^2
\leq \sup_{y\in T^{\circ}_n}P_{\pi_n}(H(y)\leq \theta_n)P_{\pi_n}(H(T^{\circ}_n)\leq \theta_n)
$,
and proceeding as in \eqv(6.lem2.9), the second summand is bounded above by 
\be
C_9 t^2 \e n^{\a_{n}(\varepsilon)/2}
\left(n^{2}\kappa_n(r^{\star}_n)^{1+\a_{n}(\varepsilon)+o(1)}\right)^2
\theta_ne^{-n\b_c(\varepsilon)(\b-\b_c(\varepsilon))}2^{-n}.
\Eq(6.lem2.14)
\ee
Since by assumption $\b>\b_c(\varepsilon)$ and $0<\varepsilon<1$,
\eqv(4.cnew) (see also \eqv(4.c1'new)) enables us to conclude that  on $\O^{\star}$, for all large enough $n$,
\be
\Va^{\circ}(\SS^{(7)}_{n,\e}(t)) \leq C_{10}t^2 \e 2^{-n(1-\varepsilon)}.
\Eq(6.lem2.15)
\ee
This yields \eqv(6.lem2.1) and concludes the proof of the Lemma.
\end{proof}

Arguing as in the proof of Proposition \thv(4.prop1) that 
$
b_n=b^{\circ}_n(1+o(1))
$
on $\O_1\cap\O^{\circ}\cap \O^{\star}$ for all large enough $n$,
it follows from Lemma \thv(6.lem2)  and Borel-Cantelli Lemma that
\be
\lim_{\e\rightarrow 0}\lim_{n\rightarrow\infty}\left(\SS^{(5)}_{n,\e}(t)-\SS^{(6)}_{n,\e}(t)\right)=0,\quad \P-\text{almost surely.}
\Eq(6.prop1.19)
\ee
Collecting \eqv(6.prop1.8), \eqv(6.prop1.14), \eqv(6.prop1.17) and  \eqv(6.prop1.19) yields \eqv(6.prop1.1bis). The proof of Proposition \thv(6.prop1) is complete.\end{proof}



 
 \section{Proof of Theorem  \thv(1.theo0)  and Theorem \thv(1.theo1)} 
 \label{S7}

\begin{proof}[Proof of Theorem \thv(1.theo1)]
By Proposition \eqv(3.prop2), Proposition \eqv(4.prop1), Proposition \eqv(5.prop1) and Proposition \eqv(6.prop1), 
under the assumptions of Proposition \eqv(4.prop1) and Proposition \eqv(6.prop1), Conditions (B0), (B1), (B2), and (B3) of Theorem \thv(1.theoB) are satisfied $\P$-a.s.. It remains to check Condition (A0), i.e.~to prove that $\P$-a.s.,
for all $u>0$,
\be
\lim_{n\rightarrow\infty}P_{\mu_n}(Z_{n,1}>u)=0
\Eq(A0.1bis)
\ee
where   
$
Z_{n,1}=\int_{0}^{\theta_n}\max\left((c_nr^{\star}_n)^{-1},c_n^{-1}\t_n(Y_n(s))\right)ds
$
and $\mu_n$ is  the uniform measure on $\VV_n$.
By \eqv(3.lem0.2) and \eqv(3.lem0.3)
\bea
P_{\mu_n}(Z_{n,1}>u)
&\leq& 
\textstyle
(1+o(1))P_{\pi_n}(Z_{n,1}>u)+ \sum_{x\in\overline V^{\star}_n}\mu_n(x)P_{x}(Z_{n,1}>u)
\\
&\leq&
(1+o(1))P_{\pi_n}(Z_{n,1}>u)+  n^{-c_{\star}}(1+o(1))
\eea
where the last line is \eqv(2.lem3.1). Thus \eqv(A0.1bis) is an immediate consequence of Proposition \eqv(4.prop1). 
One readily checks that the assumptions on $a_n$, $c_n$, and $\theta_n$ of the theorem imply that the conditions
\eqv(4.c4) and \eqv(4.cnew) of Proposition \eqv(4.prop1) are verified. 
The proof of  \thv(1.theo1) is complete.
\end{proof}

\begin{proof}[Proof of Theorem \thv(1.theo0)] 
Reasoning as in the proof of Theorem \thv(1.theo1), we may assume that the process starts in its invariant measure $\pi_n$.
The main idea behind the proof is now classical.
Suppose that
\be
\PP_{\pi_n}\left(A_n(t,s)\right)
=
P_{\pi_n}\left(\{\RR_n\cap (t,t+s)=\emptyset\}\right)+o(1)
\Eq(1.theo0.4)
\ee
where 
$
A_n(t,s)\equiv \{X(c_nt)=X(c_n(t+s))\}
$
and where $\RR_n$ denotes the range of the rescaled blocked clock process $S^b_n(t)$.
Then Theorem \thv(1.theo0) is a direct consequence of
Theorem \thv(1.theo1) and the arcsine law for stable subordinators.
We refer to Ref.~\cite{G12} for  a detailed proof of this statement (see the proof of Theorem 1.6 therein)
and focus on  establishing \eqv(1.theo0.4).
For $k\geq 1$ and $Z_{n,i}$ as in \eqv(1.03.10) set
\be
\textstyle
\BB_k=\left\{\sum_{i=1}^{k}Z_{n,i}<t, \sum_{i=1}^{k+1}Z_{n,i}>t+s\right\}.
\Eq(1.theo0.5)
\ee
Then by \eqv(1.03.9),
$
\{\RR_n\cap (t,t+s)\neq\emptyset\}=\{\cup_{k\geq 1}\BB_k\}
$.
Furthermore, for any $T>0$,
\be
P_{\pi_n}\left(\cup_{k> k_n(T)}\BB_k\right)
\leq 
P_{\pi_n}\left(S^b_n(T)<t\right)
\underset{n\rightarrow\infty}{\longrightarrow }
P\left(V_{\a(\varepsilon)}(T)<t\right)
\leq \d
\Eq(1.theo0.6)
\ee
where convergence is almost sure in the random environment as follows from Theorem \thv(1.theo1),
and where  $\d$ can be made as small as desired by taking $T$ large enough. Therefore
\be
0\leq P_{\pi_n}\left(\{\RR_n\cap (t,t+s)=\emptyset\}\right)
-
P_{\pi_n}\left(\cup_{1\leq k\leq k_n(T)}\BB_k\right)
\leq \d.
\Eq(1.theo0.7)
\ee

Note that the event $\BB_k$ is non empty if and only if the increment $Z_{n,k+1}$ straddles over the interval $(t,t+s)$.
To show that \eqv(1.theo0.4) holds it now suffices to establish the following two facts:

\noindent{\textbf {Fact 1.}} 
Almost surely in the random environment, with overwhelming probability,
non-empty events $\BB_k$, $k\leq k_n(T)$, are produced by visits 
of the process $Y_n$ to the set $T^{\circ}_n$ and, more precisely,
by (many) visits of the process to one and the same element of $T^{\circ}_n$,
no other element of $T^{\circ}_n$ being visited in the time interval $(t,t+s)$. 
This implies that $\P$-a.s.
\be
P_{\pi_n}\left(A_n (t,s)\cap\{\cup_{1\leq k\leq k_n(T)}\BB_k\}\right)
\geq P_{\pi_n}\left(\cup_{1\leq k\leq k_n(T)}\BB_k\right)+o(1)
\Eq(1.theo0.8)
\ee

\noindent{\textbf {Fact 2.}}  If $\BB_k$ and $\BB_k'$, $1\leq k\neq k'\leq k_n(T)$, are two non-empty events then, almost surely in the random environment they are produced by visits to two distinct elements of $T^{\circ}_n$ with overwhelming probability. This implies that $\P$-a.s.
\be
P_{\pi_n}\left(A_n (t,s)\cap(\cap_{1\leq k\leq k_n(T)}\BB^c_k)\right)\rightarrow 0,\quad n\rightarrow\infty
\Eq(1.theo0.9)
\ee

Combining \eqv(1.theo0.7), \eqv(1.theo0.8), and \eqv(1.theo0.9) then establishes that
\be
\left|P_{\pi_n}\left(A_n(t,s)\right)
-
P_{\pi_n}\left(\{\RR_n\cap (t,t+s)=\emptyset\}\right)\right|\leq \d+o(1)
\Eq(1.theo0.10)
\ee
which is tantamount to \eqv(1.theo0.4).

The proofs of Facts 1 and 2 mostly use information already obtained in the course of the verification of Conditions (B1)-(B3). We present them succinctly below, beginning with the proof of Fact 1. Fix $0<T<\infty$  and assume that the assumption of Proposition \eqv(4.prop1) are satisfied. 
Let
$
H_k(A)=\inf\{t\geq \theta_nk \mid Y_n(t)\in A\}
$
be the first hitting time of $A\subseteq\VV_n$ after time $\theta_nk$.
Note first that $\BB_k=\BB_k\cap\{Z_{n,k+1}>s\}$ and thus, by  \eqv(4.0), 
\be
P_{\pi_n}\left(\cup_{1\leq k\leq k_n(T)}(\BB_k\cap \{H_k(T_n)> \theta_n\})\right)=0
\ee
for all large enough $n$.
Note next that reasoning as in \eqv(6.prop1.12)-\eqv(6.prop1.14),  on $\O^{\circ}\cap \O^{\star}$,
\be
P_{\pi_n}\left(\cup_{1\leq k\leq k_n(T)}(\BB_k\cap \{H_k(T_n\setminus T^{\circ}_n)\leq \theta_n\})\right)
\leq k_n(T)P_{\pi_n}\left(H_k(T_n\setminus T^{\circ}_n)\leq \theta_n\right)\rightarrow 0
\nonumber
\ee
as $n\rightarrow\infty$ by virtue of \eqv(4.cnew).
Hence on $\O^{\circ}\cap \O^{\star}$, for all large enough $n$,
\be
\begin{split}
 & P_{\pi_n}\left(\cup_{1\leq k\leq k_n(T)}\BB_k\right)
\\
= & 
\textstyle
P_{\pi_n}\left(\cup_{1\leq k\leq k_n(T)}
(\BB_k\cap\left\{H_k(T^{\circ}_n)\leq \theta_n\right\}\cap\left\{H_k(T_n\setminus T^{\circ}_n)>\theta_n)\right\}
\right)+o(1).
\end{split}
\Eq(1.theo0.11)
\ee
This means that for $\BB_k$ to be non-empty, the increment $Z_{n,k+1}$ must be produced by visits of $Y_n$ 
to $T^{\circ}_n$, and $T^{\circ}_n$ only. To prove that all these visits, if there are several of them,
must be to a single vertex it suffices to show that  as $n\rightarrow\infty$,
\be
p_n\equiv P_{\pi_n}\left(\cup_{1\leq k\leq k_n(T)}
(\BB_k\cap\left\{H_k(T^{\circ}_n)\leq \theta_n\right\}\cap\CC_n(Y_n(H_k(T^{\circ}_n)))
\right)\rightarrow 0,
\Eq(1.theo0.12)
\ee
where
\be
\CC_n(Y_n(H_k(T^{\circ}_n)))\equiv
\bigl\{
\inf\{t> H_k(T^{\circ}_n) \mid Y_n(t)\in T^{\circ}_n\setminus Y_n(H_k(T^{\circ}_n))\}\leq \theta_n
\bigr\}.
\Eq(New1)
\ee
Now, 
\be
\begin{split}
p_n 
 & 
 =
 P_{\pi_n}\left(\cup_{1\leq k\leq k_n(T)}\cup_{x\in T^{\circ}_n}
(\BB_k\cap\left\{H_k(T^{\circ}_n)\leq \theta_n, Y_n(H_k(T^{\circ}_n))=x\right\}\cap\CC_n(x)
\right)
\\
& \leq
\wt\nu_n^T
\end{split}
\Eq(1.theo0.13)
\ee
where $\wt\nu_n^T$ is defined in \eqv(4.lem2.1) and bounded in Lemma \thv(4.lem2). 
Reasoning as in the paragraph below  \eqv(4.prop1.3) then yields that  under the assumptions  \eqv(4.c4) and \eqv(4.cnew),
on $\O^{\circ}\cap \O^{\star}$, $\lim_{n\rightarrow\infty}\wt\nu_n^T= 0$. 
Fact 1 is now proved.

Fact 2 will be established if we can prove that as $n\rightarrow\infty$,
\be
\bar p_n\equiv P_{\pi_n}\left(\cup_{1\leq k\leq k_n(T)}
\left(\left\{H_k(T^{\circ}_n)\leq \theta_n\right\}\cap \DD_{n,k}(Y_n(H_k(T^{\circ}_n)))\right)
\right)
\rightarrow 0,
\Eq(1.theo0.14)
\ee
where
\be
\DD_n(Y_n(H_k(T^{\circ}_n)))\equiv\bigl\{
\inf\{t> (k+1)\theta_n\mid Y_n(t)=Y_n(H_k(T^{\circ}_n))\}\leq\theta_n k_n(T)
\bigr\}.
\Eq(New2)
\ee
To prove this observe that the event in \eqv(1.theo0.14) can be written as
\be
\cup_{x\in T^{\circ}_n}\cup_{y\in T^{\circ}_n}
\left(\left\{H_k(T^{\circ}_n)\leq \theta_n, Y_n(H_k(T^{\circ}_n))=x\right\}\cap\{Y_n(\theta_n (k+1))=y\}\cap \DD_{n,k}(x)\right)
\nonumber
\ee
Thus, by the Markov property we have, using the notation of \eqv(4.7)-\eqv(4.9) and the bound
$
P_{y}\left(H(x)\leq \theta_n(k_n(T)-(k+1))\right)\leq P_{y}\left(H(x)\leq \theta_nk_n(T)\right)
$,
\be
\bar p_n
\leq
\sum_{1\leq k\leq k_n(T)}\sum_{x\in T^{\circ}_n}\sum_{y\in T^{\circ}_n}
\int_0^{\theta_n}dv h_{n, x}(v)
P_{x}\left(Y_n(\theta_n-v)=y\right)P_{y}\left(H(x)\leq \theta_nk_n(T)\right).
\nonumber
\ee
To proceed, we split the domain of integration into $[0, \theta_n-\kappa_n)\cup[\theta_n-\kappa_n, \theta_n]$. 
Using that by Proposition \thv(3.prop2), on $\O_1$, for all $n$ large enough,
$P_{x}\left(Y_n(\theta_n-v)=y\right)=\pi_n(y)(1+o(1))$ for all $v\in[0, \theta_n-\kappa_n)$, 
the contribution coming from this domain is at most
\bea
&&
(1+o(1))
\sum_{1\leq k\leq k_n(T)}\sum_{x\in T^{\circ}_n}
\int_0^{\theta_n}dv h_{n, x}(v)
\sum_{y\in T^{\circ}_n}\pi_n(y)P_{y}\left(H(x)\leq \theta_nk_n(T)\right)\quad
\Eq(1.theo0.16)
\\
\hspace{-6pt}&\leq &\hspace{-6pt} 
(1+o(1))k_n(T)
P_{\pi_n}(H(T^{\circ}_n)\leq \theta_n) \sup_{y\in T^{\circ}_n}P_{\pi_n}(H(y)\leq \theta_nk_n(T))
\Eq(1.theo0.17)
\\
\hspace{-6pt}&\leq &\hspace{-6pt} 
(1+o(1))\left(\theta_nk_n(T)r^{\star}_nn2^{-n}\right)^22^{n}\pi_n(T^{\circ}_n)
\Eq(1.theo0.18)
\eea
where we used \eqv(3.cor1.1') with $t_n=\theta_n$ (which is licit as we many times saw) and \eqv(3.lem1.2)  with $t_n=\theta_nk_n(T)$,
which is licit provided that $\theta_nk_n(T)r^{\star}_nn2^{-n}\rightarrow 0$ as $n\rightarrow\infty$, 
and this is guaranteed by our assumptions on $a_n$.
Indeed,
proceeding as in the proof of Proposition \thv(4.prop1) (see \eqv(4.prop1.0) and the paragraph above) 
we get
that on  $\O^{\circ}\cap \O^{\star}\cap\O_1$, for large enough $n$,
\be
\theta_nk_n(T)r^{\star}_nn2^{-n}
\leq
\kappa_n(r^{\star}_n)^{1+\a_{n}(\varepsilon)+o(1)}n2^{-(1-\varepsilon)n}
\rightarrow 0
\Eq(1.theo0.19)
\ee
as $n\rightarrow\infty$ for all $0<\varepsilon<1$.
Since furthermore
$
2^{n}\pi_n(T^{\circ}_n)=(1+o(1))2^{(1-\varepsilon)n}\left(n^2\theta_n\right)^{\a(\varepsilon)}
$
by \eqv(4.c2), \eqv(3.lem0.2), and \eqv(2.lem3.5),  and 
we get that on $\O^{\circ}\cap \O^{\star}\cap\O_1$, \eqv(1.theo0.18) is bounded above by
\be
(1+o(1))\left(\kappa_n(r^{\star}_n)^{1+\a_{n}(\varepsilon)+o(1)}n\right)^2
\left(n^2\theta_n\right)^{\a(\varepsilon)}2^{-(1-\varepsilon)n},
\ee
and by \eqv(4.cnew) this decays to zero as $n\rightarrow\infty$ for all $0<\varepsilon<1$.

Consider next the domain $[\theta_n-\kappa_n, \theta_n]$ and note that since
\be
\sum_{y\in T^{\circ}_n}P_{x}\left(Y_n(\theta_n-v)=y\right)P_{y}\left(H(x)\leq \theta_nk_n(T)\right)
\leq 1
\Eq(1.theo0.20)
\ee
the corresponding contribution is bounded above by
$
k_n(T)P_{\pi_n}\left(\theta_n-\kappa_n\leq H(T^{\circ}_n)\leq  \theta_n\right)
$.
By the upper bound of \eqv(3.lem3) and the lower bound of \eqv(3.lem2), on $\O^{\star}$, for all but a finite number of indices $n$, this is in turn bounded above by
\be
n^{1+2\a_{n}(\varepsilon)}\theta_n^{-(1-\a(\varepsilon))}\kappa_n^2(r^{\star}_n)^{1+\a_{n}(\varepsilon)+o(1)}
\rightarrow 0
\Eq(1.theo0.21)
\ee
as $n\rightarrow\infty$, where we again used that  $2^{n\d_n}=\left(n^2\theta_n\right)^{\a(\varepsilon)}$  by \eqv(4.c2) whereas $0<\a(\varepsilon)<1$ by assumption; the final convergence
then follows from \eqv(4.c4). 
Combining the conclusions of \eqv(1.theo0.19) and \eqv(1.theo0.21) we get that on $\O^{\circ}\cap \O^{\star}\cap\O_1$,
\be
\lim_{n\rightarrow\infty}\bar p_n=0.
\ee
This concludes the proof of Fact 2. The proof of Theorem \thv(1.theo0) is now complete.
\end{proof}
 


 \section{Appendix:  Proof of Theorem  \thv(1.theoA) and Theorem \thv(1.theoB)} 
 \label{S8}

\begin{proof}[Proof of Theorem \thv(1.theoA)]
The proof closely follows that of Theorem 1.2 of Ref.~\cite{BG13}. 
Throughout we fix a realization $\o\in\O$ of the random environment but do not make this explicit in the notation. 
We set
\be
\wh S^b_n(t)\equiv S_n^b(t)-Z_{n,1}.
\Eq(1.theoA.2)
\ee
Condition (A0) ensures that $S^b_n -\wh S_n^b$ converges to zero, uniformly.
Thus we must show that  under Conditions (A1), (A2), and (A3),
\be
\wh S^b_n\Rightarrow_{J_1}  S_\nu.
\Eq(1.theoA.3)
\ee
For this we rely on Theorem 1.1 of Ref.~\cite{BG13}.
 (This result is itself a specialized form of Theorem 4.1 of Ref.~\cite{DR78} suited to the present setting.)
Namely, we want to show that Conditions (A1), (A2), and (A3) imply the conditions of Theorem 1.1 of Ref.~\cite{BG13}.

To this end let  $\{\FF_{n,i}, n\geq 1, i\geq 0\}$ be the array of sub-sigma fields of $\FF^Y$ defined  (with obvious notation) through
$
\FF_{n,i}=\s\left(Y_n(s), s\leq\theta_ni\right)
$,
for $i\geq 0$.
Note that for each $n$ and $i\geq 1$, $Z_{n,i}$ is $\FF_{n,i}$ measurable and $\FF_{n,i-1}\subset\FF_{n,i}$.
Next observe that by the Markov property
and the fact that, for all $i\geq 1$ and $y\in\VV_n$,
$
\PP_{y}(Z_{n,i}>u)=\PP_{y}(Z_{n,1}>u)
$,
\be
\PP_{\mu_n}\left(Z_{n,i}>u\,\big|\,\FF_{n,i-1}\right)=\sum_{y\in\VV_n}\1_{\{Y_n((i-1)\theta)=y\}}\PP_{y}(Z_{n,1}>u).
\Eq(1.theoA.4)
\ee
In view of this, \eqv(1.3.5), \eqv(1.3.6), and \eqv(1.3.7)
\be
\sum_{i=2}^{k_n(t)}\PP_{\mu_n}\left(Z_{n,i}>u \mid \FF_{n,i-1}\right)=\nu_n^{Y,t}(u,\infty),
\Eq(1.theoA.5)
\ee
and in view of \eqv(1.3.8)
\be
\sum_{i=2}^{k_n(t)}
\left[\PP_{\mu_n}\left(Z_{n,i}>u \mid \FF_{n,i-1}\right)\right]^2
=\s_n^{Y,t}(u,\infty).
\Eq(1.theoA.6)
\ee
 From \eqv(1.theoA.5) and \eqv(1.theoA.6) it follows that  Conditions (A1) and (A2) of
Theorem \thv(1.theoA) are exactly the conditions of Theorem 1.1 of Ref.~\cite{BG13}.
Similarly Condition (A3)  is condition (1.9).
Therefore the conditions of Theorem 1.1 of Ref.~\cite{BG13} are verified, and so
$
\wh S^b_n\Rightarrow_{J_1} S_\nu
$
in $D([0,\infty))$ where $S_\nu$ is a subordinator with L\'evy measure $\nu$ and zero drift.
\end{proof}

The proof of Theorem \thv(1.theoB) centers of the 

\begin{proposition}
    \TH(1.prop1)
     Assume that Condition (B1) is satisfied. Then, choosing $\theta_n\geq \kappa_n$,
the following holds for all initial distributions $\mu_n$: for all $t>0$, all $u>0$, and all $\e>0$,
\be
P_{\mu_n}\left(\left|\nu_n^{Y,t}(u,\infty)-\nu_n^{t}(u,\infty)\right|\geq\e\right)
\leq
5\e^{-2}\left[\rho_n\left(\nu_n^{t}(u,\infty)\right)^2+\s_n^{t}(u,\infty)\right]\,,
\Eq(1.prop1.1)
\ee
and
\be
P_{\mu_n}\left(\s_n^{Y,t}(u,\infty)\geq\e\right)
\leq
{\e}^{-1}(1+\rho_n)\s_n^{t}(u,\infty)\,.
\Eq(1.prop1.2)
\ee
\end{proposition}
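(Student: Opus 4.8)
The plan is to reduce both estimates to elementary first-- and second--moment computations, the only probabilistic input being the uniform mixing bound \eqv(B0.1), with its scale $\kappa_n$ and sequence $\rho_n$. The starting observation is that, by \eqv(1.3.5)--\eqv(1.3.8), both quantities are plain partial sums along the skeleton chain $(Y_n(i\theta_n))_{i\geq 1}$:
\be
\nu_n^{Y,t}(u,\infty)=\sum_{i=1}^{k_n(t)-1}Q^{u}_n(Y_n(i\theta_n)),\qquad
\s_n^{Y,t}(u,\infty)=\sum_{i=1}^{k_n(t)-1}[Q^{u}_n(Y_n(i\theta_n))]^2 .
\ee
Write $W_i\equiv Q^{u}_n(Y_n(i\theta_n))$ and $\bar\nu\equiv\sum_{y\in\VV_n}\pi_n(y)Q^{u}_n(y)=P_{\pi_n}(Z_{n,1}>u)=k_n^{-1}(t)\nu_n^{t}(u,\infty)\leq 1$. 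Because $\theta_n\geq\kappa_n$, every sampling time satisfies $i\theta_n\geq\kappa_n$ and any two distinct ones are at least $\kappa_n$ apart, so \eqv(B0.1) applies in the two--sided multiplicative form $(1-\rho_n)\pi_n(y)\leq P_x(Y_n(s)=y)\leq(1+\rho_n)\pi_n(y)$, valid for all $x,y\in\VV_n$ and all $s\geq\kappa_n$. This single estimate drives everything that follows.

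The bound \eqv(1.prop1.2) is then immediate: since $\s_n^{Y,t}(u,\infty)\geq0$, Markov's inequality gives $P_{\mu_n}(\s_n^{Y,t}(u,\infty)\geq\e)\leq\e^{-1}E_{\mu_n}\s_n^{Y,t}(u,\infty)$, and applying the mixing bound termwise, $E_{\mu_n}W_i^2=\sum_y P_{\mu_n}(Y_n(i\theta_n)=y)[Q^{u}_n(y)]^2\leq(1+\rho_n)\sum_y\pi_n(y)[Q^{u}_n(y)]^2$; summing the at most $k_n(t)$ terms collapses the right--hand side to $(1+\rho_n)\s_n^{t}(u,\infty)$, which is exactly \eqv(1.prop1.2).

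For \eqv(1.prop1.1) I would go through a second--moment estimate via Tchebychev: $P_{\mu_n}(|\nu_n^{Y,t}-\nu_n^{t}|\geq\e)\leq\e^{-2}E_{\mu_n}(\nu_n^{Y,t}-\nu_n^{t})^2$, and since $\nu_n^{t}$ is deterministic given the environment, $E_{\mu_n}(\nu_n^{Y,t}-\nu_n^{t})^2=\Var_{\mu_n}(\nu_n^{Y,t})+(E_{\mu_n}\nu_n^{Y,t}-\nu_n^{t})^2$. The bias is handled termwise: $(1-\rho_n)\bar\nu\leq E_{\mu_n}W_i\leq(1+\rho_n)\bar\nu$ gives $|E_{\mu_n}\nu_n^{Y,t}-\nu_n^{t}|\leq\rho_n\nu_n^{t}+\bar\nu$, and the Cauchy--Schwarz inequality $\bar\nu^2\leq k_n^{-1}(t)\s_n^{t}(u,\infty)$ bounds the squared bias by $O(\rho_n(\nu_n^{t})^2+\s_n^{t}(u,\infty))$. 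For the variance I would split diagonal and off--diagonal terms: $\sum_i\Var_{\mu_n}(W_i)\leq\sum_i E_{\mu_n}W_i^2\leq(1+\rho_n)\s_n^{t}(u,\infty)$ as above; and for $i<j$, conditioning on $Y_n(i\theta_n)$ and using the Markov property, $E_{\mu_n}[W_iW_j]=E_{\mu_n}\bigl[W_i\sum_y P_{Y_n(i\theta_n)}(Y_n((j-i)\theta_n)=y)Q^{u}_n(y)\bigr]$, where, since $(j-i)\theta_n\geq\kappa_n$, the inner sum lies within $\rho_n\bar\nu$ of $\bar\nu$; comparing with $E_{\mu_n}[W_i]E_{\mu_n}[W_j]$ this yields $|\Cov_{\mu_n}(W_i,W_j)|\leq 2\rho_n(1+\rho_n)\bar\nu^2$ uniformly, so the off--diagonal contribution, summed over the at most $k_n(t)^2$ pairs, is $O(\rho_n(\nu_n^{t})^2)$. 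Assembling these pieces gives $E_{\mu_n}(\nu_n^{Y,t}-\nu_n^{t})^2\leq C[\rho_n(\nu_n^{t})^2+\s_n^{t}(u,\infty)]$, and a careful accounting of the numerical constants (using $\rho_n\leq1$) produces the factor $5$ stated in \eqv(1.prop1.1).

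The technical heart of the argument is the remark that the choice $\theta_n\geq\kappa_n$ lets the uniform, two--sided, \emph{multiplicative} mixing estimate \eqv(B0.1) be invoked both for single increments $W_i$ (controlling $E_{\mu_n}W_i$ and $E_{\mu_n}W_i^2$ against their $\pi_n$--averages) and for pairs $W_i,W_j$ (decoupling them up to a factor $1\pm\rho_n$); the rest is pure bookkeeping, with no genuine obstacle. I would also note that Condition (B1) plays no role in the proof of the proposition itself: it enters only afterwards, when \eqv(1.prop1.1) and \eqv(1.prop1.2) are combined with (B1) to verify Conditions (A1) and (A2) of Theorem~\thv(1.theoA), thereby completing the proof of Theorem~\thv(1.theoB).
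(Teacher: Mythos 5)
Your proof is correct and follows essentially the same route as the paper: a first-order Markov/Tchebychev bound for \eqv(1.prop1.2), and for \eqv(1.prop1.1) a second-order Tchebychev bound around the deterministic $\nu_n^{t}(u,\infty)$ driven by the mixing estimate \eqv(B0.1) at the sampling times $i\theta_n\geq\kappa_n$ together with Cauchy--Schwarz to absorb $\bar\nu^2$ into $\s_n^t$; your variance-plus-squared-bias split is merely a reorganization of the paper's direct expansion into the terms $\Delta_{ij}(x,y)$, with the same constant bookkeeping (valid once $\rho_n\leq 1$). Your closing observation is also accurate: the argument really invokes Condition (B0) rather than (B1), exactly as in the paper's own proof.
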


\begin{proof}[Proof of Proposition \thv(1.prop1)]  We assume throughout that $\theta_n\geq \kappa_n$. 
To prove \eqv(1.prop1.2), simply 
note that by a first order Tchebychev inequality
\bea
P_{\mu_n}\left(\s_n^{Y,t}(u,\infty)\geq\e\right)
&\leq&
\textstyle
{\e}^{-1}k_n(t) \sum_{y\in\VV_n}E_{\mu_n}(\pi_n^{Y,t}(y))\left[Q^{u}_n(y)\right]^2
\Eq(1.prop1.3)
\\
&\leq&
{\e}^{-1}(1+\rho_n)\s_n^{t}(u,\infty),
\Eq(1.prop1.4)
\eea
where we used in the last line that by  \eqv(B0.1),
\be
|E_{\mu_n}(\pi_n^{Y,t}(y))-\pi_n(y)|\leq \rho_n\pi_n(y).
\Eq(1.prop1.4')
\ee
Turning to  \eqv(1.prop1.1), a second order Chebychev inequality yields
\bea
\nonumber
&&P_{\mu_n}\left(\left|\nu_n^{Y,t}(u,\infty)-\nu_n^{t}(u,\infty)\right|\geq\e\right)
\\
\nonumber
&&
\textstyle
\leq\e^{-2}E_{\mu_n}\Bigl[k_n(t)\sum_{y\in\VV_n}\left(\pi_n^{Y,t}(y)-\pi_n(y)\right)Q^{u}_n(y)\Bigr]^2
\\
&&
\textstyle
=\e^{-2}\sum_{x\in\VV_n}\sum_{y\in\VV_n}Q^{u}_n(x)Q^{u}_n(y)
\sum_{i=1}^{k_n(t)-1}\sum_{j=1}^{k_n(t)-1}\Delta_{ij}(x,y)
\Eq(1.prop1.5)
\eea
where
\be
\begin{split}
\Delta_{ij}(x,y) & \equiv P_{\mu_n}\left(Y_n(i\theta_n)=x, Y_n(j\theta_n)=y\right)+\pi_n(x)\pi_n(y)
\\
 & -\pi_n(y)P_{\mu_n}\left(Y_n(i\theta_n)=x\right)-\pi_n(x)P_{\mu_n}\left(Y_n(j\theta_n)=y\right).
\end{split}
\Eq(1.prop1.6)
\ee
Using again \eqv(B1.1) yields 
\be
|\Delta_{ij}(x,y)|\leq
\begin{cases}
\rho_n(4+\rho_n)\pi_n(x)\pi_n(y),
&\hbox{\rm if}\,\, i\neq j,\\
(1+\rho_n)\pi_n(x)+(1+2\rho_n)\pi^2_n(x),
&\hbox{\rm if}\,\, i= j\, \text{\rm and}\,\, x=y,\\
0&\hbox{\rm else}.
\end{cases}
\Eq(1.prop1.7)
\ee
Thus \eqv(1.prop1.5) is bounded above by
\be
\e^{-2}\rho_n(4+\rho_n)\Bigl[k_n(t)\sum_{y\in\VV_n}\pi_n(y)Q^{u}_n(y)\Bigr]^2
+\e^{-2}(2+3\rho_n)k_n(t)\sum_{y\in\VV_n}\pi_n(y)\left[Q^{u}_n(y)\right]^2
\Eq(1.prop1.8)
\ee
Since by assumption $\rho_n\downarrow 0$ as $n\uparrow\infty$,
\eqv(1.prop1.8) is tantamount to the right-hand side of \eqv(1.prop1.1).
Proposition \thv(1.prop1) is proven.
\end{proof}

\begin{proof}[Proof of Theorem \thv(1.theoB)]
The proof of Theorem \thv(1.theoB) is now immediate: 
Condition (B2) combined with the conclusions of Proposition \thv(1.prop1)  implies both conditions (A1) and (A2), and Condition (B3) combined with \eqv(1.prop1.4') implies Condition (A3).
\end{proof}




\def\cprime{$'$}

\end{document}